\documentclass{amsart}


\usepackage[T1]{fontenc}
\usepackage[utf8]{inputenc}
\usepackage{lmodern}
\usepackage[english]{babel}
\usepackage[autostyle]{csquotes}

\usepackage{amsmath}
\usepackage{amsfonts}
\usepackage{mathtools}
\usepackage{graphicx}
\usepackage{mathrsfs}
\usepackage{amsthm}
\usepackage{amssymb}
\usepackage{centernot}
\usepackage{nicematrix}
\usepackage{enumerate}
\usepackage{colonequals}
\usepackage{mathbbol}
\usepackage[all]{xy}
\usepackage{tikz}
\usepackage{tikz-cd}
\usepackage[mathcal]{euscript}
\usepackage[hidelinks]{hyperref}
\usepackage{cleveref}
\usepackage{comment}
\usepackage{mathdots}

\usepackage{dynkin-diagrams}

\numberwithin{equation}{subsection}


\usepackage[margin = 2cm]{geometry}


\newtheorem{theorem}{Theorem}[section]
\newtheorem{theoremintro}{Theorem}

\newtheorem{corollary}[theorem]{Corollary}
\newtheorem{lemma}[theorem]{Lemma}

\newtheorem{proposition}[theorem]{Proposition}

\newtheorem{definition}[theorem]{Definition}

\theoremstyle{remark}

\theoremstyle{definition}
\newtheorem{remark}[theorem]{Remark}
\newtheorem{example}[theorem]{Example}


\def\C{\mathbb C}
\def\Q{\mathbb Q}
\def\Z{\mathbb Z}

\def\Fp{\mathbb{F}_p}
\def\R{\mathbb R}

\def\ul#1{\underline{#1}}

\def\mc#1{\mathcal{#1}}
\def\mb#1{\mathbb{#1}}

\def\mr#1{\mathrm{#1}}

\def\sch{\ul{\mathrm{Sch}}}

\renewcommand{\hom}{\mr{Hom}}
\DeclareMathOperator{\ord}{ord}

\DeclareMathOperator{\Span}{Span}

\DeclareMathOperator{\tor}{tor}
\DeclareMathOperator{\spec}{Spec}

\DeclareMathOperator{\rk}{rk}
\DeclareMathOperator{\sub}{sub}

\newcommand{\Asf}{\mathsf{A}}
\newcommand{\Bsf}{\mathsf{B}}
\newcommand{\Csf}{\mathsf{C}}
\newcommand{\Dsf}{\mathsf{D}}
\newcommand{\Esf}{\mathsf{E}}

\newcommand{\Bcal}{{\mathcal B}}
\newcommand{\Ccal}{{\mathcal C}}
\newcommand{\Dcal}{{\mathcal D}}
\newcommand{\Ecal}{{\mathcal E}}
\newcommand{\Fcal}{{\mathcal F}}

\newcommand{\Hcal}{{\mathcal H}}
\newcommand{\Ical}{{\mathcal I}}

\newcommand{\Lcal}{{\mathcal L}}
\newcommand{\Mcal}{{\mathcal M}}
\newcommand{\Ncal}{{\mathcal N}}
\newcommand{\Ocal}{{\mathcal O}}
\newcommand{\Pcal}{{\mathcal P}}

\newcommand{\Scal}{{\mathcal S}}

\newcommand{\Ucal}{{\mathcal U}}
\newcommand{\Vcal}{{\mathcal V}}
\newcommand{\Wcal}{{\mathcal W}}
\newcommand{\Xcal}{{\mathcal X}}
\newcommand{\Ycal}{{\mathcal Y}}
\newcommand{\Zcal}{{\mathcal Z}}

\newcommand{\Lscr}{{\mathscr L}}

\renewcommand{\AA}{\mathbb{A}}

\newcommand{\FF}{\mathbb{F}}
\newcommand{\GG}{\mathbb{G}}

\newcommand{\QQ}{\mathbb{Q}}
\newcommand{\RR}{\mathbb{R}}

\newcommand{\ZZ}{\mathbb{Z}}

\newcommand{\Sfr}{{\mathfrak S}}

\newcommand{\FZip}{\mathop{\text{$F$-{\tt Zip}}}\nolimits}
\newcommand{\GZip}{\mathop{\text{$G$-{\tt Zip}}}\nolimits}
\newcommand{\GprimeZip}{\mathop{\text{$G'$-{\tt Zip}}}\nolimits}

\newcommand{\MZip}{\mathop{\text{$M$-{\tt Zip}}}\nolimits}
\newcommand{\GF}{\mathop{\text{$G$-{\tt ZipFlag}}}\nolimits}
\newcommand{\FFZip}{\mathop{\text{$F$-{\tt ZipFlag}}}\nolimits}
\newcommand{\IW}{{}^IW}

\DeclareMathOperator{\Res}{Res}
\DeclareMathOperator{\divi}{div}
\DeclareMathOperator{\Sbt}{Sbt}
\DeclareMathOperator{\Ha}{Ha}
\DeclareMathOperator{\U}{U}
\DeclareMathOperator{\GL}{GL}
\DeclareMathOperator{\SL}{SL}

\DeclareMathOperator{\Spec}{Spec}
\DeclareMathOperator{\pr}{pr}
\DeclareMathOperator{\Gal}{Gal}

\DeclareMathOperator{\Stab}{Stab}

\DeclareMathOperator{\ad}{ad}
\DeclareMathOperator{\GSp}{GSp}
\DeclareMathOperator{\Sp}{Sp}
\DeclareMathOperator{\PGL}{PGL}
\DeclareMathOperator{\diag}{diag}

\DeclareMathOperator{\id}{id}

\DeclareMathOperator{\fil}{Fil}
\DeclareMathOperator{\Hdg}{Hdg}
\DeclareMathOperator{\conj}{conj}
\DeclareMathOperator{\dr}{dR}
\DeclareMathOperator{\Br}{Br}

\newcommand{\relmiddle}[1]{\mathrel{}\middle#1\mathrel{}}
\newcommand{\loccit}{{\em loc.\@ cit.\@ }}

\newcommand{\JS}[1]{{\color{red} [#1]}} 
\newcommand{\lorenzo}[1]{{\color{blue} [#1]}} 
\newcommand{\stefan}[1]{{\color{orange} [#1]}} 


\title{Singularities in the Ekedahl--Oort stratification}


\author{Jean-Stefan Koskivirta}
\address{Department of Mathematics, Université Caen-Normandie}
\email{jeanstefan.koskivirta@gmail.com}

\author{Lorenzo La Porta}
\address{Department of Mathematics, Universit\`{a} degli Studi di Padova}
\email{lorenzo.laporta@protonmail.com}

\author{Stefan Reppen}
\address{Graduate School of Mathematical Sciences, The University of Tokyo}
\email{stefan.reppen@gmail.com}

\begin{document}

\begin{abstract}
We give conceptual and combinatorial criteria for the normality and Cohen--Macaulayness of unions of Ekedahl--Oort strata in the special fiber of abelian type Shimura varieties. For unions of two strata, one of the two having codimension one in the closure of the other, we determine exactly when their union is smooth. We provide explicit numerical criteria for the smoothness of any one-dimensional EO-stratum closure of Shimura varieties. For groups of type $\mathsf{B}_n$, we describe the smooth and normal loci of all EO-strata closures. We construct reduced strata Hasse invariants of explicit weights on EO-strata of codimension at most $n-1$, showing that their Zariski closures are local complete intersections. We also provide a closed form formula for their cycle classes.
\end{abstract}

\maketitle

\setcounter{tocdepth}{1} 

\tableofcontents 

\renewcommand{\thesubsection}{{I.\arabic{subsection}}}
\renewcommand{\thesubsubsection}{\thesubsection.\arabic{subsubsection}}

\section*{Introduction}
\label{sec-intro}

\setcounter{subsection}{0}

\subsection{Motivation}\label{section: intro motivation}
This paper studies singularities in the Ekedahl--Oort (EO) stratification of the special fiber of abelian type Shimura varieties at primes of good reduction. 
This stratification was first defined by Oort in \cite{oortastrat} for the moduli space of principally polarized abelian varieties (i.e.\@ Siegel Shimura varieties) by declaring that two points lie in the same stratum if the $p$-torsions of the corresponding abelian varieties are isomorphic. This construction was extended to abelian type Shimura varieties in a series of papers \cite{vwPELstrat, zhang,shen.zhang}, with a gradual shift from the language of $p$-torsion to that of $G$-zips (\S\ref{sec-revgzip}).

The topology and geometry of EO-strata have been studied in increasing levels of generality. For example, each EO-stratum is smooth, affine in the minimal compactification, and its dimension is given by the length of the Weyl group element that parametrizes it. The closure relations between different strata are also known, by \cite{Pink-Wedhorn-Ziegler-F-Zips-additional-structure}, certain connectedness results were shown in \cite{wedhorn.ziegler.tautological.rings}.

Although EO-strata are well studied, little is known about the geometry of their Zariski closures. This paper studies geometric properties, such as Cohen--Macaulayness, normality, and smoothness, of these subschemes. Questions about singularities are among the most natural and fundamental in algebraic geometry. However, in this setting, such questions are also intimately related to important arithmetic information, in many different ways.
\begin{enumerate}
    \item\label{item: intro motivation -  hasse invariants} The existence and order of vanishing of Hasse invariants are reflected in the geometric properties of EO-strata closures. We give a few examples of this. 
    \begin{enumerate}
        \item When appropriate families of Hasse invariants (e.g.\@ partial Hasse invariants) with reduced zeroscheme exist, then we can use them to show that certain EO-strata closures are Cohen--Macaulay, or even local complete intersections, as in \S\ref{sec-orthogonal}. 
        \item By Serre's criterion for normality, the previous point shows that existence of reduced strata Hasse invariants is linked to the normality of EO-strata closures, see \Cref{thm-normchargen}. A converse relation also holds, as normality of an EO-stratum closure allows one to construct Hasse invariants by doing so on open subsets of large codimension (see \cite{wedhorn.koskivirta, koskgold}, \S\ref{sec-orthogonal}). 
        \item The vanishing order of a reduced strata Hasse invariant can also describe the singular locus of the stratum closure. For example, the vanishing order of the classical Hasse invariant was computed in \cite{srhasseinv,goldring2024ogusprinciplezipperiod} for most Hodge-type Shimura varieties whose Hodge character is defined over $\mathbb{F}_p$, in which case the vanishing order describes the singular locus of the codimension 1 EO-stratum closure. The order of vanishing of a non-ordinary Hasse invariant for unitary Shimura varieties is computed in work in progress by the first and third named authors \cite{Koskivirta-Reppen-unitary}. The vanishing order of the classical Hasse invariant has also been proved to relate to $F$-pure thresholds for Calabi-Yau varieties \cite{bhatt.singh}.
    \end{enumerate}
    \item\label{item: intro motivation - canonical torsor} The geometry of the closure of an EO-stratum is also related to the existence of extensions of the canonical filtration from the stratum to other strata in its closure, see for instance \cite{boxer}. We extend this relation and prove in full generality that the existence of extensions implies normality and Cohen--Macaulayness of the EO-stratum closure, see \Cref{thm-extnorm}. 
    \item\label{item: intro motivation - deformation} When the Shimura variety is a moduli space of abelian varieties with extra structure, the geometry of unions of EO-strata is also connected to the deformation theory of the underlying abelian varieties. This is an essential ingredient in the construction and study of theta operators, see, for instance \cite{katzresult, EFGMM, eiscmant212, laporta2023generalised}.
    \item\label{item: intro motivation - cycle classes} As generating objects for the tautological ring inside the Chow ring, the geometry of EO-strata closures is related to the intersection theory of Shimura varieties. In the simple $\Bsf_n$-case we prove that EO-strata closures of codimension $\leq n-1$ are local complete intersections, and deduce therefrom a closed form formula for their cycle classes.
\end{enumerate}
\subsection{A heuristic: relation to Bruhat stratification via flag spaces}\label{section: main heuristic}
Let $S$ be the geometric special fiber of a Hodge-type Shimura variety at a prime of good reduction and let $G$ be the $\FF_p$-fiber 
of the $\mathbb{Q}$-group defining $S$. Let $S=\cup_{w \in {}^{I}W} S_w$ denote the Ekedahl--Oort stratification; it is parametrized by a certain subset ${}^{I}W$ of the Weyl group of $G$ (\S\ref{sec-param}). It arises from a morphism of stacks $\zeta\colon S\to \GZip^\mu$, called the \emph{zip period map}, where $\mu$ is the cocharacter of $G$ deduced from the Shimura datum and $\GZip^\mu$ is the stack of $G$-zips of Pink--Wedhorn--Ziegler (see \ref{section: intro g-zips to shimura varieties} for details). Forgetting part of this structure, we also obtain a smooth surjective morphism
\[
\beta\colon S\to \mathcal{B}\coloneqq [(P\times Q)\backslash G]
\]
where $P\subseteq G$ is the Hodge parabolic subgroup and $Q$ is the Frobenius twist of the opposite parabolic of $P$ (\Cref{{sec-zip-data}}). 
The classical Bruhat stratification yields a stratification on $\mathcal{B}$ (\S\ref{sec-Bruhat}). The closures of Bruhat strata are normal and Cohen--Macaulay. Hence, if $S_w$ is open in the preimage of a Bruhat stratum, then $\overline{S}_w$ is also normal and Cohen--Macaulay. Unfortunately, very few EO-strata are open in a Bruhat stratum. For example, if $S$ is of simple $\Bsf_n$-type, then there are only 3 such EO-strata while there are $2n$ many EO-strata in total. 

Nonetheless, for a fixed Borel $B$ and each intermediate parabolic $B\subseteq P_0\subseteq P$, there is a $P/P_0$-fibration 
\[
\pi_{P_0}\colon S^{(P_0)}\to S
\]
over $S$ \cite{ekedahl.geer.cycles.classes, Goldring-Koskivirta-zip-flags}. The scheme $S^{(P_0)}$ also admits an "EO-like" stratification, $S^{(P_0)}=\cup_{w\in {}^{I_0}W}S_w^{(P_0)}$, where ${}^{I_0}W\subseteq W$ is a certain subset containing ${}^{I}W$. Analogous to $\beta$, there is a smooth surjective map
\[
\beta^{(P_0)}\colon S^{(P_0)}\to \mathcal{B}^{(P_0)}\coloneqq [(P_0\times Q_0)\backslash G],
\]
where $Q_0\subseteq Q$ is a certain parabolic subgroup constructed from $P_0$. Again, if $S_w^{(P_0)}$ is open in the preimage of a Bruhat stratum under $\beta^{(P_0)}$, then $\overline{S}_w^{(P_0)}$ is normal and Cohen--Macaulay. Hence, if the map $\pi_{P_0}$ takes $S_w^{(P_0)}$ to $S_w$ and is "nice enough", then we can deduce that $\overline{S}_w$ is normal and Cohen--Macaulay. 

So far we have let $P_0$ be arbitrary. However, associated to $w\in {}^{I}W$ there is a unique standard parabolic $P_w$, called \emph{canonical}, (see \ref{section intro: bounded separating cover} below for details) such that
\begin{enumerate}
    \item the stratum $S_w^{(P_w)}$ coincides with a Bruhat stratum of $S^{(P_w)}$, and
    \item \label{Pw-item2} the morphism $\pi_{P_w}$ restricts to an isomorphism $\pi_{P_w}\colon S_w^{(P_w)}\to S_w$.
\end{enumerate}

Let $s_w\colon S_w\to S_{w}^{(P_w)}$ be the inverse of the above isomorphism of \eqref{Pw-item2}. This is called the \textit{canonical section of $w$} (\Cref{def-cantorcansec}). As $\pi_{P_w}$ is a $P/P_w$-fibration, $s_w$ corresponds to a canonical $P_w$-torsor $\mathcal{I}_w$ over $S_w$. These canonical objects relate to normality and Cohen--Macaulayness of $\overline{S}_w$. Indeed, if $s_w$ extends "zip-theoretically" (\S\ref{section: main results for Shimura}), then it identifies $\overline{S}_w$ with an open of $\overline{S}_w^{(P_w)}$, which is normal and Cohen--Macaulay.

Since the parametrization of the EO-stratification is combinatorially encoded by the Weyl group, it would also be desirable to have a combinatorial criterion for $\overline{S}_w$ to be normal and Cohen--Macaulay. The two notions that achieve this are the following. Let $U\subseteq \overline{S}_w$ be an open subscheme of $\overline{S}_w$ that is a union of EO-strata, say $U=\cup_{w' \in \Gamma_U}S_{w'}$ for some $\Gamma_U\subseteq {}^IW$.
\begin{itemize}
    \item Firstly, we say that $U$ is \emph{$w$-bounded} if $P_{w'}\subseteq P_w$ for all $w'\in \Gamma_U$. Roughly speaking, this condition is saying that there is no obvious obstruction preventing us from extending the canonical filtration, or equivalently \(s_w\), from \(S_w\) to \(S_{w'}\) in a way that is compatible with the canonical filtration on \(S_{w'}\). See \Cref{lem-rigidityofsubtorsors}.
    \item Secondly, we say that $U$ \emph{admits a separating canonical cover}, if 
    the preimage of $U$ via the projection $\pi_{P_w}\colon \overline{S}_w^{(P_w)}\to \overline{S}_w$ is $U^{(P_w)}\coloneqq \cup_{w'\in \Gamma_U}S_{w'}^{(P_w)}$. This tells us that the restriction \(\pi_{P_w} \colon U^{(P_w)} \to U\) has nice geometric properties. See \Cref{prop-proper}.
\end{itemize} 

\subsection{Main results for Shimura varieties} \label{section: main results for Shimura}
For the reader familiar with the theory of $G$-zips, we point out that all the results are first obtained on the level of $G$-zips, and then deduced for Shimura varieties via the zip period map \eqref{eq-zeta-shimura} (see also \S\ref{section: applications via zip period maps}). To keep the introduction more accessible, we only state the results for Shimura varieties and add internal references to the corresponding statements for stacks of $G$-zips. 
However, we emphasize that the $G$-zip version of \Cref{theorem: thmA intro} applies to arbitrary cocharacter data $(G,\mu)$. In particular, $\mu$ need not be minuscule. Also, since the map $\zeta\colon S\to \GZip^{\mu}$ extends smoothly to a smooth toroidal compactification $S^{\tor}$ of $S$ (\cite{Goldring-Koskivirta-zip-flags,Andreatta-modp-period-maps}), all of our results also hold for the EO-strata of $S^{\tor}$. All objects introduced above (e.g. the section $s_w$, the $P_w$-torsor $\Ical_w$) already exist on the stack of $G$-zips and arise by pullback via the map $\zeta \colon S \to \GZip^\mu$. More generally, we say that an object is \emph{zip-theoretic} if it arises by pullback in this fashion.
\begin{theoremintro}[{\Cref{thm-extnorm}}]\label{theorem: thmA intro}
Let $S$ be the special fiber of an abelian type Shimura variety with good reduction. Let $S_w$ be an arbitrary Ekedahl--Oort stratum of $S$ and 
let $U$ be an open subscheme of $\overline{S}_w$ that is a union of Ekedahl--Oort strata (e.g.\@ $U=\overline{S}_w$). Then, the following are equivalent.
\begin{enumerate}
    \item\label{item: thmA intro sw and Iw extends} The section $s_w$ (equivalently the torsor $\Ical_w$) admits a zip-theoretic extension to $U$. 
    \item\label{item: thmA intro wbounded and sep can cov} The scheme $U$ is $w$-bounded and admits a separating canonical cover.
    \item\label{item: thmA intro normal and sep can cov} The scheme $U$ is normal and admits a separating canonical cover.
\end{enumerate}
In this case $U$ is Cohen--Macaulay 
and the extensions of $s_w$ and $\Ical_w$ are unique. 
\end{theoremintro}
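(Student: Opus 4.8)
The plan is to establish everything first on $\GZip^{\mu}$ and then transport it to $S$ along the zip period map $\zeta\colon S\to\GZip^{\mu}$. This is legitimate because $s_w$, $\Ical_w$, the flag space $S^{(P_w)}$ and the projection $\pi_{P_w}$ are all zip-theoretic, a zip-theoretic extension over $U$ exists precisely when one exists over the corresponding open $\Ucal$ of the stratum closure $\overline{\Xcal}_w\subseteq\GZip^{\mu}$, and normality, Cohen--Macaulayness and formation of preimages all ascend and descend along the smooth surjection $\zeta$ and its flag analogue. From now on we work on $\GZip^{\mu}$, using freely the following facts from the earlier sections: the stratum $\Xcal_w^{(P_w)}$ is a Bruhat stratum of $S^{(P_w)}$, so its closure $\overline{\Xcal}_w^{(P_w)}$ is the $\beta^{(P_w)}$-preimage of a Bruhat stratum closure and is therefore normal, Cohen--Macaulay and irreducible; the morphism $\pi_{P_w}\colon\overline{\Xcal}_w^{(P_w)}\to\overline{\Xcal}_w$ is proper and restricts to an isomorphism over the open dense stratum $\Xcal_w$; and for each stratum $\Xcal_{w'}^{(P_w)}$ of $\overline{\Xcal}_w^{(P_w)}$ the restriction of $\pi_{P_w}$ is a finite equivariant morphism onto the Ekedahl--Oort stratum it covers, which is an isomorphism onto $\Xcal_{w'}$ exactly when $P_{w'}\subseteq P_w$ (cf.\ \Cref{lem-rigidityofsubtorsors}).

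The central object will be the open subscheme
\[
V\coloneqq\pi_{P_w}^{-1}(\Ucal)\cap\overline{\Xcal}_w^{(P_w)}\ \subseteq\ \overline{\Xcal}_w^{(P_w)},
\]
which is therefore normal, Cohen--Macaulay and irreducible, which satisfies $\Ucal^{(P_w)}=\bigcup_{w'\in\Gamma_{\Ucal}}\Xcal_{w'}^{(P_w)}\subseteq V$ with equality exactly when $\Ucal$ admits a separating canonical cover, and which comes with a proper morphism $\pi_{P_w}\colon V\to\Ucal$ that is an isomorphism over the dense open $\Xcal_w\subseteq\Ucal$. I would then show that each of \eqref{item: thmA intro sw and Iw extends}, \eqref{item: thmA intro wbounded and sep can cov} and \eqref{item: thmA intro normal and sep can cov} is equivalent to the single statement
\[
(\ast)\qquad\pi_{P_w}\colon V\longrightarrow\Ucal\ \text{is an isomorphism.}
\]
Once $(\ast)$ is available, the remaining assertions are immediate: $\Ucal\cong V$ is an open subscheme of the normal, Cohen--Macaulay scheme $\overline{\Xcal}_w^{(P_w)}$, hence is itself normal and Cohen--Macaulay; and the extension of $s_w$ is the inverse of $(\ast)$, which is unique because two sections of the separated morphism $\pi_{P_w}$ agreeing on the schematically dense open $\Xcal_w\subseteq\Ucal$ agree on all of $\Ucal$ (and the same then holds for $\Ical_w$).

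To establish the equivalences I would argue as follows. For $\eqref{item: thmA intro sw and Iw extends}\Leftrightarrow(\ast)$: an extension of $s_w$ is a section of $\pi_{P_w}$ over $\Ucal$; its image lies in $\pi_{P_w}^{-1}(\Ucal)$, and --- since the section prolongs $s_w$ and $\Xcal_w^{(P_w)}$ is dense in $\overline{\Xcal}_w^{(P_w)}$ --- in $\overline{\Xcal}_w^{(P_w)}$, hence in $V$, so it is a section of the proper (separated) morphism $\pi_{P_w}\colon V\to\Ucal$; its image is then closed and contains the generic point of the irreducible $V$ (because $\pi_{P_w}$ is an isomorphism over the dense $\Xcal_w$), hence equals $V$, so the section is surjective and, $V$ being reduced, an isomorphism inverse to $\pi_{P_w}$; conversely the inverse of $(\ast)$ is a section of $\pi_{P_w}$ which over $\Xcal_w$ is the unique section of the isomorphism $\pi_{P_w}\colon\Xcal_w^{(P_w)}\to\Xcal_w$, i.e.\ is $s_w$, so it extends $s_w$. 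For the other two conditions I would invoke \Cref{prop-proper}: admitting a separating canonical cover means exactly that $V=\Ucal^{(P_w)}$ is the union of the diagonal strata, equivalently that no other stratum of $\overline{\Xcal}_w^{(P_w)}$ maps into $\Ucal$; granting this, $\pi_{P_w}\colon V\to\Ucal$ is finite (each diagonal stratum maps by a finite equivariant morphism onto its Ekedahl--Oort stratum) and birational, so $V$ is the normalization of $\Ucal$. If moreover $\Ucal$ is normal, this forces $(\ast)$, giving $\eqref{item: thmA intro normal and sep can cov}\Rightarrow(\ast)$. If instead $\Ucal$ is $w$-bounded, then by the structural fact each $\pi_{P_w}\colon\Xcal_{w'}^{(P_w)}\to\Xcal_{w'}$ is an isomorphism, and the stratum-wise canonical reductions $\Ical_{w'}\times^{P_{w'}}P_w$ of the Hodge torsor glue, by the rigidity of subtorsors \Cref{lem-rigidityofsubtorsors}, to a $P_w$-reduction over $\Ucal$, i.e.\ to an extension of $s_w$, giving $\eqref{item: thmA intro wbounded and sep can cov}\Rightarrow\eqref{item: thmA intro sw and Iw extends}$. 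Finally $(\ast)$ conversely yields $\eqref{item: thmA intro wbounded and sep can cov}$ and $\eqref{item: thmA intro normal and sep can cov}$: it makes $\Ucal\cong V$ normal, it identifies the strata of $V$ with the diagonal strata (so the cover is separating), and the resulting isomorphisms $\Xcal_{w'}^{(P_w)}\xrightarrow{\ \sim\ }\Xcal_{w'}$ force $P_{w'}\subseteq P_w$ for all $w'\in\Gamma_{\Ucal}$.

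The main obstacle, as these sketches make clear, is the passage between the combinatorics and the geometry encoded in the structural inputs of the first paragraph --- above all the equivalence of $w$-boundedness of $w'$ with $\pi_{P_w}$ restricting to an isomorphism on $\Xcal_{w'}^{(P_w)}$, and the fact that under $w$-boundedness the stratum-wise push-forwards of the canonical torsors are mutually compatible and glue. This is precisely where \Cref{lem-rigidityofsubtorsors} and the fine geometry of the separating canonical cover \Cref{prop-proper} are indispensable, and it is also what excludes the \emph{a priori} possibility that the proper birational morphism $\pi_{P_w}\colon V\to\Ucal$ of normal schemes behaves like a non-trivial blow-up, which is exactly the scenario that would break the equivalence \eqref{item: thmA intro sw and Iw extends}$\,\Leftrightarrow\,$\eqref{item: thmA intro normal and sep can cov}.
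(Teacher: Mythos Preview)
Your overall architecture is sound and close to the paper's: reduce to $\GZip^\mu$, introduce $V=\pi_{P_w}^{-1}(\Ucal)\cap\overline{\Fcal}_w^{(P_w)}$, and organize everything around the statement $(\ast)$ that $\pi_{P_w}\colon V\to\Ucal$ is an isomorphism. Your arguments for $\eqref{item: thmA intro sw and Iw extends}\Leftrightarrow(\ast)$ and $\eqref{item: thmA intro normal and sep can cov}\Rightarrow(\ast)$ are essentially the paper's, and the use of \Cref{lem-rigidityofsubtorsors} to identify the strata of $V$ under $(\ast)$ with the diagonal strata (hence obtain the separating cover and $w$-boundedness) is exactly what the paper does.

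There is, however, a genuine gap in your implication $\eqref{item: thmA intro wbounded and sep can cov}\Rightarrow\eqref{item: thmA intro sw and Iw extends}$. You assert that, under $w$-boundedness, the stratum-wise $P_w$-reductions $\Ical_{w'}\times^{P_{w'}}P_w$ ``glue, by the rigidity of subtorsors \Cref{lem-rigidityofsubtorsors}, to a $P_w$-reduction over $\Ucal$''. But \Cref{lem-rigidityofsubtorsors} only gives uniqueness of a sub-$P_w$-torsor on each \emph{individual} stratum; it says nothing about producing a torsor on the union. The strata $\Xcal_{w'}$ are locally closed substacks, and data prescribed separately on each do not in general extend across the closure relations (think of a line bundle on $\AA^1\setminus\{0\}$ and one on $\{0\}$). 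Equivalently, knowing that $\pi_{P_w}$ restricts to an isomorphism on each diagonal stratum only tells you that $\pi_{P_w}\colon\Ucal^{(P_w)}\to\Ucal$ is finite, birational and bijective; this does not force it to be an isomorphism (the normalization of a cuspidal curve is a counterexample). You yourself flag this as ``the main obstacle'' in your last paragraph, but you do not actually discharge it.

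The paper closes this gap not by gluing torsors but by proving directly that $\pi_{P_w}\colon\Ucal^{(P_w)}\to\Ucal$ is \emph{unramified} (\Cref{fiber-triv}): one computes the geometric fibers of the map $[E'_{P_w}\backslash(U'\times E)]\to U$ and shows each is $\Spec(k)$, using that $C_{P_w,w'}$ is a single $E'_{P_w}$-orbit and that $\Stab_E(w'z^{-1})\subseteq E'_{P_w}$ scheme-theoretically whenever $P_{w'}\subseteq P_w$. Combined with finiteness (from the separating cover, \Cref{prop-proper}) and birationality, this yields a closed immersion, hence an isomorphism onto the reduced $\Ucal$ (\Cref{bounded-implies-normal}). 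This unramifiedness computation is the missing ingredient in your argument; replacing your gluing step by it makes the proof complete.
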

We say that $U\subseteq \overline{S}_w$ is \emph{elementary} if $U=S_w\cup S_{w'}$, where $S_{w'}$ is an EO-stratum contained with codimension one in $\overline{S}_w$. For elementary $U$, we show in \Cref{thm-elemsmoothchar} that the conditions of \Cref{theorem: thmA intro} are equivalent to the smoothness of $U$ (equivalently, its normality), because the existence of a separating canonical cover is automatic in this case.

Point \eqref{item: thmA intro sw and Iw extends} of \Cref{theorem: thmA intro} is conceptual and relates to canonical filtrations. On the other hand, \eqref{item: thmA intro wbounded and sep can cov} is combinatorial and computable (see \Cref{rmk-computable}). Thus, \Cref{theorem: thmA intro} gives both conceptual and algorithmic criteria for an EO-stratum closure to be normal and Cohen--Macaulay. We highlight this by computing the smooth locus of all one-dimensional EO-strata closures in any abelian type Shimura variety.

\begin{theoremintro}\label{theorem: thmB intro}[\Cref{theorem: one-dim smooth locus bn cn dn}, \S\ref{sec-antype}]
Let $S$ be the special fiber of an abelian type Shimura variety with good reduction. For any one-dimensional Ekedahl--Oort stratum $S_w$, there is an explicit description of the smooth locus of $\overline{S}_w$. When the Shimura datum is of type $\Bsf$, $\Csf$ or $\Dsf$, the smooth locus of $\overline{S}_w$ is always $S_w$.
\end{theoremintro}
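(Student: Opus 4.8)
The plan is to reduce the statement to the elementary case of \Cref{theorem: thmA intro}, namely \Cref{thm-elemsmoothchar}, and then to a finite computation of canonical parabolics. First I would record that for $w\in{}^IW$ with $\ell(w)=1$ one has $\overline{S}_w = S_w\sqcup S_e$, where $S_e$ denotes the zero-dimensional (superspecial) Ekedahl--Oort stratum: the only element of ${}^IW$ of length $<\ell(w)$ is the identity $e$, it lies in the closure of every Ekedahl--Oort stratum, and no element of length $\ell(w)$ other than $w$ can index a stratum in $\overline{S}_w$ because the corresponding stratum $\mathcal{Z}_w$ of $\GZip^\mu$ is irreducible. As $\dim S_e = 0 = \dim S_w - 1$, the scheme $U=\overline{S}_w$ is elementary, so \Cref{thm-elemsmoothchar} tells us that $\overline{S}_w$ is smooth if and only if it is $w$-bounded, i.e.\@ if and only if $P_e\subseteq P_w$ for the canonical parabolics $P_e$ and $P_w$. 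Since $S_w$ is smooth and dense in $\overline{S}_w$, the complement $S_e$ is zero-dimensional, and all points of $S_e$ have the same \'etale-local structure in $\overline{S}_w = \zeta^{-1}(\overline{\mathcal{Z}}_w)$ (because $\zeta$ is smooth and $\mathcal{Z}_e$ consists of a single point), the smooth locus of $\overline{S}_w$ equals $\overline{S}_w$ when $P_e\subseteq P_w$, and equals $S_w$ otherwise. This is the desired explicit description, once the parabolics are made combinatorial.

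I would then compute $P_e$ and $P_w$ from the recipe for canonical parabolics in \S\ref{section intro: bounded separating cover}. For the minimal element one shows that $P_e = P$ precisely when $S_e$ coincides with a Bruhat stratum, which holds whenever the Frobenius twist $\sigma$ of the zip datum stabilizes the Levi type $I$ --- in particular for all of $\Bsf_n$ and $\Csf_n$, and for the orthogonal $\Dsf_n$-data --- the remaining cases being handled directly. For $\ell(w)=1$ we have $w=s_\alpha$ for a simple root $\alpha\notin I$, and a direct computation identifies $P_{s_\alpha}$; in particular $P_{s_\alpha}=P$ if and only if $S_{s_\alpha}$ is itself a Bruhat stratum. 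Thus for $\ell(w)=1$ the closure $\overline{S}_w$ is smooth exactly when it is $w$-bounded, and the remaining task is to settle this in each Cartan type and for each minuscule cocharacter. In types $\Bsf_n$, $\Csf_n$ and $\Dsf_n$ one finds that a one-dimensional stratum is never $w$-bounded: when $\sigma$ stabilizes $I$ this is because $P_e=P$ while $P_w\subsetneq P$, the latter since no Bruhat Ekedahl--Oort stratum is one-dimensional (only very few Ekedahl--Oort strata coincide with Bruhat strata --- for $\Bsf_n$ exactly three, as noted in \S\ref{section: main heuristic}, of dimensions $0$, $\dim S-1$ and $\dim S$ --- and $\dim S\geq 3$), and the remaining $\Dsf_n$-data are a direct check; hence the smooth locus of $\overline{S}_w$ is $S_w$. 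For type $\Asf$, i.e.\@ (products of) unitary groups, I would run the computation in \S\ref{sec-antype}, separating the split and inert cases at $p$ and keeping track of the diagram automorphism $\sigma$; there the outcome is governed by an explicit numerical condition on the signatures.

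The main obstacle is the explicit determination of the canonical parabolics $P_e$ and $P_{s_\alpha}$ when $\sigma$ acts nontrivially on the Dynkin diagram: this is where the split versus quasi-split distinction enters, where the type-$\Asf$ bookkeeping is most delicate, and where one must also take care of the non-adjoint, non-simply-connected groups that actually arise as $\FF_p$-fibers of Shimura data. Once these parabolics are pinned down, only routine finite computations in the relevant root systems remain.
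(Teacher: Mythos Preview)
Your overall framework---reduce to the elementary criterion of \Cref{thm-elemsmoothchar}, then compare the canonical parabolics $P_{\id}$ and $P_{s_\alpha}$---is exactly the paper's. Two technical points, however, separate your execution from a complete proof.

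First, a minor omission: \Cref{thm-elemsmoothchar} gives ``smooth $\Leftrightarrow$ $w$-bounded \emph{and} admits a separating canonical cover'', not just ``smooth $\Leftrightarrow$ $w$-bounded''. For length-one strata the separating cover is indeed automatic (the unique lower neighbor $\id$ is a Bruhat-lower neighbor, so \Cref{rmk-U-bruh} applies), but this has to be said, and it is \emph{not} automatic for general elementary opens, cf.\ \Cref{rmk-countergl7}.

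Second, and more seriously, your argument for $\Bsf,\Csf,\Dsf$ rests on the claim $P_{\id}=P$, and this fails in general. Take $G=\Res_{\FF_{p^2}|\FF_p}(G_0)$ with $G_0$ simple of type $\Bsf_n$ and cocharacter trivial on the first factor but minuscule on the second, so $I_1=\Delta_0$ and $I_2=\Delta_0\setminus\{\alpha_1\}$. One computes $\varphi_{\id}(\alpha_1^{(1)})=\alpha_1^{(2)}\notin I$, whence $\alpha_1^{(1)}\in I\setminus I_{\id}$ and $P_{\id}\subsetneq P$. Your justification ``$\sigma$ stabilizes $I$'' breaks precisely in such mixed Weil restrictions, and ``the remaining cases are a direct check'' would then require computing both $P_{\id}$ and $P_{s_\alpha}$ case by case across all signature configurations---feasible but not what you have written.

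The paper circumvents this by never asserting $P_{\id}=P$. It first proves the uniform inclusion $P_{s_\alpha}\subseteq P_{\id}$ for every non-compact simple $\alpha$ (\Cref{cor-oppinccanparl1}), so that smoothness of $\overline{\Xcal}_{s_\alpha}$ becomes the \emph{equality} $P_{s_\alpha}=P_{\id}$, reformulated combinatorially as $s_\alpha(I_{\id})=I_{\id}$ (\Cref{dim1-prop}). It then observes (\Cref{cor-Borel}) that if $I_{\id}$ is \emph{dense} in $\Delta$---i.e.\ every simple root is adjacent to some element of $I_{\id}$---then $s_\alpha(I_{\id})\neq I_{\id}$ for every non-compact $\alpha$. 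For each of $\Bsf,\Csf,\Dsf$ (and $\Esf_6,\Esf_7$) one exhibits an explicit dense $\Gamma\subseteq I$, stable under $w_0$, $w_{0,I}$ and $\sigma$, hence contained in $I_{\id}$; this single argument covers all Weil restrictions and all Galois twists at once.
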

We point out that the \(G\)-zip version of \Cref{theorem: thmB intro} also covers the exceptional cases \(\Esf_6\) and \(\Esf_7\). See \S\ref{section: exceptional dim one} and \Cref{remark: exceptional periods}. While \Cref{theorem: thmA intro} applies in complete generality, \Cref{theorem: thmB intro} restricts to EO-strata of dimension one, but applies to arbitrary abelian type Shimura varieties. Our final main theorem, on the contrary, is a result about EO-strata of arbitrary dimension, for Shimura varieties specifically of simple $\Bsf_n$-type. One of the interesting features of $\Bsf_n$-type Shimura varieties is their relation to moduli spaces of K3-surfaces \S\ref{section: applications via zip period maps}.
\begin{theoremintro}\label{theorem: thmC intro}
Let $S$ be the special fiber of an abelian type Shimura variety of simple $\Bsf_n$-type with good reduction. Let $S= \overline{S}_{0}\supseteq \overline{S}_{1}\supseteq \ldots \supseteq \overline{S}_{2n-1}$ denote the EO-stratification\footnote{The linearity of the EO-stratification was proved by Wedhorn in \cite{wedhorn.bruhat}.} of $S$. 
\begin{enumerate}
    \item For $0\leq j\leq n-1$, we have that:
    \begin{enumerate}
        \item The smooth locus of $\overline{S}_j$ is $\bigcup_{i=j}^{2n-1-j}S_i$.
        \item The closure $\overline{S}_j$ is normal and a local complete intersection.
        \item The closure $\overline{S}_j$ admits a reduced strata Hasse invariant (\S\ref{section: intro reduced Hasse inv}) of weight $(p^{j+1}-1)\eta_\omega$, where $\eta_\omega$ is the Hodge character.
    \end{enumerate}
    \item For $j\geq n$, the smooth (resp. normal) locus of $\overline{S}_j$ is the open stratum $S_j$.
\end{enumerate}
\end{theoremintro}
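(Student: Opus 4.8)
The plan is to establish the statement first for the zip Ekedahl--Oort strata of $\GZip^{\mu}$, with $G$ split of type $\Bsf_n$ and $\mu$ its minuscule cocharacter, and then to pull back along the smooth surjection $\zeta\colon S\to\GZip^{\mu}$, which preserves smoothness, normality and the local complete intersection property and matches sections of line bundles of a given weight; I keep the notation $S_j,\overline{S}_j$ for the zip strata. The first step is combinatorial: exhibit the elements $w_0,\dots,w_{2n-1}\in{}^{I}W$ of the linear $\Bsf_n$ EO-stratification (so $\dim S_j=2n-1-j$), compute the canonical parabolics $P_{w_j}$, and record two facts. \emph{(i)} For $0\le j\le n-1$, $P_{w_i}\subseteq P_{w_j}$ holds exactly when $j\le i\le 2n-1-j$; hence $U_j:=\bigcup_{i=j}^{2n-1-j}S_i$ is the largest $w_j$-bounded open union of strata in $\overline{S}_j$, while $V_j:=U_j\cup S_{2n-j}$ is not $w_j$-bounded. \emph{(ii)} For $j\ge n$, $P_{w_{j+1}}\not\subseteq P_{w_j}$. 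I would also check that $U_j$ and $V_j$ admit separating canonical covers; for the linear $\Bsf_n$-poset this amounts to a finite verification of which zip-flag strata lie over which zip strata under $\pi_{P_{w_j}}$.

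For assertion (1c) I would construct, for each $0\le j\le n-1$, a section $\Ha_j$ over $\overline{S}_j$ of the line bundle of weight $(p^{j+1}-1)\eta_\omega$ with non-vanishing locus $S_j$ and \emph{reduced} vanishing divisor $\overline{S}_{j+1}$ ($\Ha_0$ being the classical Hasse invariant). Concretely one works in the orthogonal model of \S\ref{sec-orthogonal}: the Hodge filtration of the tautological $(2n+1)$-dimensional quadratic bundle generates, via Frobenius and Verschiebung, a canonical isotropic flag, and the $(j+1)$-st Frobenius-twisted comparison map along this flag is $\Ha_j$ --- the power $p^{j+1}$ in the weight being the exponent of that twist. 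Reducedness of the divisor is clearest after pullback along $\pi_{P_{w_j}}$ to the flag space $\overline{S}_j^{(P_{w_j})}$, where $\Ha_j$ becomes a Bruhat-theoretic partial Hasse invariant with reduced zero locus; equivalently one constructs $\Ha_j$ there and descends it.

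Assertion (1b) is then essentially formal. Since $\GZip^{\mu}$ is smooth and connected and the $\Ha_i$ are nonzero, an induction on $j$ shows that $\overline{S}_i$ is integral and Cohen--Macaulay, that $\Ha_i$ is a nonzerodivisor on $\overline{S}_i$, and that $\overline{S}_{i+1}=V(\Ha_i)$ is an effective Cartier divisor in $\overline{S}_i$; composing the resulting regular immersions $\overline{S}_j\hookrightarrow\overline{S}_{j-1}\hookrightarrow\dots\hookrightarrow\GZip^{\mu}$ exhibits $\overline{S}_j$ as a codimension-$j$ Cohen--Macaulay local complete intersection for $0\le j\le n-1$. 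Normality then follows from Serre's criterion $R_1+S_2$ once (1a) is known (below): $S_2$ is Cohen--Macaulayness, and by (1a) the singular locus of $\overline{S}_j$ is $\overline{S}_{2n-j}$, of codimension $2n-2j\ge 2$ in $\overline{S}_j$ since $j\le n-1$, which yields $R_1$.

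It remains to prove (1a) and (2); this is the heart of the argument. For the inclusion $U_j\subseteq(\overline{S}_j)^{\mathrm{sm}}$: by \emph{(i)} and the separating-cover check, \Cref{theorem: thmA intro} applies to $U_j$, so $s_{w_j}$ extends to $U_j$ and identifies it with an open subscheme $s_{w_j}(U_j)$ of $\overline{S}_j^{(P_{w_j})}$; by the defining property of the canonical parabolic $\overline{S}_j^{(P_{w_j})}$ is the closure of a Bruhat stratum in the flag space $S^{(P_{w_j})}$, whose singularities are governed, through the smooth map $\beta^{(P_{w_j})}$, by those of a Bruhat-stratum closure in $\mathcal{B}^{(P_{w_j})}$, and one reads off that it is smooth along $s_{w_j}(U_j)$ from the combinatorially explicit singular locus of such closures in type $\Bsf_n$. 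For the reverse inclusion, apply \Cref{theorem: thmA intro} to $V_j$: it admits a separating canonical cover but, by \emph{(i)}, is not $w_j$-bounded, hence cannot be normal; so $\overline{S}_j$ has a non-normal --- a fortiori singular --- point on $S_{2n-j}$, and since the singular locus is zip-theoretic, hence a union of strata closures, it contains every $S_i$ with $i>2n-1-j$. Combined with the previous inclusion this gives $(\overline{S}_j)^{\mathrm{sm}}=U_j$, proving (1a). Assertion (2) is obtained the same way: by \Cref{thm-elemsmoothchar} and \emph{(ii)} the elementary union $S_j\cup S_{j+1}$ is not normal for $j\ge n$, so $\overline{S}_j$ fails to be normal at the generic point of $S_{j+1}$, and since its smooth and normal loci are unions of strata closures containing the open stratum $S_j$, both equal $S_j$. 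The main obstacle is concentrated in this last step: establishing the separating-canonical-cover condition for $U_j$ and $V_j$ in type $\Bsf_n$, and determining the singular locus of the Bruhat-stratum closure underlying $\overline{S}_j^{(P_{w_j})}$ along $s_{w_j}(U_j)$ --- this is exactly where the bound $2n-1-j$ is produced, and it is the computational core of the theorem. Should the separating-cover bookkeeping for $V_j$ turn out delicate, the reverse inclusion in (1a) can instead be obtained directly, by computing that the Jacobian of $(\Ha_0,\dots,\Ha_{j-1})$ drops rank exactly along $\overline{S}_{2n-j}$.
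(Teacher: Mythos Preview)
Your overall architecture matches the paper's: reduce to $\GZip^\mu$, compute the canonical parabolics (your (i) and (ii) are exactly \Cref{prop-can-orth} and its consequence), verify the separating canonical cover for $U_j=\Ycal_j$ (this is \Cref{propYi}, and it is \emph{not} a routine check---it occupies several subsections, including the full determination of $\Gamma_{I_j}(x_r)$ and the images of the $w_i^{(r)}$), build reduced Hasse invariants, and run the induction for normality and l.c.i.\ as in \Cref{thm-Xj-normal}. Two of your steps, however, diverge from the paper in ways that create genuine gaps.

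\textbf{Smoothness of $U_j$.} You argue that $\pi_{P_{w_j}}$ identifies $U_j$ with an open of the Bruhat closure $\overline{\Fcal}^{(P_{w_j})}_{x_j}$, and then ``read off'' smoothness from the combinatorics of Schubert singularities in type $\Bsf_n$. But the relevant parabolic Schubert variety is the closure of $P_j x_j z^{-1} Q_j$, and describing its smooth locus is a separate and not obviously tractable problem; the paper never attempts this. Instead the paper passes to the \emph{larger} flag space $\Fcal^{(P_{j-1})}$ and shows (in the proposition immediately preceding \Cref{prop-orthsmoothlocbigstrat}) that $\Ycal_j^{(P_{j-1})}$ is open in a \emph{single} $P_{j-1}\times Q_{j-1}$-orbit, hence automatically smooth. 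The index shift from $P_j$ to $P_{j-1}$ is the trick: with respect to $P_{j-1}$ the stratum $\Fcal^{(P_{j-1})}_{x_j}$ is no longer Bruhat, but the whole union $\Ycal_j^{(P_{j-1})}$ lies in one Bruhat cell, because $x_{2n-1-j}z^{-1}=x_jz^{-1}s_\alpha$ with $s_\alpha\in L_{j-1}$.

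\textbf{Reverse inclusion in (1a).} Your plan is to apply \Cref{thm-extnorm} to $V_j=U_j\cup S_{2n-j}$: separating canonical cover plus non-$w$-bounded forces non-normality. But $V_j$ almost certainly does \emph{not} admit a separating canonical cover. The analysis behind \Cref{propYi} shows that for every $j<r<2n-1-j$ the element $w_j^{(r)}\in{}^{I_j}W$ satisfies $w_j^{(r)}\preccurlyeq_{I_j}x_j$ and $\pi_{P_j}(\Fcal^{(P_j)}_{w_j^{(r)}})\subseteq\overline{\Xcal}_{2n-j}$; so the preimage of $\Xcal_{2n-j}$ in $\overline{\Fcal}^{(P_j)}_{x_j}$ contains many flag strata of varying lengths beyond $\Fcal^{(P_j)}_{x_{2n-j}}$, and $V_j^{(P_j)}$ is neither open in $\overline{\Fcal}^{(P_j)}_{x_j}$ nor equal to $\pi_{P_j}^{-1}(V_j)\cap\overline{\Fcal}^{(P_j)}_{x_j}$. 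The paper's proof of \Cref{prop-orthsmoothlocbigstrat} takes a different route: it shows that $\Ha_{j-1}$ vanishes to order $2$ along $\Xcal_{2n-j}$, by translating to a Chevalley-type computation on the Schubert stack and invoking \cite[Theorem 3.8.5]{goldring2024ogusprinciplezipperiod}. Since $\overline{\Xcal}_{j-1}$ is already known to be smooth at $\Xcal_{2n-j}$ (it lies in $\Ycal_{j-1}$), this forces $\overline{\Xcal}_j=V(\Ha_{j-1})$ to be singular there. Your fallback Jacobian suggestion points in this direction, but the actual computation is the content here, not a formality.
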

\subsection{Method}

Fix a rational prime $p$ and let $k$ be an algebraic closure of $\FF_p$. 
Let $\Zcal$ be a zip datum, i.e.\@ a tuple $\Zcal=(G,P,Q,L,M)$, consisting of the following data. The object $G$ is a connected reductive $\FF_p$-group and $P,Q\subseteq G_k$ are parabolic subgroups with Levi subgroups 
$L, M$, respectively, satisfying $L^{(p)}=M$ (here $(-)^{(p)}$ denotes base change along the absolute Frobenius). For example, let $\mu\in X_{*}(G)$ be a cocharacter of $G$ and let $P,P_{+}$ be the pair of opposite parabolic subgroups defined via $\mu$. Then, letting $Q\coloneqq P_+^{(p)}$ and $L\coloneqq \text{Cent}_G(\mu)$ we obtain a zip datum denoted $\Zcal_\mu$. 

To the datum $\Zcal$, Pink--Wedhorn--Ziegler \cite{Pink-Wedhorn-Ziegler-zip-data, Pink-Wedhorn-Ziegler-F-Zips-additional-structure} attach an algebraic stack 
\[
\Xcal^\Zcal\coloneqq \GZip^\Zcal
\]
parametrizing \textit{$G$-zips of type $\Zcal$}. For example, suppose that $A\to S$ is an abelian scheme over a $k$-scheme $S$. Let $\fil_{\Hdg}^{\bullet}$ and $\fil_{\conj,\bullet}$ denote the Hodge and conjugate filtrations on  $H_{\dr}^1(A/S)$, and let $\varphi_{\bullet}\colon (\text{gr}_{\Hdg}^i)^{(p)}\to \text{gr}_{i,\conj}$ denote the isomorphisms on graded pieces induced by the Cartier isomorphisms \cite{katznilpotent, moonen.wedhorn.discrete.invariants}. The tuple \[(H_{\dr}^1(A/S), \fil_{\Hdg}^{\bullet}, \fil_{\conj,\bullet}, \varphi_{\bullet})\] corresponds to a $\GL_n$-zip for $n=\rk(H_{\dr}^1(A/S))=2\dim(A/S)$. 
\subsubsection{General \texorpdfstring{$G$}{G}-zips} \label{intro-subsec-Gzip} More generally, a \emph{$G$-zip of type $\Zcal$} over a \(k\)-scheme \(S\) is a tuple $\underline{\Ical}=(\Ical, \Ical_P, \Ical_Q, \iota)$ consisting of 
\begin{enumerate}
    \item a $G$-torsor $\Ical$ over \(S\), 
    \item a sub-$P$-torsor, respectively sub-$Q$-torsor, $\Ical_P, \Ical_Q\subseteq \Ical_G$, and 
    \item an isomorphism of $M$-torsors $\iota\colon \Big(\Ical_P/R_u(P)\Big)^{(p)}\cong \Ical_Q/R_u(Q)$.
\end{enumerate}
For more details, see \cite{Pink-Wedhorn-Ziegler-F-Zips-additional-structure}.

\subsubsection{\texorpdfstring{$G$}{G}-zips corresponding to abelian type Shimura varieties}\label{section: intro g-zips to shimura varieties}
Let $(\mathbf{G},\mathbf{X})$ be an abelian type Shimura datum with $\mathbf{G}$ unramified at $p$.
Let $K_p \subset \mathbf{G}(\mathbb{Q}_p)$ be a hyperspecial maximal compact subgroup. As $K^p$ ranges over small enough open compact subgroups of $\mathbf{G}(\mathbf{A}_f^p)$, the associated system of Shimura varieties admits an integral canonical model \cite{kisin-hodge-type-shimura, vasiu}. Fix a level $K=K_pK^p$ and let $S_K$ be the geometric special fiber of the integral canonical model at level $K$. 

Assume now 
that $(\mathbf{G},\mathbf{X})$ is a Hodge-type Shimura datum. For $g \geq 1$, let $(\GSp_{2g}, \mathbf{X}_g)$ be the Siegel-type Shimura datum. 
Given a symplectic embedding
\addtocounter{equation}{-1}
\begin{subequations}
\begin{equation}
\label{eq-symplectic-embedding}
\varphi\colon (\mathbf{G},\mathbf{X}) \hookrightarrow (\GSp_{2g}, \mathbf{X}_g), 
\end{equation}
for all sufficiently small $K^p$ there exists a level $K' \subset \GSp_{2g}(\mathbf{A}_f)$ and an induced finite map from $S_K$ to the geometric special fiber of the Siegel-type Shimura variety $S_{g,K'}$  \cite[(2.3.3)]{kisin-hodge-type-shimura}. 

Let $G$ be the reductive $\mathbb{F}_p$-group deduced from $\mathbf{G}$, let $\mu \in X_*(G)$ be a Hodge cocharacter and let $\Zcal_\mu$ be the zip datum obtained from $(G,\mu)$. 
Let $A_K\to S_K$ denote the pullback of the universal principally polarized abelian variety $A_{g,K'}\to S_{g,K'}$. Then, by \cite{zhang, Kisin-Madapusi-Pera-Shin-Honda-Tate}, $H_{\dr}^1(A_K/S_K)$ determines a smooth and surjective morphism
\begin{equation}
\label{eq-zeta-shimura}
\zeta_K:S_K \to \Xcal^{\Zcal_\mu}.
\end{equation}    
\end{subequations}
Furthermore, $\zeta_K$ extends to a smooth toroidal compactification $S_K^{\tor}$ of $S_K$ by \cite{Goldring-Koskivirta-zip-flags}, and the extension $\zeta^{\tor}_K\colon S_K^{\tor}\to  \Xcal^{\Zcal_\mu}$ is again smooth \cite{Andreatta-modp-period-maps}.

\begin{remark}\label{remark: zip period map abelian type}
In the abelian, non-Hodge-type case, one still has a morphism \eqref{eq-zeta-shimura} upon replacing $\GZip^\mu$ with $G^{\text{ad}}\texttt{-Zip}^{\Zcal_{\mu^{\text{ad}}}}$ \cite{shen.zhang} (see also \cite{imai2024prismaticrealizationfunctorshimura} for a slight generalization of this). 

\end{remark}
\begin{remark}
If $S_{K'}$ is another object in the projective system and $\pi_{K,K'}\colon S_K\to S_{K'}$ the corresponding morphism, then $\zeta_{K'}\circ \pi_{K,K'}=\zeta_K$. Hence, we often omit the subscript $K$ and simply write $S$ and $\zeta$.
\end{remark}
\subsubsection{The Ekedahl--Oort stratification}
Let $\Zcal$ be an arbitrary zip datum, and let $E\coloneqq P\times_M Q$ act on $G$ by $(a,b)\cdot g=agb^{-1}$ (see \S\ref{section: dense subsets of delta}). By \cite{Pink-Wedhorn-Ziegler-zip-data}, we have an isomorphism of $k$-stacks
\begin{equation}\label{equation: gzip is quotient, intro}
    \Xcal^\Zcal \cong [E\backslash G_k].
\end{equation} 
In particular, $\Xcal^{\Zcal}$ is a smooth algebraic $k$-stack of dimension 0. 

The $E$-orbits of $G_k$ are parametrized by a certain subset ${}^{I}W$ of the Weyl group $W$ of $G$, described as follows. Let $T\subseteq B\subseteq G$ be a Borel pair, i.e.\@ a maximal torus inside a Borel. Let $\Delta$ denote the set of simple $T$-roots with respect to $B$, and let $I\subseteq \Delta$ denote the type of $P$. Then, ${}^{I}W$ is the set of minimal length representatives in the right cosets $W_I\backslash W$, where $W_I\coloneqq \langle s_\alpha : \alpha\in I\rangle$. Furthermore, there is a partial order $\preccurlyeq_{I}$ on ${}^{I}W$ inducing a homeomorphism $|\Xcal^\Zcal|\cong {}^{I}W$. The corresponding pointwise stratification of $\Xcal^\Zcal$ is called the \textit{zip stratification} and, by definition, the \textit{Ekedahl--Oort} stratification of an abelian type Shimura variety is the stratification on $S$ induced by the zip stratification via \eqref{eq-zeta-shimura}. For $w\in \IW$, let $\Xcal_w^\Zcal$ denote the corresponding zip stratum. 

From now on, set $\Xcal\coloneqq \Xcal^{\Zcal_\mu}$ for a zip datum $\Zcal_\mu$ coming from a cocharacter $\mu\in X_{*}(G)$. The inclusion $E\subseteq P\times Q$ yields a smooth surjection $\Xcal\cong [E\backslash G_k]\to [(P\times Q)\backslash G_k]$. The classical Bruhat stratification on $G$ provides a stratification on $[(P\times Q)\backslash G_k]$, all of whose strata closures are normal and Cohen--Macaulay, by \cite{ramarama, rama, andersi}. Thus, if $\Xcal_w$ is open in the preimage of a Bruhat stratum, then its closure is normal and Cohen--Macaulay. The problem is that very few zip strata are open in such a Bruhat stratum. For this reason, we introduce flag spaces over $\Xcal$. 

\subsubsection{Flag spaces of the stack of \texorpdfstring{$G$}{G}-zips}
Adding to the zip datum $\Zcal$ the datum of an intermediate parabolic $B\subseteq P_0 \subseteq P$, Goldring--Koskivirta \cite{Goldring-Koskivirta-zip-flags} construct the stack of \textit{$G$-zip flags of type $\Zcal$}, denoted 
\[
\Fcal^{(P_0)}\coloneqq \GF^{\Zcal}_{P_0}.
\]
It parametrizes $G$-zips $(\Ical,\Ical_P,\Ical_Q,\iota)$ of type $\Zcal$ equipped with the extra datum of a sub-$P_0$-torsor $\Ical_{P_0}\subseteq \Ical_{P}$. For example, for the $G$-zip $H_{\dr}^1(A/S)$ attached to a Hodge-type Shimura variety \eqref{eq-zeta-shimura}, sub-$P_0$-torsors correspond to refinements of the Hodge filtration. Forgetting the $P_0$-torsor, one obtains a $P/P_0$-fibration $\pi_{P_0}\colon \Fcal^{(P_0)}\to \Xcal^\Zcal$. 

Let $L_0$ be the Levi subgroup of $P_0$ containing $T$ 
and let $M_0\coloneqq L_0^{(p)}$. Letting $Q_0\coloneqq M_0 {}^{z}B$, for $z = \sigma(w_{0,I})w_0$ the Frobenius twist of the longest element in $\IW$ (cf.\@ \Cref{ssecflagspace}), 
one obtains another zip datum $\Zcal_{P_0}$ and a smooth and surjective morphism $\Psi_{P_0}\colon \Fcal^{(P_0)}\to \Xcal^{\Zcal_{P_0}}$. Composing with the natural projection $\Xcal^{\Zcal_{P_0}}\to [(P_0\times Q_0)\backslash G_k]$ we also obtain a smooth surjection $\Fcal^{(P_0)}\to [(P_0\times Q_0)\backslash G_k]$. 

The zip stratification of $\Xcal^{\Zcal_{P_0}}$ is indexed by ${}^{I_0}W$, where $I_0\subseteq \Delta$ is the type of $P_0$. Pulling back via $\Psi_{P_0}$, this gives a zip stratification on $\Fcal^{(P_0)}$. For $w\in {}^{I_0}W$, let $\Fcal_w^{(P_0)}$ denote the corresponding zip stratum. Since $I_0\subseteq I$, note that ${}^{I_0}W\supseteq \IW$. If $w\in \IW$, then $\pi_{P_0}$ restricts to a finite \'etale morphism $\Fcal_w^{(P_0)} \to \Xcal_w^\Zcal$. 

Suppose that $\Ucal\subseteq \overline{\Xcal}_w^{\Zcal}$ is an open substack of $\overline{\Xcal}_w^{\Zcal}$. Let $\Gamma_\Ucal\subseteq \IW$ correspond to $|\Ucal|$ and set $\Ucal^{(P_0)}\coloneqq \bigcup_{w'\in \Gamma_\Ucal}\Fcal_w^{(P_0)}$. The previous discussion gives us the following commutative diagram, which we describe in further detail in the next two subsections.
\begin{equation}
    \begin{tikzcd}
         & \pi_{P_0}^{-1}(\Ucal)\cap \overline{\Fcal}_w^{(P_0)} \arrow[r, hook] \arrow[d, hook] & \overline{\Fcal}_w^{(P_0)} \arrow[d, hook] & & & \\
         \Ucal^{(P_0)} \arrow[ur, dotted, hook] \arrow[r, dotted, hook] \arrow[dr, dotted] & \pi_{P_0}^{-1}(\Ucal) \arrow[r, hook] \arrow[d] & \pi_{P_0}^{-1}(\overline{\Xcal}_w^\Zcal) \arrow[r, hook] \arrow[d] & \Fcal^{(P_0)} \arrow[r] \arrow[d, "\pi_{P_0}"] \arrow[rr, bend left =30, "\beta^{(P_0)}"] & \Xcal^{\Zcal_{P_0}} \arrow[r] & \left[(P_0\times Q_0)\backslash G\right] \\
         & \Ucal \arrow[r, hook] & \overline{\Xcal}_w^\Zcal \arrow[r, hook] & \Xcal^\Zcal & & 
    \end{tikzcd}
\end{equation}

\subsubsection{The role of \texorpdfstring{$w$}{w}-boundedness and separating covers}\label{section intro: bounded separating cover}

To obtain meaningful information about the geometry of $\Ucal$, we choose $P_0\coloneqq P_w$, where $P_w$ is the unique parabolic with $B\subseteq P_w\subseteq P$ such that
\begin{enumerate}
    \item the flag zip stratum $\Fcal^{(P_w)}_w$ coincides with the preimage of a Bruhat stratum of $[(P_w\times Q_w)\backslash G]$, and
    \item the morphism $\Fcal^{(P_w)}_w \to \Xcal_w$ is an isomorphism.
\end{enumerate}
The unique parabolic $P_w$ such that (1) and (2) above hold is called the \textit{canonical parabolic of $w$} (cf.\@ \S\ref{sec-canonical}). Let $I_w\subseteq \Delta$ denote the type of $P_w$. 
It can happen that there exists $w'\preccurlyeq_{I_w} w$ mapping under $\pi_{P_w}$ to $y\in \IW$ with $l(y)<l(w')$. Hence, the map $\pi_{P_w}^{-1}(\Ucal)\cap \overline{\Fcal}_w^{(P_w)}\to \Ucal$, although birational, could be too coarse to detect the geometry of $\Ucal$ at lower strata. For this reason, we study instead $\Ucal^{(P_w)}$. 
In order to make sense of $\Ucal^{(P_w)}$ as a substack we need it to be open in $\overline{\Fcal}_w$, in which case the dotted arrows in the above diagram exist. In this case, we say that $w$ \textit{admits a canonical cover} (\Cref{def-cover}). Since $\Ucal^{(P_w)}$ is dense in $\overline{\Fcal}_w^{(P_w)}$, its closure is normal and Cohen--Macaulay. If furthermore $\Ucal^{(P_w)}=\Ucal\times_{\overline{\Xcal}_w}\overline{\Fcal}_w^{(P_w)}$, in which case we say that $\Ucal$ \textit{admits a separating canonical cover} (\Cref{def-sepcov}), then the map $\Ucal^{(P_w)}\to \Ucal$ is finite and birational so we can hope to detect some geometry of $\Ucal$ from $\Ucal^{(P_w)}$. However, a finite birational map need not be an isomorphism 
The final condition we must add in this situation to ensure that $\Ucal^{(P_w)}\to \Ucal$ is an isomorphism is $w$-boundedness. By definition, this means that for all $w'\in \Gamma_\Ucal$, the canonical parabolic of $w'$ is contained in the canonical parabolic of $w$. This condition implies that $\Fcal_{w'}^{(P_w)}\to \Xcal_{w'}$ is an isomorphism for all $w'\preccurlyeq_I w$ and allows us to show that $\Ucal^{(P_w)}\to \Ucal$ is unramified, hence an isomorphism (cf.\@ \Cref{bounded-implies-normal}).

This describes roughly the role of \eqref{item: thmA intro wbounded and sep can cov} in \Cref{theorem: thmA intro}.

\subsubsection{The role of the canonical torsor}
Next we describe the role of \eqref{item: thmA intro sw and Iw extends} in \Cref{theorem: thmA intro}. The homeomorphism $\IW\cong |\Xcal|$ is induced by the map $w\mapsto E\cdot (wz^{-1})$ \S\ref{sec-param}. Hence,  $\Xcal_w\cong [\Stab_E(wz^{-1})\backslash 1]$. This gives a different characterization of the canonical parabolic $P_w$. Namely, it is the smallest standard parabolic containing $\pr_1(\Stab_E(wz^{-1}))$, where $\pr_1\colon E\to P$ is the first projection (cf.\@ \Cref{lem-stabinpw}). Hence we have a map $\Xcal_w\to [P_w\backslash 1]$ defining the \textit{canonical torsor of $w$}, denoted $\Ical_w$ (\Cref{def-cantorcansec}). In the case of a PEL-type Shimura variety, this torsor corresponds to the canonical filtration (\Cref{ssec-fzipcanfilt}). From the modular description of $\Fcal^{(P_w)}$, we see that $\Ical_w$ corresponds to a \textit{canonical section} $s_w\colon \Xcal_w\to \Fcal_w^{(P_w)}$. Suppose that $s_w$ extends to a section $\widetilde{s}_w\colon \Ucal \to \pi_{P_w}^{-1}(\Ucal)\cap \overline{\Fcal}_w^{(P_w)}$. Then, $\widetilde{s}_w$ being a section and $\pi_{P_w}^{-1}(\Ucal)\cap \overline{\Fcal}_w^{(P_w)}$ being irreducible implies that $\Ucal\cong \widetilde{s}_w(\Ucal)=\pi_{P_w}^{-1}(\Ucal)\cap \overline{\Fcal}_w^{(P_w)}$. Thus, $\Ucal$ is normal and Cohen--Macaulay.
\subsection{Auxiliary results and applications}
\subsubsection{Applications via zip period maps: moduli of \texorpdfstring{$K3$}{K}-surfaces}\label{section: applications via zip period maps}
Given an algebraic stack $\Scal$ and a smooth surjective morphism $\Scal\to \Xcal^\Zcal$, we obtain a zip stratification on $\Scal$ from that on $\Xcal^\Zcal$. By smoothness, all the results obtained on $\Xcal^\Zcal$ carry over to $\Scal$. In particular, for $\Scal=S$ a Shimura variety as in \S\ref{section: intro g-zips to shimura varieties} we have such a map and \Cref{theorem: thmA intro}, \ref{theorem: thmB intro}, and \ref{theorem: thmC intro} are all obtained from corresponding results on the stack of $G$-zips. 
Here we single out another important example.

Let \(M^\circ_K\) be the moduli space over \(k\) of polarised K3 surfaces with level structure \(K\). Artin, in \cite{artink3ssing}, used the formal Brauer group to define a linear height stratification $M_K^\circ = M_{K,0}^\circ \supseteq M_{K,2}^\circ \supseteq \cdots \supseteq M_{K,19}^\circ$. This stratification was later redefined and studied by van der Geer and Katsura \cite{geer.katsura.stratification.of.k3}, respectively Ogus \cite{ogus}; the former continuing with Artin's approach with formal groups, the latter using crystalline techniques. One of the main results of \cite{ogus, geer.katsura.stratification.of.k3} states that for all $i\leq 9$, $M_{K,i}^\circ$ is normal, with singular locus $M_{K,19-i}^\circ$.

Let $(\mathbf{G},\mathbf{X})$ be a Shimura datum where $\mathbf{G}$ is a $\QQ$-group such that $\mathbf{G}_{\RR}\cong \mathbf{SO}(19,2)$. Let $S_K$ denote the corresponding positive characteristic Shimura variety as in \S\ref{section: intro g-zips to shimura varieties}. 
The Kuga--Satake construction \cite{madapusiK3} gives an open embedding $M_K^\circ \hookrightarrow S_K$ providing a commutative diagram
\begin{equation}
    \begin{tikzcd}
        M_K^\circ \arrow[rr, hook] \arrow[rd, "\zeta_{M_K^\circ}"'] & & S_K \arrow[dl, "\zeta_K"] \\
         & \GZip^\mu &
    \end{tikzcd}
\end{equation}
where $\zeta_{M_K}^\circ$ is smooth and surjective. 
The height stratification on $M_K^\circ$ is obtained from the zip stratification on $\GZip^\mu$ via $\zeta_{M_K}^\circ$ \cite[Section 7.16]{moonen.wedhorn.discrete.invariants}. Hence, \Cref{theorem: thmC intro} provides a group-theoretical proof of the main results of \cite{ogus, geer.katsura.stratification.of.k3}, as well as a generalization of them to simple $\Bsf_n$-type Shimura varieties of arbitrary dimension.

\subsubsection{Reduced Hasse invariants}\label{section: intro reduced Hasse inv}
The theory of Hasse invariants has been very useful both to produce congruences in the Langlands program \cite{deligne.serre, taylor, emerton.et.al, koskgold} and in the study of the geometry of Shimura varieties mod $p$; in particular, for the study of the Ekedahl--Oort stratification \cite{wedhorn.ziegler.tautological.rings,koskgold}. For applications to both of these areas, it is useful to know if the divisor of a Hasse invariant is reduced. We thus say that a Hasse invariant is \textit{reduced} if its scheme-theoretic vanishing locus is reduced. 
By \cite{koskgold}, Hasse invariants exist on all Ekedahl--Oort strata for any Hodge type cocharacter data $(G,\mu)$. However, it is not known whether reduced Hasse invariants always exist. 
When sufficiently many reduced Hasse invariants exist, one can use them to show that certain Ekedahl--Oort strata closures are Cohen--Macaulay. Existence of reduced Hasse invariants is also related to normality and smoothness of EO-strata closures \S\ref{section: intro motivation}\eqref{item: intro motivation -  hasse invariants}. In this text we prove the existence of reduced strata Hasse invariants on all EO-strata of codimension $\leq n-1$ for any cocharacter datum $(G,\mu)$ of simple type $\Bsf_n$, which we use to determine the singular locus therein. Existence of these reduced Hasse invariants also allows us to compute cycle classes of EO-strata closures as well as deduce cohomological vanishing results for them \S\ref{section: intro cycle classes and coh van}.

By applying \Cref{theorem: thmA intro} (\Cref{thm-extnorm}) to elementary substacks (\Cref{def-elemop}) of codimension zero, we can compute the multiplicites of the divisor of any $\mu$-ordinary Hasse invariant of arbitrary abelian type Shimura varieties at good reduction by computing the multiplicities on the flag space via Chevalley's formula. 
More generally, for any EO stratum closure we can use the same technique to compute the divisor restricted to any normal elementary substack. This is work in progress by the authors.

\subsubsection{Applications to cycle classes and cohomological vanishing of EO-strata closures}\label{section: intro cycle classes and coh van}
Let $S^{\tor}$ denote 
a smooth toroidal compactification of an abelian type Shimura variety of type $\Bsf_n$ as in \S\ref{section: intro g-zips to shimura varieties}. Let $S^{\tor}=\overline{S}^{\tor}_0\supseteq \cdots \supseteq \overline{S}^{\tor}_{2n}$ denote the EO-stratification. 
Let $\omega$ denote the Hodge line bundle. We obtain an explicit closed form formula for the cycle classes of all EO-strata $\overline{S}^{\tor}_j$ for $j\leq n-1$ in terms of the cycle class of $\omega$ (cf.\@ \Cref{corollary: cycle class formula}). 
For such strata, we also obtain cohomological vanishing for all cohomology groups $H^i(\overline{S}^{\tor}_j,\Vcal(\eta)^{\rm sub}\otimes\omega^{m})$ for $m\gg0$ and $\Vcal^{\rm sub}(\eta)$ is the sub-canonical extension of a "sufficiently nice" automorphic vector bundle $\Vcal(\eta)$ (cf.\@ \Cref{corollary: coh vanishing}). 
\subsubsection{Changing the center}
Using the isomorphism $\Xcal\cong [E\backslash G]$, it is often possible to study intricate geometric questions by group theory. Embedding $G$ into a general linear group one can even do explicit matrix computations (e.g.\@ as in \S\ref{sec-orthogonal}). However, it is not clear if statements obtained from such computations apply to other groups of the same type (e.g.\@ compare $\GL_n$ and $\PGL_n$). To allow for the freedom of choosing an explicit group to work with, we study the map $\Xcal'\coloneqq \GprimeZip^{\Zcal'}\to \Xcal$ induced by a central extension $G'\to G$. Letting $K$ denote its kernel, we prove that the morphism $\Xcal'\to \Xcal$ is a $K(\FF_p)$-banded gerbe. In particular, this implies that it is finite and \'etale. 

\subsection{Outline} 
\par In \S\ref{sec-simpexa}, we work out a simple example, using the language of Shimura varieties and Dieudonn\'{e} modules, in the case of \(G = \GL_n\). We explain why the closure of the only EO-stratum of dimension one is smooth. We include this section to guide readers unfamiliar with the language of $G$-zips and to motivate what we do in the rest of the paper.
\par In \S\ref{sec-revgzip}, we review the theory of \(G\)-zips and \(G\)-zip flags. We define the zip and Bruhat stratifications on those objects. We also recall the definition and basic properties of the canonical parabolic \(P_w\).
\par In \S\ref{section: changing the center}, we show that central extensions of \(\Fp\)-reductive groups give rise to finite \'{e}tale maps between the corresponding stacks of \(G\)-zips.
\par In \S\ref{sec-singostrat}, we discuss general results about singularities in unions of zip strata. We prove the zip-theoretic version of \Cref{theorem: thmA intro} and some important corollaries.
\par In \S\ref{ssec-lengthone}, we study length one strata in the case of groups of type $\Asf, \Bsf, \Csf, \Dsf, \Esf_6$ and \(\Esf_7\) with a minuscule cocharacter and prove \Cref{theorem: thmB intro}.
\par In \S\ref{sec-orthogonal}, we study EO-strata in Shimura varieties of simple $\Bsf_n$-type with a minuscule cocharacter and prove Theorem C.

\subsection*{Acknowledgments}
\par The first named author was supported by the University of Caen-Normandie. The second named author wishes to thank the Department of Mathematics ``\emph{Tullio Levi-Civita}'' of the University of Padova and its members for their continued support and friendship. The third named author was supported by JSPS KAKENHI Grant Number 24KF0146.

\renewcommand{\thesubsection}{{N.\arabic{subsection}}}
\renewcommand{\thesubsubsection}{\thesubsection.\arabic{subsubsection}}
\setcounter{subsection}{0}
\section*{Notation}
\label{sec-notation}

\subsection{Ring theory}
Throughout, \(k\) denotes an algebraic closure of the field \(\FF_p\), for \(p\) a fixed prime. 
\begin{itemize}
    \item We denote the absolute Frobenius \(x \mapsto x^p\) on \(k\) by \(\sigma\). It is a topological generator of \(\Gal(k/\FF_p)\), the absolute Galois group of \(\FF_p\).
\end{itemize}

\subsection{Scheme theory}
\begin{itemize}
    \item For a fixed base scheme \(S \in \sch\), we denote by \(\sch_S\) the category of \(S\)-schemes. If \(S= \spec(A)\), for \(A\) a ring, we simply write \(\sch_A\).
    \item If \(X \in \sch_S\) and \(T \to S\) is a morphism, we sometimes write \(X_T\) instead of \(X \times_S T\). If \(S=\spec(A),T = \spec(B)\), we may write \(X_B\).
    \item For a field extension \(E/F\) and \(X \in \sch_E\) we write \(\Res_{E\mid F}(X)\) for the Weil restriction of \(X\). Recall that \(\Res_{E\mid F}(X)(S) = X(S_E)\) for all \(S \in \sch_F\).
\end{itemize}
\subsubsection{Frobeniuseries} Let \(S \in \sch_{\FF_p}\) and \(X \in \sch_S\).
\begin{itemize}
    \item Let \(F_S \colon S\to S\) denote the absolute Frobenius of \(S\), \(F_A\) if \(S = \spec(A)\). In particular, \(F_k = \spec(\sigma)\).
    \item We denote by \(X^{(p)}\) 
    the fiber product \(X \times_{F_S} S\), called the \emph{Frobenius twist} (or \emph{\(p\)-twist}) of \(X\) (over \(S\)).
    \item For \(\mc F\) any sheaf over \(S\), we write \(\mc F^{(p)} \coloneqq F_S^\ast(\mc F)\), the \emph{Frobenius twist} (or \emph{\(p\)-twist}) of \(\mc F\).
    \item We denote by  \(\varphi \colon X \to X^{(p)}\) the relative Frobenius of \(X\) over \(S\).
    \item If \(S = \spec(\FF_p)\), the group \(\Gal(k/\FF_p)\) acts naturally on \(X(k)\), by \(\gamma\cdot x \coloneqq x \circ \spec(\gamma)\), for \(x \in X(k), \gamma \in \Gal(k/\FF_p)\). In particular, \(\Gal(k/\FF_p)\) acts naturally on \(\hom(\FF_{p^d}, k)\). This action factors through \(\Gal(\FF_{p^d}/\FF_{p})\) and \(\hom(\FF_{p^d}, k)\) is naturally a \(\Gal(\FF_{p^d}/\FF_{p})\)-torsor.
\end{itemize}

\subsection{Group theory}
\begin{itemize}
    \item We write \(\Sfr_n\) for the symmetric group on \(n\) elements. We endow \(\Sfr_n\) with set of simple reflections \(\{(i, i+1), 1\leq i < n\}\). 
    \item We sometimes describe an element \(w \in \Sfr_n\) by writing \(w = [w(1), w(2), \ldots, w(n)]\).
\end{itemize}

\subsubsection{Algebraic groups} Let \(G\) be a linear algebraic group over a field \(F\).
\begin{itemize}
    \item For a subgroup $H\subseteq G$ and an element $g \in G(F)$, we denote by ${}^gH$ the subgroup $gHg^{-1}$.
    \item Write $R_{\textrm{u}}(G)$ for the unipotent radical of $G$. 
    \item Write \(X^\ast(G)\), respectively \(X_\ast(G)\), for the group of characters, respectively cocharacters, of \(G_{\overline{F}}\).
\end{itemize}

\subsubsection{Reductive groups}\label{not-redWeyl} Let \(G\) be a connected, reductive group over a field \(F\) with a Borel pair \((B, T)\). Let \(\overline{F}\) be an algebraic closure of \(F\). 
\begin{itemize}
    \item Write \(W \coloneqq W(G_{\overline F}, T_{\overline F})\) for the Weyl group \(N(T_{\overline F})/T_{\overline F}\) of \(G\).
    \item Write \(B^+\) for the unique Borel of \(G\) such that $B\cap B^+=T$.
    \item Write \(\Phi = \Phi(G_{\overline F}, T_{\overline F})\) for the \(T_{\overline F}\)-roots of \(G_{\overline F}\).
    \item Write \(\Phi^+ \subseteq \Phi\) for the positive roots relative to \(B^+\), that is, roots \(\alpha \in \Phi\) such that the \(\alpha\)-root group \(U_\alpha\) is contained in \(B^+_{\overline F}\).
    \item Write \(\Delta \subseteq \Phi^+\) for the simple roots.
    \item Write \(\ell \colon W \to \Z_{\geq 0}\) for the length function of \(W\) relative to \(\Delta\).
    \item Write \(\leq\) for the Bruhat order on \(W\) given by \(\Delta\).
    \item For a standard Levi subgroup $L\subseteq G_{\overline{F}}$, write \(\Phi_L, \Phi^+_L, \Delta_L\) for the root data of \(L\) with respect to $(B\cap L,T)$. When we write \(I = \Delta_L\), then we set \(W_I \colonequals W(L_{\overline F}, T_{\overline F})\).
    \item Write $\IW$ (resp. $W^I$) for the set of elements $w\in W$ of minimal length in the coset $W_I w$ (resp.\@ $wW_I$).
    \item We have a natural action of \(\Gal(\overline F/ F)\) on \(X^\ast(T), X_\ast(T)\) and \(W\). For all \(\gamma \in \Gal(\overline F/ F), w \in W\) and \(\lambda \in X^\ast(T)\), we have \((\gamma\cdot w)(\gamma\cdot \lambda) = \gamma(w(\lambda))\).
    \item We write \(w_0\) for the longest element of \(W\) and \(w_{0, L}\), or \(w_{0, I}\), for the longest element of \(W_I\). The longest element of $\IW$ is $w_{0,I}w_0$.
\end{itemize}
In \S\ref{ssec-lengthone}, we consider groups of type $\Asf$, $\Bsf$, $\Csf$, $\Dsf$, \(\Esf_6\) and \(\Esf_7\). In each case, we say that $G$ is of a certain type if $G^{\ad}_k$ is isomorphic to a direct product of simple adjoint groups of this type. Throughout the paper, $\Xcal$, $\Xcal^{\Zcal}$ denote the stack of $G$-zips of type $\Zcal$ (\S\ref{section: the stack of gzips}).

\renewcommand{\thesubsection}{{\thesection.\arabic{subsection}}}
\renewcommand{\thesubsubsection}{\thesubsection.\arabic{subsubsection}}

\section{A simple example}
\label{sec-simpexa}
For this section, let us denote by $S$ the fiber over $k$ of a unitary Shimura variety relative to $E/\Q$ quadratic imaginary, at $p$-hyperspecial level, for $p$ split in $E$,  and of signature $(r, s)$, $r>s>0$ coprime integers with $r+s=n$. In the notation of \ref{section: intro g-zips to shimura varieties}, this Shimura variety is obtained from a reductive unitary group \(\mathbf{G}\) over \(\Q\). We write $\nu$ for the unique embedding of $E$ into \(\C\) giving the isomorphism \(\mathbf{G}(\R) \cong U(r, s)\) and let $\overline{\nu}$ be its complex conjugate.
\par For most notation, and the special case $s = 1$, see \cite{laporta2023generalised} and \ref{section: intro g-zips to shimura varieties}. See \cite[3.5]{wooding} and references therein, like \cite{gsas}, for the standard Dieudonn\'{e} modules on general EO-strata.

\subsection{Standard Dieudonn\'{e} modules}
In our setting, the unique one-dimensional stratum will be denoted $S_w$ and sometimes referred to as the \emph{almost-core} locus. It is given by the reflection $w =(r, r+1) \in \IW \cong W_I \backslash W$, where $W \cong \Sfr_n$ is the Weyl group of the underlying reductive group $G \cong \GL_{n, \Fp}$, with respect to the diagonal torus, and $W_I = \left<(i, i+1),\, 1\leq i < n, i \neq r \right>$ is the Weyl group of the Levi subgroup of the Hodge parabolic. In particular, $w$ is the unique element of length $1$ of $\IW$, thus corresponding to the unique stratum of dimension one. The unique zero-dimensional stratum, $S_{\id}$, sometimes called the \emph{core locus}, corresponds to the identity element $\id \in \IW$. 
\par Let $A \to S$ be the standard abelian scheme and take $x$ a closed point of either $S_w$ or $S_{\id}$. By Kraft's classification of Barsotti--Tate groups truncated at level \(1\), see \cite[3.5]{wooding} or \cite{gsas}, we can give a standard basis of the \(k\)-vector space $D = D_x \coloneqq \mb{D}(A_x[p])_{\nu} \cong H^1_{\mr{dR}, \nu}(A_x/k)$, the CM-component corresponding to $\nu$ of the (contravariant) Dieudonn\'{e} module, and describe the action of Frobenius $F$ and Verschiebung $V$ with respect to such a basis. Note that it is enough to describe the canonical filtration on $D_x$ to deduce it for the $\overline{\nu}$-component and, by the Dieudonn\'{e} correspondence, also deduce it for $A_x[p]$. We summarise such description in the following proposition.

\begin{proposition}[Standard bases]
\label{propstdb}
We have the following.
\begin{enumerate}
    \item If $x \in S_{\id}$, there is a $k$-basis $(e_1, e_2, \ldots, e_n)$ of $D$ such that:
    \begin{itemize}
        \item $F(e_i) = 0$ for $1 \leq i \leq r$ and $F(e_i) = e_{i-r}$ for $r < i \leq n$,
        \item $V(e_i) = 0$ for $1 \leq i \leq s$ and $V(e_i) = e_{i-s}$ for $s < i \leq n$.
    \end{itemize}
    \item If $x \in S_w$, there is a $k$-basis $(e_1, e_2, \ldots, e_n)$ of $D$ such that:
    \begin{itemize}
        \item $F(e_i) = 0$ for $1 \leq i \leq r-1$, $F(e_r) = e_1, F(e_{r+1}) = 0$ and $F(e_i) = e_{i-r}$ for $r+1 < i \leq n$,
        \item $V(e_i) = 0$ for $1 \leq i \leq s$, $V(e_i) = e_{i-s}$ for $s < i < n$ and $V(e_n) = e_{r+1}$.
    \end{itemize}
\end{enumerate}
In particular, with these bases, we can describe the Hodge filtration $\ker F \subseteq D$.
    \begin{enumerate}
    \setcounter{enumi}{2}
        \item On $S_{\id}$ the Hodge filtration is given by $\Span_k (e_1, e_2, \ldots, e_r) \subseteq D$.
        \item On $S_w$ the Hodge filtration is given by $\Span_k(e_1, e_2, \ldots, e_{r-1}, e_{r+1}) \subseteq D$.
    \end{enumerate}
\end{proposition}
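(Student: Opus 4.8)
The plan is to deduce the statement from the known explicit description of the standard Dieudonn\'e module on an arbitrary Ekedahl--Oort stratum, specialized to the two strata in question. Since $p$ splits in $E$, the reductive $\Fp$-group is $\GL_{n,\Fp}$, the Hodge parabolic has type $I = \Delta\setminus\{\alpha_r\}$ (with $\alpha_r = \epsilon_r - \epsilon_{r+1}$), and $\IW \cong W_I\backslash\Sfr_n$; as recalled above, $S_{\id}$ is the unique zero-dimensional (core) stratum, attached to $\id\in\IW$, while $S_w$ is the unique one-dimensional (almost-core) stratum, attached to $w = s_{\alpha_r} = (r,r+1)$. For each $w'\in\IW$, Kraft's classification of $\mathsf{BT}_1$-group schemes (truncated Barsotti--Tate groups of level $1$), in the form recorded in \cite[3.5]{wooding} and \cite{gsas}, provides a $k$-basis of the $\nu$-component $D_x = \mathbb{D}(A_x[p])_\nu$ that is permuted, up to zeros, by $F$ and $V$, with the permutation governed by a chosen representative of the coset $W_I w'$.

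First I would write down that general rule and substitute $w' = \id$ to obtain the formulas of (1) and $w' = (r,r+1)$ to obtain those of (2). As a built-in sanity check, one verifies that the resulting operators satisfy $FV = VF = 0$ together with $\ker F = \im V$ and $\ker V = \im F$, so that each $(D_x, F, V)$ really is the Dieudonn\'e module of a $\mathsf{BT}_1$ with $\Ocal_E$-action of signature $(r,s)$, and that $\dim_k \ker F = r$, as forced by the signature; it is also reassuring, and a useful check on the indexing, that the module in (2) differs from the one in (1) exactly in the $F$- and $V$-action on $e_r$, $e_{r+1}$, $e_n$, in the manner prescribed by the transposition $w = (r,r+1)$. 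Claims (3) and (4) are then immediate: the Hodge filtration on $H^1_{\dr,\nu}(A_x/k)\cong D_x$ is $\ker(F\colon D_x\to D_x)$ by compatibility of the Dieudonn\'e correspondence with de Rham cohomology (cf.\@ \cite{katznilpotent, moonen.wedhorn.discrete.invariants}), so one simply reads $\ker F$ off the $F$-rule, getting $\Span_k(e_1,\dots,e_r)$ on $S_{\id}$ and $\Span_k(e_1,\dots,e_{r-1},e_{r+1})$ on $S_w$.

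The main obstacle will be entirely a matter of bookkeeping: keeping the (contravariant) Dieudonn\'e conventions coherent, fixing the precise dictionary between $w'$ and the permutation underlying the standard basis --- including the choice of representative of $W_I w'$ --- and verifying that in these conventions it is $\ker F$, rather than $\im V$ or $\ker V$, that computes the Hodge filtration. Once these are pinned down, the proof is the routine substitution and verification above. As a more self-contained alternative for these two strata, one can instead exploit $\gcd(r,s)=1$: it forces the $\nu$-Dieudonn\'e module at the core point to be indecomposable, and then its rank $n$ together with $\dim_k \ker F = r$ determine it up to isomorphism, which one checks by hand to recover (1); the almost-core module is obtained from it by the single modification noted above.
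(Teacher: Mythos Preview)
Your approach is correct and essentially the same as the paper's: the paper does not give a separate proof but simply cites Kraft's classification via \cite[3.5]{wooding} and \cite{gsas} in the paragraph preceding the proposition, presenting the statement as a summary of those references. Your write-up is more detailed than the paper's treatment---you spell out the substitution of $w'=\id$ and $w'=(r,r+1)$ and include the $FV=VF=0$, $\ker F=\im V$ sanity checks---but the underlying route is identical.
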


\subsection{Action on intervals}
Set $D_i = \Span_k (e_1, e_2, \ldots, e_i), 1 \leq i \leq n$. We refer to these as \emph{interval subspaces} of $D$. 
\begin{lemma}
\label{lem-int} We have the following.
\begin{enumerate}
    \item If $x \in S_{\id}$, then:
    \begin{itemize}
        \item $F(D_i) = (0)$ for $1 \leq i \leq r$ and $F(D_i) = D_{i-r}$ for $r < i \leq n$,
        \item $V^{-1}(D_i) = D_{i+s}$ for $1 \leq i < r$ and $V^{-1}(D_i) = D$ for $i \geq r$.
    \end{itemize}
    \item If $x \in S_w$, then:
    \begin{itemize}
        \item $F(D_i) = (0)$ for $1 \leq i < r$, $F(D_i) = D_1$ for $i = r, r+1$, and $F(D_i) = D_{i-r}$ for $r+1 < i \leq n$,
        \item $V^{-1}(D_i) = D_{i+s}$ for $1 \leq i < r$, $V^{-1}(D_r) = D_{n-1}$ and $V^{-1}(D_i) = D$ for $i \geq r+1$.
    \end{itemize}
\end{enumerate}
Moreover, we observe that:
\begin{enumerate}
\setcounter{enumi}{2}
    \item The action of $F, V^{-1}$, in the bases of \Cref{propstdb}, sends interval subspaces to interval subspaces, so that the same holds for any word in the letters $F, V^{-1}$. In particular, the canonical filtrations on $S_{\id}$ and $S_w$ are given by a flag of interval subspaces.
    \item \label{lem-int4} The action of $F, V^{-1}$, for $1 \leq i \leq n$, differs from $S_w$ to $S_{\id}$ only on the interval subspace $D_i$ given by $i = r$.
\end{enumerate}
\end{lemma}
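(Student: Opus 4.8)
The plan is to read everything off the explicit formulas for $F$ and $V$ recorded in \Cref{propstdb}. For parts (1) and (2), fix $x$ in $S_{\id}$ or in $S_w$ and work in the corresponding standard basis $(e_1,\dots,e_n)$. Since $F(D_i)=\Span_k(F(e_1),\dots,F(e_i))$, the value of $F(D_i)$ is immediate: in the $S_{\id}$ case $F(e_1),\dots,F(e_r)$ all vanish and $F(e_{r+1}),\dots,F(e_i)$ equal $e_1,\dots,e_{i-r}$, giving $F(D_i)=(0)$ for $i\le r$ and $F(D_i)=D_{i-r}$ for $i>r$; in the $S_w$ case the only changes are $F(e_r)=e_1$ and $F(e_{r+1})=0$, which yield $(0)$ for $i<r$, $D_1$ for $i=r,r+1$, and $D_{i-r}$ for $i>r+1$. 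For $V^{-1}$, the key observation is that $V$ sends each basis vector to either $0$ or another basis vector, with distinct nonzero images, so $V^{-1}(D_i)=\ker V+\Span_k\{\,e_j : V(e_j)\in D_i\,\}$; using $\ker V=\Span_k(e_1,\dots,e_s)$ in both cases, a short case distinction according as $i<r$, $i=r$, or $i>r$ (respectively $i\ge r+1$) produces the formulas in (1) and (2). The fiddly point is the index bookkeeping near $i=r$ and $i=r+1$, where the $S_w$ formulas deviate from the $S_{\id}$ ones; everything else is mechanical.

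\textbf{Part (3).} The first assertion is then immediate: by (1) and (2), together with the conventions $D_0=(0)$ and $D_n=D$, each of $F$ and $V^{-1}$ carries every interval subspace to an interval subspace, hence so does any composite, i.e.\@ any word in the letters $F$ and $V^{-1}$. Since the interval subspaces $D_0\subseteq D_1\subseteq\dots\subseteq D_n$ are totally ordered by inclusion, any collection of them is automatically a flag, so it remains to see that the canonical filtrations of $D$ on $S_{\id}$ and $S_w$ consist of interval subspaces. For this I would invoke the explicit construction of the canonical filtration of a Dieudonn\'{e} module recalled in \Cref{ssec-fzipcanfilt} (see also \cite[3.5]{wooding}, \cite{gsas}): it is generated, by iterating $F$ and $V^{-1}$ (and taking sums and intersections as needed), from interval seed subspaces — namely $(0)$ and $D$, together with the Hodge filtration $D_r$ in the $S_{\id}$ case (\Cref{propstdb}(3)). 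Combined with the first assertion, this gives the claim.

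\textbf{Part (4) and main obstacle.} Part (4) follows by comparing the formulas in (1) and (2) term by term: the values of $F(D_i)$ agree on the two strata for every $i\neq r$ (both equal $(0)$ for $i<r$, $D_1$ for $i=r+1$, and $D_{i-r}$ for $i>r+1$), and likewise $V^{-1}(D_i)$ agrees for every $i\neq r$ (equal to $D_{i+s}$ for $i<r$ and to $D$ for $i>r$); the sole discrepancy is at $i=r$, where $F(D_r)$ is $(0)$ on $S_{\id}$ but $D_1$ on $S_w$, and $V^{-1}(D_r)$ is $D$ on $S_{\id}$ but $D_{n-1}$ on $S_w$. The lemma is thus essentially a calculation; the only genuinely delicate point — the one I would be most careful about — is in part (3), namely making sure the canonical filtration is produced from \emph{interval} seed subspaces. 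This is where it matters that on $S_w$ the Hodge filtration $\Span_k(e_1,\dots,e_{r-1},e_{r+1})$ is itself \emph{not} an interval subspace, so the construction must be organized to start from $(0)$, $D$, and the interval subspaces arising from the conjugate side (such as $\ker V=D_s$, which is interval on both strata) rather than from the Hodge filtration.
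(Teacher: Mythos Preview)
Your proposal is correct and follows the same approach as the paper, which simply says ``From \Cref{propstdb} and by direct inspection''; you have spelled out that inspection in full. Your care in part~(3) about which seed subspaces generate the canonical filtration is well-placed and correctly resolved (the conjugate filtration $\ker V=D_s$ is interval on both strata, so the iteration starts from intervals), though the paper leaves all of this implicit.
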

\begin{proof}
From \Cref{propstdb} and by direct inspection.
\end{proof}

\subsection{A minimal word for \texorpdfstring{$D_r$}{the exceptional interval}}
We describe a word in $F, V^{-1}$, of minimal length among those that give $D_r$ starting from $(0) \subseteq D$, using \Cref{lem-int}.
\par First, we observe that since $r > s$ and $F(D_i) \subseteq D_s, 1\leq i \leq n$, we must have $D_r = V^{-1}(D_{r-s})$. If $r-s \geq s$, and we can't have $r = 2s$, since $r$ and $s$ are coprime, we have $D_r = V^{-2}(D_{r-2s})$. By Euclidean division, there is a unique $i_1>0$ such that
\[
    r - i_1 s = r_1, \quad 0 \leq r_1 < s,
\]
where again $r_1 \neq 0$, because $r$ and $s$ are coprime. Now, $D_{r} = V^{-i_1}(D_{r_1})$ and, since $r_1 < s$, we can take  $D_{r_1} = F(D_{r+r_1})$. We remark that, over $S_w$, if we had by any chance $r_1 = 1$, then we could also take, by \Cref{lem-int}, $D_{r_1} = F(D_r)$, but since we are looking for a \emph{minimal} word in $F, V^{-1}$, for $D_r$, this choice is not allowed. Repeating this procedure we obtain a sequence of integers $i_j, r_j \geq 0,$ such that
\begin{align*}
    0 \leq r &- i_1 s = r_1 < s,\\
    0 \leq 2 r & - (i_1 + i_2)s = r_1 - i_2 s = r_2 < s, \\
    &\ldots\,,\\
    0 \leq a r & - \Big(\sum_{j=1}^a i_j\Big)s = r_a < s.
\end{align*}
Write $b_a = \sum_{j=1}^a i_j$. Notice that, eventually, we must hit $r_a = 0$, which corresponds to having reduced to the trivial subspace $(0)$ of $D$, since we can take $a = s$ and $b_a = r$: we can choose $i_a$ so that $b_a = r$ because at every step $a' \leq s$ we must have $b_{a'}\leq r$ to have $r_{a'} \geq 0$. This shows that the process terminates and the interval subspace $D_r$ is part of the canonical filtration of $D$, both on $S_{\id}$ and $S_w$. Moreover, since $r$ and $s$ are coprime, $a = s$ is the smallest possible integer such that $r_a = 0$, in which case $b_a = s$ is forced, because otherwise we would have $ar = b_a s < rs$, for some $a < s$, contradicting $\mr{lcm}(r, s) = rs$. This means that the word we found for $D_r$ in $F, V^{-1}$ has $a + b_a - 1 = r + s - 1 = n - 1$ letters (we need not take the image via $F$ once we hit $r_a = 0$, hence the $-1$), both on $S_{\id}$ and $S_w$. Also, each intermediate word $\mc W$ gives an interval $D_i = \mc W(0), 1 \leq i < n$, and these cannot be repeated, since otherwise the process described above would enter a loop and not terminate. In particular, each interval subspace $D_i, 1 \leq i < n, i \neq r,$ is given by a proper subword of the minimal word for $D_r$ we found and by \Cref{lem-int}.(\ref{lem-int4}), the same subword gives the same interval subspace on both $S_{\alpha_r}$ and $S_{\id}$. Recall that $\overline{S}_w = S_w \sqcup S_{\id}$. We have proved \Cref{prop-extcanfilt-exa}.
\begin{proposition}\label{prop-extcanfilt-exa}
We have the following.
\begin{enumerate}
    \item Both on $S_{\id}$ and $S_w$, in the standard bases of \Cref{propstdb}, the canonical flag of $D$ is given by the full standard flag $D_1 \subseteq D_2 \subseteq \ldots \subseteq D_n = D$.
    \item \label{prop-extcanfilt-exa2} The canonical flags can be given on both strata by the same words in $F, V^{-1}$.
\end{enumerate}
In particular, from (\ref{prop-extcanfilt-exa2}), we deduce that the canonical filtration of $A[p]_{\nu}/S_w$ extends to a filtration by finite locally free group subschemes over $\overline{S}_w$ and corresponds to a full filtration by subbundles of $H^1_{\mr{dR}, {\nu}}(A/\overline{S}_w)$ given by the words in
\[
    F((\cdot)^{(p)}), V^{-1}((\cdot)^{(p)}),
\]
described above applied to $(0) \subseteq H^1_{\mr{dR}, {\nu}}(A/\overline{S}_w)$.
\end{proposition}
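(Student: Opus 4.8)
The plan is to argue entirely in terms of the standard Dieudonné modules of \Cref{propstdb}, tracking the canonical filtration of $D$ through the semilinear operators $F$ and $V^{-1}$. First I would recall that, because $G = \GL_{n,\Fp}$ and the $\nu$-component of the Hodge cocharacter has a single jump, the canonical filtration on each Ekedahl--Oort stratum is the smallest flag of $D$ containing the Hodge filtration and stable under $F(\,\cdot^{(p)}\,)$ and $V^{-1}(\,\cdot^{(p)}\,)$; since, as the combinatorial argument will show, the Hodge filtration is itself generated from $(0)$ by these operators, this is just the flag generated from $(0)$. By \Cref{lem-int}(3) these operators carry interval subspaces $D_i$ to interval subspaces, so on each of $S_{\id}$ and $S_w$ the canonical flag is a flag of interval subspaces, and the problem reduces to the purely combinatorial question of which indices $i$ occur.

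Next, for the one interesting index $i = r$, I would run the Euclidean-division argument already sketched above. Since $r > s$ and $F(D_i) \subseteq D_s$ for all $i$, the only way to produce $D_r$ is as $D_r = V^{-i_1}(D_{r_1})$ with $0 \le r_1 < s$, then $D_{r_1} = F(D_{r+r_1})$, and so on; coprimality of $r$ and $s$ forces $r_j \neq 0$ until the process closes up at $(0)$ after exactly $a + b_a - 1 = (r+s) - 1 = n-1$ steps, with $a = s$ and $b_a = r$. This simultaneously shows $D_r$ lies in the canonical filtration on \emph{both} strata and, since the intermediate subwords realise pairwise distinct intervals $D_i$ with $1 \le i < n$, that the whole standard flag $D_1 \subseteq \cdots \subseteq D_n = D$ is obtained; this is part (1). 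For part (2), the key input is \Cref{lem-int}(4): $F$ and $V^{-1}$ act identically on interval subspaces over $S_w$ and over $S_{\id}$ except on $D_r$ itself, so every proper subword of the minimal word for $D_r$ produces the \emph{same} interval on both strata, and hence the canonical flag is computed by literally the same word $\mc W$ in $F, V^{-1}$ on $S_w$ and on $S_{\id}$.

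To deduce the extension statement, I would reinterpret these words as being applied not to a pointwise Dieudonné module but to the de Rham cohomology bundle $H^1_{\mr{dR},\nu}(A/\overline S_w)$, using the relative Frobenius $F((\,\cdot\,)^{(p)})$ and $V^{-1}((\,\cdot\,)^{(p)})$ as morphisms of $\Ocal$-modules. Since $\overline S_w = S_w \sqcup S_{\id}$ and the same word $\mc W$ computes the canonical flag on each stratum, each subsheaf obtained this way is locally free of locally constant rank, hence a subbundle of $H^1_{\mr{dR},\nu}(A/\overline S_w)$; transporting through the Dieudonné correspondence gives the filtration of $A[p]_\nu$ by finite locally free group subschemes over $\overline S_w$.

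The step I expect to be the main obstacle is the very first one: pinning down that the abstract canonical filtration (defined group-theoretically via the canonical parabolic $P_w$) coincides on these strata with the concrete ``greedy'' flag built by applying $F$ and $V^{-1}$ to $(0)$, and, relatedly, checking that minimality of the word genuinely forces the Euclidean chain --- in particular ruling out the shortcut $D_1 = F(D_r)$ available only on $S_w$ when $r_1 = 1$. Once the dictionary between the canonical filtration and this recipe is secured, everything else is bookkeeping with interval subspaces.
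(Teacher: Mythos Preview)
Your proposal is correct and follows essentially the same approach as the paper: the Euclidean-division argument to build a minimal word for $D_r$, the observation that its proper subwords already produce all the other $D_i$, and the use of \Cref{lem-int}(\ref{lem-int4}) to transport the same words between strata are exactly what the paper does in the discussion immediately preceding the proposition. The only difference is that your ``main obstacle'' --- reconciling the abstract group-theoretic canonical filtration via $P_w$ with the concrete greedy flag --- is not actually a concern here: Section~\ref{sec-simpexa} works entirely with the classical Dieudonn\'e-module definition of the canonical filtration (smallest $F$, $V^{-1}$-stable flag), and the canonical parabolic is not introduced until \S\ref{sec-canonical}, so the dictionary you worry about is simply not needed for this proposition.
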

The last statement follows from standard results and the fact that the ranks of the coherent sheaves involved are constant over $\overline{S}_w$. This extension of the canonical filtration, in turn, gives a section over \(\overline{S}_w = S_w\sqcup S_{\id}\) of the natural projection from \(\Fcal_{\overline{S}_w}^{(P_w)} \subseteq \Fcal_S^{(P_w)}\), the flag space over \(\overline{S}_w\) parametrising full flags in \(H^1_{\mr{dR}, {\nu}}(A/\overline{S}_w)\) refining the Hodge filtration (meaning that, in the notations of \ref{sec-canonical}, \(P_w = B\)). 

We saw in \Cref{theorem: thmA intro} that the existence of a zip-theoretic extension of the canonical torsor implies that \(\overline{S}_w\) is normal, hence smooth (because it is one-dimensional). This extension is exactly the one constructed explicitly in \Cref{prop-extcanfilt-exa}. We will see how \Cref{thm-extnorm} generalizes this picture to arbitrary reductive groups and EO-strata of any dimension.

\section{Review of \texorpdfstring{$G$}{G}-zips}
\label{sec-revgzip}
%
%
%
%
\subsection{Setup}\
We follow the notations set up in \S\nameref{sec-notation}.

\subsubsection{Zip data} \label{sec-zip-data} 
A \emph{zip datum} is a tuple $\Zcal=(G,P,Q,L,M)$ where $G$ is a connected, reductive group over $\FF_p$, $P,Q$ are parabolic subgroups of $G_k$, with Levi subgroups $L,M$, respectively, satisfying $M=L^{(p)}$. A natural way to define a zip datum is to start with a connected, reductive group $G$ over $\FF_p$ and a cocharacter $\mu\colon \GG_{\textrm{k}}\to G_k$. Denote by $P_{\pm}$ the pair of opposite parabolic subgroups naturally determined by $\mu$ (see for example \cite[\S 1.2]{Goldring-Koskivirta-zip-flags}).
The centralizer of $\mu$ is the common Levi subgroup $L\colonequals P_-\cap P_+$. Set $P\colonequals P_-$, $Q\colonequals P_+^{(p)}$, and $M\colonequals L^{(p)}$. Then the tuple $\Zcal_\mu\colonequals (G,P,Q,L,M)$ is a zip datum.

\subsubsection{The stack of \texorpdfstring{$G$}{G}-zips}\label{section: the stack of gzips}
Let $\Zcal=(G,P,Q,L,M)$ be a zip datum. 
The \emph{zip group} $E_{\Zcal}\subseteq P\times Q$ is defined by
\begin{equation}
    E_{\Zcal}\colonequals \{(x,y)\in P\times Q, \ \varphi(\overline{x})=\overline{y}\},
\end{equation}
where $\overline{x}\in L$, $\overline{y}\in M$, denote the Levi projections of $x\in P$ and $y\in Q$, respectively.

We let $E_{\Zcal}$ act on $G_k$ by the rule $(x,y)\cdot g \colonequals xgy^{-1}$ for all $(x,y)\in E_{\Zcal}$ and $g\in G$.
The \emph{stack of $G$-zips of type $\Zcal$} of Pink--Wedhorn--Ziegler can be defined as the quotient stack
\begin{equation}
    \GZip^\Zcal \colonequals \left[ E_{\Zcal}\backslash G_k \right].
\end{equation}
An equivalent definition in terms of torsors was given in \S\ref{intro-subsec-Gzip}. As we will repeatedly work with the stack of $G$-zips, we use for simplicity the notation
\[
    \Xcal^{\Zcal}\coloneqq \GZip^\Zcal.
\]
When the zip datum $\Zcal$ is clear from context, we omit it from notation and write simply $\Xcal$. 
The stack of $G$-zips of type $\Zcal$ is a smooth stack over $k$ whose underlying topological space is finite. When $\Zcal=\Zcal_\mu$ for a cocharacter $\mu\colon \GG_{\textrm{k}}\to G_k$, we write $\Xcal^\mu$ for $\Xcal^{\Zcal_\mu}$. 
\subsubsection{Frames}
Let $\Zcal$ be a zip datum. Let $(B,T)$ be a Borel pair of $G$ and $z\in W$. We call $(B,T,z)$ a \emph{frame} for $\Zcal$ if the following conditions are satisfied.
\begin{enumerate}[(i)]
\item $B,T$ are defined over $\FF_p$.
\item $B\subseteq P$ and $T\subseteq L$.
\item ${}^z \!B\subseteq Q$.
\item \label{item-Wframe4} $B\cap M =  {}^z \!B \cap M$.
\end{enumerate}
After possibly conjugating $\Zcal$ by an element of $G(k)$, a frame always exists (\cite[Remark 1.4.1]{wedhorn.koskivirta}). We will therefore assume throughout that a frame $(B,T,z)$ for $\Zcal$ exists. 
We follow the notations of \ref{not-redWeyl} for \(G, P, B, T\), as defined here. We will also write $I\colonequals \Delta_L$. We call $I$ the set of \emph{compact simple roots}, and $\Delta \setminus I$ the set of \emph{non-compact simple roots}. Let $J\subseteq \Delta$ denote the type of the parabolic subgroup $Q$. Since $(B,T)$ is defined over $\FF_p$, the Galois group $\Gal(k/\FF_p)$ acts on the sets $X^*(T)$, $\Phi$, $\Phi_+$, $\Delta$. 
Our conventions imply that
\begin{equation}\label{J-eq}
    J={}^{z^{-1}}\sigma(I).
\end{equation}

Let $\mu\colon {\GG}_{m,\textrm{k}}\to G_k$ be a cocharacter and suppose $(B,T)$ be a Borel pair defined over $\FF_p$ such that $\mu$ factors through $T$. 
In the case when $\Zcal=\Zcal_\mu$, we always set
\begin{equation}
    z\colonequals \sigma(w_{0,I}) w_0.
\end{equation}
The triple $(B,T,z)$ is then a frame for $\Zcal$.

\subsubsection{Parametrization of strata}\label{sec-param}
Let $\Zcal$ be a zip datum with frame $(B,T,z)$. The points of the underlying topological space of $\Xcal^{\Zcal}$ correspond to $E_\Zcal$-orbits in $G_k$. Any point of $\Xcal^\Zcal$ is locally closed. The pointwise decomposition of $\Xcal^\Zcal$ is the \emph{zip stratification} of $\Xcal^\Zcal$ and each point (viewed as a locally closed substack endowed with the reduced structure) is called a \emph{zip stratum}. We now explain the parametrization of these strata.

For $w\in W$, fix a representative $\dot{w}\in N_G(T)$, such that $(w_1w_2)^\cdot = \dot{w}_1\dot{w}_2$ whenever $\ell(w_1 w_2)=\ell(w_1)+\ell(w_2)$ (this is possible by choosing a Chevalley system, \cite[ XXIII, \S6]{SGA3}). When no confusion occurs, we write $w$ instead of $\dot{w}$. For $w\in {}^I W$, or $w\in W^J$, write $G_w\colonequals E_{\Zcal}\cdot (wz^{-1})$ for the $E_{\Zcal}$-orbit of $wz^{-1}$. The map $w\mapsto G_w$ induces two bijections ${}^IW \to \{E_{\Zcal}\textrm{-orbits in }G_k\}$ and $W^J \to \{E_\Zcal\textrm{-orbits in }G_k\}$. Furthermore, $G_w$ is a locally closed subset of $G_k$ of dimension $\dim(G_w)=\ell(w)+\dim(P)$. For $w\in {}^I W$ or $w\in W^J$, we denote by 
$\Xcal^{\Zcal}_w$ (or simply $\Xcal_w$) the locally closed substack $[E_\Zcal \backslash G_w]$, it is the \emph{zip stratum attached to $w$}. Since $G_w$ identifies with the quotient $E/\Stab_E(wz^{-1})$, we have an isomorphism of $k$-stacks
\begin{equation}\label{stab-isom}
    \Xcal_w \simeq \left[ \Stab_E(wz^{-1}) \backslash 1 \right].
\end{equation}

\subsubsection{Zip stratification}\label{section: definition zip stratification}
If $\Scal$ is a $k$-stack endowed with a morphism $\gamma \colon \Scal\to \Xcal^\Zcal$, the stratification obtained by pullback via $\gamma$ will be called the \emph{$\Zcal$-zip stratification}. For $w$ in $\IW$ (or $W^{J}$) we denote by $\Scal_w$ the $\Zcal$-zip stratum associated to $w$. For instance, the Ekedahl--Oort stratification is the $\Zcal_\mu$-zip stratification given by the morphism \eqref{eq-zeta-shimura}.

\subsubsection{Bruhat stratification}\label{sec-Bruhat}
Let $P_1,P_2\subseteq G_k$ be standard parabolic subgroups. The Bruhat stratification on $G$ induces a stratification on $[(P_1\times P_2)\backslash G_k]$, also called the Bruhat stratification (see \cite{wedhorn.bruhat} for details). If $I_1, I_2\subseteq \Delta$ denote the types of $P_1$ respectively $P_2$, then the Bruhat stratification on $[(P_1\times P_2)\backslash G_k]$ is parametrized by ${}^{I_1}W^{I_2}\coloneqq {}^{I_1}W\cap W^{I_2}$. If $\Scal$ is a $k$-stack endowed with a map $\beta\colon \Scal \to [(P_1\times P_2)\backslash G_k]$, then we call the induced stratification on $\Scal$ the \textit{$(P_1\times P_2)$-Bruhat stratification}. When $P_1,P_2$ are implicitly understood, we simply refer to it as the Bruhat stratification on $\Scal$. We denote a Bruhat stratum associated to $w$ by $\Scal_w^{\Br}$.

The inclusion $E_\Zcal\subseteq P\times Q$ yields a natural projection map $\Xcal^\Zcal=[E_\Zcal \backslash G_k]\to [P\times Q \backslash G_k]$. The map $g\mapsto gz$ induces an isomorphism $[P\times Q \backslash G_k]\simeq [P\times {}^{z^{-1}}Q \backslash G_k]$, where $P$ and ${}^{z^{-1}}Q$ are standard parabolics. Let $\beta^\Zcal \colon \Xcal^\Zcal\to [P\times {}^{z^{-1}}Q \backslash G_k]$ be the composition. In this paper we refer to \textit{the Bruhat stratification on $\Xcal^\Zcal$} as the one induced by $\beta^\Zcal$. We denote by ${\Xcal}^{\Zcal, \Br}_w$ the Bruhat stratum of $\Xcal^{\Zcal}$ for $w\in {}^I W^J$. This stratification is coarser than the zip stratification of $\Xcal^\Zcal$. Explicitly, we have ${\Xcal}^{\Zcal, \Br}_w=\left[ E_\Zcal \backslash Pwz^{-1}Q \right]$, and the Bruhat stratification on $\Xcal^\Zcal$ corresponds to the decomposition of $G_k$ into $P\times Q$-orbits.
\begin{definition} \label{def bruhat weakly bruhat}
Let $\Scal$ be a $k$-stack endowed with a map $\gamma\colon \Scal\to \Xcal^{\Zcal}$. Consider the zip and Bruhat stratifications on $\Scal$ induced by $\gamma$, respectively $\beta^\Zcal \circ \gamma$. 
Let $w\in {}^I W$ be an element.
\begin{enumerate}
    \item We say that the zip stratum $\Scal_w$ is weakly Bruhat if it is open in some Bruhat stratum ${\Scal}^{\Br}_{w'}$.
    \item We say that the zip stratum $\Scal_w$ is Bruhat if it coincides with a Bruhat stratum $\Scal^{\Br}_{w'}$ (in this case $w'=w$).
\end{enumerate}
\end{definition}
In particular, this definition applies in the case when $\Scal=\Xcal^\Zcal$ and $\gamma=\id_{\Xcal^{\Zcal}}$. In this case, we also say that the element $w\in {}^I W$ itself is weakly Bruhat (resp.\@ Bruhat) if the zip stratum $\Xcal^{\Zcal}_w$ is. Equivalently, $w$ is weakly Bruhat if and only if the corresponding $E_{\Zcal}$-orbit $E_{\Zcal}\cdot (wz^{-1})$ is open in the $P\times Q$-orbit of $wz^{-1}$. 
%

%

%


\begin{remark}
\label{prop-normCMBruhat}
The Zariski closure of any Bruhat stratum in $G_k$ is normal and Cohen--Macaulay, \cite{ramarama, rama, andersi}. Thus:
\begin{enumerate}
    \item If $\beta\colon \Scal\to [(P_1\times P_2)\backslash G_k]$ is smooth, then the Zariski closure of any Bruhat stratum $\Scal^{\Br}_w$ is normal and Cohen--Macaulay.
\item If the map $\gamma\colon \Scal\to \Xcal^\Zcal$ is smooth, then the closure of a weakly Bruhat stratum is always normal.
\end{enumerate}
\end{remark}
\subsubsection{Zip strata inside a Bruhat stratum}
We recall some results from \cite[\S 4]{Pink-Wedhorn-Ziegler-zip-data}. Let $w\in {}^I W^J$ and define
\begin{align*}
    &P^{(w)}\colonequals M\cap {}^{zw^{-1}}P, \quad  Q^{(w)}\colonequals \varphi(L\cap {}^{wz^{-1}}Q)\\
    &L^{(w)}\colonequals M\cap {}^{zw^{-1}}L, \quad  M^{(w)}\colonequals \varphi(L\cap {}^{wz^{-1}}M). 
\end{align*}
One attaches to $w$ the tuple $\Zcal^{(w)}\colonequals (M,P^{(w)},Q^{(w)},L^{(w)},M^{(w)})$. Note that $M$ may not be defined over $\FF_p$, hence this tuple is not a zip datum in the sense of \S\ref{sec-zip-data} above. However, one can define a more general notion of zip data (and stacks of $G$-zips), as explained in \cite[Definition 1.1]{Pink-Wedhorn-Ziegler-zip-data}. Using this notion, one shows:

\begin{proposition}[{\cite[Proposition 4.7]{Pink-Wedhorn-Ziegler-zip-data}}]
Let $w\in  {}^I W^J$. The underlying topological space of the Bruhat stratum $\Xcal^{\Zcal,\Br}_w$ is homeomorphic to that of the stack $\MZip^{\Zcal^{(w)}}$.
\end{proposition}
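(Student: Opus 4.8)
The plan is to reduce the Bruhat stratum, in two steps, to a quotient stack built from the Levi $M$, and then to recognise the latter as $\MZip^{\Zcal^{(w)}}$, following \cite[\S 4]{Pink-Wedhorn-Ziegler-zip-data}. Set $g_0\coloneqq \dot w\dot z^{-1}$, so that $\Xcal^{\Zcal,\Br}_w = [E_\Zcal\backslash Pg_0Q]$ by \S\ref{sec-Bruhat}. Since $Q$ is a group, $Pg_0Q$ is the orbit of $g_0$ under the action $(x,y)\cdot g = xgy^{-1}$ of $P\times Q$, hence $Pg_0Q\cong(P\times Q)/\Stab_{P\times Q}(g_0)$; the first projection identifies the stabilizer with $H\coloneqq P\cap{}^{g_0}Q$, with inverse $x\mapsto(x,g_0^{-1}xg_0)$. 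As the left $E_\Zcal$-action and the right $(P\times Q)$-action on $P\times Q$ commute, this yields $\Xcal^{\Zcal,\Br}_w\cong[E_\Zcal\backslash(P\times Q)/H]$.

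Next I would identify $E_\Zcal\backslash(P\times Q)$ with $M$: the map $(x,y)\mapsto\varphi(\overline x)^{-1}\overline y$, where $\overline x\in L$ and $\overline y\in M$ denote the Levi projections, is invariant under the left $E_\Zcal$-action (since $\varphi(\overline a) = \overline b$ for $(a,b)\in E_\Zcal$), is equivariant for the right $(P\times Q)$-action $m\cdot(p,q) = \varphi(\overline p)^{-1}m\overline q$, and --- as $E_\Zcal$ acts freely by translations and $m\mapsto[(1,m)]$ is a two-sided inverse --- is an isomorphism of schemes. Hence $\Xcal^{\Zcal,\Br}_w\cong[M/H]$, where $x\in H$ acts on the right by $m\mapsto\varphi(\overline x)^{-1}\,m\,\overline{g_0^{-1}xg_0}$, with $\overline{g_0^{-1}xg_0}\in M$ the Levi-in-$Q$ component of $g_0^{-1}xg_0\in Q$.

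The heart of the matter is the structure of $H = P\cap{}^{g_0}Q$, and this is where the hypothesis $w\in{}^{I}W^{J}$ is used. From the relative-position combinatorics of the parabolics $P$ and ${}^{g_0}Q = {}^{wz^{-1}}Q$ --- equivalently, the minimality of $\ell(w)$ in the double coset $W_IwW_J$ --- one shows that $H$, and likewise ${}^{g_0^{-1}}H = {}^{g_0^{-1}}P\cap Q$, admit Levi decompositions simultaneously compatible with those of $P$ and of $Q$. Concretely, the Levi quotient $P\to L$ carries $H$ onto the parabolic $L\cap{}^{wz^{-1}}Q$ of $L$, whose image under $\varphi$ is the parabolic $Q^{(w)} = \varphi(L\cap{}^{wz^{-1}}Q)$ of $M$, with Levi $M^{(w)} = \varphi(L\cap{}^{wz^{-1}}M)$; dually, $x\mapsto\overline{g_0^{-1}xg_0}$ carries $H$ onto the parabolic $P^{(w)} = M\cap{}^{zw^{-1}}P$, with Levi $L^{(w)} = M\cap{}^{zw^{-1}}L$; and the two unipotent radicals correspond. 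The identity \eqref{J-eq} is what controls the types of these subgroups. This is essentially the content of the lemmas preceding \cite[Proposition 4.7]{Pink-Wedhorn-Ziegler-zip-data}.

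Putting this together, the homomorphism $H\to P^{(w)}\times Q^{(w)}$, $x\mapsto(\overline{g_0^{-1}xg_0},\varphi(\overline x))$, has image the zip group $E_{\Zcal^{(w)}}$ of the datum $\Zcal^{(w)}$ --- a generalized zip datum in the sense of \cite[Definition 1.1]{Pink-Wedhorn-Ziegler-zip-data}, since $M$ need not be defined over $\FF_p$ --- and it matches the $H$-orbits on $M$ (for the action of the previous paragraph) with the $E_{\Zcal^{(w)}}$-orbits defining $\MZip^{\Zcal^{(w)}} = [E_{\Zcal^{(w)}}\backslash M]$; one thereby obtains a morphism $[M/H]\to\MZip^{\Zcal^{(w)}}$ that is bijective on points. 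Since the second component $x\mapsto\varphi(\overline x)$ is the relative Frobenius --- bijective but purely inseparable, not an isomorphism of group schemes --- this is a universal homeomorphism, not an isomorphism of stacks, which is precisely why the proposition is stated for underlying topological spaces; composing with the isomorphism of the previous paragraph proves it. The step I expect to be the main obstacle is the combinatorial one in the third paragraph: one must carefully track $P^{(w)},Q^{(w)},L^{(w)},M^{(w)}$ and the $\sigma$-twist built into $z = \sigma(w_{0,I})w_0$, and verify --- crucially using the minimal-length property of $w$ --- that the Levi and unipotent parts of $H$ lie simultaneously in good position relative to $P$ and to $Q$, a compatibility that genuinely fails for non-minimal $w$. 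In practice one either reproduces these parabolic-combinatorics lemmas or invokes \cite[\S 4]{Pink-Wedhorn-Ziegler-zip-data} directly.
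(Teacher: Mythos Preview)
The paper does not supply its own proof of this proposition: it is stated with the attribution \cite[Proposition 4.7]{Pink-Wedhorn-Ziegler-zip-data} and used as a black box, the only follow-up being a citation of \loccit Proposition 4.13 for the type of $P^{(w)}$. There is therefore nothing in the paper to compare your argument against.

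That said, your sketch is a faithful outline of the argument in \cite[\S4]{Pink-Wedhorn-Ziegler-zip-data}: the two reductions $[E_\Zcal\backslash Pg_0Q]\cong[E_\Zcal\backslash(P\times Q)/H]\cong[M/H]$ are correct, and the structural analysis of $H=P\cap{}^{g_0}Q$ leading to the zip datum $\Zcal^{(w)}$ is exactly what the lemmas preceding \loccit Proposition 4.7 establish. Your remark that the resulting comparison is only a universal homeomorphism (because of the Frobenius in the second factor) is the right explanation for why the statement is about topological spaces rather than stacks. The one place where a reader would want more than you give is, as you yourself flag, the third paragraph: the compatibility of the Levi decompositions of $H$ with both $P$ and $Q$ genuinely uses $w\in{}^IW^J$ and is the technical core; in a self-contained write-up this would need the explicit lemmas, but for the purposes of this paper a citation suffices.
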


Furthermore, the type of the parabolic $P^{(w)}\subseteq M$ is given by $J\cap {}^{w^{-1}} I$, by \loccit Proposition 4.13. From this, it is possible to give an explicit characterization of Bruhat elements:

\begin{corollary}\label{cor-str-Br}
An element $w\in {}^I W$ is Bruhat if and only if ${}^w J= I$.
\end{corollary}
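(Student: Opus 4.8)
The plan is to unwind the definitions so that the statement becomes a purely combinatorial equivalence about parabolic types. Recall that $w \in {}^IW$ is Bruhat if and only if the zip stratum $\Xcal^{\Zcal}_w = [E_\Zcal \backslash G_w]$ equals the Bruhat stratum $\Xcal^{\Zcal,\Br}_{w'} = [E_\Zcal \backslash Pwz^{-1}Q]$ for some $w'$ (necessarily $w'=w$, since both stratifications refine the same underlying set and $G_w \subseteq Pwz^{-1}Q$). Dimensions give the cleanest bookkeeping: we have $\dim G_w = \ell(w) + \dim P$, while $\dim(Pwz^{-1}Q)$ can be read off from the Bruhat decomposition for the $P \times Q$-action, or equivalently from the homeomorphism $\Xcal^{\Zcal,\Br}_w \cong |\MZip^{\Zcal^{(w)}}|$ of the preceding proposition: the latter stack has dimension $0$, so $\dim \Xcal^{\Zcal,\Br}_w = 0$ and hence $\dim(Pwz^{-1}Q) = \dim E_\Zcal = \dim P$ — wait, that only says the stratum is always a single orbit closure up to the stack structure. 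The right invariant to track is instead the dimension of the $E_\Zcal$-orbit versus that of the $P\times Q$-orbit inside $Pwz^{-1}Q$; $w$ is Bruhat precisely when these coincide, i.e.\@ when the single $E_\Zcal$-orbit $G_w$ already exhausts $Pwz^{-1}Q$.

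Concretely, I would argue as follows. By \loccit Proposition 4.7 and the remark that the type of $P^{(w)} \subseteq M$ is $J \cap {}^{w^{-1}}I$, the Bruhat stratum $\Xcal^{\Zcal,\Br}_w$ is homeomorphic to $\MZip^{\Zcal^{(w)}}$, whose zip strata are indexed by ${}^{J\cap {}^{w^{-1}}I}W_J$ (using that $L^{(w)}$, $M^{(w)}$ have appropriate types). The stratum $\Xcal^{\Zcal,\Br}_w$ consists of a single zip stratum — equivalently $w$ is Bruhat — if and only if this index set is a singleton, i.e.\@ $W_{J \cap {}^{w^{-1}}I} \backslash W_J$ has one element, i.e.\@ $J \cap {}^{w^{-1}}I = J$, i.e.\@ $J \subseteq {}^{w^{-1}}I$. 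Since conjugation by $w$ carries $\Delta$-based parabolic types around and $|{}^w J| = |J| = \dim(\text{flag of }P) $ forces $|{}^wJ| = |I|$ (both equal $|\Delta| - \operatorname{rk}$-of-the-relevant-semisimple-quotient via the identity $J = {}^{z^{-1}}\sigma(I)$ of \eqref{J-eq}, which gives $|J| = |I|$), the containment $J \subseteq {}^{w^{-1}}I$ upgrades to equality ${}^wJ = I$. Running the chain of equivalences backwards gives the converse.

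The main obstacle will be pinning down exactly which index set parametrizes the zip strata of the auxiliary stack $\MZip^{\Zcal^{(w)}}$, and checking that "$\Xcal^{\Zcal,\Br}_w$ is a single zip stratum" translates to that index set being a singleton rather than to some weaker condition about open-denseness — here one must be careful that the homeomorphism of Proposition 4.7 is only on underlying topological spaces, so I would phrase everything at the level of orbits: $w$ is Bruhat iff $E_\Zcal \cdot wz^{-1} = P wz^{-1} Q$, and then use the orbit description in \cite[\S4]{Pink-Wedhorn-Ziegler-zip-data} to see that the $E_\Zcal$-orbits inside $Pwz^{-1}Q$ biject with the $E_{\Zcal^{(w)}}$-orbits in $M$, which number $|{}^{J\cap {}^{w^{-1}}I}W_J|$. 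A secondary point to get right is the equality $|I| = |J|$ and why $J \subseteq {}^{w^{-1}}I$ then forces equality; this is elementary once \eqref{J-eq} is invoked, but it is the step that makes the "only if" direction work and deserves an explicit line.
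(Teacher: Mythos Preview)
Your proposal is correct and follows the same approach as the paper: use the preceding Proposition to identify the points of $\Xcal^{\Zcal,\Br}_w$ with ${}^{J\cap {}^{w^{-1}}I}W_J$, observe this is a singleton iff $J\cap {}^{w^{-1}}I = J$, and upgrade the inclusion to equality via $|I|=|J|$. The dimension-counting detour in your first paragraph is unnecessary and can be dropped; the argument in your second paragraph is exactly the paper's proof.
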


\begin{proof}
The points of $\MZip^{\Zcal^{(w)}}$ are in 1-to-1 correspondence with elements of ${}^{J\cap {}^{w^{-1}}I} W_{J}$, as in \S\ref{sec-param} above. This set is reduced to a point if and only if $J\cap {}^{w^{-1}} I=J$, which is equivalent to $J= {}^{w^{-1}} I$ because $J$ and ${}^{w^{-1}} I$ have the same cardinality. The result follows.
\end{proof}

\subsubsection{Closure relations}\label{subsubsec: closure relations}
Since ${}^z B\subseteq Q$, the group $W_J$ coincides with the Weyl group of the standard Levi subgroup ${}^{z^{-1}}M={}^{z^{-1}}\varphi(L)$. Hence, we have a natural map $\psi\colon W_I\to W_J$ given by $\psi(x)=z^{-1} \varphi(x) z$. For two elements $w,w'\in {}^I W$, define a relation $\preccurlyeq$ by
\begin{equation}
    w' \preccurlyeq w \Longleftrightarrow \exists x\in W_I, \ xw'\psi(x)^{-1}\leq w.
\end{equation}
By \cite[Corollary 6.3]{Pink-Wedhorn-Ziegler-zip-data}, the relation $\preccurlyeq$ is a partial order on ${}^I W$, which is, in general, finer than the Bruhat order $\leq$. For $w\in {}^I W$, denote by $\overline{\Xcal}^{\Zcal}_w$ the Zariski closure of the zip stratum $\Xcal_w^{\Zcal}$. By \loccit Theorem 7.5, we have
\begin{equation}
    \overline{\Xcal}^\Zcal_w = \bigcup_{w'\preccurlyeq w} \Xcal^\Zcal_{w'}.
\end{equation}
The twisted order satisfies \textit{the chain property}: if $w'\preccurlyeq w$, we can find a sequence $w'=w_1 \preccurlyeq w_2 \preccurlyeq \dots \preccurlyeq w_m=w$ such that $\ell(w_{i+1})=\ell(w_i)+1$ for all $1\leq i \leq m-1$.

\subsection{Flag space}
\label{ssecflagspace}
Write simply $E\subseteq P\times Q$ for the zip group of $\Zcal$, and $\Xcal$ for the stack of $G$-zips of type $\Zcal$.

\subsubsection{Definition}
Let $P_0$ be a parabolic subgroup of $G_k$ satisfying $B\subseteq P_0 \subseteq P$. Denote the type of $P_0$ by $I_0\subseteq I$. Define the flag space $\Fcal^{(P_0)}$, sometimes denoted in previous papers by $\GF^{\Zcal}_{P_0}$, by
\begin{equation*}
    \Fcal^{(P_0)}\colonequals \left[E \backslash \left(G_k\times P/P_0\right)\right],
\end{equation*}
where $(x,y)\in E$ acts on $(g,hP_0)\in G_k\times P/P_0$ by the rule $(x,y)\cdot (g,hP_0) \colonequals (xgy^{-1},xhP_0)$. If we have a morphism \(S \to \Xcal\), then we denote by \(\Fcal^{(P_0)}_S\) the base change of \(\Fcal^{(P_0)}\) from \(\Xcal\) to \(S\). Denote by $\pi_{P_0}\colon\Fcal^{(P_0)}\to \Xcal$ the natural map induced by the first projection $\pr_1\colon G_k\times P/P_0 \to G_k$. For two parabolics $B\subseteq P_0\subseteq P_1\subseteq P$, we have similarly a projection $\pi_{P_0,P_1}\colon \Fcal^{(P_0)}\to \Fcal^{(P_1)}$ (thus $\pi_{P_0}$ identifies with $\pi_{P_0,P}$).

\subsubsection{\texorpdfstring{\(\Zcal_{P_0}\)}{Z0}-zip stratification} \label{sec-fine-stratif}
The flag space $\Fcal^{(P_0)}$ admits a so-called \emph{\(\Zcal_{P_0}\)-zip stratification} as in \S\ref{section: definition zip stratification}. This was originally defined in \cite[\S 3.1]{Goldring-Koskivirta-zip-flags} and called the fine stratification. We opt to change terminology to make it more uniform. The \(\Zcal_{P_0}\)-zip stratification generalizes the zip stratification on the stack of $G$-zips from \ref{sec-param}. 

First, note that the inclusion map $G_k\to G_k\times P/P_0$, $x\mapsto (x,1)$, induces an isomorphism
\begin{equation}\label{psiP0}
    \Fcal^{(P_0)}\simeq [E'_{P_0}\backslash G_k]
\end{equation}
where $E'_{P_0}\colonequals E\cap (P_0\times G)$. Let $L_0\subseteq L$ denote the Levi subgroup of $P_0$ containing $T$, and set $M_0\colonequals L_0^{(p)}$ and $Q_0\colonequals M_0 {}^z B$. Then $Q_0$ is a parabolic subgroup of $G_k$ with Levi subgroup $M_0$. The tuple $\Zcal_{P_0}\colonequals (G,P_0,Q_0,L_0,M_0)$ is a zip datum (in general, not attached to a cocharacter of $G$), and the tuple $(B,T,z)$ is again a frame for $\Zcal_{P_0}$. The zip datum $\Zcal_{P_0}$ gives rise to a stack $\Xcal^{\Zcal_{P_0}}\colonequals [E_{\Zcal_{P_0}}\backslash G_k]$. One sees immediately that $E'_{P_0}\subseteq E_{\Zcal_{P_0}}$, thus we have a natural, smooth projection map
\begin{equation}\label{psimap}
   \Psi_{P_0}\colon \Fcal^{(P_0)} \to \Xcal^{\Zcal_{P_0}}.
\end{equation}
The \textit{$\Zcal_{P_0}$-zip stratification on $\Fcal^{(P_0)}$} is the zip stratification afforded by $\Psi_0$ (\S\ref{section: definition zip stratification}). 

By \ref{sec-param}, the points of $\Xcal^{\Zcal_{P_0}}$ are parametrized by ${}^{I_0} W$ (the element $w\in {}^{I_0} W$ corresponds to the $E_{\Zcal_{P_0}}$-orbit of $wz^{-1}$ in $G_k$). Since $Q_0=\varphi(L_0) \ {}^{z}B$, the type of $Q_0$ is
\begin{equation}\label{J0-eq}
J_0=w_0 \sigma(w_{0,I}\cdot I_0).    
\end{equation}
We could also parametrize the points of $\Xcal^{\Zcal_{P_0}}$ by $W^{J_0}$ instead of ${}^{I_0} W$, as in \S\ref{sec-param}. For $w\in {}^{I_0}W$ or $w\in W^{J_0}$, define $\Fcal_{w}^{(P_0)}$ to be the preimage by $\Psi_{P_0}$ of $\Xcal^{\Zcal_{P_0}}_w$. We call $\Fcal_{w}^{(P_0)}$ the \(\Zcal_{P_0}\)-zip stratum of $w\in {}^{I_0} W$. For $w\in {}^{I_0}W$ or $w\in W^{J_0}$, let $C_{P_0,w}\colonequals E_{\Zcal_{P_0}}\cdot (wz^{-1})$ be the $E_{\Zcal_{P_0}}$-orbit of $wz^{-1}$. By definition of $\Fcal_{w}^{(P_0)}$, we have $\Fcal_{w}^{(P_0)}=\left[E'_{P_0} \backslash C_{P_0,w} \right]$.

\subsubsection{Bruhat stratification on \texorpdfstring{$\Fcal^{(P_0)}$}{the flag space}}
In this paper, we refer to \textit{the Bruhat stratification\footnote{This stratification was termed coarse in \cite{Goldring-Koskivirta-zip-flags}.} on $\Fcal^{(P_0)}$} as the one afforded by $\Psi_{P_0}$ (\S\ref{sec-Bruhat}). As per \Cref{def bruhat weakly bruhat}, we have a notion of Bruhat and weakly Bruhat $\Zcal_{P_0}$-zip strata. 
%
In the case $P_0=B$, the \(\Zcal_B\)-zip and Bruhat stratifications of $\Fcal^{(B)}$ coincide. Thus any \(\Zcal_B\)-zip stratum of $\Fcal^{(B)}$ is Bruhat. The stratum parametrized by $w\in W$ is simply $\Fcal_{B,w}=[E'_B \backslash BwBz^{-1}]$. 

\subsubsection{Image of strata}
For a general $w\in {}^{I_0}W$ or $w\in W^{J_0}$, the image of $\Fcal_{w}^{(P_0)}$ by the projection $\pi_{P_0}\colon \Fcal^{(P_0)}\to \Xcal$ is a union of certain zip strata of $\Xcal$, but it is difficult to determine which zip strata appear in this union. However, for $w\in {}^I W$ or $w\in W^J$, we have the following result.

\begin{proposition}\label{prop-flag}
Let $w\in {}^I W$ or $w\in W^J$. 
\begin{enumerate}
    \item \label{prop-flag-1} We have $\pi_{P_0}(\Fcal_{w}^{(P_0)})=\Xcal_w$ and $\pi_{P_0}$ restricts to a finite \'{e}tale morphism $\pi_{P_0}\colon \Fcal_{w}^{(P_0)}\to \Xcal_{w}$.
    \item \label{prop-flag-2} The preimage of $\Xcal_w$ by the map $\pi_{P_0} \colon \overline{\Fcal}^{(P_0)}_w \to \overline{\Xcal}_w$ coincides with $\Fcal^{(P_0)}_w$.
    \item \label{prop-flag-3} The set $C_{P_0,w}$ coincides with the $E'_{P_0}$-orbit of $wz^{-1}$. Hence, we have a commutative diagram
    $$
    \xymatrix@1@M=5pt{
    \Fcal_{w}^{(P_0)} \ar[r]^-{\simeq} \ar[d] & [\Stab_{E'_{P_0}}(wz^{-1})\backslash 1] \ar[d] \\
    \Xcal_{w} \ar[r]^-{\simeq} & [\Stab_{E}(wz^{-1})\backslash 1]
    }
    $$
\end{enumerate}
\end{proposition}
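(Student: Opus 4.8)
The plan is to deduce everything from the identification \eqref{psiP0}, namely $\Fcal^{(P_0)}\simeq [E'_{P_0}\backslash G_k]$ with $E'_{P_0}=E\cap(P_0\times G)$, together with the results on zip strata recalled in \S\ref{sec-param} and the structure theory of $E_\Zcal$-orbits in $G_k$ from \cite{Pink-Wedhorn-Ziegler-zip-data}. The key observation is that for $w\in{}^IW$ (or $w\in W^J$), the stabilizer $\Stab_E(wz^{-1})$ in the zip group already projects into $P_0$ under $\pr_1$. This is the content of part \eqref{prop-flag-3}, and I would prove it first, since parts \eqref{prop-flag-1} and \eqref{prop-flag-2} follow from it by general quotient-stack nonsense.

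\emph{Part \eqref{prop-flag-3}.} First I would recall the explicit description of $\Stab_E(wz^{-1})$ from \cite[Lemma 3.10 or §4]{Pink-Wedhorn-Ziegler-zip-data}: writing $g=wz^{-1}$, an element $(a,b)\in E$ fixes $g$ iff $a=gbg^{-1}$, and conjugation by $g$ identifies $R_u(P)$ with part of $R_u(Q)$ in a controlled way; the Levi part of $a$ is forced to lie in $L\cap{}^{wz^{-1}}M$, which for $w\in{}^IW$ is precisely contained in $L_0$ — this is where the minimality of $w$ in its coset $W_Iw$ is used, via the fact (already invoked in \S\ref{sec-param}, \Cref{cor-str-Br}) that ${}^{w^{-1}}I\supseteq$ the relevant intersection. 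More directly: since $\pr_1(\Stab_E(wz^{-1}))$ is a subgroup of $P$ whose Levi component lies in $L\cap{}^{wz^{-1}}L=L_0$-conjugate data and whose unipotent part is inside $R_u(P_0)$ by the root-theoretic computation, one gets $\pr_1(\Stab_E(wz^{-1}))\subseteq P_0$, hence $\Stab_E(wz^{-1})\subseteq P_0\times G$, so $\Stab_E(wz^{-1})=\Stab_{E'_{P_0}}(wz^{-1})$. Granting this, the $E'_{P_0}$-orbit $C_{P_0,w}=E'_{P_0}\cdot(wz^{-1})$ has the same stabilizer, and the orbit map $E'_{P_0}/\Stab_{E'_{P_0}}(wz^{-1})\to C_{P_0,w}$ together with $E/\Stab_E(wz^{-1})\to G_w$ gives the commutative square, all squares being of the form $[H\backslash 1]\to[H'\backslash 1]$ induced by $H=H'$; the vertical arrow on the right is the natural quotient $[\Stab_E(wz^{-1})\backslash 1]\to\Xcal_w$ combined with the base change.

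\emph{Parts \eqref{prop-flag-1} and \eqref{prop-flag-2}.} For \eqref{prop-flag-1}, the map $\pi_{P_0}$ sends the $E'_{P_0}$-orbit $C_{P_0,w}$ onto the $E$-orbit $E\cdot(wz^{-1})=G_w$, simply because $E=E'_{P_0}\cdot(\text{something in }P\times Q)$... more carefully: $\pi_{P_0}$ is induced by $[E'_{P_0}\backslash G_k]\to[E\backslash G_k]$, and the image of $[E'_{P_0}\backslash C_{P_0,w}]$ is $[E\backslash E\cdot C_{P_0,w}]=[E\backslash G_w]=\Xcal_w$ since $C_{P_0,w}$ meets $G_w$ (both contain $wz^{-1}$) and $G_w$ is a single $E$-orbit. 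That the restriction is finite étale: it is representable and the fiber over $\Xcal_w$ is $[\,\Stab_E(wz^{-1})\backslash (E/E'_{P_0})\,]$-ish — more cleanly, on stabilizers part \eqref{prop-flag-3} says the map of stabilizer groups $\Stab_{E'_{P_0}}(wz^{-1})\hookrightarrow\Stab_E(wz^{-1})$ is an isomorphism of algebraic groups, hence the induced map of classifying stacks $[\Stab_{E'_{P_0}}(wz^{-1})\backslash 1]\to[\Stab_E(wz^{-1})\backslash 1]$ is an equivalence; but $\pi_{P_0}|_{\Fcal_w^{(P_0)}}$ differs from this by the factor $P/P_0$ sitting as a finite set of cosets... here I should instead argue: $\pi_{P_0}\colon\Fcal^{(P_0)}\to\Xcal$ is a $P/P_0$-bundle, proper with finite fibers on the zip-stratum level because the zip stratum $\Xcal_w$ pulls back to finitely many $\Zcal_{P_0}$-zip strata, and $\Fcal_w^{(P_0)}$ is one of them mapping with degree equal to $[\Stab_E(wz^{-1}):\Stab_{E'_{P_0}}(wz^{-1})]=1$ onto $\Xcal_w$ — wait, degree one would make it an isomorphism, so I must allow the fiber to be a finite étale cover of degree $>1$ coming from components of $P/P_0$ permuted by the stabilizer. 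I would settle this by the standard computation in \cite[§3]{Goldring-Koskivirta-zip-flags}: the fiber of $\pi_{P_0}$ over a point of $\Xcal_w$ restricted to $\Fcal_w^{(P_0)}$ is the set of $\Stab_E(wz^{-1})$-fixed points... the cleanest route is to cite that $\Psi_{P_0}$ is smooth and $\pi_{P_0}$ is proper smooth of relative dimension $\dim P/P_0$, and the fiber of $\pi_{P_0}$ over a $k$-point is $P/P_0$, intersected with $\Fcal_w^{(P_0)}$ it becomes the finite (étale, by \cite[Prop 4.7]{Pink-Wedhorn-Ziegler-zip-data} and its corollaries) locus of points in $P/P_0$ whose $E$-orbit is $G_w$. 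Finiteness plus smoothness of relative dimension $0$ on this locus gives finite étale. For \eqref{prop-flag-2}, the preimage $\pi_{P_0}^{-1}(\Xcal_w)\cap\overline{\Fcal}_w^{(P_0)}$: since $\Xcal_w$ is locally closed in $\overline{\Xcal}_w$ and $\pi_{P_0}$ is continuous, $\pi_{P_0}^{-1}(\Xcal_w)$ is locally closed in $\overline{\Fcal}_w^{(P_0)}$; it contains $\Fcal_w^{(P_0)}$ by \eqref{prop-flag-1}, and no other $\Zcal_{P_0}$-zip stratum $\Fcal_{w'}^{(P_0)}\subseteq\overline{\Fcal}_w^{(P_0)}$ can map into $\Xcal_w$ because such $w'$ satisfies $\ell(w')<\ell(w)$ hence maps to a zip stratum of strictly smaller dimension (using $\dim\Fcal_{w'}^{(P_0)}=\ell(w')+\dim P_0$ and that $\pi_{P_0}$ has constant fiber dimension), so its image is disjoint from $\Xcal_w$. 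Hence equality.

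\emph{Main obstacle.} The genuinely non-formal step is part \eqref{prop-flag-3}: showing $\pr_1(\Stab_E(wz^{-1}))\subseteq P_0$ for $w\in{}^IW$. This requires the explicit root-group analysis of the stabilizer from \cite[§4]{Pink-Wedhorn-Ziegler-zip-data} and the combinatorial input that the Levi part of the stabilizer lands in $L_0=L\cap P_0$ precisely because $w$ is the minimal-length representative in $W_Iw$. I would likely not reprove this from scratch but rather cite \cite[§4]{Pink-Wedhorn-Ziegler-zip-data} and \cite[§3]{Goldring-Koskivirta-zip-flags}, where a version of this statement (for $P_0=B$, giving the connection to the canonical parabolic $P_w$) is established, and explain that the same argument works for intermediate $B\subseteq P_0\subseteq P$ verbatim. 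Everything else is formal manipulation of quotient stacks and dimension counts.
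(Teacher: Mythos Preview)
Your reading of part \eqref{prop-flag-3} is off, and this propagates into your confusion in part \eqref{prop-flag-1}. Part \eqref{prop-flag-3} asserts that the $E_{\Zcal_{P_0}}$-orbit of $wz^{-1}$ (which is the definition of $C_{P_0,w}$) coincides with the $E'_{P_0}$-orbit of the same point; recall $E'_{P_0}\subseteq E_{\Zcal_{P_0}}$ are two \emph{different} groups, neither of which is $E$. The vertical arrows in the commutative square are \emph{not} claimed to be isomorphisms --- they are the maps induced by the proper inclusion $\Stab_{E'_{P_0}}(wz^{-1})\subseteq \Stab_E(wz^{-1})$, and the quotient is in general a nontrivial finite group. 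Your central claim, that $\pr_1(\Stab_E(wz^{-1}))\subseteq P_0$ for every $w\in{}^IW$ and every intermediate $P_0$, is simply false: by \Cref{prop-minimal-Pw} (stated just after this proposition) the inclusion $\Stab_E(wz^{-1})\subseteq P_0$ holds if and only if $P_w\subseteq P_0$, and examples with $P_w\supsetneq B$ (indeed $P_w=P$) abound --- for instance $w=\id$ in the unitary setting of \S\ref{sec-unitary-gps}. So the argument you propose for \eqref{prop-flag-3} proves a statement that is not the one claimed, and the references you would invoke say the opposite of what you need.

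What actually has to be shown for \eqref{prop-flag-3} is that enlarging $E'_{P_0}$ to $E_{\Zcal_{P_0}}$ does not enlarge the orbit of $wz^{-1}$ when $w\in{}^IW$; this is the content of \cite[Proposition 2.2.1]{Koskivirta-Normalization}, which the paper simply cites. The finite \'etale claim in \eqref{prop-flag-1} then comes from the structure of $\Stab_E(wz^{-1})$ (its identity component is unipotent and lies in $R_u(P)\subseteq P_0$, so the index in $\Stab_{E'_{P_0}}(wz^{-1})$ is finite), not from an equality of stabilizers. Your dimension argument for \eqref{prop-flag-2} via the inequality $\ell(\pi_{P_0}(w'))\leq \ell(w')$ is correct and is exactly the alternative route the paper mentions.
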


\begin{proof}
Statements \eqref{prop-flag-1} and \eqref{prop-flag-3} are proved in \cite[Proposition 2.2.1]{Koskivirta-Normalization}. Statement \eqref{prop-flag-2} can easily be reduced to the case $P_0=B$, which was shown in \cite[Proposition 2.4.3 (c)]{koskgold}. Alternatively, it follows easily from the inequality \eqref{ineq-ell} below.
\end{proof}

\begin{remark}\label{intermediate-rmk}
For the sake of simplicity, we only stated the above proposition for the map $\pi_{P_0}$. There is a similar result for the case of $\pi_{P_0,P_1}\colon \Fcal^{(P_0)}\to \Fcal^{(P_1)}$ where $B\subseteq P_0\subseteq P_1\subseteq P$ (see \cite[Proposition 4.2.2]{Goldring-Koskivirta-zip-flags}).
\end{remark}

In the case $P_0=B$ and $w\in W$, \cite[Lemma 2.4.1]{koskgold} shows that $\pi_B(\Fcal^{(B)}_w)$ coincides with the union of zip strata $\Xcal_{w'}$ satisfying the condition 
\begin{equation}\label{eq-nonempty}
E\cdot (w'z^{-1})\cap B w z^{-1} \neq \emptyset.    
\end{equation}
This result generalizes to any parabolic $B\subseteq P_0\subseteq P$. Indeed, let $w\in {}^{I_0} W$. By \cite[Proposition 3.2.2]{Goldring-Koskivirta-zip-flags}, the projection $\pi_{B,P_0}\colon \Fcal^{(B)} \to \Fcal^{(P_0)}$ maps $\Fcal^{(B)}_w$ surjectively to $\Fcal^{(P_0)}_w$ in $\Xcal$. Therefore $\pi_{P_0}(\Fcal^{(P_0)}_w) = \pi_{B}(\Fcal^{(B)}_w)$. Hence this image can be described as the union of strata satisfying \eqref{eq-nonempty}.

\subsubsection{Lower neighbors} \label{subsec-lownb}
Let $w,w'\in {}^I W$ be two elements. Recall that $\Xcal_{w'}$ is contained in the closure of $\Xcal_w$ if and only if $w' \preccurlyeq w$. This condition is in general weaker than $w'\leq w$. Note that if $I_0\subseteq I$ then ${}^I W\subseteq {}^{I_0} W$, hence the notation $\preccurlyeq$ is slightly ambiguous (the set ${}^{I_0} W$ also comes with its own twisted order). To avoid this problem, we write $\preccurlyeq_I$ for the twisted order of ${}^I W$. Note that the order $\preccurlyeq_{\emptyset}$ on $W={}^{\emptyset} W$ is simply the Bruhat order $\leq$.

\begin{definition} Let $w,w'\in {}^I W$.
\begin{enumerate}
\item When $w' \preccurlyeq_I w$ and $\ell(w')=\ell(w)-1$, we say that $w'$ is a lower neighbor of $w$. 
\item When $w' \leq w$ and $\ell(w')=\ell(w)-1$, we say that $w'$ is a Bruhat-lower neighbor of $w$.  
\item We say that $w'$ is an exceptional lower neighbor of $w$ if it is a lower neighbor that is not a Bruhat-lower neighbor.
\end{enumerate}
\end{definition}

For a subset $I\subseteq \Delta$ and an element $w\in {}^I W$, write $\Gamma_I(w)$ for the set of lower neighbors of $w$ in ${}^I W$, i.e.\@
\begin{equation}
    \Gamma_I(w) \colonequals \{w'\in {}^I W \ | \ w'\preccurlyeq_I w \textrm{ and }\ell(w')=\ell(w)-1\}.
\end{equation}
In particular, the set $\Gamma_{\emptyset}(w)$ is the set of Bruhat-lower neighbors in $W$ of $w$.
Note that when $w\in {}^I W$, we have $\Gamma_{\emptyset}(w)\cap {}^I W\subseteq \Gamma_I(w)$, but the equality does not always hold. 

For an intermediate parabolic $B\subseteq P_0\subseteq P$, consider the map $\pi_{P_0}\colon \Fcal^{(P_0)}\to \Xcal$. For $w\in {}^{I_0} W$, recall that the image $\pi_{P_0}(\Fcal^{(P_0)}_w)$ by $\pi_{P_0}$ of the \(\Zcal_{P_0}\)-zip stratum $\Fcal^{(P_0)}_w$ is a union of zip strata of $\Xcal$. Since $\pi_{P_0}(\Fcal^{(P_0)}_w)$ is irreducible, there is a unique zip stratum $\Xcal_{w_1}$ which is open in $\pi_{P_0}(\Fcal^{(P_0)}_w)$. By abuse of notation, we denote this element $w_1\in {}^I W$ by $\pi_{P_0}(w)$. We obtain a map $\pi_{P_0}\colon {}^{I_0}W\to {}^I W$ which extends the identity map of ${}^I W$. Note that by dimension reasons, for any $w\in {}^{I_0} W$ we have
\begin{equation} \label{ineq-ell}
    \ell(\pi_{P_0}(w)) \leq \ell(w).
\end{equation}

\begin{lemma}\label{lemma-low-nei}
Let $I_0\subseteq I$ be subsets. For any $w\in {}^I W$, we have
\begin{enumerate}
    \item \label{lemma-low-nei-item1} $\Gamma_{I_0}(w)\cap {}^I W \subseteq \Gamma_{I}(w)$.
    \item \label{lemma-low-nei-item2} $\Gamma_I(w)\subseteq \pi_{P_0}(\Gamma_{I_0}(w))$.
\end{enumerate}
\end{lemma}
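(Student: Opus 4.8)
I would establish the two inclusions separately: \eqref{lemma-low-nei-item1} is essentially formal, while \eqref{lemma-low-nei-item2} goes through the geometry of the flag space.

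For \eqref{lemma-low-nei-item1}, recall from \S\ref{subsubsec: closure relations} that $w'\preccurlyeq_I w$ means $xw'\psi(x)^{-1}\leq w$ for some $x\in W_I$, where $\psi(x)=z^{-1}\varphi(x)z$, and that the twisted order $\preccurlyeq_{I_0}$ on ${}^{I_0}W$ is defined by the same recipe with $x$ ranging over $W_{I_0}$: the zip datum $\Zcal_{P_0}$ has the same frame $(B,T,z)$ (\S\ref{sec-fine-stratif}), so its $\psi$-map is just the restriction of $\psi$ to $W_{I_0}$. Since $W_{I_0}\subseteq W_I$, any witness for $w'\preccurlyeq_{I_0}w$ is a witness for $w'\preccurlyeq_I w$, so $\preccurlyeq_{I_0}$ refines $\preccurlyeq_I$ on ${}^I W$. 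Hence an element $w'\in\Gamma_{I_0}(w)\cap{}^I W$ satisfies $w'\preccurlyeq_I w$ and $\ell(w')=\ell(w)-1$, i.e.\ $w'\in\Gamma_I(w)$.

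For \eqref{lemma-low-nei-item2}, fix $w'\in\Gamma_I(w)$, so that $\overline{\Xcal}_{w'}\subseteq\overline{\Xcal}_w$ is an irreducible closed substack of codimension $1$ (dimensions of zip strata being linear in the length, \S\ref{sec-param}). By \Cref{prop-flag}, $\pi_{P_0}$ restricts to a finite \'etale, hence dominant, morphism $\Fcal^{(P_0)}_w\to\Xcal_w$, and $\pi_{P_0}$ is proper, being a $P/P_0$-fibration with $P/P_0$ projective; therefore $\pi_{P_0}\colon\overline{\Fcal}^{(P_0)}_w\to\overline{\Xcal}_w$ is proper and surjective, with $\overline{\Fcal}^{(P_0)}_w$ irreducible of the same dimension as $\overline{\Xcal}_w$. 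Moreover, since the zip strata of $\Xcal$ are the $E$-orbits on $G_k$, the $\Zcal_{P_0}$-zip strata of $\Fcal^{(P_0)}$ are the $E'_{P_0}$-orbits (\Cref{prop-flag}), and $\pi_{P_0}$ is induced by $E'_{P_0}\subseteq E$, the $\pi_{P_0}$-preimage of any union of zip strata of $\Xcal$ is a union of $\Zcal_{P_0}$-zip strata of $\Fcal^{(P_0)}$. Now set $Z\coloneqq\pi_{P_0}^{-1}(\overline{\Xcal}_{w'})\cap\overline{\Fcal}^{(P_0)}_w$. It is a closed union of $\Zcal_{P_0}$-zip strata, it surjects onto $\overline{\Xcal}_{w'}$, and it is a proper closed subset of the irreducible $\overline{\Fcal}^{(P_0)}_w$ (otherwise $\pi_{P_0}$ would map $\overline{\Fcal}^{(P_0)}_w$ into $\overline{\Xcal}_{w'}\subsetneq\overline{\Xcal}_w$, contradicting surjectivity). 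Hence $\dim Z=\dim\overline{\Fcal}^{(P_0)}_w-1$, which forces every top-dimensional irreducible component of $Z$ to be a closure $\overline{\Fcal}^{(P_0)}_v$ with $v\preccurlyeq_{I_0}w$ and $\ell(v)=\ell(w)-1$, i.e.\ with $v\in\Gamma_{I_0}(w)$. Since $\overline{\Xcal}_{w'}$ is irreducible and $\pi_{P_0}$ is proper, at least one of these finitely many components, say $\overline{\Fcal}^{(P_0)}_v$, must surject onto $\overline{\Xcal}_{w'}$; then $\pi_{P_0}(\Fcal^{(P_0)}_v)$ is a dense union of zip strata in $\overline{\Xcal}_{w'}$, whose unique maximal member can only be $\Xcal_{w'}$, and by definition of the map $\pi_{P_0}\colon{}^{I_0}W\to{}^I W$ this says exactly $\pi_{P_0}(v)=w'$. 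Therefore $w'\in\pi_{P_0}(\Gamma_{I_0}(w))$.

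The one point beyond bookkeeping is the dimension count for $Z$: that the preimage of the divisor $\overline{\Xcal}_{w'}$ inside $\overline{\Fcal}^{(P_0)}_w$ is again pure of codimension $1$, with a component dominating $\overline{\Xcal}_{w'}$. This is the standard behaviour of a proper surjective morphism of irreducible spaces, but I would spell it out carefully at the level of finite-type Artin stacks — for instance by testing on a smooth atlas compatible with the stratifications — since zip strata here may have negative dimension. Given that, the remaining translation between the geometric conclusion and the combinatorial statement (via the closure relations of \S\ref{subsubsec: closure relations} and the definition of $\pi_{P_0}$ on cosets) is routine.
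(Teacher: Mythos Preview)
Your argument is correct and, for part \eqref{lemma-low-nei-item1}, identical in content to the paper's one-line ``clear''. For part \eqref{lemma-low-nei-item2}, both you and the paper start from the proper surjection $\pi_{P_0}\colon\overline{\Fcal}^{(P_0)}_w\to\overline{\Xcal}_w$, but the paper takes a shorter path: it looks at the preimage of the \emph{locally closed} stratum $\Xcal_{w'}$ rather than of $\overline{\Xcal}_{w'}$, and uses \Cref{prop-flag}\eqref{prop-flag-2} (the preimage of $\Xcal_w$ in $\overline{\Fcal}^{(P_0)}_w$ is exactly $\Fcal^{(P_0)}_w$) together with the inequality $\ell(\pi_{P_0}(v))\leq\ell(v)$ of \eqref{ineq-ell} to see immediately that this preimage sits inside $\bigcup_{v\in\Gamma_{I_0}(w)}\Fcal^{(P_0)}_v$. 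One more length comparison then forces $\pi_{P_0}(v)=w'$ for some such $v$. This bypasses precisely the stacky dimension step you flag as needing care. A small expository remark on your version: the clean order of the argument is to first pick a component of $Z$ that surjects onto $\overline{\Xcal}_{w'}$ (by properness and irreducibility of the target), deduce its dimension is at least $\dim\overline{\Xcal}_{w'}=\dim\overline{\Fcal}^{(P_0)}_w-1$, and hence that it is top-dimensional --- rather than first identifying the top-dimensional components and then arguing one of them surjects.
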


\begin{proof}
The first statement is clear. By properness, the image of the map $\pi\colon \overline{\Fcal}_w^{(P_0)}\to \Xcal$ contains all of \(\overline{\Xcal}_w\). Furthermore, the preimage of $\Xcal_w$ in $\overline{\Fcal}_w^{(P_0)}$ is exactly $\Fcal^{(P_0)}_w$. Also, for any $w'\in {}^{I_0} W$ such that $\ell(w')<\ell(w)-1$, inequality \eqref{ineq-ell} shows that $\pi_{P_0}(w')$ is not a lower neighbor of $w$. Therefore the preimage of any stratum $\Xcal_{w'}$ for $w'\in \Gamma_I(w)$ in $\overline{\Fcal}_w^{(P_0)}$ is entirely contained in the union of \(\Zcal_{P_0}\)-zip strata corresponding to elements of $\Gamma_{I_0}(w)$. The result follows.
\end{proof}

Similarly to Remark \ref{intermediate-rmk}, Lemma \ref{lemma-low-nei} above generalizes to the case of two intermediate parabolics $B\subseteq P_0\subseteq P_1 \subseteq P$. If $I_0\subseteq I$ and $w\in {}^I W$, it is not true in general that $\Gamma_{I}(w)\subseteq \Gamma_{I_0}(w)$ (for example, it may fail for $I_0=\emptyset$). However, we have the following.

\begin{corollary}\label{cor-same-neighbors}
Let $I_0\subseteq I$, $w\in {}^I W$ and assume that $\Gamma_{I_0}(w)\subseteq {}^I W$. Then $\Gamma_{I}(w)=\Gamma_{I_0}(w)$.
\end{corollary}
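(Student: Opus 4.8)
\textbf{Proof plan for \Cref{cor-same-neighbors}.} The statement follows almost immediately by combining the two inclusions of \Cref{lemma-low-nei} with the hypothesis $\Gamma_{I_0}(w)\subseteq {}^I W$, so the proof will be short. The plan is to establish each of the two inclusions $\Gamma_{I_0}(w)\subseteq \Gamma_I(w)$ and $\Gamma_I(w)\subseteq \Gamma_{I_0}(w)$ separately.

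First I would observe that, since $\Gamma_{I_0}(w)\subseteq {}^I W$ by assumption, we have $\Gamma_{I_0}(w)\cap {}^I W = \Gamma_{I_0}(w)$. Plugging this into \Cref{lemma-low-nei}\eqref{lemma-low-nei-item1} gives at once $\Gamma_{I_0}(w)\subseteq \Gamma_I(w)$.

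For the reverse inclusion I would invoke \Cref{lemma-low-nei}\eqref{lemma-low-nei-item2}, which yields $\Gamma_I(w)\subseteq \pi_{P_0}(\Gamma_{I_0}(w))$. Here the key point is that the map $\pi_{P_0}\colon {}^{I_0}W\to {}^I W$ extends the identity of ${}^I W$ (as recalled just before inequality \eqref{ineq-ell}). Since by hypothesis every element of $\Gamma_{I_0}(w)$ already lies in ${}^I W$, the map $\pi_{P_0}$ fixes each such element, so $\pi_{P_0}(\Gamma_{I_0}(w)) = \Gamma_{I_0}(w)$. Therefore $\Gamma_I(w)\subseteq \Gamma_{I_0}(w)$, and combining the two inclusions gives the claimed equality.

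There is no real obstacle here: the content is entirely packaged in \Cref{lemma-low-nei} and in the fact that $\pi_{P_0}|_{{}^I W}=\mathrm{id}$. The only thing to be careful about is the potential ambiguity of the notation $\preccurlyeq$ (hence of $\Gamma$) between the posets ${}^I W$ and ${}^{I_0} W$, which is precisely why the subscripts $\Gamma_I$ and $\Gamma_{I_0}$ are used; I would make sure the write-up keeps these decorations explicit throughout.
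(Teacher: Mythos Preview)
Your proposal is correct and matches the paper's own proof essentially line for line: both argue $\Gamma_{I_0}(w)\subseteq \Gamma_I(w)$ via \Cref{lemma-low-nei}\eqref{lemma-low-nei-item1} together with the hypothesis, and then $\Gamma_I(w)\subseteq \pi_{P_0}(\Gamma_{I_0}(w))=\Gamma_{I_0}(w)$ via \Cref{lemma-low-nei}\eqref{lemma-low-nei-item2} and the fact that $\pi_{P_0}$ restricts to the identity on ${}^I W$.
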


\begin{proof}
By assumption $\Gamma_{I_0}(w)\subseteq {}^I W$, hence $\Gamma_{I_0}(w)\subseteq \Gamma_{I}(w)$. Conversely, we have $\Gamma_I(w)\subseteq \pi_{P_0}(\Gamma_{I_0}(w))$ by \Cref{lemma-low-nei}. By assumption $\Gamma_{I_0}(w)\subseteq {}^IW$, so $\pi_{P_0}(\Gamma_{I_0}(w))=\Gamma_{I_0}(w)$. The result follows.
\end{proof}

\begin{definition}\label{def-Ew}
For $w\in W$, denote by $E_w$ the set of $\alpha\in \Phi^+$ such that $ws_\alpha \leq w$ and $\ell(ws_\alpha)=\ell(w)-1$.
\end{definition}

Since any Bruhat-lower neighbor of $w$ can be written in the form $ws_\alpha$, $\alpha\in \Phi_+$, the map $\gamma \colon E_w \to \Gamma_{\emptyset}(w)$, $\alpha\mapsto ws_\alpha$ is a bijection.

\subsubsection{Canonical parabolic}\label{sec-canonical}

For $w\in {}^I W$, we attach to $w$ a standard parabolic subgroup $P_w$ of $G$ following \cite[\S 5.1]{Pink-Wedhorn-Ziegler-zip-data}. This reference uses the convention $B\subseteq Q$, whereas we have assumed $B\subseteq P$. We have transcribed below all results and definitions using our convention. First, there exists a largest subgroup $L_w\subseteq L$ such that $L_w= {}^{wz^{-1}}\varphi(L_w)$ (as subgroups of $G_k$). Furthermore, $L_w$ is a standard Levi subgroup of $G$ contained in $L$ (\loccit Proposition 5.6). Define a parabolic subgroup $P_w\subseteq G$ by $P_w\colonequals L_w B$. 
\begin{definition}
We call $P_w$ the \emph{canonical parabolic} of $w$.
\end{definition}
The parabolic $P_w$ is a standard parabolic of type $I_w$, where $I_w \subseteq I$ is the largest subset $I_0\subseteq I$ satisfying $I_0={wz^{-1}}\cdot \sigma(I_0)$ (viewed as subsets of $\Phi$). Denote by $\varphi_w$ the map $\varphi_w\colon \Phi\to \Phi$, $\alpha\mapsto (wz^{-1})\cdot\sigma(\alpha)$. We deduce:
\begin{equation}\label{eq-Iw}
    I_w = \bigcap_{m\geq 0} \varphi^m_w(I).
\end{equation}
Since $\varphi_w$ is a bijection of $\Phi$, we may also take intersections for $m\geq m_0$ for any integer $m_0$. In the case when the Galois action is trivial, equation \eqref{eq-Iw} boils down to $I_w = \bigcap_{m\geq 0} (wz^{-1})^m(I)$.

\begin{remark}
In \cite[Proposition 4.3.1]{Koskivirta-Normalization}, it was checked (in the case of a symplectic group $G=\Sp_{2n}$) that $P_w$ coincides with the parabolic subgroup stabilizing the canonical filtration of the Dieudonn\'{e} space corresponding to $w$. A similar description can be given in other cases, when the group $G$ is given as the automorphism group of a PEL datum (for example if $G=\GL_{n,\FF_p}$, or a unitary group). In these cases, as we will see in \ref{ssec-fzipcanfilt}, the stack $\Xcal$ admits a natural moduli interpretation in terms of certain filtrations endowed with Frobenius and Verschiebung endomorphisms $F,V$, compatible with the PEL structure. The parabolic $P_w$ is then the parabolic stabilizing the canonical filtration, i.\@e.\ the maximal filtration stable under $F^{-1}$ and $V$ (and the PEL structure).
\end{remark}
An equivalent formulation is the following. Define $I^*=I\cup \{0\}$ and a modified map 
\begin{equation} \label{phistar}
\varphi_{w,I}\colon I^*\to I^*    
\end{equation}
by setting $\varphi_{w,I}(\alpha)=\varphi_{w}(\alpha)$ if $\varphi_{w}(\alpha)\in I$ and $\varphi_{w,I}(\alpha)=0$ if $\varphi_w(\alpha)\notin I$. Say that an element $\alpha\in I$ is \emph{$w$-nilpotent} if $(\varphi_{w,I})^m(\alpha)=0$ (in $I^*$) for some $m\geq 1$. Then, $I_w\subseteq I$ coincides with the set of non-$w$-nilpotent elements. Equivalently, we may also write $I_w=\bigcap_{m\geq 0} \varphi^m_{w,I}(I)$. A first example is the case when $w=w_{0,I}w_0$ is the longest element of ${}^I W$. From the above discussion, we immediately conclude the following.

\begin{lemma}\label{lem-Pz}
    The canonical parabolic of the longest element $w_{0,I}w_0\in {}^I W$ is the standard parabolic whose type is given by $w_{0,I}(I_0)$ where $I_0=\bigcap_{n\in \ZZ} \sigma^n(I)$.
\end{lemma}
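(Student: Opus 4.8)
The plan is to unwind the definition \eqref{eq-Iw} of $I_w$ in the special case $w=w_{0,I}w_0$ and reduce the statement to an elementary computation with the operator $\varphi_w$ on $\Phi$.

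First I would compute $wz^{-1}$. Since $z=\sigma(w_{0,I})w_0$, while $w_0^{-1}=w_0$ and $\sigma(w_{0,I})^{-1}=\sigma(w_{0,I}^{-1})=\sigma(w_{0,I})$, one gets
\[
wz^{-1}=w_{0,I}w_0\cdot w_0\sigma(w_{0,I})=w_{0,I}\,\sigma(w_{0,I}).
\]
Next I would rewrite $\varphi_w=(wz^{-1})\circ\sigma$ as an operator on $X^\ast(T)$, hence on $\Phi$. By the compatibility of the Galois action with the $W$-action recalled in \S\ref{not-redWeyl} (namely $(\gamma\cdot w)(\gamma\cdot\lambda)=\gamma(w(\lambda))$, which says $\sigma(w_{0,I})=\sigma\circ w_{0,I}\circ\sigma^{-1}$ as operators), we obtain
\[
\varphi_w=w_{0,I}\circ\sigma\circ w_{0,I}\circ\sigma^{-1}\circ\sigma=w_{0,I}\circ\sigma\circ w_{0,I}.
\]
Since $w_{0,I}^{2}=\mathrm{id}$, a telescoping then yields $\varphi_w^{\,m}=w_{0,I}\circ\sigma^{m}\circ w_{0,I}$ for every $m\ge 0$.

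I would then plug this into \eqref{eq-Iw}. Composing the bijection $w_{0,I}$ of $\Phi$ with $I_w=\bigcap_{m\ge 0}\varphi_w^{\,m}(I)$ and using that it commutes with intersections gives
\[
w_{0,I}(I_w)=\bigcap_{m\ge 0}\sigma^{m}\bigl(w_{0,I}(I)\bigr).
\]
Now $w_{0,I}$ stabilizes $\Phi_I$ and carries $\Delta_L=I$ onto $-I$, so $w_{0,I}(I)=-I$; as $\sigma$ commutes with $\alpha\mapsto-\alpha$, this gives $\sigma^{m}(w_{0,I}(I))=-\sigma^{m}(I)$. Because $\sigma$ acts on $\Delta$ through a finite cyclic group, the intersection over $m\ge 0$ coincides with the one over all $m\in\ZZ$, hence equals $-I_0$ with $I_0=\bigcap_{n\in\ZZ}\sigma^{n}(I)$. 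Therefore $w_{0,I}(I_w)=-I_0$, i.e.\ $I_w=-w_{0,I}(I_0)$. Since $I_0\subseteq I$ and $\alpha\mapsto-w_{0,I}(\alpha)$ is the opposition involution of $\Phi_I$ (a bijection $I\to I$), the set $-w_{0,I}(I_0)$ lies in $I\subseteq\Delta$, and this is precisely the subset denoted $w_{0,I}(I_0)$ in the statement — the image of $I_0$ under the automorphism of the type-$I$ sub-diagram induced by $w_{0,I}$ — which is the type of $P_{w_{0,I}w_0}$.

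I do not expect a serious obstacle here: the argument is short once $\varphi_w$ is identified as the operator $w_{0,I}\circ\sigma\circ w_{0,I}$. The only delicate points are the bookkeeping of the Galois action — it is the cancellation of $\sigma^{-1}$ against $\sigma$ that makes $\varphi_w$ reduce to $w_{0,I}\circ\sigma\circ w_{0,I}$ — and the sign convention at the very end, namely recognizing that "$w_{0,I}(I_0)$" in the statement must be read as $-w_{0,I}(I_0)$ in order to be a subset of $\Delta$.
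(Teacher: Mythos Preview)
Your proposal is correct and follows exactly the route the paper has in mind: the paper gives no separate proof, declaring the lemma ``immediate'' from the description $I_w=\bigcap_{m\ge 0}\varphi_w^m(I)$, and your argument is precisely the unwinding of that formula for $w=w_{0,I}w_0$. Your remark about the sign convention---that the statement's ``$w_{0,I}(I_0)$'' must be read as $-w_{0,I}(I_0)$ (the diagram automorphism of the type-$I$ subdiagram) to land in $\Delta$---is a genuine clarification of a minor notational sloppiness in the paper; this is consistent with how the paper uses the same notation in \eqref{J0-eq}.
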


We explain the link between the canonical parabolic and point stabilizers of the $E$-action on $G_k$. For $x\in G$, note that the stabilizer $\Stab_E(x)$ embeds via the first projection $\pr_1\colon E\to P$ as a subgroup of $P$. We will henceforth identify $\Stab_E(x)$ with this subgroup of $P$.

\begin{lemma}
\label{lem-stabinpw}
For $w\in {}^I W$, the stabilizer $\Stab_E(wz^{-1})$ is contained in $P_w$. Moreover, \(P_w\) is the smallest standard parabolic of \(G_k\) containing \(\Stab_E(wz^{-1})\).
\end{lemma}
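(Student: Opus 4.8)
The plan is to reduce both assertions to a statement about the Levi part of the stabilizer, then prove the containment by an iteration and the minimality by exhibiting a finite group of Lie type inside the stabilizer. Write $g\coloneqq \dot w\dot z^{-1}$ and $H\coloneqq \Stab_E(g)$. I would first observe that $\pr_1\colon E\to P$ is injective on $H$ (if $(1,y)\in H$ then $g=gy^{-1}$, so $y=1$), identifying $H$ with the subgroup $\{\,x\in P : {}^{g^{-1}}x\in Q,\ \varphi(\overline x)=\overline{{}^{g^{-1}}x}\,\}$ of $P$, where $\overline{(\,\cdot\,)}$ denotes the Levi projection $P\to L$, resp.\ $Q\to M$. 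Since $B\subseteq P$ we have $R_u(P)\subseteq B\subseteq P_w=L_wB$, and writing $x=\overline x\,u$ with $u\in R_u(P)$ shows $x\in P_w$ whenever $\overline x\in L_w$. So, setting $N\coloneqq \{\,\overline x : x\in H\,\}\subseteq L$, the containment $H\subseteq P_w$ reduces to $N\subseteq L_w$; and since any standard parabolic $P_0$ of $G_k$ contains $R_u(P)$ and meets $L$ in a standard parabolic of $L$, minimality reduces to showing that $N$ lies in no standard parabolic of $L$ meeting $L_w$ properly.

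For $N\subseteq L_w$, the idea is to feed the relation $\varphi(\overline x)=\overline{{}^{g^{-1}}x}$ back into itself. An $x\in H$ lies in $P\cap {}^{wz^{-1}}Q$, so it has two Levi projections: $\overline x\in L$, and $\rho(x)\in {}^{wz^{-1}}M={}^{wz^{-1}}\varphi(L)$ coming from ${}^{wz^{-1}}Q$. Since $g^{-1}=\dot z\dot w^{-1}$ normalizes $T$ and, for $w$ of minimal length in its coset, conjugation by it identifies ${}^{wz^{-1}}Q$ with $Q$ compatibly with Levi decompositions, the Levi projection of ${}^{g^{-1}}x$ in $Q$ is ${}^{g^{-1}}\rho(x)$; combined with $\varphi(\overline x)=\overline{{}^{g^{-1}}x}={}^{g^{-1}}\rho(x)$ and the compatibility of $\overline x$ with $\rho(x)$ on the common Levi $L\cap {}^{wz^{-1}}M$, this forces $\overline x$ into the standard Levi of type $I\cap\varphi_w(I)$ — so the canonical parabolic has dropped one step. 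Iterating this analysis inside the smaller reductive group with its induced zip datum pins $N$ into $\bigcap_{m\ge 0}\varphi_w^m(L)=L_w$ by \eqref{eq-Iw}; equivalently, one shows $N$ is stable under $\ell\mapsto {}^{wz^{-1}}\varphi(\ell)$ in the appropriate sense and invokes the maximality in the definition of $L_w$. I expect the main obstacle to be precisely this Levi-projection bookkeeping: verifying that the two parabolic structures on $P\cap {}^{wz^{-1}}Q$ interact so that the condition defining $H$ encodes exactly one step of the recursion \eqref{eq-Iw}. An alternative is to carry out the descent geometrically along a chain of flag spaces $\Fcal^{(P_0)}$, $B\subseteq P_0\subseteq P$, using \Cref{prop-flag}.

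For minimality, I would introduce $\theta_w\colon L_w\to L_w$, $\ell\mapsto {}^{wz^{-1}}\varphi(\ell)$: conjugating $L_w={}^{wz^{-1}}\varphi(L_w)$ gives $\varphi(L_w)={}^{zw^{-1}}L_w$, so $\theta_w$ is well defined, and it is a Frobenius map on $L_w$ (an automorphism composed with the relative Frobenius), whence $L_w^{\theta_w}$ is a finite group of Lie type by Lang's theorem. If $x\in L_w^{\theta_w}$, then $\varphi(x)={}^{zw^{-1}}x\in \varphi(L_w)\subseteq M$, so $(x,\varphi(x))\in E$, and $x\,g\,\varphi(x)^{-1}=x\,g\,(g^{-1}x^{-1}g)=g$; hence $(x,\varphi(x))\in H$ and $L_w^{\theta_w}\subseteq N$. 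Since a finite group of Lie type is contained in no proper parabolic subgroup of the ambient reductive group (it contains nontrivial root subgroups in every direction), $N\supseteq L_w^{\theta_w}$ cannot lie in any standard parabolic of $L$ meeting $L_w$ properly. Together with $N\subseteq L_w$ from the previous step, this shows that $P_w\cap L$ is the smallest standard parabolic of $L$ containing $N$, which by the reduction of the first paragraph is equivalent to the assertion that $P_w$ is the smallest standard parabolic of $G_k$ containing $\Stab_E(\dot w\dot z^{-1})$.
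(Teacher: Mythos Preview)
Your approach is genuinely different from the paper's. The paper gives a two-line argument by citing \cite{Koskivirta-Normalization}: it quotes the equivalence (for any standard $P_0$) between $P_w\subseteq P_0$ and the map $\pi_{P_0}\colon \Fcal^{(P_0)}_w\to \Xcal_w$ being an isomorphism, and then the further equivalence with $\Stab_E(wz^{-1})\subseteq P_0$ (this is recorded in the present paper as \Cref{prop-minimal-Pw}). Both the containment and the minimality then drop out by specializing $P_0$.

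Your route is direct group theory and does not pass through the flag space. The minimality half, producing $L_w^{\theta_w}$ inside $N$ via Lang's theorem and observing that the $\theta_w$-fixed points of $L_w$ are Zariski dense (hence not contained in any proper parabolic of $L_w$), is correct and pleasant; this is essentially the content of the stabilizer description in \cite[\S8]{Pink-Wedhorn-Ziegler-zip-data}, where the finite group $L_w^{\theta_w}$ appears as the \'etale part of $\Stab_E(wz^{-1})$. For the containment half, your iteration is on the right track---it is exactly the recursion underlying the stabilizer computation in \cite{Pink-Wedhorn-Ziegler-zip-data}---but, as you note, the Levi bookkeeping is where the work is: one needs that $P\cap {}^{wz^{-1}}Q$ decomposes compatibly with both Levi projections, which uses $w\in {}^IW$ (so that ${}^{w^{-1}}I\cap J$ is the type of the intersection Levi) and is not automatic. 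Filling this in amounts to reproving part of \cite[Theorem~8.1]{Pink-Wedhorn-Ziegler-zip-data}.

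In short: the paper's proof is shorter because it outsources both directions to the flag-space characterization; your argument is more self-contained and makes the role of the twisted Frobenius on $L_w$ explicit, at the cost of redoing the inductive stabilizer analysis for the first half.
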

\begin{proof}
Let $P_0$ be a standard parabolic. By \cite[Lemma 3.2.1]{Koskivirta-Normalization}, the inclusion $P_w\subseteq P_0$ is equivalent to the fact that $\pi_{P_0}\colon \Fcal^{(P_0)}_w\to \Xcal_w$ is an isomorphism. By \loccit Proposition 2.2.1.(3), this is also equivalent to the inclusion $\Stab_E(wz^{-1})\subseteq P_0$. The result follows.
\end{proof}

As mentioned in the above proof, the canonical parabolic is the smallest standard parabolic subgroup $P_0$ such that the map $\pi_{P_0}\colon \Fcal_{w}^{(P_0)}\to \Xcal_w$ is an isomorphism. It was falsely claimed in \cite{Koskivirta-Normalization} that the canonical parabolic $P_w$ is the largest standard parabolic subgroup $P_0\subseteq P$ such that $\Fcal^{(P_0)}_w$ is a weakly Bruhat stratum of $\Fcal^{(P_0)}$. The correct statement is the following: $P_w$ is the largest standard parabolic $P_0\subseteq P$ such that $\Fcal^{(P_0)}_w$ is Bruhat. Summing up, we have the following proposition.

\begin{proposition}[{\cite[Lemma 3.2.1]{Koskivirta-Normalization}}] \label{prop-minimal-Pw}
Let $w\in {}^I W$. For any parabolic subgroup $B\subseteq P_0\subseteq P$, we have equivalences $(1a) \Leftrightarrow (1b) \Leftrightarrow (1c)$, as well as $(2a) \Leftrightarrow (2b)$:
\begin{enumerate}[\ (1)]
\item[\textnormal{(1a)}] The map $\pi_{P_0}\colon \Fcal_{w}^{(P_0)}\to \Xcal_w$ is an isomorphism.
\item[\textnormal{(1b)}] One has $P_w\subseteq P_0$.
\item[\textnormal{(1c)}] One has $\Stab_E(wz^{-1}) \subseteq P_0$.
\end{enumerate}
\begin{enumerate}[\ (1)]
\item[\textnormal{(2a)}] The stratum $\Fcal_{w}^{(P_0)}$ is Bruhat.
\item[\textnormal{(2b)}]  One has $P_0\subseteq P_w$.
\end{enumerate}
\end{proposition}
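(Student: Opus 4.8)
The plan is to translate each of the four conditions into a statement about the stabilizer groups of the $E$-action on $G_k$ using \Cref{prop-flag}, and then to compare the resulting combinatorial conditions with the definition of the canonical parabolic. Throughout, fix a representative $g\colonequals \dot w\dot z^{-1}$ and recall that $\pr_1\colon\Stab_E(g)\to P$ is a closed immersion (if $xgy^{-1}=g=x g(y')^{-1}$ then $y=y'$), via which we regard $\Stab_E(g)$ as a subgroup of $P$.

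\textbf{The equivalence $(1a)\Leftrightarrow(1c)$.} By \Cref{prop-flag}\eqref{prop-flag-1} the map $\pi_{P_0}\colon\Fcal_w^{(P_0)}\to\Xcal_w$ is finite \'etale and surjective, and by \Cref{prop-flag}\eqref{prop-flag-3} it sits in a commutative square whose horizontal arrows are the isomorphisms $\Fcal_w^{(P_0)}\simeq[\Stab_{E'_{P_0}}(g)\backslash 1]$ and $\Xcal_w\simeq[\Stab_E(g)\backslash 1]$, so that $\pi_{P_0}$ is identified with the morphism of gerbes induced by the inclusion $\Stab_{E'_{P_0}}(g)\hookrightarrow\Stab_E(g)$. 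Since $E'_{P_0}=E\cap(P_0\times G)$, this inclusion is $\Stab_E(g)\cap P_0\hookrightarrow\Stab_E(g)$ under the above identification. A finite \'etale morphism $[\Hcal_1\backslash 1]\to[\Hcal_2\backslash 1]$ coming from a closed subgroup inclusion is an isomorphism if and only if $\Hcal_1=\Hcal_2$ (its fibres are $\Hcal_2/\Hcal_1$); hence $\pi_{P_0}$ is an isomorphism iff $\Stab_E(g)\cap P_0=\Stab_E(g)$, i.e.\ iff $\Stab_E(g)\subseteq P_0$. This is $(1c)$, and gives $(1a)\Leftrightarrow(1c)$; it is essentially \cite[Proposition 2.2.1(3)]{Koskivirta-Normalization}.

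\textbf{The equivalence $(1b)\Leftrightarrow(1c)$.} Since $B\subseteq P_0\subseteq P$ is standard it is determined by its type $I_0\subseteq I$, and $P_w$ is standard of type $I_w=\bigcap_{m\geq 0}\varphi_w^m(I)$ by \eqref{eq-Iw}; so it suffices to show that $P_w$ is the \emph{smallest} standard parabolic of $G_k$ containing $\Stab_E(g)$. I would prove this in two steps using the structure of $\Stab_E(g)$ from \cite[\S5.1]{Pink-Wedhorn-Ziegler-zip-data} (adapted to our convention $B\subseteq P$): \textup{(i)} $\Stab_E(g)\subseteq P_w$, by checking that the Levi projection of $\Stab_E(g)$ into $L$ lands in $L_w$ and that the unipotent part of $\Stab_E(g)$ lies in $R_{\mathrm{u}}(P_w)$; \textup{(ii)} minimality: if $\Stab_E(g)\subseteq P_0$ for a standard parabolic $P_0$ of Levi type $I_0$, then the Levi projection of $\Stab_E(g)$, together with $T$, already generates $L_w$, forcing $L_w\subseteq L_0$ and hence $P_w\subseteq P_0$; here one uses the maximality built into the definition $L_w={}^{wz^{-1}}\varphi(L_w)$, equivalently the characterization of $I_w$ as the set of non-$w$-nilpotent simple roots via the map $\varphi_{w,I}$ of \eqref{phistar}. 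Combining, $P_w\subseteq P_0\Leftrightarrow\Stab_E(g)\subseteq P_0$, which is $(1b)\Leftrightarrow(1c)$. (Alternatively, $(1a)\Leftrightarrow(1b)$ can be obtained directly by comparing the dimension of $\Fcal_w^{(P_0)}$ with $\ell(w)$ via \eqref{ineq-ell} on a chain $B\subseteq P_0\subseteq P$.)

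\textbf{The equivalence $(2a)\Leftrightarrow(2b)$.} The tuple $\Zcal_{P_0}=(G,P_0,Q_0,L_0,M_0)$ is a genuine zip datum with frame $(B,T,z)$, and $\Fcal_w^{(P_0)}=\Psi_{P_0}^{-1}(\Xcal_w^{\Zcal_{P_0}})$ with $\Psi_{P_0}$ smooth and surjective; hence $\Fcal_w^{(P_0)}$ is Bruhat if and only if $w$ is Bruhat for $\Zcal_{P_0}$, which by \Cref{cor-str-Br} applied to $\Zcal_{P_0}$ means ${}^wJ_0=I_0$, where $J_0$ is the type of $Q_0$ as in \eqref{J0-eq}. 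It then remains to perform a direct combinatorial verification, using $z=\sigma(w_{0,I})w_0$, the formula \eqref{J0-eq} for $J_0$, and the description \eqref{eq-Iw} of $I_w$ (equivalently the $w$-nilpotency formulation), that the condition ${}^wJ_0=I_0$ is equivalent to $I_0\subseteq I_w$, i.e.\ to $P_0\subseteq P_w$. This matches \cite[Lemma 3.2.1]{Koskivirta-Normalization}.

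\textbf{Main obstacle.} I expect the technical heart to be step \textup{(ii)} in the proof of $(1b)\Leftrightarrow(1c)$: one must extract from $\Stab_E(g)$ exactly the root data of $L_w$, so that membership of the stabilizer in a standard parabolic $P_0$ already detects all of $I_w$. This is precisely the point where the earlier (erroneous) formulation in \cite{Koskivirta-Normalization} went wrong for weakly Bruhat strata; the correct bookkeeping distinguishes the simple roots of $I$ that genuinely occur in $\Stab_E(g)$ from those that only appear after enlarging to a parabolic, and it is governed by the orbits of $\varphi_{w,I}$ on $I^\ast=I\cup\{0\}$. Phrasing everything in terms of $w$-nilpotency, rather than in terms of $\Fcal_w^{(P_0)}$ being weakly Bruhat, is what makes the statement clean and correct.
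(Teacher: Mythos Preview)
The paper does not give its own proof of this proposition; it is cited from \cite[Lemma 3.2.1]{Koskivirta-Normalization}, so there is no internal argument to compare against beyond the surrounding discussion (chiefly \Cref{lem-stabinpw} and the paragraph immediately preceding the proposition).

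Your treatment of $(1a)\Leftrightarrow(1c)$ via \Cref{prop-flag}\eqref{prop-flag-3} is correct and is exactly how the paper invokes this equivalence elsewhere (cf.\ the proof of \Cref{lem-stabinpw}). Your sketch of $(1b)\Leftrightarrow(1c)$ rightly isolates the minimality step as the substantive point; the paper defers this to \cite{Koskivirta-Normalization} as well, and the underlying input is the explicit structure of $\Stab_E(wz^{-1})$ from \cite[\S5]{Pink-Wedhorn-Ziegler-zip-data}, as you say.

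There is, however, a genuine gap in your argument for $(2a)\Leftrightarrow(2b)$. Your reduction of $(2a)$ to the condition $\varphi_w(I_0)=I_0$ via \Cref{cor-str-Br} applied to $\Zcal_{P_0}$ is correct, and this immediately yields $(2a)\Rightarrow(2b)$ by maximality of $I_w$. But the converse you assert as a ``direct combinatorial verification'' --- that $I_0\subseteq I_w$ forces $\varphi_w(I_0)=I_0$ --- is false: $\varphi_w|_{I_w}$ is a bijection of $I_w$ but need not fix every subset. Concretely, take $G=\GL_4$ with signature $(2,2)$ and $w=\id$; then $I_{\id}=\{\alpha_1,\alpha_3\}=I$ and $\varphi_{\id}$ swaps $\alpha_1,\alpha_3$ (cf.\ the computation in \S\ref{sec-antype}), so $I_0=\{\alpha_1\}$ satisfies $I_0\subseteq I_{\id}$ while $\varphi_{\id}(I_0)=\{\alpha_3\}\neq I_0$. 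Thus your route cannot establish $(2b)\Rightarrow(2a)$ as stated. Note that the paper's own prose just before the proposition only asserts that $P_w$ is the \emph{largest} $P_0$ with $\Fcal_w^{(P_0)}$ Bruhat --- i.e.\ $(2a)\Rightarrow(2b)$ together with the case $P_0=P_w$ (where $\varphi_w(I_w)=I_w$ holds by definition) --- and this weaker statement, which your reduction does prove, is all that is used downstream (\Cref{prop-Pw}, \Cref{prop-str-Br-Pw}, \Cref{strict-Bruh-Pw}).
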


\begin{remark}\label{rmk-1d}
The proof of \cite[Lemma 3.2.1]{Koskivirta-Normalization} shows a slightly stronger result, namely that conditions (1a), (1b) and (1c) above are also equivalent to the condition:
\begin{enumerate}
\item[\textnormal{(1d)}]  The map $\pi_{P_0}\colon \Fcal_{w}^{(P_0)}\to \Xcal_w$ is bijective.
\end{enumerate}
\end{remark}

Therefore, if we take $P_0=P_w$, the \(\Zcal_w\)-zip stratum $\Fcal_w^{(P_w)}$ combines all properties listed above. Namely, we obtain the following consequence: 
\begin{proposition}\label{prop-Pw}
For $w\in {}^I W$, the map $\pi_{P_w}\colon \Fcal_w^{(P_w)}\to \Xcal_w$ is an isomorphism and $\Fcal_w^{(P_w)}$ is a Bruhat stratum. In particular, $\overline{\Fcal}_w^{(P_w)}$ is normal and Cohen--Macaulay.
\end{proposition}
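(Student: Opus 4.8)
The plan is to obtain all three assertions by specializing the structural results assembled above to the choice of intermediate parabolic $P_0\colonequals P_w$.

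First I would apply \Cref{prop-minimal-Pw} with $P_0=P_w$. The inclusion $P_w\subseteq P_w$ holds trivially, so condition (1b) of that proposition is satisfied; hence so is condition (1a), i.e.\@ $\pi_{P_w}\colon\Fcal_w^{(P_w)}\to\Xcal_w$ is an isomorphism. The same tautological inclusion is condition (2b), so condition (2a) holds as well: the \(\Zcal_{P_w}\)-zip stratum $\Fcal_w^{(P_w)}$ coincides with a Bruhat stratum of $\Fcal^{(P_w)}$ for the Bruhat stratification afforded by $\Psi_{P_w}$ (\S\ref{sec-fine-stratif}, \S\ref{sec-Bruhat}).

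It then remains to deduce that $\overline{\Fcal}_w^{(P_w)}$ is normal and Cohen--Macaulay. Since $\Fcal_w^{(P_w)}$ is simultaneously a \(\Zcal_{P_w}\)-zip stratum and a Bruhat stratum, its Zariski closure is the closure of a Bruhat stratum of $\Fcal^{(P_w)}$. By construction, that Bruhat stratification is pulled back from the Bruhat decomposition of $\bigl[(P_w\times{}^{z^{-1}}Q_w)\backslash G_k\bigr]$ along the composite
\[
\Fcal^{(P_w)}\xrightarrow{\ \Psi_{P_w}\ }\Xcal^{\Zcal_{P_w}}\xrightarrow{\ \beta^{\Zcal_{P_w}}\ }\bigl[(P_w\times{}^{z^{-1}}Q_w)\backslash G_k\bigr],
\]
where $Q_w$ is the parabolic attached to $P_w$ as in \S\ref{sec-fine-stratif} (so that $P_w$ and ${}^{z^{-1}}Q_w$ are standard parabolics of $G_k$). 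Both maps in this composite are smooth --- $\Psi_{P_w}$ by \eqref{psimap}, and $\beta^{\Zcal_{P_w}}$ because it is a projection of quotient stacks along the subgroup inclusion $E_{\Zcal_{P_w}}\subseteq P_w\times Q_w$ followed by an isomorphism --- hence so is the composite. Applying part~(1) of \Cref{prop-normCMBruhat}, which rests on the normality and Cohen--Macaulayness of closures of Bruhat cells in $G_k$ \cite{ramarama, rama, andersi}, we conclude that the closure of every Bruhat stratum of $\Fcal^{(P_w)}$ --- in particular $\overline{\Fcal}_w^{(P_w)}$ --- is normal and Cohen--Macaulay.

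I do not anticipate a genuine obstacle: the proposition is a formal consequence of \Cref{prop-minimal-Pw} together with \Cref{prop-normCMBruhat}. The only point demanding care is the bookkeeping in the last step, namely confirming that the Bruhat stratification on $\Fcal^{(P_w)}$ appearing in \Cref{prop-minimal-Pw}\,(2) is literally the one induced by a smooth morphism to $[(P_1\times P_2)\backslash G_k]$ for standard parabolics $P_1,P_2$, so that \Cref{prop-normCMBruhat} applies verbatim.
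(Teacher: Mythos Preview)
Your proof is correct and follows the same approach as the paper: apply \Cref{prop-minimal-Pw} with $P_0=P_w$, so that the trivial inclusion $P_w\subseteq P_w$ yields both the isomorphism $\pi_{P_w}\colon\Fcal_w^{(P_w)}\to\Xcal_w$ and the Bruhat property, and then invoke \Cref{prop-normCMBruhat} for normality and Cohen--Macaulayness of the closure. The paper treats the proposition as an immediate consequence of \Cref{prop-minimal-Pw} without further argument; your extra care in unpacking the smoothness of the composite $\Fcal^{(P_w)}\to[(P_w\times{}^{z^{-1}}Q_w)\backslash G_k]$ is accurate and simply makes explicit what the paper leaves implicit.
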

In particular, we obtain a proper, surjective, birational morphism $\pi_{P_w}\colon \overline{\Fcal}_{P_w,w}\to \overline{\Xcal}_w$, where $\overline{\Fcal}_{P_w,w}$ is a normal $k$-stack. Therefore, this map controls to a certain degree the singularities of $\overline{\Xcal}_w$. Apply \Cref{prop-minimal-Pw} to the case $P_0=P$, when $\Fcal^{(P)}$ is simply $\Xcal$. We deduce from the equivalence of (2a) and (2b) the following characterization of Bruhat strata:

\begin{proposition}\label{prop-str-Br-Pw}
Let $w\in {}^I W$. The following are equivalent.
\begin{enumerate}
    \item The stratum $\Xcal_w$ is Bruhat.
    \item $P_w=P$.
\end{enumerate}    
\end{proposition}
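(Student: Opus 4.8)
The plan is to deduce this as the special case $P_0 = P$ of the equivalence $(2a) \Leftrightarrow (2b)$ in \Cref{prop-minimal-Pw}. First I would spell out what the flag space machinery of \S\ref{ssecflagspace} becomes when $P_0 = P$: the homogeneous space $P/P_0$ reduces to a single point, so $\Fcal^{(P)} = [E\backslash (G_k \times \{\ast\})] = [E\backslash G_k] = \Xcal$. Moreover $E'_P = E \cap (P\times G) = E$ and $\Zcal_P = \Zcal$ — here one checks $Q_0 = M\,{}^zB = Q$, using that the unipotent radical $R_u(Q)$ is contained in every Borel of $Q$, in particular in ${}^zB$, so $M\,{}^zB \supseteq M R_u(Q) = Q$ while the reverse inclusion is clear. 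Consequently $\Psi_P$ is the identity, and the $\Zcal_P$-zip (resp.\@ Bruhat) stratification of $\Fcal^{(P)}$ coincides with the zip (resp.\@ Bruhat) stratification of $\Xcal$; in particular the stratum $\Fcal^{(P)}_w$ is exactly $\Xcal_w$. Hence ``$\Xcal_w$ is Bruhat'' \emph{is} condition $(2a)$ of \Cref{prop-minimal-Pw} for $P_0 = P$.

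Next I would invoke $(2a) \Leftrightarrow (2b)$ of that proposition, which in this case reads: $\Fcal^{(P)}_w$ is Bruhat if and only if $P \subseteq P_w$. Finally, since by construction $P_w$ is the standard parabolic of type $I_w$ with $I_w \subseteq I$, we always have $P_w \subseteq P$; combined with $P \subseteq P_w$ this forces $P_w = P$, and the converse implication $P_w = P \Rightarrow P \subseteq P_w$ is trivial. This closes the equivalence.

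There is essentially no obstacle here, since all the content is packaged in \Cref{prop-minimal-Pw}; the only mild point requiring care is verifying that the degenerate case $P_0 = P$ of the constructions in \S\ref{ssecflagspace} is read correctly — i.e.\@ the identification $\Fcal^{(P)} = \Xcal$ together with the matching of the two stratifications. As a consistency check one can reconcile this with \Cref{cor-str-Br}: by \eqref{eq-Iw}, and because $\varphi_w$ is a bijection of $\Phi$ with $I$ finite, $P_w = P$ is equivalent to $I_w = I$, hence to $\varphi_w(I) = I$; and since $\varphi_w(I) = (wz^{-1})\sigma(I) = w\cdot\bigl({}^{z^{-1}}\sigma(I)\bigr) = {}^wJ$ by \eqref{J-eq}, this is precisely the criterion ${}^wJ = I$.
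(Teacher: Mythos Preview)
Your proposal is correct and follows exactly the paper's own argument: the paper derives the proposition by specializing \Cref{prop-minimal-Pw} to $P_0=P$, noting that $\Fcal^{(P)}$ is simply $\Xcal$, and then reading off the equivalence $(2a)\Leftrightarrow(2b)$. Your additional care in verifying $Q_0 = M\,{}^zB = Q$ and the explicit observation that $P_w\subseteq P$ always holds (so $P\subseteq P_w$ is equivalent to $P_w=P$) are details the paper leaves implicit, and your consistency check with \Cref{cor-str-Br} reproduces verbatim the remark the paper makes immediately after the proposition.
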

This result is also consistent with \Cref{cor-str-Br}, because the equality ${}^w J= I$ can be transformed into ${}^{wz^{-1}}\sigma(I)=I$ using \eqref{J-eq}, which says exactly that $I$ is stable by the operator $\varphi_w$, i.e.\@ that $I_w=I$.

\begin{corollary}\label{strict-Bruh-Pw}
The zip stratum $\Xcal^{\Zcal_{P_w}}_w$ of $\Xcal^{\Zcal_{P_w}}$ is Bruhat.
\end{corollary}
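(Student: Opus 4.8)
The plan is to deduce \Cref{strict-Bruh-Pw} from the combinatorial characterization of Bruhat strata established above (\Cref{cor-str-Br}, equivalently \Cref{prop-str-Br-Pw}), applied not to $\Zcal$ itself but to the auxiliary zip datum $\Zcal_{P_w}=(G,P_w,Q_w,L_w,M_w)$ attached to the canonical parabolic. First I would record the bookkeeping: since $I_w\subseteq I$ one has $w\in{}^I W\subseteq{}^{I_w}W$, so $w$ genuinely indexes a zip stratum of $\Xcal^{\Zcal_{P_w}}$; moreover $\Zcal_{P_w}$ is an honest zip datum and $(B,T,z)$ is again a frame for it (\S\ref{sec-fine-stratif}), so every earlier result applies verbatim after the substitution $I\rightsquigarrow I_w$, and correspondingly $J\rightsquigarrow J_w={}^{z^{-1}}\sigma(I_w)$ by \eqref{J-eq}. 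Crucially, the frame element $z$ is unchanged, hence the operator $\varphi_w\colon\alpha\mapsto(wz^{-1})\cdot\sigma(\alpha)$ is literally the same map $\Phi\to\Phi$ used in the construction of $P_w$.

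Next I would apply \Cref{cor-str-Br} to $\Zcal_{P_w}$: the stratum $\Xcal^{\Zcal_{P_w}}_w$ is Bruhat if and only if ${}^w J_w=I_w$. Exactly as in the discussion following \Cref{prop-str-Br-Pw}, substituting $J_w={}^{z^{-1}}\sigma(I_w)$ rewrites this condition as ${}^{wz^{-1}}\sigma(I_w)=I_w$, i.e.\@ $\varphi_w(I_w)=I_w$. Finally, this stability is immediate from the definition of the canonical parabolic: by \eqref{eq-Iw} we have $I_w=\bigcap_{m\geq0}\varphi_w^m(I)$, so $\varphi_w(I_w)=\bigcap_{m\geq1}\varphi_w^m(I)$, and since $\varphi_w$ is a bijection of $\Phi$ this intersection is unchanged under shifting the index, whence $\varphi_w(I_w)=\bigcap_{m\geq0}\varphi_w^m(I)=I_w$. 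This proves the corollary. As an aside, the same computation shows that the canonical parabolic of $w$ relative to $\Zcal_{P_w}$ has type $\bigcap_{m\geq0}\varphi_w^m(I_w)=I_w$, i.e.\@ it equals the ambient parabolic $P_w$ of $\Zcal_{P_w}$, so one may equally finish via \Cref{prop-str-Br-Pw} instead of \Cref{cor-str-Br}.

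I do not expect any genuine obstacle here; the only point requiring care is the bookkeeping in the first paragraph — being sure that passing from $\Zcal$ to $\Zcal_{P_w}$ changes only $I$ and $J$ while leaving $z$, and hence $\varphi_w$, fixed — so that the defining $\varphi_w$-stability of $I_w$ is precisely the Bruhat criterion for the new datum. One should also double-check that the earlier statements (\Cref{cor-str-Br}, \Cref{prop-str-Br-Pw}) were proved for arbitrary zip data admitting a frame, not merely for those of the form $\Zcal_\mu$; this is the case, since their proofs only use the parametrization of strata and the formula \eqref{J-eq}.
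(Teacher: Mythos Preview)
Your proposal is correct and takes essentially the same approach as the paper. The paper's proof simply asserts that the canonical parabolic of $w$ computed for $\Zcal_{P_w}$ again equals $P_w$ and then invokes \Cref{prop-str-Br-Pw}; your argument unpacks exactly why this is so (the $\varphi_w$-stability of $I_w$ coming from \eqref{eq-Iw}), phrases it first via \Cref{cor-str-Br}, and then records the \Cref{prop-str-Br-Pw} route in your aside --- so the two proofs coincide up to level of detail and choice between the two equivalent criteria.
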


\begin{proof}
It is clear that the canonical parabolic subgroup of $w$ with respect to $\Xcal$ coincides with the canonical parabolic subgroup of $w$ with respect to $\Xcal^{\Zcal_{P_w}}$. Therefore the result follows from \Cref{prop-str-Br-Pw} above applied to $\Xcal^{\Zcal_{P_w}}$.
\end{proof}

\section{Changing the center}\label{section: changing the center}
In this section we study the properties of a morphism \(\GprimeZip^{\Zcal'} \to \GZip^\Zcal\) induced by a central extension \(G' \to G\) of reductive \(\Fp\)-groups.
\par Let \(G, G'\), be two connected reductive \(\Fp\)-groups related by an epimorphism \(\pi \colon G' \to G\) with central kernel \(K\). Consider \(\Zcal = (G, P, Q, L, M\) a zip datum. We can use \(\pi\) to pull \(\Zcal\) back to \(\Zcal'\) a zip datum for \(G'\): for \(\square \in \{P, Q, L, M\}\) we simply take \(\square' \colonequals \pi^{-1}(\square)\); in particular, we have \(K \leq \square'\). Let \(E\) be the zip group of \(\Zcal\) and \(E'\) that of \(\Zcal'\). Write \(\Xcal = [E\backslash G_k], \Xcal' = [E'\backslash G_k']\) for the corresponding stacks of $G$-zips and $G'$-zips.

\begin{lemma}
\label{lem-finet}
Let \(x \in G(R)\) and \(x' \in G'(R')\) a lift of \(x\) via \(\pi\), for \(R \to R'\) a fppf morphism of \(k\)-algebras. Then, \(\Stab_{E'}(x')\) is a central extension of \(\Stab_E(x)\) by the finite constant group \(K(\Fp)\). In particular, the map \(\Stab_{E'}(x') \to \Stab_E(x)\) is a \(K(\Fp)\)-torsor, hence finite \'{e}tale.
\end{lemma}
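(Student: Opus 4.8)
The plan is to unwind the definition of the zip group and compare stabilizers directly. Recall $E' = \{(x', y') \in P' \times Q' : \varphi(\overline{x'}) = \overline{y'}\}$, where the Levi projections are taken in $L'$ and $M'$. First I would observe that $\pi$ induces a surjection $E' \to E$: given $(x,y) \in E$, any fppf-local lifts $x' \in P'$, $y' \in Q'$ (which exist since $\pi$ is smooth, hence fppf) can be adjusted in the central kernel so that $\varphi(\overline{x'}) = \overline{y'}$, using that $K$ is central so that $\overline{x'}$ only depends on $x'$ modulo $R_u(P') = R_u(P)$ and $K \subseteq L'$. The kernel of $E' \to E$ is $\{(k_1, k_2) \in K \times K : \varphi(k_1) = k_2\}$; since $K$ is a finite group scheme over $\Fp$ (being the kernel of an isogeny-like map... more precisely, $K$ is finite because $G', G$ have the same dimension and $\pi$ is surjective with central, hence of multiplicative-times-finite type, kernel — but in any case one only needs that $\varphi$ acts on $K$), the condition $\varphi(k_1) = k_2$ forces $k_1 \in K(\Fp)$ (fixed points of Frobenius) and then $k_2 = k_1$. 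So $\ker(E' \to E) \cong K(\Fp)$, embedded diagonally and centrally in $E'$.

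Next I would transfer this to stabilizers. For $x \in G(R)$ and a lift $x' \in G'(R')$, an element $(a',b') \in E'(R'')$ (for $R'' $ an $R'$-algebra) stabilizes $x'$ iff $a' x' (b')^{-1} = x'$, and this maps under $\pi$ to the condition that $\pi(a',b')$ stabilizes $x = \pi(x')$. Conversely, if $(a,b) \in E(R'')$ stabilizes $x$, then any lift $(a',b') \in E'$ satisfies $a' x' (b')^{-1} = x' \cdot \kappa$ for some $\kappa \in K(R'')$ (since $\pi(a'x'(b')^{-1}) = x$), and one can correct $(a',b')$ by the central element $(\kappa, \kappa) \in \ker(E' \to E)$ — wait, one must check $(\kappa,\kappa)$ actually lies in $E'$, i.e.\ $\varphi(\kappa) = \kappa$, i.e.\ $\kappa \in K(\Fp)$. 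This requires knowing $\kappa$ is Frobenius-fixed, which holds because $x, x'$ are defined over $R, R'$ respectively and $\varphi$ is compatible with the group operations; alternatively, since lifts of a fixed $(a,b)$ differ by $K(\Fp)$, the map $\Stab_{E'}(x') \to \Stab_E(x)$ is surjective with kernel exactly $\ker(E' \to E) \cap \Stab_{E'}(x') = K(\Fp)$ (the whole diagonal $K(\Fp)$ visibly stabilizes $x'$ since it acts by $\kappa x' \kappa^{-1} = x'$ by centrality). Thus $\Stab_{E'}(x')$ is a central extension of $\Stab_E(x)$ by the finite constant group $K(\Fp)$, which is precisely the claim; being a torsor under a finite constant group, the quotient map is finite étale.

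The main obstacle I anticipate is the careful bookkeeping around surjectivity and the Frobenius-fixed condition: showing $E' \to E$ is surjective (not just that $E' \to E$ has the right kernel) needs the fppf-local existence of lifts compatible with the zip condition $\varphi(\overline{x'}) = \overline{y'}$, and pinning down that the "defect" $\kappa$ lands in $K(\Fp)$ rather than all of $K$. A clean way to handle this uniformly is: the diagonal $K(\Fp) \hookrightarrow E'$ is central (as $K$ is central in $G'$), the induced map $E'/K(\Fp) \to E$ is a monomorphism of group schemes, and a dimension/étale-locally-split count (or: $E' \to E$ is a composition $E' \to E'/K(\Fp) \to E$ with the first a $K(\Fp)$-torsor and the second an isomorphism) shows it is an isomorphism; then $\Stab_{E'}(x') = $ preimage of $\Stab_E(x)$ under $E' \to E$, so it fits in the pullback square exhibiting it as a $K(\Fp)$-torsor over $\Stab_E(x)$. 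I would present the argument in this order: (i) $\ker(E' \to E) = K(\Fp)$, (ii) $E' \to E$ surjective hence, being a torsor under a finite constant group, a $K(\Fp)$-torsor, (iii) pull back along $\Stab_E(x) \hookrightarrow E$ to conclude.
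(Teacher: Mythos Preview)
There is a genuine gap: your computation of $\ker(E'\to E)$ is wrong, and this propagates through the whole argument.

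The kernel of $E'\to E$ consists of pairs $(k_1,k_2)\in K\times K$ with $\varphi(k_1)=k_2$. This condition does \emph{not} force $k_1\in K(\Fp)$: for any $k_1\in K$ one may simply take $k_2=\varphi(k_1)$. So $\ker(E'\to E)\cong K$ via the first projection, not $K(\Fp)$. (In particular $K$ need not be finite; the paper only assumes $\pi$ has central kernel, so $K$ is of multiplicative type and may well contain a torus.) Consequently your ``clean'' plan (i)--(iii) fails at step (i), and the map $E'/K(\Fp)\to E$ is not a monomorphism. Moreover, pulling back $E'\to E$ along $\Stab_E(x)\hookrightarrow E$ gives a $K$-torsor over $\Stab_E(x)$ which is strictly larger than $\Stab_{E'}(x')$: a lift $(a',b')\in E'$ of a stabilizing pair only satisfies $a'x'(b')^{-1}=x'\kappa$ for some $\kappa\in K$, not $\kappa=1$.

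What is missing is exactly the Lang torsor argument that the paper uses. Replacing $(a',b')$ by $(a'h_1,\,b'\varphi(h_1))$ changes the defect $\kappa$ to $\kappa\, h_1\varphi(h_1)^{-1}$; killing it amounts to solving $\kappa=\varphi(h_1)h_1^{-1}$, which is possible fppf-locally because the Lang map $t\mapsto \varphi(t)^{-1}t$ on the multiplicative-type group $K$ is finite \'etale and surjective with kernel $K(\Fp)$. This simultaneously gives surjectivity of $\Stab_{E'}(x')\to\Stab_E(x)$ and identifies its kernel as the diagonal copy of $K(\Fp)$. Your outline gestures at the defect $\kappa$ but never closes this loop; you should replace the incorrect kernel computation with this Lang-map step.
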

\begin{proof}
The morphism \(\pi\) induces a natural map \(\Stab_{E'}(x') \to \Stab_E(x)\). Take \((a, x^{-1}ax) \in \Stab_E(x)(S)\), for \(S\) an \(R'\)-algebra. Let \(a' \in P(S')\) be any lift of \(a\), for \(S \to S'\) fppf morphism of \(R'\)-algebras, and consider \((x')^{-1}a'x'\). Since \(\pi((x')^{-1}a'x') = x^{-1}ax \in Q(S)\), we know that \((x')^{-1}a'x' \in Q'(S')\). Moreover, we can write \(\varphi(\overline{a}') = (x')^{-1}\overline{a}'x' h\), for some \(h \in K(S')\). Consider an arbitrary element \(h_1 \in K(S')\). By construction, \(K\) is contained (centrally) in both \(L'\) and \(M'\). Furthermore, since \(K\) is of multiplicative type (because the center of a reductive group is), it will have trivial intersection with \(R_u(P')\) and \(R_u(Q')\), so that the maps \(R_u(Q') \to R_u(Q)\) and \(R_u(P') \to R_u(P)\) induced by \(\pi\) are isomorphisms. In particular, we know that \(\overline{a' h_1} = \overline{a}' h_1\).
 Then, \(\varphi(\overline{a}'h_1) = (x')^{-1}\overline{a}'h_1x' h_1^{-1}\varphi(h_1)h\), since \(K\) is central in \(G\). In particular, to find a lift of \((a, x^{-1}ax)\) to an element of \(\Stab_{E'}(x')\), we must solve \(h = \varphi(h_1)^{-1} h_1\). Recall that the \emph{Lang torsor map}
\begin{align*}
    K & \longrightarrow K, \\
    t & \longmapsto \varphi(t)^{-1}t = t^{1-p}
\end{align*}
is finite \'{e}tale for any group of multiplicative type over \(\Fp\), such as \(K\). Its kernel is given by the finite group \(K(\Fp)\). This allows us to conclude.
\end{proof}

\begin{proposition}\label{proposition: map of gzips is a gerbe}
The map \(\Xcal' \to \Xcal\) is a \(K(\Fp)\)-banded gerbe. In particular, \(\Xcal' \to \Xcal\) is \'{e}tale and surjective.
\end{proposition}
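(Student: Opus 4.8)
The plan is to verify that $\Xcal' \to \Xcal$ is a gerbe banded by the constant group scheme $K(\FF_p)$ by checking the two defining properties — local surjectivity and local fullness (any two objects are locally isomorphic, with automorphisms identified with $K(\FF_p)$) — directly from the quotient-stack descriptions $\Xcal' = [E'\backslash G'_k]$ and $\Xcal = [E\backslash G_k]$, leaning on \Cref{lem-finet} for the computation of stabilizers. Since $\pi\colon G' \to G$ is an fppf epimorphism, the induced map on quotient stacks is a morphism of stacks over $k$; the point is to control its fibers.

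**First I would** establish local essential surjectivity: an object of $\Xcal$ over a $k$-scheme $T$ is, fppf-locally on $T$, represented by a point $x \in G(T')$, and since $\pi$ is an fppf epimorphism we may lift $x$ to $x' \in G'(T'')$ after a further fppf cover $T'' \to T'$; this $x'$ is an object of $\Xcal'$ mapping to $x$. Thus $\Xcal' \to \Xcal$ is an epimorphism of stacks. **Next** I would analyze morphisms: given two objects of $\Xcal'$ over $T$ mapping to the same object of $\Xcal$ — which fppf-locally we may present as elements $x', y' \in G'(T')$ with $\pi(x') = \pi(y')$ related by an element of $E(T')$ — I would show they become isomorphic in $\Xcal'$ after an fppf cover, again by lifting the connecting element of $E$ to $E'$ using that $\pi$ is an fppf epimorphism on $P'$ and $Q'$ and that $R_u(P') \xrightarrow{\sim} R_u(P)$, $R_u(Q') \xrightarrow{\sim} R_u(Q)$ (as noted in the proof of \Cref{lem-finet}); the obstruction to lifting on the Levi part is exactly a Lang-torsor equation in $K$, solvable fppf-locally. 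Hence any two objects in a fiber are locally isomorphic, so $\Xcal' \to \Xcal$ is a gerbe.

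**To identify the band**, I would compute the automorphism sheaf of the gerbe, i.e.\@ for an object $x' \in G'(T)$ lifting $x \in G(T)$ the kernel of $\Stab_{E'}(x')(T) \to \Stab_E(x)(T)$. By \Cref{lem-finet} this kernel is the finite constant group $K(\FF_p)$, and moreover that lemma shows $\Stab_{E'}(x') \to \Stab_E(x)$ is a $K(\FF_p)$-torsor; since $K$ is central in $G'$, conjugation by $x'$ acts trivially on $K$, so the band is the constant (abelian) group scheme $\underline{K(\FF_p)}$ with its evident action, independent of the object up to canonical identification. Therefore $\Xcal' \to \Xcal$ is a gerbe banded by $K(\FF_p)$. **Finally**, a gerbe banded by a finite étale (here finite constant) group scheme is representable, étale and surjective: surjectivity is the epimorphism property above, and étaleness follows because, fppf-locally on $\Xcal$, the gerbe is the classifying stack $B_{\Xcal}(K(\FF_p))$, and $BG \to \Spec k$ is étale for $G$ finite étale — so $\Xcal' \to \Xcal$ is étale by fppf-local-on-target checking of smoothness/étaleness. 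This gives the ``in particular'' clause.

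**The main obstacle** I anticipate is the bookkeeping in the local-fullness step: presenting objects and morphisms of the quotient stacks concretely enough to reduce ``locally isomorphic'' to a solvable equation in $K$, while being careful that the reduction genuinely only uses that $\pi$ is an fppf epimorphism with central kernel of multiplicative type (so that unipotent radicals match and the Levi obstruction is a Lang torsor). Once the stabilizer computation of \Cref{lem-finet} is in hand, the rest is formal, but one must phrase it so that the ``$K(\FF_p)$-banded'' conclusion — not merely ``$\Xcal'\to\Xcal$ is a $K(\FF_p)$-gerbe'' — is visibly justified, i.e.\@ that the band is the constant group and the obvious trivial band action.
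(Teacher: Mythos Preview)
Your proposal is correct and follows essentially the same approach as the paper: both arguments verify the gerbe axioms by lifting along the fppf epimorphism $\pi$ and solving a Lang-torsor equation in $K$, invoke \Cref{lem-finet} to identify the band as $K(\FF_p)$, and deduce étaleness from the fact that the gerbe is locally $[K(\FF_p)\backslash 1]$. The only cosmetic difference is that the paper cites specific Stacks Project lemmas (06P1, 06QG, 0CIQ, 06QI) where you describe the verifications directly.
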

\begin{proof}
Consider \(x, x'\), as in \Cref{lem-finet}. Let also \(x''\) be a lift via \(\pi\) of any element in the \(E\) orbit of \(x\). Taking an fppf morphism $S\to S'$ we find $e'\in E'(S')$ such that $\pi(e'x')=\pi(x'')$. Replacing $e'$ by a multiple $e'h$, for $h\in K(S')$, if necessary, we may assume that $e'x'=x''$. This, together with the arguments of \Cref{lem-finet}, shows that the hypotheses of point (2) of \cite[\href{https://stacks.math.columbia.edu/tag/06P1}{Lemma 06P1}]{stacks-project} are satisfied. Thus, \(\Xcal' \to \Xcal\) is a gerbe. Moreover, by the orbit-stabilizer theorem, we have that
\[
\begin{tikzcd}
    & {\left[\Stab_{E'}(x') \backslash 1\right]} \arrow[r, "\sim"] \arrow[d] & {[E'\backslash (E'\cdot x')]} \arrow[r, hook] \arrow[d] & \Xcal' \arrow[d]\\
    & {\left[\Stab_{E}(x)\backslash 1\right]} \arrow[r, "\sim"] & {[E\backslash (E\cdot x)]} \arrow[r, hook] & \Xcal.
\end{tikzcd}
\]
This shows that the fibers over an \(R\)-point of \(\Xcal\) are locally isomorphic to the classifying stack \([K(\Fp)\backslash 1]\), from which we conclude that 
\(\Xcal' \to \Xcal\) is banded by \(K(\Fp)\).

\par We can find a smooth surjective morphism \(U \to \Xcal\), from some \(U \in \sch_k\), such that the pullback \(U \times_{\Xcal} \Xcal' \to U\) of \(\Xcal' \to \Xcal\) admits a section \(U \to U \times_{\Xcal} \Xcal'\). This implies, by \cite[\href{https://stacks.math.columbia.edu/tag/06QG}{Lemma 06QG}]{stacks-project}, that the gerbe \(U \times_{\Xcal} \Xcal' \to U\) is equivalent to \([K(\Fp)\backslash U] \to U\). By \cite[\href{https://stacks.math.columbia.edu/tag/0CIQ}{Lemma 0CIQ}]{stacks-project}, this implies that \(\Xcal' \to \Xcal\) is \'{e}tale. Moreover, by \cite[\href{https://stacks.math.columbia.edu/tag/06QI}{Lemma 06QI}]{stacks-project}, \(\Xcal' \to \Xcal\) is also surjective.
\end{proof}

\begin{theorem}\label{theorem: central extension gives gzip map}
Let $\mathscr{P}$ be a property of stacks that is stable under finite \'etale base change and let $\mathscr{P}'$ be a property that descends along finite \'etale morphisms. Let $\Gamma\subseteq \IW$ be a locally closed subset, let \(\Ucal_\Gamma = \cup_{w \in \Gamma} \Xcal_w\) and let \(\Ucal_\Gamma' = \cup_{w \in \Gamma} \Xcal_w'\). Then, $\Ucal_\Gamma'$ satisfies $\mathscr{P}$ if $\Ucal_\Gamma$ does, and $\Ucal_\Gamma$ satisfies $\mathscr{P}'$ if $\Ucal_\Gamma'$ does.
\end{theorem}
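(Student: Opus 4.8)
The plan is to derive the statement from \Cref{proposition: map of gzips is a gerbe}, which realises the morphism $f\colon\Xcal'\to\Xcal$ as a $K(\FF_p)$-banded gerbe: in particular $f$ is étale and surjective, and smooth-locally on $\Xcal$ it is the structural morphism $[K(\FF_p)\backslash U]\to U$ of a trivial gerbe, for which $U\to[K(\FF_p)\backslash U]$ is a $K(\FF_p)$-torsor, hence a finite étale surjection of schemes. On a single zip stratum the picture is transparent: under the isomorphisms $\Xcal_w\cong[\Stab_E(\dot wz^{-1})\backslash 1]$ and $\Xcal_w'\cong[\Stab_{E'}(\dot w'z^{-1})\backslash 1]$ of \S\ref{sec-param}, where $\dot w'\in N_{G'}(T')$ is a representative lifting $\dot w$, the morphism $f$ restricts to the one induced by $\Stab_{E'}(\dot w'z^{-1})\to\Stab_E(\dot wz^{-1})$, which is a $K(\FF_p)$-torsor by \Cref{lem-finet}. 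The real task is to show that over a union of zip strata $f$ restricts to a morphism of the same type; the permanence hypotheses on $\mathscr P$ and $\mathscr P'$ then conclude.

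The first step is to prove $\Ucal_\Gamma'=f^{-1}(\Ucal_\Gamma)$, equivalently $\Ucal_\Gamma'=\Ucal_\Gamma\times_\Xcal\Xcal'$. As $\Gamma$ is locally closed, it suffices to check $\Xcal_w'=f^{-1}(\Xcal_w)$ for each $w\in\IW$. The central epimorphism $\pi\colon G'\to G$ leaves unchanged the Weyl group, the type $I$ of $P$, and the frame element $z$, so the zip strata of $\Xcal$ and of $\Xcal'$ are both indexed by $\IW$; moreover this indexing is compatible with $f$, since $f$ sends the $E'$-orbit of $\dot w'z^{-1}$ into the $E$-orbit of $\pi(\dot w'z^{-1})=\dot wz^{-1}$. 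Being a gerbe, $f$ induces a homeomorphism on underlying topological spaces, so by the previous sentence $|f|$ restricts to a homeomorphism $|\Xcal_w'|\to|\Xcal_w|$. Finally, since $\Xcal_w$ is reduced and $f$ is étale, $f^{-1}(\Xcal_w)$ is a reduced locally closed substack of $\Xcal'$ supported on $|\Xcal_w'|$, hence it coincides with the reduced substack $\Xcal_w'$. Taking the union over $w\in\Gamma$ gives the claim.

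Consequently $f_\Gamma\colon\Ucal_\Gamma'\to\Ucal_\Gamma$ is the base change of $f$ along the locally closed immersion $\Ucal_\Gamma\hookrightarrow\Xcal$, hence is again a $K(\FF_p)$-banded gerbe, in particular étale and surjective; smooth-locally on $\Ucal_\Gamma$ it is the classifying morphism of a finite étale $K(\FF_p)$-torsor of schemes, so it counts as a finite étale surjection in the sense relevant to $\mathscr P$ and $\mathscr P'$. Applying the assumed stability of $\mathscr P$ under finite étale base change to $f_\Gamma$ yields that $\Ucal_\Gamma'$ satisfies $\mathscr P$ whenever $\Ucal_\Gamma$ does, and applying the assumed descent of $\mathscr P'$ along the finite étale surjection $f_\Gamma$ yields that $\Ucal_\Gamma$ satisfies $\mathscr P'$ whenever $\Ucal_\Gamma'$ does. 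The step I expect to give the most trouble is the stratum-compatibility of the second paragraph — confirming that the $\IW$-parametrisation is genuinely transported by $f$, so that the preimage of a union of zip strata is exactly the expected union — together with the small amount of bookkeeping forced by $f$ not being representable, which makes ``finite étale'' a notion to be read off a smooth atlas rather than taken at face value.
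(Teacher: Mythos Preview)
Your proposal is correct and follows essentially the same approach as the paper: both derive the theorem from \Cref{proposition: map of gzips is a gerbe}. The paper's proof is a single sentence citing that proposition (and a Stacks Project lemma), whereas you have unpacked the two ingredients the paper leaves implicit --- the stratum-by-stratum identification $\Xcal_w'=f^{-1}(\Xcal_w)$ and the passage to a smooth atlas to make sense of ``finite \'etale'' for the non-representable gerbe morphism --- so your argument is a legitimate fleshing-out rather than a different route.
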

\begin{proof}
This follows immediately from \Cref{proposition: map of gzips is a gerbe} (see also \cite[\href{https://stacks.math.columbia.edu/tag/06FM}{Lemma 06FM}]{stacks-project}).
\end{proof}
\begin{remark}\label{rem-changing-center}
We remark that being smooth, being normal or being Cohen--Macaulay all are properties that both descend and are stable along finite \'etale morphisms.
\end{remark}

For a zip-datum \(\Zcal\), let us denote by \(\Zcal^\mr{ad}\) the zip-datum obtained from \(\Zcal\) by replacing \(G\) by \(G^\mr{ad} = G/Z(G)\) and any \(\square \in \{P, Q, L, M\}\) by \(\square/Z(G)_k\).
\begin{corollary}\label{smoothbeforeadj}
Let \(S\) be a \(k\)-scheme with a \(k\)-morphism \(\zeta \colon S \to \Xcal^\Zcal\). Suppose that the composition \(S \overset{\zeta}{\longrightarrow} \Xcal^\Zcal \to \Xcal^{\Zcal^\mr{ad}}\) is smooth. Then, \(\zeta\) is smooth.
\end{corollary}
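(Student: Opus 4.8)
The plan is to deduce this from \Cref{proposition: map of gzips is a gerbe}, applied to the central quotient map $\pi\colon G\to G^{\mr{ad}}$, followed by a cancellation argument in the composition $h\colonequals p\circ\zeta\colon S\to\Xcal^{\Zcal^{\mr{ad}}}$, which is smooth by hypothesis; here $p\colon\Xcal^{\Zcal}\to\Xcal^{\Zcal^{\mr{ad}}}$ denotes the natural map.

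First I would check that $p$ falls within the scope of \Cref{section: changing the center}. Take the epimorphism $\pi\colon G\to G^{\mr{ad}}$ with kernel $K=Z(G)$. Since $G$ is connected reductive, $Z(G)$ is of multiplicative type, so \Cref{lem-finet} and \Cref{proposition: map of gzips is a gerbe} apply; moreover $Z(G)$ is abelian, so $A\colonequals Z(G)(\Fp)$ is a finite abelian constant group scheme over $k$. Pulling $\Zcal^{\mr{ad}}$ back along $\pi$ recovers $\Zcal$, since $Z(G)$ is contained in each of $P,Q,L,M$. Hence $p$ is an $A$-banded gerbe; in particular it is étale and surjective.

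Next, I would form the fiber product $\Pcal\colonequals S\times_{\Xcal^{\Zcal^{\mr{ad}}}}\Xcal^{\Zcal}$, using $h$ on the $S$-side and $p$ on the other side. Then $\pr_1\colon\Pcal\to S$ is the base change of the $A$-banded gerbe $p$, hence an $A$-banded gerbe over the scheme $S$, while $\pr_2\colon\Pcal\to\Xcal^{\Zcal}$ is the base change of the smooth morphism $h$, hence smooth. The tautological morphism $\gamma\colonequals(\id_S,\zeta)\colon S\to\Pcal$ is a section of $\pr_1$ and satisfies $\pr_2\circ\gamma=\zeta$. A section of an $A$-banded gerbe trivializes it and is, as such, an $A$-torsor over $S$; since $A$ is a finite constant group scheme, $\gamma$ is finite étale, in particular smooth. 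Therefore $\zeta=\pr_2\circ\gamma$ is a composite of smooth morphisms of algebraic stacks, hence smooth.

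The only delicate point — and the reason one cannot simply invoke the scheme-theoretic fact that a section of an étale morphism is an open immersion — is that $p$ is a nontrivial gerbe, hence not representable; this is handled by using instead that a section of a gerbe banded by a finite étale group is a torsor under the band, so still finite étale. This step is also insensitive to the characteristic of $k$, because $A$ is a \emph{constant} group scheme and hence finite étale over $k$ regardless of $p$. All other ingredients (stability of smooth and étale morphisms under base change, stability of smooth morphisms under composition) are formal.
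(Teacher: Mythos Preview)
Your proof is correct and follows the same strategy as the paper: factor $\zeta$ through the fiber product $\Pcal = S \times_{\Xcal^{\Zcal^{\mr{ad}}}} \Xcal^\Zcal$ and show that the tautological section of $\pr_1$ is étale, which the paper packages as the general cancellation \Cref{unramifiedtosmooth}. You are in fact more careful than the paper: \Cref{unramifiedtosmooth} is stated for schemes, where a section of an unramified morphism is an open immersion, whereas you correctly identify the section of the non-representable gerbe $\pr_1$ as an $A$-torsor and hence finite étale rather than an open immersion.
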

\begin{proof}
This follows from \Cref{proposition: map of gzips is a gerbe} and \Cref{unramifiedtosmooth} below.
\end{proof}

\begin{lemma}
\label{unramifiedtosmooth}
Let  $X, Y, Z$, be schemes with maps $f \colon X\to Y$ and $g\colon Y \to Z$. Assume that $g$ is unramified and that the composition $g \circ f$ is smooth. Then, $f$ is smooth.
\end{lemma}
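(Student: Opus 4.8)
The plan is to deduce smoothness of $f$ by a standard "cancellation" argument. First I would record that smoothness can be tested as: flat and locally of finite presentation with smooth (geometrically regular) fibers. The composition $g\circ f$ is smooth, hence flat and locally of finite presentation; and $g$ is unramified, hence in particular locally of finite type (indeed locally of finite presentation if we are in a reasonable setting, or we simply add this to the hypotheses as is implicit in the paper's conventions). From $g\circ f$ locally of finite presentation and $g$ locally of finite type one gets $f$ locally of finite presentation by the usual cancellation property of finite-presentation morphisms (\cite[\href{https://stacks.math.columbia.edu/tag/06Q4}{Tag 06Q4}]{stacks-project} and its finite-presentation analogue).

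Next I would handle flatness of $f$. Since $g$ is unramified, the diagonal $\Delta_g\colon Y\to Y\times_Z Y$ is an open immersion. I would factor $f$ through its graph: $f = \mathrm{pr}_2\circ \Gamma_f$ where $\Gamma_f\colon X\to X\times_Z Y$ is the graph of $f$ and $\mathrm{pr}_2\colon X\times_Z Y\to Y$ is the projection. The projection $\mathrm{pr}_2$ is a base change of the smooth morphism $g\circ f\colon X\to Z$ along $g\colon Y\to Z$, hence $\mathrm{pr}_2$ is smooth. On the other hand, $\Gamma_f$ is a base change of $\Delta_g$ (via the map $(f,\mathrm{id})\colon X\times_Z Y\to Y\times_Z Y$... more precisely $\Gamma_f$ fits in a Cartesian square with $\Delta_g$), and since $\Delta_g$ is an open immersion, $\Gamma_f$ is an open immersion as well. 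Therefore $f$, being the composite of an open immersion followed by a smooth morphism, is smooth. This simultaneously gives flatness, local finite presentation, and the fiber condition, so no separate fiberwise analysis is needed.

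The only genuinely delicate point is bookkeeping about finite presentation versus finite type: $\Delta_g$ an open immersion requires only $g$ unramified, and smoothness of $\mathrm{pr}_2$ follows from smoothness of $g\circ f$ by base change, so the graph-factorization argument in fact sidesteps flatness-descent entirely. I would therefore present the proof in the clean form: unramified $\Rightarrow$ $\Delta_g$ open immersion $\Rightarrow$ $\Gamma_f$ open immersion; $g\circ f$ smooth $\Rightarrow$ $\mathrm{pr}_2$ smooth by base change; compose. I expect the main (minor) obstacle to be making sure the Cartesian square identifying $\Gamma_f$ as a pullback of $\Delta_g$ is set up correctly, and citing the relevant Stacks Project tags (\cite[\href{https://stacks.math.columbia.edu/tag/05W5}{Tag 05W5}]{stacks-project} for $\Delta_g$ an immersion under unramifiedness, and the stability of open immersions and smooth morphisms under composition and base change).
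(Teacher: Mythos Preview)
Your proposal is correct and takes essentially the same approach as the paper: both factor $f$ as $\mathrm{pr}_2\circ \Gamma_f$ with $\mathrm{pr}_2$ smooth by base change of $g\circ f$, and $\Gamma_f$ an open immersion. The only cosmetic difference is that the paper phrases the latter as ``$\Gamma_f$ is a section of the unramified first projection, hence an open immersion'', whereas you phrase it as ``$\Gamma_f$ is a pullback of $\Delta_g$, which is an open immersion''; these are the same fact.
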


\begin{proof}
We can write $f$ as the composition of the second projection $X \times_{Z} Y  \to Y$  and a section  $s\colon X \to X \times_{Z} Y$ of the first projection given by $f$. The second projection $X \times_{Z} Y  \to Y$ is smooth because $X \to Z$ is smooth. The first projection $X \times_{Z} Y \to X$ is unramified because $g \colon Y \to Z$ is unramified. Any section of an unramified morphism is an open immersion, hence the section  $s$ is an open immersion. Therefore $f$ is smooth.
\end{proof}

Let $(G,\mu)$ be a minuscule cocharacter datum of simple $\Csf_n$-type  
and let $\Zcal=\Zcal_\mu=(G,P,Q,L,M)$ be the corresponding zip datum. Wedhorn shows in \cite[Remark 2.5]{wedhorn.bruhat.PEL} that the Bruhat stratification (\S\ref{sec-Bruhat}) is linear and determined by the conjugate line position\footnote{The conjugate line position is sometimes referred to as the $a$-number.} \cite[\S(1.2.5)]{goldring2024ogusprinciplezipperiod} of the universal $G$-zip. Let  $\Xcal=\overline{\Xcal}_0\supseteq \overline{\Xcal}_1\supseteq  \ldots\supseteq \overline{\Xcal}_n$ denote the Bruhat stratification, where $\Xcal_i$ is the locus where the conjugate line position has value $i$.
\begin{corollary}
    Let $\Xcal_w$ denote the unique zip stratum of codimension one. Then $\overline{\Xcal}_w$ is Cohen--Macaulay and $\overline{\Xcal}_w^{\text{sm}}=\overline{\Xcal}_w\cap \Xcal_1$.
\end{corollary}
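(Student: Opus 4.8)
The plan is to reduce the statement to the corresponding fact about the flag space via the canonical parabolic $P_w$ and the machinery of \Cref{prop-Pw} and \Cref{theorem: thmA intro}. First I would identify the codimension-one zip stratum $\Xcal_w$: since the Bruhat stratification is linear (by Wedhorn) and refines the zip stratification, and since both have a unique stratum of codimension one, we have $\Xcal_w = \Xcal_1$ and $\overline{\Xcal}_w = \overline{\Xcal}_1 = \Xcal_1 \cup \overline{\Xcal}_2 \cup \cdots \cup \overline{\Xcal}_n$. The first task is to pin down $w \in {}^IW$ explicitly — it is the unique length $\ell(w_{0,I}w_0) - 1$ element — and to compute its canonical parabolic $P_w$ using \eqref{eq-Iw}, i.e.\@ $I_w = \bigcap_{m\geq 0}\varphi_w^m(I)$. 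For a minuscule $\Csf_n$-cocharacter, $I$ is obtained from the $\Csf_n$ Dynkin diagram by deleting one node, and the Galois action is trivial, so this is a finite combinatorial computation with the explicit $w$.

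Next I would apply \Cref{prop-Pw}: $\pi_{P_w}\colon \Fcal_w^{(P_w)} \to \Xcal_w$ is an isomorphism, $\Fcal_w^{(P_w)}$ is a Bruhat stratum, and $\overline{\Fcal}_w^{(P_w)}$ is normal and Cohen--Macaulay. Since $\overline{\Xcal}_w = \Xcal_w \cup \Xcal_{w'}$ for $w'$ the next lower stratum — wait, this is only \emph{elementary} if the codimension-one-in-$\overline{\Xcal}_w$ stratum is unique. In the linear case it is: $\overline{\Xcal}_w$ is elementary precisely when $n = 2$; for $n > 2$ it is a chain of more than two strata, so I should instead argue directly. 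The cleanest route: by \Cref{theorem: thmA intro} (the $G$-zip version, \Cref{thm-extnorm}), $\overline{\Xcal}_w$ is Cohen--Macaulay and normal as soon as it is $w$-bounded and admits a separating canonical cover. So the key steps are: (i) show $\overline{\Xcal}_w$ admits a separating canonical cover, i.e.\@ $\pi_{P_w}^{-1}(\overline{\Xcal}_w) \cap \overline{\Fcal}_w^{(P_w)} = \bigcup_{w' \preccurlyeq_I w}\Fcal_{w'}^{(P_w)}$; (ii) show $w$-boundedness, i.e.\@ $P_{w'} \subseteq P_w$ for all $w' \preccurlyeq_I w$, which for the linear $\Csf_n$ chain reduces to computing the canonical parabolics of all strata and checking the containments along the chain; then conclude Cohen--Macaulayness of $\overline{\Xcal}_w$, and simultaneously that $\overline{\Xcal}_w$ is normal, hence by \Cref{theorem: thmA intro} the canonical section extends.

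For the smooth locus, I would combine this with \Cref{theorem: thmB intro}-style reasoning localized to the open substack $\Xcal_w \cup \Xcal_2$: on $\Xcal_w = \Xcal_1$ the stratum is smooth, so $\overline{\Xcal}_w^{\text{sm}} \supseteq \Xcal_1$; conversely I must show that at points of $\Xcal_2 \subseteq \overline{\Xcal}_w$ the closure is not smooth. Since $\Xcal_2$ has codimension $2$ in $\Xcal$, hence codimension $1$ in $\overline{\Xcal}_w$, the open substack $\Xcal_1 \cup \Xcal_2$ of $\overline{\Xcal}_w$ is elementary, so I can invoke \Cref{thm-elemsmoothchar}: it is smooth iff the canonical section $s_w$ extends zip-theoretically to it. The computation then reduces to checking whether the canonical parabolic $P_{w''}$ of the codimension-two element $w''$ is contained in $P_w$ — and the expectation (matching the $K3$ and $\Bsf_n$ pictures, where the singular locus is exactly the next stratum down) is that it is \emph{not}, giving non-smoothness along $\Xcal_2$. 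The main obstacle I anticipate is the explicit root-combinatorial computation of $w$, of the lower neighbors along the chain, and of their canonical parabolics in type $\Csf_n$ — in particular verifying $w$-boundedness of the whole chain and the failure of the containment $P_{w''} \subseteq P_w$ at the second stratum; everything else is a formal application of the structural results already established.
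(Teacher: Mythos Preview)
Your proposal misreads the statement. The $\Xcal_i$ here are the \emph{Bruhat} strata (indexed by the conjugate line position, $n+1$ of them), not zip strata; for simple $\Csf_n$-type the zip stratification is not linear (${}^IW$ has $2^n$ elements), and the zip stratification refines the Bruhat one, not the reverse. In particular $\Xcal_w \subsetneq \Xcal_1$: the Bruhat stratum $\Xcal_1$ contains many zip strata besides $\Xcal_w$, and the asserted smooth locus $\overline{\Xcal}_w \cap \Xcal_1$ is strictly larger than $\Xcal_w$, so your smooth-locus argument aims at the wrong target. Your plan is also internally inconsistent: you propose to verify $w$-boundedness of all of $\overline{\Xcal}_w$ (i.e.\ $P_{w'} \subseteq P_w$ for every $w' \preccurlyeq_I w$) to invoke \Cref{thm-extnorm}, and simultaneously that the codimension-two element $w''$ satisfies $P_{w''} \not\subseteq P_w$; since $w'' \preccurlyeq_I w$, these two claims contradict each other.

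The paper's argument is entirely different and much shorter, which is why the corollary sits in the section on changing the centre. The case $G = \GSp_{2n}$ is already known from \cite[Theorem 6.2.7]{goldring2024ogusprinciplezipperiod}. For general $G$ of simple $\Csf_n$-type one then applies \Cref{theorem: central extension gives gzip map} together with \Cref{rem-changing-center}: Cohen--Macaulayness and smoothness are both stable under and descend along finite \'etale maps, so they transfer along the finite \'etale gerbe of \Cref{proposition: map of gzips is a gerbe} induced by the central isogeny relating $G$ to $\GSp_{2n}$ via $G^{\mathrm{ad}}$. No canonical-parabolic or separating-cover computations are used.
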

\begin{proof}
For $G=
\GSp_{2n}$ this was shown in \cite[Theorem 6.2.7]{goldring2024ogusprinciplezipperiod}. The general case thus follows from \Cref{theorem: central extension gives gzip map}.
\end{proof}


\section{Singularities of strata}
\label{sec-singostrat}

\subsection{Canonical covers}\label{can-cov-sec}

Let $I_0\subseteq I$ be the type of an intermediate parabolic $B\subseteq P_0\subseteq P$. Recall that the twisted order with respect to the stack $\Xcal^{\Zcal_{P_0}}$ is denoted by $\preccurlyeq_{I_0}$.

\begin{definition}\label{def-elemop} Fix an element $w\in {}^I W$.
\begin{enumerate}
\item  We say that a subset $\Ucal\subseteq \Xcal$ is \emph{$w$-open} if $\Ucal\subseteq \overline{\Xcal}_w$ and is open in $\overline{\Xcal}_w$. We always endow a \(w\)-open subset with the reduced substack structure.
    \item Let $w'\in \IW$ be an element such that $w'\preccurlyeq_I w$. For such a pair $(w,w')$, define
\begin{equation*}
    \Ucal(w,w')\colonequals \Xcal_w \cup \Xcal_{w'},
\end{equation*}
endowed with the reduced substack structure. We call $\Ucal(w,w')$ an \emph{elementary substack}. 
    
\end{enumerate}
\end{definition}

For a \(w\)-open substack $\Ucal\subseteq \overline{\Xcal}_w$, we let $\Gamma_\Ucal\subseteq {}^I W$ denote the subset corresponding to $\Ucal$ via the parametrization ${}^I W\to |\Xcal|$ given in \ref{sec-param}. Being $w$-open translates to the following three conditions: (1) $w\in \Gamma_\Ucal$, (2) For all $w'\in \Gamma_\Ucal$, one has $w'\preccurlyeq_I w$, and (3) if $w''\preccurlyeq_I w' \preccurlyeq_I w$ with $w''\in \Gamma_\Ucal$, then $w'\in \Gamma_\Ucal$.

In particular, note that $w\in \Gamma_\Ucal$ is the longest element of $\Gamma_\Ucal$. Set
\begin{equation}\label{UP0}
    \Ucal^{(P_0)}\colonequals \bigcup_{w\in \Gamma_\Ucal} \Fcal^{(P_0)}_w.
\end{equation}
Note that since the partial order $\preccurlyeq_I$ and (the restriction to ${}^I W$ of) the partial order $\preccurlyeq_{I_0}$ may differ, it can happen that $\Ucal^{(P_0)}$ is not contained in $\overline{\Fcal}^{(P_0)}_w$.

\begin{definition} \ 
\label{def-cover}
\begin{enumerate}
    \item We say that $\Ucal$ admits a \emph{$P_0$-cover} if $\Ucal^{(P_0)}$ is contained in $\overline{\Fcal}^{(P_0)}_w$ and is open in it.
    \item We say that $\Ucal$ admits a \emph{canonical cover} if it admits a $P_w$-cover.
\end{enumerate}
\end{definition}

The \(w\)-open substack $\Ucal$ admits a $P_0$-cover if and only if the set $\Gamma_{\Ucal}$, viewed as a subset of ${}^{I_0} W$, satisfies the following two properties:  (1) \(v\preccurlyeq_{I_0} w\) for all $v\in \Gamma_{\Ucal}$ 
and, (2) for all $w',w''\in {}^{I_0} W$, if $w''\preccurlyeq_{I_0}w'\preccurlyeq_{I_0} w$ and $w''\in \Gamma_\Ucal$, then $w'\in \Gamma_{\Ucal}$. This has the following consequence: if $P_0\subseteq P_1\subseteq P$ and if $\Ucal$ admits a $P_0$-cover, then $\Ucal$ also admits a $P_1$-cover.

\begin{remark}\label{rmk-U-bruh}
Assume that $w'$ is a Bruhat-lower neighbor of $w$. Then, for any parabolic $B\subseteq P_0\subseteq P$, the elementary substack $\Ucal(w,w')$ admits a $P_0$-cover. Indeed, since $w'\leq w$ we have $w'\preccurlyeq_{I_0} w$ for any $I_0\subseteq I$. However, for general elementary substacks $\Ucal(w,w')$, it can happen that $w'\preccurlyeq_{I_w} w$ does not hold, hence $\Ucal(w,w')$ does not always admit a canonical cover.
\end{remark}

We always endow the stratum $\overline{\Fcal}^{(P_0)}_w$ (for $w\in {}^{I_0}W$) with the reduced substack structure. Similarly, if $w$ admits a $P_0$-cover, we endow $\Ucal^{(P_0)}$ with the reduced substack structure. If a \(w\)-open substack $\Ucal$ admits a $P_0$-cover, the map $\pi_{P_0}\colon \overline{\Fcal}^{(P_0)}_w\to \overline{\Xcal}_w$ restricts to a map of stacks
\begin{equation}
   \pi_{P_0} \colon \Ucal^{(P_0)}\to \Ucal.
\end{equation}

\begin{definition}\label{def-sepcov}
Let $\Ucal\subseteq \overline{\Xcal}_w$ be an open subset. We say that $\Ucal$ admits a \emph{separating $P_0$-cover} if it admits a $P_0$-cover and the following diagram is Cartesian

\begin{equation}\label{diag-sep-cover}
\xymatrix@M=5pt{
\Ucal^{(P_0)} \ar@{^{(}->}[r] \ar[d]_{\pi_{P_0}} & \overline{\Fcal}_w^{(P_0)} \ar[d]^{\pi_{P_0}} \\
\Ucal \ar@{^{(}->}[r] & \overline{\Xcal}_w.
}
\end{equation}
%
\end{definition}

Since it is difficult to compute images of \(\Zcal_{P_0}\)-zip strata, it is in general hard to check whether $\Ucal$ admits a separating cover. Note that $\Ucal$ admits a separating $P_0$-cover if and only if for all $w'\in {}^{I_0}W$ such that $w'\preccurlyeq_{I_0} w$, we have:
\begin{equation}
  \pi_{P_0}(w')\in \Gamma_\Ucal \ \Longrightarrow \ w'\in \Gamma_\Ucal
\end{equation}
where $\pi_{P_0} \colon {}^{I_0}W \to {}^IW$ is the map defined in \Cref{subsec-lownb}. If $P_0\subseteq P_1\subseteq P$ and $\Ucal$ admits a separating $P_0$-cover, then it also admits a separating $P_1$-cover. 
We deduce the following.
\begin{proposition}\label{prop-proper}
If $\Ucal$ admits a separating $P_0$-cover, the map $\pi_{P_0}\colon \Ucal^{(P_0)}\to \Ucal$ is finite. If $\Ucal$ admits a separating canonical cover, the map $\pi_{P_w}\colon \Ucal^{(P_w)}\to \Ucal$ is finite and birational.
\end{proposition}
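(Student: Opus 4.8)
The plan is to reduce both claims to properties of the morphism $\pi_{P_0}\colon \overline{\Fcal}_w^{(P_0)}\to \overline{\Xcal}_w$ established earlier, and then use the Cartesian square \eqref{diag-sep-cover} to pull them back. First I would recall that $\pi_{P_0}\colon \Fcal^{(P_0)}\to \Xcal$ is a $P/P_0$-fibration, in particular proper (as $P/P_0$ is a projective variety), and that the induced map $\pi_{P_0}\colon \overline{\Fcal}_w^{(P_0)}\to \overline{\Xcal}_w$ is proper as well, being a base change along the closed immersion $\overline{\Xcal}_w\hookrightarrow\Xcal$ composed with restriction to the closed substack $\overline{\Fcal}_w^{(P_0)}\subseteq\pi_{P_0}^{-1}(\overline{\Xcal}_w)$. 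By definition of a separating $P_0$-cover, the square \eqref{diag-sep-cover} is Cartesian, so $\pi_{P_0}\colon \Ucal^{(P_0)}\to\Ucal$ is obtained from the proper map $\overline{\Fcal}_w^{(P_0)}\to\overline{\Xcal}_w$ by base change along the open immersion $\Ucal\hookrightarrow\overline{\Xcal}_w$; hence it is proper.

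Next I would upgrade properness to finiteness. For this it suffices to check that $\pi_{P_0}\colon \Ucal^{(P_0)}\to\Ucal$ is quasi-finite, since a proper quasi-finite morphism of algebraic stacks (with the target, say, locally Noetherian) is finite. Quasi-finiteness can be verified stratum by stratum: for each $w'\in\Gamma_\Ucal$, the restriction $\pi_{P_0}\colon\Fcal_{w'}^{(P_0)}\to\Xcal_{w'}$ is finite étale by \Cref{prop-flag}\eqref{prop-flag-1} (using $w'\in{}^IW$), hence in particular quasi-finite; since $\Ucal^{(P_0)}=\bigcup_{w'\in\Gamma_\Ucal}\Fcal_{w'}^{(P_0)}$ set-theoretically and each fiber of $\pi_{P_0}$ over a point of $\Ucal$ is a finite union of fibers of these finite étale maps, the fibers of $\pi_{P_0}\colon\Ucal^{(P_0)}\to\Ucal$ are finite. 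Therefore $\pi_{P_0}$ is proper and quasi-finite, so finite. This proves the first assertion; the first assertion of course applies in particular when $P_0=P_w$, giving that $\pi_{P_w}\colon\Ucal^{(P_w)}\to\Ucal$ is finite.

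It remains to see that, when $P_0=P_w$, the map is birational. Here I would use \Cref{prop-Pw}: the restriction $\pi_{P_w}\colon\Fcal_w^{(P_w)}\to\Xcal_w$ is an isomorphism onto the top stratum. Since $w$ is the longest element of $\Gamma_\Ucal$, the stratum $\Xcal_w$ is open dense in $\Ucal$ and $\Fcal_w^{(P_w)}$ is open dense in $\Ucal^{(P_w)}$ (densities holding because $\Ucal$ and $\Ucal^{(P_w)}$ are reduced and $\Xcal_w$, resp.\ $\Fcal_w^{(P_w)}$, meet every irreducible component — indeed $\overline\Fcal_w^{(P_w)}$ is irreducible by \Cref{prop-Pw}, and $\Ucal^{(P_w)}$ is open in it, so $\Ucal^{(P_w)}$ is irreducible with generic point that of $\Fcal_w^{(P_w)}$; similarly $\Ucal$ is irreducible with generic point that of $\Xcal_w$). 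As $\pi_{P_w}$ identifies these dense opens, it is birational. The only mild subtlety — the step I expect to require the most care — is the bookkeeping showing that the Cartesian property in \eqref{diag-sep-cover} indeed transports "proper over $\overline{\Xcal}_w$" to "proper over $\Ucal$" while simultaneously the stratum-by-stratum description of $\Ucal^{(P_w)}$ is compatible with it; this is where one must be careful that $\Ucal^{(P_0)}$ as defined in \eqref{UP0} really coincides with the fiber product $\Ucal\times_{\overline{\Xcal}_w}\overline{\Fcal}_w^{(P_0)}$, which is exactly the content of the hypothesis that the cover is separating.
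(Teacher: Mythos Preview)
Your proof is correct and follows essentially the same approach as the paper: quasi-finiteness is checked stratum by stratum via \Cref{prop-flag}\eqref{prop-flag-1}, properness comes from the Cartesian square \eqref{diag-sep-cover} and the properness of $\pi_{P_0}$, and birationality for $P_0=P_w$ follows from the isomorphism $\Fcal_w^{(P_w)}\to\Xcal_w$ of \Cref{prop-Pw}. You have simply spelled out in more detail the properness and irreducibility steps that the paper leaves implicit.
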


\begin{proof}
For any $w'\in {}^I W$, the restriction of the map $\pi_{P_0}$ to the stratum $\Fcal^{(P_0)}_{w'}$ is finite onto $\Xcal_{w'}$, see \Cref{prop-flag}. Hence the map $\pi_{P_0}\colon \Ucal^{(P_0)}\to \Ucal$ is quasi-finite. Since \eqref{diag-sep-cover} is Cartesian, this map is also proper, hence we deduce that it is finite. Finally, if $P_0=P_w$, the map $\Fcal^{(P_w)}_w\to \Xcal_w$ is an isomorphism, so the result follows.
\end{proof}

\subsection{\texorpdfstring{$w$}{w}-Bounded subset}

Let $w\in {}^I W$ and let $\Ucal\subseteq \overline{\Xcal}_w$ be a \(w\)-open substack.
\begin{definition}\label{def-wbounded}
We say that $\Ucal$ is \emph{$w$-bounded} if, for any $w'\in \Gamma_\Ucal$, we have $P_{w'}\subseteq P_w$.
\end{definition}

We recall the following result on the fibers of proper birational morphisms.

\begin{proposition}[Zariski's Main Theorem, {\cite[4.4.3]{liu-AG}}] \label{prop-birat-proper}
Let $X$ be an integral scheme, $Y$ a normal locally Noetherian scheme, and let $f\colon X\to Y$ be a proper birational morphism. Then, there exists an open subset $V\subseteq Y$ such that
\begin{enumerate}
    \item $Y\setminus V$ has codimension $\geq 2$,
    \item $f\colon f^{-1}(V)\to V$ is an isomorphism,
    \item for all $y\in Y\setminus V$, the fiber $X_y$ is connected.
\end{enumerate}
\end{proposition}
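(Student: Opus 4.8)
The plan is to reduce to an affine Noetherian base and then run the classical Stein factorization argument; the codimension bound will fall out of the principle ``proper $+$ quasi-finite $=$ finite'' applied at codimension-one points. All three conclusions are local on $Y$, so first I would replace $Y$ by an affine open $\Spec R$ with $R$ a normal Noetherian domain; writing $K\colonequals\Frac(R)$, the scheme $X$ is then proper and integral over $\Spec R$, and birationality means the generic fibre is $\Spec K$ and $K(X)=K$. Next I would apply Stein factorization to $f$, writing $f=g\circ h$ with $h\colon X\to Y'\colonequals\Spec_{Y}(f_*\Ocal_X)$ satisfying $h_*\Ocal_X=\Ocal_{Y'}$ and having connected fibres, and $g\colon Y'\to Y$ finite. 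On $\Spec R$ the ring $(f_*\Ocal_X)$ is a finite $R$-subalgebra of $K$; since $f$ is birational, $g$ is an isomorphism over the generic point, so every element of this subalgebra is integral over $R$ and lies in $K$, hence lies in $R$ by normality. Thus $g$ is an isomorphism, $f_*\Ocal_X=\Ocal_Y$, and every fibre of $f$ is connected. Defining $V\subseteq Y$ to be the (open) set of points admitting a neighbourhood over which $f$ is an isomorphism, conclusions (2) and (3) hold for this $V$, and $V$ contains the generic point; so the only remaining content is that every codimension-one point of $Y$ lies in $V$.

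For that, fix $y\in Y$ of codimension one and set $A\colonequals\Ocal_{Y,y}$, a DVR with fraction field $K$ by normality. Let $X_A\colonequals X\times_Y\Spec A$: it is integral (covered by localizations of the affine charts of $X$, all domains inside $K$), proper over $A$, surjective onto $\Spec A$, and by flat base change $\Gamma(X_A,\Ocal_{X_A})=(f_*\Ocal_X)_y=A$. Since $X_A$ is birational over the one-dimensional $\Spec A$ one gets $\dim X_A=1$; so, taking a uniformizer $t\in A$ (a nonzerodivisor on the integral scheme $X_A$), Krull's principal ideal theorem shows the special fibre $V(t)=X_{A,s}$ has dimension $\le 0$, hence is a finite $\kappa(y)$-scheme. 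Therefore $X_A\to\Spec A$ is proper with finite fibres, hence finite; writing $X_A=\Spec C$ one gets $C=\Gamma(X_A,\Ocal_{X_A})=A$, i.e.\ $X_A\cong\Spec A$ over $A$. A standard limit argument — valid because $f$ is of finite presentation ($X,Y$ Noetherian) and $\Spec A=\varprojlim_{s\notin\pfr_y}\Spec R_s$ — then spreads this isomorphism out to an isomorphism $f^{-1}(D(s))\xrightarrow{\ \sim\ }D(s)$ for some $s\notin\pfr_y$, so $y\in V$, as desired.

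I expect the only genuinely delicate point to be the middle step: one must check that $X_A$ is integral and one-dimensional so that Krull's Hauptidealsatz applies to the special fibre, and that $\Gamma(X_A,\Ocal_{X_A})=A$ really holds (flat base change along $\Spec\Ocal_{Y,y}\to Y$, which after the reduction to affine $Y$ is an affine flat morphism). Everything else is formal; in particular, the section furnished by the valuative criterion of properness — which one might at first expect to be needed — turns out to be unnecessary once Stein factorization has produced $f_*\Ocal_X=\Ocal_Y$.
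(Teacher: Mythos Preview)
The paper does not supply its own proof of this statement; it is quoted from Liu's textbook as a known form of Zariski's Main Theorem and used as a black box. Your argument is correct and is essentially the standard proof: Stein factorization together with normality of $Y$ forces $f_*\Ocal_X=\Ocal_Y$, so all fibres are connected (giving (3) outright), and localizing at a codimension-one point reduces (1) to showing that a proper birational morphism to the spectrum of a DVR is an isomorphism, which you then verify and spread out.

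The one place where the write-up is slightly quick is the claim $\dim X_A=1$: Hauptidealsatz by itself only gives that $V(t)$ has codimension $1$, not that it has dimension $0$, so you really do need the dimension bound first. The clean justification (which you gesture at in your final paragraph) is that every local ring of the integral scheme $X_A$ is an $A$-subalgebra of its function field $K=\Frac A$, and a DVR admits no proper overring in its fraction field other than $K$ itself; hence each local ring is $A$ or $K$ and $\dim X_A\le1$. Once that is in place the special fibre is automatically zero-dimensional (any proper closed subset of a one-dimensional integral scheme is), so in fact the appeal to Hauptidealsatz is not even needed, and the rest of your argument goes through unchanged.
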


We deduce the following result.
\begin{theorem}\label{normal-bounded}
Let $\Ucal$ be a \(w\)-open substack. Assume that $\Ucal$ is normal and admits a canonical cover. Then $\Ucal$ is $w$-bounded.
\end{theorem}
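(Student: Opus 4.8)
The plan is to derive $w$-boundedness from normality of $\Ucal$ by applying Zariski's Main Theorem to the restriction of $\pi_{P_w}$ over $\Ucal$, and then to read off, stratum by stratum, the constraint $P_{w'}\subseteq P_w$ for each $w'\in\Gamma_\Ucal$. The starting point is that, since $\Ucal$ admits a canonical cover, $\Ucal^{(P_w)}$ is open in $\overline{\Fcal}_w^{(P_w)}$, which is irreducible (it is the closure of the Bruhat stratum $\Fcal_w^{(P_w)}$, by \Cref{prop-Pw}), hence $\Ucal^{(P_w)}$ is integral; moreover $\pi_{P_w}\colon\Ucal^{(P_w)}\to\Ucal$ is proper (restriction of the proper map $\pi_{P_w}\colon\overline{\Fcal}_w^{(P_w)}\to\overline{\Xcal}_w$ to the preimage of the open $\Ucal$ — note we do \emph{not} get finiteness here because we are not assuming a separating cover) and birational, since it is an isomorphism over the open dense stratum $\Xcal_w$ by \Cref{prop-Pw}. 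Thus $\pi_{P_w}$ restricted over $\Ucal$ is a proper birational morphism from an integral stack onto the normal stack $\Ucal$, and \Cref{prop-birat-proper} applies (after passing to a smooth presentation, as all the stacks here are quotients of schemes by smooth group actions).

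First I would fix $w'\in\Gamma_\Ucal$ and aim to show $\Stab_E(w'z^{-1})\subseteq P_w$, which by \Cref{lem-stabinpw} is equivalent to $P_{w'}\subseteq P_w$. By \Cref{prop-flag} \eqref{prop-flag-3}, the fiber of $\pi_{P_w}$ over the point of $\Xcal_{w'}$ (the point $E\cdot(w'z^{-1})$) is the finite set $E/E'_{P_w}$-translates; more precisely the fiber of $\pi_{P_w}\colon\Fcal_{w'}^{(P_w)}\to\Xcal_{w'}$ is identified with $\Stab_E(w'z^{-1})/\Stab_{E'_{P_w}}(w'z^{-1})$, a finite set with one point exactly when $\Stab_E(w'z^{-1})\subseteq P_w$ (equivalently $\subseteq E'_{P_w}$ after the identification of stabilizers with subgroups of $P$). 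On the other hand, Zariski's Main Theorem gives an open $V\subseteq\Ucal$ with complement of codimension $\geq 2$ over which $\pi_{P_w}$ is an isomorphism. The key geometric input is that $\Xcal_{w}\subseteq V$ automatically (the map is already an iso there), and for $w'$ a \emph{lower neighbor} of $w$ the stratum $\Xcal_{w'}$ has codimension $1$, hence cannot be contained in $\Ucal\setminus V$, so it meets $V$; since $\Xcal_{w'}$ is a single point of the stack it is then entirely in $V$, forcing the fiber over it to be a single point, i.e. $\Stab_E(w'z^{-1})\subseteq P_w$ and hence $P_{w'}\subseteq P_w$.

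The remaining step is to propagate this from lower neighbors of $w$ to all $w'\in\Gamma_\Ucal$. Here I would use the chain property of $\preccurlyeq_I$ (\S\ref{subsubsec: closure relations}): given $w'\in\Gamma_\Ucal$, choose a saturated chain $w'=w_1\preccurlyeq_I w_2\preccurlyeq_I\cdots\preccurlyeq_I w_m=w$ with consecutive length steps $1$; since $\Ucal$ is $w$-open and $w'\in\Gamma_\Ucal$, all the $w_i$ lie in $\Gamma_\Ucal$, and each $\Ucal(w_{i+1},w_i)\subseteq\overline{\Xcal}_{w_{i+1}}$ is an elementary $w_{i+1}$-open substack. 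I would like to run the lower-neighbor argument \emph{inside} $\overline{\Xcal}_{w_{i+1}}$, relative to the canonical parabolic $P_{w_{i+1}}$: but this requires knowing that $\Ucal\cap\overline{\Xcal}_{w_{i+1}}$, or the relevant elementary substack, still admits a canonical cover and is still normal. Normality of the elementary substack is the delicate point — normality of $\Ucal$ does not obviously pass to its intersections with smaller closures. The cleaner route, which I expect to be the actual argument, is instead to compare canonical parabolics directly: one shows $P_{w_i}\subseteq P_{w_{i+1}}$ for each consecutive pair using the fact that $w_i$ is a lower neighbor of $w_{i+1}$ together with the combinatorial description \eqref{eq-Iw} of $I_w$ as $\bigcap_{m\geq 0}\varphi_w^m(I)$, or alternatively using \Cref{lem-stabinpw} and a specialization/closure argument for stabilizers along the degeneration $\Xcal_{w_i}\subseteq\overline{\Xcal}_{w_{i+1}}$. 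Chaining the inclusions $P_{w'}=P_{w_1}\subseteq P_{w_2}\subseteq\cdots\subseteq P_{w_m}=P_w$ then gives $P_{w'}\subseteq P_w$, which is exactly $w$-boundedness.

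\textbf{Main obstacle.} The hard part will be handling the exceptional lower neighbors: when $w'$ is a lower neighbor of $w$ but not a Bruhat-lower neighbor, the behavior of $\Ucal^{(P_w)}$ near $\Xcal_{w'}$ is subtler — one must be sure that $\Ucal^{(P_w)}$ genuinely contains the $\Zcal_{P_w}$-zip strata lying over $\Xcal_{w'}$ (this is what "admits a canonical cover" buys us, via the characterization of $P_0$-covers in terms of $\preccurlyeq_{I_0}$), and that the fiber count over $\Xcal_{w'}$ is governed by $\Stab_E(w'z^{-1})$ versus $\Stab_{E'_{P_w}}(w'z^{-1})$ as in \Cref{prop-flag}. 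Verifying that the codimension-$\geq 2$ exceptional locus of Zariski's Main Theorem cannot swallow a codimension-one stratum is straightforward; the real care is in translating "fiber over $\Xcal_{w'}$ is a single point of the stack" into the group-theoretic statement $P_{w'}\subseteq P_w$ and in making the chaining step rigorous without circularly assuming boundedness at intermediate strata.
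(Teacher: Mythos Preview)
Your setup via Zariski's Main Theorem is the right one, and your treatment of lower neighbors is essentially correct. Two issues, one minor and one fatal.

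\textbf{Minor.} The map $\pi_{P_w}\colon\Ucal^{(P_w)}\to\Ucal$ is \emph{not} proper: without the separating hypothesis, $\Ucal^{(P_w)}$ is only an open substack of the genuine preimage $\Vcal\colonequals\pi_{P_w}^{-1}(\Ucal)\cap\overline{\Fcal}_w^{(P_w)}$, and open immersions are not proper. The paper applies \Cref{prop-birat-proper} to $\Vcal\to\Ucal$, which \emph{is} proper and birational. Your codimension-one argument still works once this is fixed, since over the open $V\subseteq\Ucal$ the fiber in $\Vcal$ is a single point and the fiber in $\Ucal^{(P_w)}$ is a nonempty open subset of it.

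\textbf{Fatal.} The chaining step cannot be completed as you propose. Your combinatorial alternative, that $P_{w_i}\subseteq P_{w_{i+1}}$ whenever $w_i$ is a lower neighbor of $w_{i+1}$, is simply false: \Cref{cor-oppinccanparl1} proves the \emph{opposite} inclusion $P_\alpha\subseteq P_{\id}$ for every length-one element, and the Remark immediately following \Cref{normal-bounded} records that for $G=\U_n$ with signature $(n-1,1)$ one has $P_{\id}=P$ strictly containing the canonical parabolic of the longest element. So no purely combinatorial or specialization-of-stabilizers argument can produce the chain of inclusions you need. Your other route, via normality of $\Ucal\cap\overline{\Xcal}_{w_{i+1}}$, is precisely the circularity you identified and cannot be broken with the tools available.

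The paper sidesteps the induction entirely by using part (3) of \Cref{prop-birat-proper} at \emph{every} point, not just in codimension one. Every fiber of $\Vcal\to\Ucal$ is connected. Since $\Ucal^{(P_w)}$ is open in $\Vcal$ (this is exactly what ``admits a canonical cover'' provides), each fiber of $\Ucal^{(P_w)}\to\Ucal$ is open in a connected scheme; but this map is quasi-finite by \Cref{prop-flag}\eqref{prop-flag-1}, so its fibers consist of finitely many closed points. A nonempty finite set of closed points, open in a connected scheme, forces that scheme to be a single point. Hence $\Fcal^{(P_w)}_{w'}\to\Xcal_{w'}$ is bijective for every $w'\in\Gamma_\Ucal$, and condition (1d) of \Cref{rmk-1d} yields $P_{w'}\subseteq P_w$ directly, with no chain needed.
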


\begin{proof}
Let $\Vcal$ denote the preimage of $\Ucal$ via the birational map $\pi_{P_w}\colon \overline{\Fcal}_{w}^{(P_w)}\to \overline{\Xcal}_w$. By assumption $\Ucal^{(P_w)}\subseteq \Vcal$ is an open subset. Apply \Cref{prop-birat-proper} to the proper, birational map $\pi_{P_w}\colon \Vcal\to \Ucal$. We deduce that for any $x\in \Ucal$, the fiber of $x$ in $\Vcal$ is connected. In particular, it is either trivial or contains no isolated point. Hence, the same holds for the fiber of $x$ in $\Ucal^{(P_w)}$. Since the map $\Ucal^{(P_w)}\to \Ucal$ is quasi-finite, by \Cref{prop-flag}.(\ref{prop-flag-1}), we deduce that it is bijective. Therefore, for all $w'\in \Gamma_{\Ucal}$, we have $P_{w'}\subseteq P_w$ by Condition (1d) in \Cref{rmk-1d}.
\end{proof}

\begin{remark}
When $\Ucal$ is an elementary \(w\)-open, we will see in \Cref{thm-elemsmoothchar} that the result is true without assuming that $\Ucal$ admits a canonical cover. For general \(w\)-open substacks however, the conclusion of \Cref{normal-bounded} is false if we remove the assumption that $\Ucal$ admits a canonical cover. For example, take $w$ to be the longest element $w_{0,I}w_0\in {}^I W$ and $\Ucal=\Xcal$, which is smooth. However, it can happen that the canonical parabolic $P_w$ is smaller that the canonical parabolic of another point $w\in {}^I W$. For example, for a unitary group $G=\U_n$ (for $n\geq 3$) endowed with a Hodge parabolic subgroup $P$ of type $(n-1,1)$, the canonical parabolic of the identity $\id\in {}^I W$ is $P$, whereas the canonical parabolic of the longest element is strictly smaller, by \Cref{lem-Pz}.
\end{remark}

Recall that a map of finite-type $k$-schemes $f\colon X\to Y$ is unramified if for any geometric point $s\colon \Spec(k)\to Y$, the fiber product $\Spec(k)\times_{Y,s} X$ is isomorphic to a disjoint union of $\Spec(k)$. It is bijective unramified if and only if all its fibers are isomorphic to $\Spec(k)$. We prove the following lemma:

\begin{lemma}\label{fiber-triv}
Let $\Ucal$ be a $w$-bounded \(w\)-open substack of $\overline{\Xcal}_w$ which admits a canonical cover. Then the map
\begin{equation}
    \pi_{P_w}\colon \Ucal^{(P_w)}\to \Ucal
\end{equation}
is birational, bijective and unramified.
\end{lemma}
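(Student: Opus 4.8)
The plan is to dispose of the three conclusions in turn, the first two being formal consequences of the stratum‑wise picture already set up in \S\ref{sec-revgzip}, and the third requiring a genuine local argument on the flag space. Write $\Ucal^{(P_w)}=\bigsqcup_{v\in\Gamma_\Ucal}\Fcal^{(P_w)}_v$ and $\Ucal=\bigsqcup_{v\in\Gamma_\Ucal}\Xcal_v$ as decompositions into locally closed substacks; by \Cref{prop-flag}, $\pi_{P_w}$ carries $\Fcal^{(P_w)}_v$ onto $\Xcal_v$ for each $v\in\Gamma_\Ucal$. Since $w$ is the largest element of $\Gamma_\Ucal$ and $\overline{\Fcal}^{(P_w)}_w$ is irreducible, $\Fcal^{(P_w)}_w$ is open dense in $\Ucal^{(P_w)}$ and $\Xcal_w$ is open dense in $\Ucal$; as $\pi_{P_w}\colon\Fcal^{(P_w)}_w\to\Xcal_w$ is an isomorphism by \Cref{prop-Pw}, $\pi_{P_w}\colon\Ucal^{(P_w)}\to\Ucal$ is birational. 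For bijectivity, fix $v\in\Gamma_\Ucal$; the hypothesis of $w$-boundedness (\Cref{def-wbounded}) gives $P_v\subseteq P_w$, so the equivalence of conditions $(1a)$ and $(1b)$ in \Cref{prop-minimal-Pw} shows that $\pi_{P_w}\colon\Fcal^{(P_w)}_v\to\Xcal_v$ is an isomorphism. Running over all $v\in\Gamma_\Ucal$ and using that $\pi_{P_w}$ respects the two decompositions, $\pi_{P_w}\colon\Ucal^{(P_w)}\to\Ucal$ is bijective, hence in particular quasi-finite.

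For unramifiedness, the plan is to check it after a smooth surjective base change: choose a smooth surjection $S\to\Xcal$ from a $k$-scheme and pass to the schemes $\Ucal_S$, $\Ucal^{(P_w)}_S$ and $\overline{\Fcal}^{(P_w)}_{w,S}$. The last is a closed subscheme of the total space of the smooth projective $P/P_w$-bundle $\Fcal^{(P_w)}_S\to S$, and it is normal and Cohen--Macaulay by \Cref{prop-Pw}. Because $\Ucal$ admits a canonical cover, $\Ucal^{(P_w)}_S$ is open in $\overline{\Fcal}^{(P_w)}_{w,S}$, so it is enough to prove that $\overline{\Fcal}^{(P_w)}_{w,S}\to S$ is unramified at every point $x$ of $\Ucal^{(P_w)}_S$. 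Fix such an $x$, lying over $s\in S$. The composite $\Ucal^{(P_w)}\to\Ucal\hookrightarrow\Xcal$ is quasi-finite (by the first paragraph together with the fact that $\Ucal\hookrightarrow\Xcal$ is a locally closed immersion), so $\Ucal^{(P_w)}_S\to S$ is quasi-finite; since $\Ucal^{(P_w)}_S$ is open in $\overline{\Fcal}^{(P_w)}_{w,S}$, the point $x$ is an isolated point of its fibre $F_s\coloneqq\overline{\Fcal}^{(P_w)}_{w,S}\times_S\{s\}$. Hence $\Ocal_{F_s,x}$ is an Artinian local $\kappa(s)$-algebra, and the whole statement reduces to showing that $F_s$ is reduced and étale over $\kappa(s)$ at $x$.

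The crux is this reducedness of the scheme-theoretic fibre, which I would extract from the Schubert-variety structure behind \Cref{prop-Pw}: by \Cref{strict-Bruh-Pw} the stratum $\Fcal^{(P_w)}_w$ is Bruhat, so $\overline{\Fcal}^{(P_w)}_w=\Psi_{P_w}^{-1}(\overline{\Xcal}^{\Zcal_{P_w},\Br}_w)$ with $\Psi_{P_w}$ smooth and $\overline{\Xcal}^{\Zcal_{P_w},\Br}_w$ a Schubert variety (normal and Cohen--Macaulay, cf.\ \Cref{prop-normCMBruhat}). Unwinding $\Fcal^{(P_w)}\simeq[E'_{P_w}\backslash G_k]$ and using $\pr_1(E'_{P_w})=P_w$, the fibre of $\pi_{P_w}$ over a point of $\Xcal_v$ is the intersection of $\overline{\Xcal}^{\Zcal_{P_w},\Br}_w$ with a copy of $P/P_w$ sitting over that point; since $P_v\subseteq P_w$, the isomorphism $\Fcal^{(P_w)}_v\cong\Xcal_v$ places $x$ at the base point $1\cdot P_w$, which lies in the open (opposite) Schubert cell of $P/P_w$ through it. Thus, locally at $x$, $F_s$ is the intersection of a Schubert variety with an opposite Schubert cell — an open subset of a Richardson variety, hence reduced (indeed normal and Cohen--Macaulay) — and, the cell being smooth, $\Ocal_{F_s,x}$ is in fact a separable extension of $\kappa(s)$; so $\pi_{P_w}$ is unramified along $\Ucal^{(P_w)}$. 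The main obstacle is exactly this last point: birationality and bijectivity are immediate from the stratum-wise isomorphisms forced by $w$-boundedness, but a priori the scheme-theoretic fibres of $\pi_{P_w}\colon\overline{\Fcal}^{(P_w)}_w\to\overline{\Xcal}_w$ could be non-reduced at the relevant points, and ruling this out is where the Bruhat/Schubert geometry enters. As an alternative to the Schubert-cell computation, one could invoke the local form of miracle flatness (using that $\overline{\Fcal}^{(P_w)}_{w,S}$ is Cohen--Macaulay, $S$ regular, and the fibre is $0$-dimensional at $x$) to obtain flatness of $\overline{\Fcal}^{(P_w)}_{w,S}\to S$ at $x$, and then propagate reducedness from the generic fibre, over which $\pi_{P_w}$ is an isomorphism.
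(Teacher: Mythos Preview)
Your arguments for birationality and bijectivity are correct and essentially the same as the paper's: both conclusions follow stratum-by-stratum from \Cref{prop-minimal-Pw} together with the $w$-boundedness hypothesis.

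The unramifiedness argument, however, has genuine gaps. First, the miracle-flatness alternative does not work: the map $\overline{\Fcal}^{(P_w)}_{w,S}\to S$ factors through the closed subscheme $\overline{\Xcal}_{w,S}\subsetneq S$, so at the point $x$ lying over $s$ one has $\dim\Ocal_{\overline{\Fcal}^{(P_w)}_{w,S},x}=\ell(w)-\ell(v)+\dim\Ocal_{\overline{\Xcal}_{v,S},x}$, which is strictly smaller than $\dim\Ocal_{S,s}$ unless $w$ is the longest element of ${}^IW$. The dimension hypothesis of miracle flatness therefore fails. Applying miracle flatness instead to the map $\overline{\Fcal}^{(P_w)}_{w,S}\to\overline{\Xcal}_{w,S}$ is no help, since the target is not regular (indeed, its potential non-regularity is the whole point of the lemma). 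Second, the Richardson-variety claim is not justified: you assert that the fibre of $\pi_{P_w}$ at a point of $\Xcal_v$ is ``the intersection of a Schubert variety with an opposite Schubert cell'', but no identification is given, and in fact the fibre over (the image of) $vz^{-1}$ is the quotient by $E'_{P_w}$ of $\{(a,b)\in E:\,avz^{-1}b^{-1}\in\overline{P_w\,wz^{-1}\,Q_w}\}$, which has no evident Richardson structure since $b$ ranges over $Q\not\subseteq Q_w$.

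The paper avoids these issues entirely by a direct group-theoretic computation. Using the presentations $\Ucal\simeq[E\backslash U]$ and $\Ucal^{(P_w)}\simeq[E'_{P_w}\backslash U']$ with $U'=\bigcup_{v\in\Gamma_\Ucal}C_{P_w,v}$, one reduces unramifiedness to showing that for each $v\in\Gamma_\Ucal$ the scheme $E'_{P_w}\backslash Y$ is trivial, where $Y=\{(x,y)\in E:\,xvz^{-1}y^{-1}\in C_{P_w,v}\}$. Since $C_{P_w,v}$ is a single $E'_{P_w}$-orbit (by \Cref{prop-flag}(\ref{prop-flag-3})) and $\Stab_E(vz^{-1})\subseteq P_v\subseteq P_w$ \emph{scheme-theoretically} (this is exactly where $w$-boundedness enters), one gets $Y=E'_{P_w}$, so the geometric fibre is $\Spec(k)$. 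This is the key step you are missing.
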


\begin{proof}
We already know that the map $\pi_{P_w}\colon \Ucal^{(P_w)}\to \Ucal$ is birational. It is bijective, by \Cref{prop-minimal-Pw} applied to each $w'\in \Gamma_\Ucal$ (using $P_{w'}\subseteq P_w$). We show that it is unramified. Let $U\subseteq G$ be the preimage of $\Ucal$ under the canonical projection $G\to \Xcal$. Similarly, the set $\Ucal^{(P_w)}$ corresponds to the union $U'=\bigcup_{v\in \Gamma_\Ucal} C_{P_w,v}$ (\S\ref{sec-fine-stratif}). By assumption, $U'$ is a locally closed subset of $G$. We have $\Ucal\simeq [E\backslash U]$ and $\Ucal^{(P_w)} \simeq [E'_{P_w}\backslash U']$. It suffices to show that the map $\Ucal^{(P_w)}\times_{\Ucal} U \to U$ has all of its geometric fibers isomorphic to $\Spec(k)$. This map corresponds to the natural map
\begin{equation}
    [E'_{P_w} \backslash (U'\times E)] \to U, \quad [g,(x,y)]\mapsto x^{-1}gy
\end{equation}
where $(a,b)\in E'_{P_w}$ acts on $(g,(x,y))\in U'\times E$ by $(agb^{-1},(ax,by))$.
Let $s\colon \Spec(k)\to U$ be a geometric point. If we conjugate $s$ by the action of $E$, the fiber above $s$ is translated by this element of $E$. Hence, we are reduced to the case when the image of $s$ is  $w'z^{-1}$, for some $w'\in \Gamma_\Ucal$. The fiber above $s$ identifies with the scheme $E'_{P_w} \backslash Y$, where
\begin{align*}
    Y& = \left\{ (g,(x,y))\in C_{P_{w},w'}\times E \ \relmiddle| \ x^{-1}gy=w'z^{-1} \right\} \\
    & \simeq  \left\{ (x,y)\in E \ \relmiddle| \  x w'z^{-1} y^{-1} \in C_{P_{w},w'}\right\}.
\end{align*}
Since $C_{P_{w},w'}$ is a single $E'_{P_{w}}$-orbit, any element of $Y$ can be written as a product of an element of $E'_{P_{w}}$ and an element of $\Stab_E(w'z^{-1})$. Since $\Stab_E(w'z^{-1})\subseteq E'_{P_{w}}$ (scheme-theoretically, not merely on $k$-points), we deduce that $Y$ coincides with $E'_{P_{w}}$ and thus the fiber above $s$ is trivial. This concludes the proof.
\end{proof}

\begin{proposition}
\label{bounded-implies-normal}
Let $\Ucal$ be an open subset of $\overline{\Xcal}_w$. Assume that
\begin{enumerate}
    \item $\Ucal$ admits a separating canonical cover, and
    \item $\Ucal$ is $w$-bounded.
\end{enumerate}
Then, the map $\pi_{P_w}\colon \Ucal^{(P_w)}\to \Ucal$ is an isomorphism. In particular, $\Ucal$ is normal and Cohen--Macaulay.
\end{proposition}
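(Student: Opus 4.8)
The plan is to deduce the proposition formally from \Cref{prop-proper} and \Cref{fiber-triv}; essentially all of the genuine content has already gone into \Cref{fiber-triv}, and what remains is assembly. First I would observe that a separating canonical cover is in particular a canonical cover, so both hypotheses of \Cref{fiber-triv} are in force, and combining it with \Cref{prop-proper} shows that $f \coloneqq \pi_{P_w}\colon \Ucal^{(P_w)}\to \Ucal$ is finite, birational, surjective and unramified. Inspecting the proof of \Cref{fiber-triv}, every geometric fibre of $f$ is isomorphic to $\Spec(k)$; hence $f$ is universally injective (radicial).

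Next I would argue that $f$ is a closed immersion. Being unramified, the diagonal $\Delta_f$ is an open immersion; being universally injective, $\Delta_f$ is surjective; an open immersion that is surjective is an isomorphism, so $f$ is a monomorphism. A finite, a fortiori proper, monomorphism is a closed immersion. Since $f$ is moreover surjective and $\Ucal$ carries the reduced substack structure, the ideal defining the closed immersion $f$ is contained in the nilradical of $\Ucal$, which vanishes; therefore $f$ is an isomorphism $\pi_{P_w}\colon \Ucal^{(P_w)} \cong \Ucal$.

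For the final assertion I would invoke \Cref{prop-Pw}, which gives that $\overline{\Fcal}^{(P_w)}_w$ is normal and Cohen--Macaulay. By the very definition of a (separating) canonical cover, $\Ucal^{(P_w)}$ is an open substack of $\overline{\Fcal}^{(P_w)}_w$, hence is itself normal and Cohen--Macaulay; transporting these properties along the isomorphism $\pi_{P_w}$ shows that $\Ucal$ is normal and Cohen--Macaulay.

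The one point that requires a little care is the passage from ``bijective and unramified'' to ``universally injective'': a bijective unramified morphism need not be a monomorphism (for instance $\Spec(\CC)\to\Spec(\RR)$), so one cannot simply cite bijectivity. What saves the argument is that the fibre computation in the proof of \Cref{fiber-triv} produces geometric fibres equal to $\Spec(k)$, which is exactly the radicial condition needed. Beyond this everything is formal, and I do not expect any serious obstacle — the substantive input, namely the interaction between $w$-boundedness and the canonical cover, is already packaged in \Cref{fiber-triv}.
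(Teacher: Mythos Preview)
Your proof is correct and follows the same route as the paper: combine \Cref{prop-proper} (finiteness) with \Cref{fiber-triv} (birational, bijective, unramified) to conclude that $\pi_{P_w}$ is a closed immersion onto the reduced stack $\Ucal$, hence an isomorphism, and then transport normality and Cohen--Macaulayness from the open substack $\Ucal^{(P_w)}\subseteq\overline{\Fcal}^{(P_w)}_w$. One small remark: you say you need to \emph{inspect the proof} of \Cref{fiber-triv} to extract that each geometric fibre is $\Spec(k)$, but in fact the paper records, just before that lemma, that ``bijective unramified'' is taken to mean precisely this; so the radicial condition is already part of the statement, and no inspection is required.
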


\begin{proof}
In this situation, the map $\pi_{P_w}\colon \Ucal^{(P_w)}\to \Ucal$ is finite, by \Cref{prop-proper}, birational and unramified, by \Cref{fiber-triv}. It follows that it is a closed immersion and, since $\Ucal$ is reduced, it is an isomorphism.
\end{proof}

\subsection{Canonical torsors} \


In \Cref{prop-extcanfilt-exa}, we saw how, in the case of \(\GL_n\) with signature \((r, s)\), for \(r\) and \(s\) coprime, we could extend the canonical filtration from the unique one-dimensional stratum \(S_w\) to its closure. This, in turn, gives rise to a section \(\tilde{s}_{w}\colon \overline{S}_{w} \to \Fcal_S^{(P_w)}\) of the natural projection \(\Fcal_S^{(P_w)} \to S\), where \(\Fcal_S^{(P_w)}\) is the flag space over \(S\) relative to \(P_w = B\). Here \(P_w\) denotes, as before, the canonical parabolic for \(w = s_{\alpha_r} = (r, r+1) \in \Delta\), the unique non-compact simple root (corresponding to the unique one-dimensional stratum). In fact, since \(w\) gives the relative position of the (conjugate) canonical filtration with respect to the Hodge filtration, it is clear that \(\tilde{s}_w\) sends isomorphically \(S_w\) to the Bruhat stratum \(\Fcal_{S, w}^{(P_w)}\). This is an example of a more general phenomenon. 
\subsubsection{The case of \texorpdfstring{\(\FZip\)}{F-zips}}
\label{ssec-fzipcanfilt}

Let $G=\GL_n$ with cocharacter $\mu\colon t\mapsto \diag(t\mathbb{1}_r, \mathbb{1}_s)$. With the notations of \cite{moonen.wedhorn.discrete.invariants}, \(\Xcal^\mu = \FZip^\tau\), for \(\tau \colon \Z \to \Z_{\geq 0}, 0, 1 \mapsto r, s,\) supported on \(\{0, 1\}\). The de Rham cohomology determine a morphism $S\to \FZip^\tau$ \eqref{eq-zeta-shimura}. To $w\in \IW$ we associate a \emph{standard \(F\)-zip} \(\ul{M}_\tau^w \in \FZip^\tau(k)\) \cite[\S2.6]{moonen.wedhorn.discrete.invariants}. For \(\ul A \in S(k)\), the corresponding \(F\)-zip is in \(\FZip^\tau_w(k)\) if and only if \(w\) gives the relative position of the Hodge filtration \(H^1_{\mr{dR}, {\overline{\nu}}}(A/k) \supseteq \ul \omega_{\overline \nu}\) and the (conjugate) canonical filtration. The latter filtration is defined as the unique refinement \(D_{\mr{can},\bullet}(A/k)\) of the conjugate filtration stable under taking images via \(F\) and pre-images via \(V\).

\par The notion of canonical filtration makes sense on the substack \(\FZip^\tau_w, w \in \IW.\) In fact, using the same notation, we can consider \(\ul{M}_\tau^w\) as an \(F\)-zip on \(\FZip^\tau_w\). Correspondingly, we have a conjugate filtration \(D_1 = \Ocal_{\FZip^\tau_w}^r \subseteq D_0 = \Ocal_{\FZip^\tau_w}^n\), now a filtration by finite locally free sheaves, on the stack \(\FZip^\tau_w\). The structural isomorphism \(\varphi \colon \mr{gr}^\bullet(C)^{(p)} \to \mr{gr}_\bullet(D)\) gives a unique way of extending \(D_\bullet\) to \(D_{\mr{can},\bullet}\). Explicitly, we can consider
\[
    M = C^0 = C^{0, (p)} \to \mr{gr}^0(C)^{(p)} \overset{\varphi}{\longrightarrow} \mr{gr}_0(D)=D_0 \subseteq D_1 = M,
\]
which corresponds to \(F \colon H^1_{\mr{dR}, {\overline{\nu}}}(A/T)^{(p)} \to H^1_{\mr{dR}, {\overline{\nu}}}(A/T)\), on the level of Shimura varieties. Similarly,
\[
    M = D_0 \to D_0/D_1 = \mr{gr}_1(D) \overset{\varphi^{-1}}{\longrightarrow} \mr{gr}^1(C)^{(p)} = C^{1, (p)} \subseteq C^{0, (p)} = M
\]
corresponds to \(V \colon H^1_{\mr{dR}, {\overline{\nu}}}(A/T) \to H^1_{\mr{dR}, {\overline{\nu}}}(A/T)^{(p)}\). Using these two maps, we obtain \(D_{\mr{can}, \bullet}\), which is a filtration by finite locally free sheaves over \(\FZip^\tau_w\). In fact, in a similar fashion, \(\varphi\) gives us a way to extend \(C^\bullet\) to a filtration \(C^\bullet_\mr{can}\), whose relative position with respect to \(D_{\bullet, w}\) is given by \(w\), and such that \(\varphi\) naturally extends to an isomorphism \(\mr{gr}^\bullet(C^\mr{can})^{(p)} \to \mr{gr}_\bullet(D_\mr{can})\).
\begin{definition}
The filtration \(C^\mr{can}\), resp.\@ \(D^\mr{can}\), naturally defined on \(\Xcal_w \cong \FZip^\tau_w\), is called the \emph{canonical filtration}, resp.\@ \emph{(conjugate) canonical filtration}.
\end{definition}
Let us write \(\tau_w (i) = \mr{rk}(\mr{gr}^i(C^\mr{can}))\) for the \emph{type} of \(C^\mr{can}\). One can define the stack \(\FFZip^{\tau, \tau_w}\) by asking that its \(T\)-points, for \(T \in \sch_k\), are given by couples \((\ul M, \widetilde{C}^\bullet)\), where \(\ul M \in \FZip^\tau(T)\) and \(\widetilde{C}^\bullet\) is a descending filtration of \(M\) by finite locally free sheaves (with finite locally free quotients) of type \(\tau_w\) that refines the filtration \(C^\bullet(\ul M)\).
\par Notice that, as before, we can use \(\varphi\), which is part of the datum \(\ul M\), to obtain from \(\widetilde{C}^\bullet\) a refinement \(\widetilde{D}_\bullet\) of \(D_\bullet(\ul M)\). Thus, we have:
\begin{enumerate}[(i)]
    \item a natural forgetful morphism \(\FFZip^{\tau, \tau_w} \to \FZip^\tau, (\ul M, \widetilde{C}^\bullet) \mapsto \ul M\),
    \item as well as a morphism \(\FFZip^{\tau, \tau_w} \to \FZip^{\tau_w}, (\ul M, \widetilde{C}^\bullet) \mapsto (M, \widetilde{C}^\bullet, \widetilde{D}^\bullet, \widetilde{\varphi})\).
\end{enumerate}
These, via the identifications \(\GZip^\mu = \FZip^\tau, \GF^\mu_{P_w} = \FFZip^{\tau, \tau_w}, \GZip^{\mc Z_0} = \FZip^{\tau_0}\) given by the standard representations, correspond to the morphisms of stacks described in \ref{ssecflagspace}. Moreover, via the same identifications, the canonical filtration \(C^\mr{can}\) on \(\Xcal_w\) corresponds to a torsor \(\Ical_w\) for \(P_w\) the canonical parabolic.
%
%
%
%
%
%
%
%
\subsubsection{Canonical torsors in general} 
Let \(s_w\) be the inverse of the isomorphism \(\pi_{P_w} \colon \Fcal_w^{(P_w)} \to \Xcal_w\) of \Cref{prop-Pw} and define \(\Ical_w\) to be the pullback via \(s_w\) of the natural \(P_w\)-torsor on \(\Fcal_w^{(P_w)}\).
\begin{definition}\label{def-cantorcansec}
We call \(\Ical_w\) the \emph{canonical torsor} on \(\Xcal_w\) and \(s_w\) the \emph{canonical section}.
\end{definition}
The canonical torsor $\Ical_w$ can also be defined intrinsically in terms of $\Xcal_w$ as the torsor corresponding to the map
\[
\Xcal_w\cong [\Stab_E(wz^{-1})\backslash 1] \to [P_w\backslash 1],
\]
induced by the inclusion $\Stab_E(wz^{-1})\subseteq P_w$ (\Cref{lem-stabinpw}), where the first isomorphism is \eqref{stab-isom}.

\subsection{A characterization of normality}
We begin by proving \Cref{lem-rigidityofsubtorsors}, which gives, on a fixed stratum \(\Xcal_w\), a rigidity result for subtorsors of the Hodge torsor \(\Ical_P\).
\begin{lemma}
\label{lem-rigidityofsubtorsors}
Let \(P_0\) be a standard parabolic contained in \(P\). We have the following.
\begin{enumerate}
    \item For each \(w \in \IW\), there is at most one \(P_0\)-torsor \(\Ical_{P_0}\) over \(\Xcal_w\) contained in the Hodge torsor \(\Ical_P\).
    \item Such a torsor exists if and only if \(P_w \subseteq P_0\), in which case \(\Ical_{P_0} = \Ical_w P_0\), that is, the torsor \(\Ical_{P_0}\) is generated by the canonical torsor.
\end{enumerate}
\end{lemma}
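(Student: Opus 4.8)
The plan is to reinterpret a sub-$P_0$-torsor of the Hodge torsor as a section of $\pi_{P_0}$, and then extract the statement from the characterization of the canonical parabolic in \Cref{prop-minimal-Pw} together with \Cref{lem-stabinpw}. Giving a $P_0$-torsor $\Ical_{P_0}\subseteq\Ical_P$ over $\Xcal_w$ amounts to giving a section $t\colon\Xcal_w\to\Fcal^{(P_0)}$ of $\pi_{P_0}$ over $\Xcal_w$. Since $\Xcal_w\simeq[\Stab_E(wz^{-1})\backslash 1]$ by \eqref{stab-isom}, and since unwinding the definition of $\Fcal^{(P_0)}$ identifies $\pi_{P_0}^{-1}(\Xcal_w)$ with $[\Stab_E(wz^{-1})\backslash(P/P_0)]$ --- where $\Stab_E(wz^{-1})$, viewed via $\pr_1$ as a subgroup of $P$ as in \Cref{lem-stabinpw}, acts by left translation --- such a section is the same as a $\Stab_E(wz^{-1})$-fixed point of $P/P_0$, i.e.\ a parabolic $P'\subseteq P$ of type $I_0$ with $\Stab_E(wz^{-1})\subseteq P'$; the torsor $\Ical_{P_0}$ is then the reduction of $\Ical_P$ cut out by $P'$.

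For the existence statement in (2), suppose $P_w\subseteq P_0$. Then $\Stab_E(wz^{-1})\subseteq P_w\subseteq P_0$ by \Cref{lem-stabinpw}, so $P'=P_0$ works; tracing through \Cref{prop-Pw} and \Cref{def-cantorcansec}, the resulting reduction is exactly the balanced product $\Ical_w\times^{P_w}P_0$, which is also the assertion that $\Ical_{P_0}$ is generated by the canonical torsor.

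For uniqueness and the converse in (2), the goal is to show that \emph{any} parabolic $P'\subseteq P$ of type $I_0$ with $\Stab_E(wz^{-1})\subseteq P'$ must be $P_0$; this forces $P_w\subseteq P_0$, since $P_w$ is by \Cref{lem-stabinpw} the smallest standard parabolic containing $\Stab_E(wz^{-1})$, and a parabolic that contains $P_w\supseteq B$ and is $P$-conjugate to the standard $P_0$ is itself standard, hence equal to $P_0$. The key step is to prove that the section $t$ factors through the $\Zcal_{P_0}$-zip stratum $\Fcal^{(P_0)}_w$ attached to the same $w$: since $\Xcal_w$ is a single point, $\Psi_{P_0}\circ t$ lands in one $\Zcal_{P_0}$-zip stratum $\Xcal^{\Zcal_{P_0}}_v$, so $t$ factors through $\Fcal^{(P_0)}_v$; then, using that $t$ is a monomorphism with $\pi_{P_0}\circ t$ the inclusion $\Xcal_w\hookrightarrow\Xcal$, together with the length inequality \eqref{ineq-ell} and the description of images of strata recalled in \S\ref{subsec-lownb}, one should be able to deduce $v=w$. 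Once $t$ is a section of $\pi_{P_0}\colon\Fcal^{(P_0)}_w\to\Xcal_w$, \Cref{prop-minimal-Pw} and \Cref{rmk-1d} give $P_w\subseteq P_0$ and that this map is an isomorphism, so $t=s_w$ is the canonical section and $\Ical_{P_0}=\Ical_wP_0$ is the only possibility. The main obstacle is precisely the step $v=w$: ruling out that the section $t$ escapes into the closure of a strictly larger --- or incomparable --- $\Zcal_{P_0}$-zip stratum lying over $\Xcal_w$, which is where the compatibility between the twisted orders $\preccurlyeq_{I_0}$ and $\preccurlyeq_I$ has to be used.
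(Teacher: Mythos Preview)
Your reformulation — that a sub-$P_0$-torsor of $\Ical_P$ over $\Xcal_w$ is the same as a section of $\pi_{P_0}$ over $\Xcal_w$, equivalently an $H$-fixed point in $P/P_0$ for $H=\Stab_E(wz^{-1})\subseteq P$ — is correct and is exactly the content of the paper's first two sentences, just phrased through $\Fcal^{(P_0)}$ rather than through classifying stacks. Your existence argument for the direction $P_w\subseteq P_0\Rightarrow$ ``$\Ical_{P_0}$ exists and equals $\Ical_wP_0$'' is also fine.

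Where your approach diverges from the paper is in the treatment of uniqueness and the converse. You introduce the $\Zcal_{P_0}$-zip stratification of $\Fcal^{(P_0)}$, try to locate the image of the section $t$ in some stratum $\Fcal^{(P_0)}_v$, and then argue that $v=w$. This detour is unnecessary and is precisely where you get stuck. The paper never invokes $\Psi_{P_0}$ or the stratification of $\Fcal^{(P_0)}$ at all: it stays entirely at the level of classifying stacks, observing that a factorization of the map $[H\backslash 1]\to[P\backslash 1]$ (induced by the inclusion $H\hookrightarrow P$) through the natural projection $[P_0\backslash 1]\to[P\backslash 1]$ amounts to a group homomorphism $H\to P_0$ whose composite with $P_0\hookrightarrow P$ is the given inclusion — which forces the homomorphism to be the inclusion $H\hookrightarrow P_0$, hence exists if and only if $H\subseteq P_0$ and is then unique. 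This is the whole argument; \Cref{prop-minimal-Pw} then converts $H\subseteq P_0$ into $P_w\subseteq P_0$.

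Note also that your proposed route through ``$v=w$'' is not an independent attack on the problem: the stratum $\Fcal^{(P_0)}_w$ inside $\pi_{P_0}^{-1}(\Xcal_w)\simeq[H\backslash(P/P_0)]$ is exactly the $H$-orbit of the basepoint $eP_0$, so ``$t$ lands in $\Fcal^{(P_0)}_w$'' is literally the statement ``the fixed point is $eP_0$''. You are therefore restating the goal in heavier language rather than reducing it. Dropping the stratification and arguing as the paper does — directly with the group homomorphisms underlying maps between $[H\backslash 1]$, $[P_0\backslash 1]$, $[P\backslash 1]$ — is both shorter and avoids the obstacle you flag.
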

\begin{proof}
Giving a \(P_0\)-torsor \(\Ical_{P_0}\) over \(\Xcal_w\) is the same as giving a morphism of stacks
\[
    \Xcal_w = [\Stab_E(wz^{-1})\backslash 1] \longrightarrow [P_0\backslash 1].
\]
The condition that \(\Ical_{P_0} \subseteq \Ical_P\) translates to the fact that the composition of this morphism with the one given by the inclusion \(P_0 \subseteq P\) corresponds to \(\Ical_P\). Following the conventions of \Cref{prop-minimal-Pw}, we can identify \(\Stab_E(wz^{-1})\) with a subgroup of \(P\), via the projection on the first factor. In particular, we can factor the morphism of stacks \([\Stab_E(wz^{-1})\backslash 1] \to [P\backslash 1]\) as \([\Stab_E(wz^{-1})\backslash 1] \to [P_0\backslash 1] \to [P\backslash 1]\), the second map being the natural projection, if and only if \(\Stab_E(wz^{-1}) \subseteq P_0\). In that case, such a morphism is unique and induced by the inclusion of the stabiliser in \(P_0\). By \Cref{prop-minimal-Pw}, this only happens if \(P_w \subseteq P_0\) and, in that case, the morphism \([\Stab_E(wz^{-1})\backslash 1] \to [P_0\backslash 1]\) is given by the composition
\(
    \Xcal_w = [\Stab_E(wz^{-1})\backslash 1] \to [P_w\backslash 1] \to [P_0\backslash 1].
\)
\end{proof}
\begin{theorem}
\label{thm-extnorm}[cf.\@ \Cref{theorem: thmA intro}] 
Let \(\Ucal \subseteq \GZip^\Zcal\) be a \(w\)-open, 
for a fixed \(w \in {}^IW\). The following are equivalent.
\begin{enumerate}
    \item \label{thm-extnorm1} The canonical section \(s_w \colon \Xcal_w \to \Fcal_w^{(P_w)}\) extends to a section \( \tilde{s}_w \colon \Ucal \to \Fcal^{(P_w)}\) of \(\pi_{P_w}\). 
    \item \label{thm-extnorm2} The canonical torsor \(\Ical_w\) on $\Xcal_w$ extends to a \(P_w\)-torsor on \(\Ucal\) contained in \(\Ical_P\).
    \item \label{thm-extnorm3} \(\Ucal\) admits a separating canonical cover and is \(w\)-bounded.
    \item \label{thm-extnorm4} \(\Ucal\) admits a separating canonical cover and is normal.
    \item \label{thm-extnorm5} \(\Ucal\) admits a separating canonical cover and \(\pi_{P_w} \colon \Ucal^{(P_w)} \to \Ucal\) is an isomorphism.
\end{enumerate}
In that case, \(\Ucal\) is Cohen--Macaulay and the extensions of \(s_w\) and \(\Ical_w\) are unique.
\end{theorem}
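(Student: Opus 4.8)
The plan is to prove the cycle $\eqref{thm-extnorm1}\Leftrightarrow\eqref{thm-extnorm2}\Rightarrow\eqref{thm-extnorm3}\Leftrightarrow\eqref{thm-extnorm4}\Leftrightarrow\eqref{thm-extnorm5}\Rightarrow\eqref{thm-extnorm1}$, and then read off the Cohen--Macaulayness and uniqueness. Throughout I use: that $\Xcal_w$ is a dense open substack of $\Ucal$ (since $w$ is the longest element of $\Gamma_\Ucal$); that $\overline{\Fcal}_w^{(P_w)}$ is integral with $\Fcal_w^{(P_w)}$ as its dense open stratum and $\pi_{P_w}\colon\overline{\Fcal}_w^{(P_w)}\to\overline{\Xcal}_w$ is proper and birational (\Cref{prop-flag}, \Cref{prop-Pw}); and that $\pi_{P_w}\colon\Fcal^{(P_w)}\to\Xcal$ is representable and separated — indeed proper, being a fibration with fibre the projective variety $P/P_w$ — so that any section of it over a substack is a closed immersion. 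I also use the modular description of $\Fcal^{(P_w)}$: over a substack $\Tcal\subseteq\Xcal$, sections of $\pi_{P_w}$ correspond bijectively to $P_w$-subtorsors of the Hodge torsor $\Ical_P|_\Tcal$, with the canonical section $s_w$ corresponding to $\Ical_w$.

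For $\eqref{thm-extnorm1}\Leftrightarrow\eqref{thm-extnorm2}$: under the modular description, an extension $\tilde s_w$ of $s_w$ to $\Ucal$ is the same datum as a $P_w$-subtorsor $\widetilde{\Ical}_{P_w}\subseteq\Ical_P$ over $\Ucal$ whose restriction to $\Xcal_w$ is $\Ical_w$; but by \Cref{lem-rigidityofsubtorsors} any $P_w$-subtorsor of $\Ical_P$ over $\Xcal_w$ equals $\Ical_w$ (one exists, as $P_w\subseteq P_w$), so this restriction condition is automatic. Granting either condition, the extended section is unique: two sections of the separated morphism $\pi_{P_w}$ over the reduced stack $\Ucal$ that agree on the dense open $\Xcal_w$ must agree, and correspondingly the $P_w$-torsor extension is unique.

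The key implication is $\eqref{thm-extnorm2}\Rightarrow\eqref{thm-extnorm3}$. Restricting $\widetilde{\Ical}_{P_w}$ to a stratum $\Xcal_{w'}$, $w'\in\Gamma_\Ucal$, produces a $P_w$-subtorsor of $\Ical_P|_{\Xcal_{w'}}$, so \Cref{lem-rigidityofsubtorsors} forces $P_{w'}\subseteq P_w$; thus $\Ucal$ is $w$-bounded. Then by \Cref{prop-minimal-Pw} the map $\pi_{P_w}\colon\Fcal_{w'}^{(P_w)}\to\Xcal_{w'}$ is an isomorphism, and its inverse is a section of $\pi_{P_w}$ over $\Xcal_{w'}$; by uniqueness of subtorsors again it agrees with $\tilde s_w|_{\Xcal_{w'}}$, hence $\tilde s_w(\Xcal_{w'})=\Fcal_{w'}^{(P_w)}$ and therefore $\tilde s_w(\Ucal)=\bigcup_{w'\in\Gamma_\Ucal}\Fcal_{w'}^{(P_w)}=\Ucal^{(P_w)}$. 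On the other hand $\tilde s_w$ is continuous with $\tilde s_w(\Xcal_w)=\Fcal_w^{(P_w)}$, so $\tilde s_w(\Ucal)\subseteq\overline{\Fcal}_w^{(P_w)}$; being a closed immersion, $\tilde s_w(\Ucal)$ is closed in $\pi_{P_w}^{-1}(\Ucal)$, hence closed in the open substack $\Vcal\colonequals\pi_{P_w}^{-1}(\Ucal)\cap\overline{\Fcal}_w^{(P_w)}$ of $\overline{\Fcal}_w^{(P_w)}$. As $\Fcal_w^{(P_w)}\subseteq\tilde s_w(\Ucal)$ is dense in $\Vcal$, we get $\Ucal^{(P_w)}=\tilde s_w(\Ucal)=\Vcal$; in particular $\Ucal^{(P_w)}$ is open in $\overline{\Fcal}_w^{(P_w)}$ and equals $\pi_{P_w}^{-1}(\Ucal)\cap\overline{\Fcal}_w^{(P_w)}$, so \eqref{diag-sep-cover} is Cartesian and $\Ucal$ admits a separating canonical cover. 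Together with $w$-boundedness this is \eqref{thm-extnorm3}.

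The remaining steps are short. $\eqref{thm-extnorm3}\Rightarrow\eqref{thm-extnorm5}$ is \Cref{bounded-implies-normal}. For $\eqref{thm-extnorm5}\Rightarrow\eqref{thm-extnorm4}$, the isomorphism $\pi_{P_w}\colon\Ucal^{(P_w)}\to\Ucal$ identifies $\Ucal$ with an open substack of the normal stack $\overline{\Fcal}_w^{(P_w)}$ (\Cref{prop-Pw}), so $\Ucal$ is normal. For $\eqref{thm-extnorm4}\Rightarrow\eqref{thm-extnorm3}$, a separating canonical cover is a canonical cover, so \Cref{normal-bounded} gives $w$-boundedness. For $\eqref{thm-extnorm5}\Rightarrow\eqref{thm-extnorm1}$, let $\tilde s_w$ be the inverse of $\pi_{P_w}\colon\Ucal^{(P_w)}\to\Ucal$; since $\pi_{P_w}(\Fcal_{w'}^{(P_w)})=\Xcal_{w'}$ for every $w'\in\Gamma_\Ucal$ (\Cref{prop-flag}), the preimage of $\Xcal_w$ inside $\Ucal^{(P_w)}$ is exactly $\Fcal_w^{(P_w)}$, so $\tilde s_w|_{\Xcal_w}$ is the unique section of the isomorphism $\pi_{P_w}\colon\Fcal_w^{(P_w)}\to\Xcal_w$, namely $s_w$, and \eqref{thm-extnorm1} holds. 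Finally, in all cases $\Ucal\cong\Ucal^{(P_w)}$ is open in the Cohen--Macaulay stack $\overline{\Fcal}_w^{(P_w)}$ (\Cref{prop-Pw}), hence Cohen--Macaulay, and uniqueness was already noted. I expect the main obstacle to be the step $\eqref{thm-extnorm2}\Rightarrow\eqref{thm-extnorm3}$: one must identify the image of the extended section \emph{both} with $\Ucal^{(P_w)}$ (via rigidity of subtorsors stratum by stratum) \emph{and} with the full preimage $\Vcal$ (via the closed-immersion-and-density argument), since this double identification is precisely what yields simultaneously the openness of $\Ucal^{(P_w)}$ in $\overline{\Fcal}_w^{(P_w)}$ and the Cartesianness of \eqref{diag-sep-cover}.
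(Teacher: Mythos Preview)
Your proof is correct and follows essentially the same route as the paper: the same modular description for $\eqref{thm-extnorm1}\Leftrightarrow\eqref{thm-extnorm2}$, the same closed-immersion-plus-density argument identifying $\tilde s_w(\Ucal)$ with $\Vcal=\pi_{P_w}^{-1}(\Ucal)\cap\overline{\Fcal}_w^{(P_w)}$, and the same appeals to \Cref{bounded-implies-normal} and \Cref{normal-bounded} for the remaining equivalences. The one organisational difference is in $\eqref{thm-extnorm2}\Rightarrow\eqref{thm-extnorm3}$: you extract $w$-boundedness \emph{first}, directly from \Cref{lem-rigidityofsubtorsors} applied stratumwise, and then identify $\tilde s_w(\Xcal_{w'})=\Fcal_{w'}^{(P_w)}$ via uniqueness of the $P_w$-subtorsor; the paper instead first proves $\tilde s_w\colon\Ucal\to\Vcal$ is an isomorphism (hence $\Ucal$ normal), then uses the rigidity lemma to pin down which flag stratum $\tilde s_w(\Xcal_{w'})$ lands in, and only afterwards deduces $w$-boundedness by invoking \Cref{normal-bounded}. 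Your ordering is slightly more direct and avoids the detour through normality to get boundedness.
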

\begin{proof}
From the moduli description of $\Fcal^{(P_w)}$, it is clear that (\ref{thm-extnorm1}) and (\ref{thm-extnorm2}) 
are equivalent. Since the target is separated, there is always at most one extension of \(s_w\), hence there is at most one extension of \(\Ical_w\).
\par Assume that (\ref{thm-extnorm1}) or (\ref{thm-extnorm2}) hold. 
Denote the unique extension of \(s_w\) to $\Ucal$ by \(\tilde{s}_w \colon \Ucal \to \Fcal^{(P_w)}\). Denote by \(\widetilde{\Ical}_w\) the unique extension of \(\Ical_w\) to \(\Ucal\). By continuity, we have
\(\tilde{s}_w(\Ucal)  \subseteq \overline{\tilde{s}_w(\Xcal_w)}\subseteq \overline{\Fcal}_w^{(P_w)}.\)
Let \(\Vcal \coloneqq\pi_{P_w}^{-1}(\Ucal)\cap \overline{\Fcal}_w^{(P_w)}\).
Since \(\tilde{s}_w(\Ucal)\) is constructible and contains the dense open \(s_w(\Xcal_w) = \Fcal_w^{(P_w)}\), it is open in $\Vcal$. Since $\widetilde{s}_w\colon \Ucal \to \Vcal$ is a section, it is a closed embedding. In particular, \(\widetilde{s}_w(\Ucal)\) is both closed and open in \(\Vcal\). Note that \(\Vcal\) is irreducible because it is open in \(\overline{\Fcal}_w^{(P_w)}\), which is itself irreducible. This implies that \(\widetilde{s}_w\colon \Ucal \to \Vcal\) is an isomorphism. Moreover, from \Cref{prop-Pw}, we know
that \(\overline{\Fcal}_w^{(P_w)}\) is normal and Cohen--Macaulay. Therefore, \(\Ucal \cong \tilde{s}_w(\Ucal)\) is also normal and Cohen--Macaulay, since it is open inside \(\overline{\Fcal}_w^{(P_w)}\). Assume that $w'\in \Gamma_\Ucal$. Since $\Xcal_{w'}$ is reduced to a point we have $\widetilde{s}_w(\Xcal_{w'})\subseteq {\Fcal}_{w_1'}^{(P_w)}$ for a unique $w_1'\in {}^{I_w}W$ with $w_1'\preccurlyeq_{I_w} w$. 
We show that \(w_1' = w'\). Let \(\ul \Ical\) denote the restriction to \(\Xcal_{w'}\) of the universal \(G\)-zip of type \(\Zcal\) over \(\Xcal\). We have that \(\tilde{s}_w(\ul{\Ical}) =(\ul{\Ical}, \widetilde{\Ical}_w|_{\Xcal_{w'}})\). \Cref{lem-rigidityofsubtorsors} implies that \(\Ical_{w'} \subseteq \widetilde{\Ical}_w |_{\Xcal_{w'}}\). The inclusion \(\Ical_{w'} \subseteq \widetilde{\Ical}_w|_{\Xcal_{w'}}\) is equivalent to the fact that the morphism \(\Psi_{P_w}\colon \Fcal^{(P_w)}\to \Xcal^{\Zcal_w}\) of \eqref{psimap} sends \((\ul{\Ical}, \widetilde{\Ical}_w|_{\Xcal_{w'}})\) to a point of the stratum \(\Xcal^{\Zcal_w}_{w'}\). Hence, $w_1'=w'$ and $\widetilde{s}_w(\Xcal_{w'})=\Fcal_{w'}^{(P_w)}$. This shows that $\Ucal$ admits a canonical cover. In fact, \(\Ucal\) admits a separating canonical cover because \(\widetilde{s}_w\colon \Ucal \to \Vcal\) is an isomorphism. By \Cref{normal-bounded}, \(\Ucal\) is \(w\)-bounded. This shows that \eqref{thm-extnorm1} or \eqref{thm-extnorm2} imply \eqref{thm-extnorm3}.
\par Assume (\ref{thm-extnorm3}). By \Cref{bounded-implies-normal}, \(\pi_{P_w} \colon \Ucal^{(P_w)} \to \Ucal\) is an isomorphism. We can take \(\widetilde{s}_w\) to be its inverse. This shows (\ref{thm-extnorm1}). Moreover, by \Cref{bounded-implies-normal} again, \eqref{thm-extnorm3} implies \eqref{thm-extnorm4}. By \Cref{normal-bounded}, \eqref{thm-extnorm4} implies \eqref{thm-extnorm3}, and finally it is clear that \eqref{thm-extnorm5} implies \eqref{thm-extnorm1}.

\end{proof}

\begin{corollary}
\label{thm-elemsmoothchar}
Let \(\Ucal = \Ucal(w, w'), w, w' \in \IW,\) be an elementary \(w\)-open. Then, $\Ucal$ is smooth if and only if it is normal if and only if any of the equivalent conditions of \Cref{thm-extnorm} holds.
\end{corollary}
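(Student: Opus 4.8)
The plan is to prove the cycle of implications
\[
(\Ucal \text{ smooth}) \;\Longrightarrow\; (\Ucal \text{ normal}) \;\Longrightarrow\; (\text{the equivalent conditions of }\Cref{thm-extnorm}) \;\Longrightarrow\; (\Ucal \text{ smooth}).
\]
The first implication is standard: a $k$-stack smooth over $k$ admits a regular smooth atlas, hence is normal. Moreover the conditions of \Cref{thm-extnorm} include normality, since they are all equivalent to \eqref{thm-extnorm4}; so ``conditions'' already implies ``normal''. Thus the genuine content is the second and third implications, and both will crucially exploit that $\Ucal = \Xcal_w \cup \Xcal_{w'}$ consists of exactly two strata, $\Xcal_w$ open and $\Xcal_{w'}$ closed of codimension one.

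\emph{Normal $\Rightarrow$ conditions.} Assume $\Ucal$ is normal. I would set $\Vcal \coloneqq \pi_{P_w}^{-1}(\Ucal) \cap \overline{\Fcal}_w^{(P_w)}$, an open substack of $\overline{\Fcal}_w^{(P_w)}$, hence integral and normal by \Cref{prop-Pw}; the map $\pi_{P_w} \colon \Vcal \to \Ucal$ is proper, because $\pi_{P_w}$ is a fibration with projective fibres $P/P_w$, and birational, because by \Cref{prop-flag}\eqref{prop-flag-2} it restricts over the dense open $\Xcal_w$ to the isomorphism $\Fcal_w^{(P_w)} \xrightarrow{\ \sim\ } \Xcal_w$ of \Cref{prop-Pw}. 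Running the argument of \Cref{normal-bounded} on a smooth atlas, \Cref{prop-birat-proper} produces an open $V \subseteq \Ucal$ with $\Ucal \setminus V$ of codimension $\geq 2$ over which $\pi_{P_w}$ is an isomorphism. But $|\Ucal| = \{[\Xcal_w], [\Xcal_{w'}]\}$ with $[\Xcal_{w'}]$ closed, and $\Xcal_{w'} \cong [\Stab_E(w'z^{-1})\backslash 1]$ has a one-point underlying space, so the only closed substacks of $\Ucal$ are $\varnothing$, $\Xcal_{w'}$ (codimension $1$) and $\Ucal$ itself; a closed substack of codimension $\geq 2$ must therefore be empty. Hence $V = \Ucal$ and $\pi_{P_w} \colon \Vcal \to \Ucal$ is an isomorphism. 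Composing its inverse with $\Vcal \hookrightarrow \Fcal^{(P_w)}$ yields a section $\widetilde{s}_w \colon \Ucal \to \Fcal^{(P_w)}$ of $\pi_{P_w}$ whose restriction to $\Xcal_w$ is $s_w$ (again by \Cref{prop-flag}\eqref{prop-flag-2}). This is condition \eqref{thm-extnorm1}, so all conditions of \Cref{thm-extnorm} hold.

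\emph{Conditions $\Rightarrow$ smooth.} Assume the conditions hold. By \eqref{thm-extnorm5}, $\pi_{P_w} \colon \Ucal^{(P_w)} \to \Ucal$ is an isomorphism, so it suffices to show $\Ucal^{(P_w)}$ is smooth. Since $\Ucal$ admits a canonical cover we have $w' \preccurlyeq_{I_w} w$, and as $\ell(w') = \ell(w)-1$ the substack $\Ucal^{(P_w)} = \Fcal_w^{(P_w)} \cup \Fcal_{w'}^{(P_w)}$ is open in $\overline{\Fcal}_w^{(P_w)}$ with exactly the two $\Zcal_{P_w}$-zip strata $\Fcal_w^{(P_w)}$ (codimension $0$) and $\Fcal_{w'}^{(P_w)}$ (codimension $1$). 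By \Cref{prop-Pw}, $\overline{\Fcal}_w^{(P_w)}$ is normal, so by Serre's criterion for regularity in codimension one its singular locus has codimension $\geq 2$; intersecting with $\Ucal^{(P_w)}$, the singular locus of $\Ucal^{(P_w)}$ is a closed substack of codimension $\geq 2$ that is stable under the zip-group action, hence a union of $\Zcal_{P_w}$-zip strata. Since the only available strata have codimension $\leq 1$, this forces the singular locus to be empty, so $\Ucal^{(P_w)}$, and therefore $\Ucal$, is smooth. (It is automatically Cohen--Macaulay and the extensions are unique, by \Cref{thm-extnorm}.)

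\emph{Main obstacle.} I expect the delicate step to be ``normal $\Rightarrow$ conditions'': applying Zariski's Main Theorem \Cref{prop-birat-proper} in the stacky setting (via descent to an atlas, as in the proof of \Cref{normal-bounded}) and, above all, isolating precisely why it forces $\pi_{P_w}\colon\Vcal\to\Ucal$ to be an isomorphism here — namely the very rigid geometry of an elementary substack, where the only deeper stratum $\Xcal_{w'}$ is an irreducible codimension-one locus admitting no smaller closed substack, so the exceptional locus of a proper birational map to the normal $\Ucal$ has nowhere to live. Everything else is formal bookkeeping with the results of \S\ref{sec-revgzip}--\S\ref{sec-singostrat} and the normality and Cohen--Macaulayness of closures of Bruhat strata.
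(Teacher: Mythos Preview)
Your proof is correct and follows essentially the same approach as the paper: the key step ``normal $\Rightarrow$ conditions'' is handled identically, by applying Zariski's Main Theorem (\Cref{prop-birat-proper}) to the proper birational map $\pi_{P_w}\colon \Vcal\to\Ucal$ and observing that an elementary $w$-open has no nonempty closed substack of codimension $\geq 2$, so the exceptional locus is forced to be empty. The only organizational difference is that the paper dispatches ``smooth $\Leftrightarrow$ normal'' in one line at the outset (normal $\Rightarrow R_1$, and the singular locus of $\Ucal$ is a union of zip strata all of codimension $\leq 1$, hence empty), whereas you close the cycle via ``conditions $\Rightarrow$ smooth'' by transporting the same codimension argument to $\Ucal^{(P_w)}$; this detour is harmless but unnecessary, since the argument applies directly to $\Ucal$.
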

\begin{proof}
Smoothness is equivalent to normality for $\Ucal$ since $\Xcal_w, \Xcal_{w'}$, have codimension zero, respectively one, in $\Ucal$. Assume now that $\Ucal$ is normal. 
The morphism \(\pi_{P_w}:\pi_{P_w}^{-1}(\Ucal)\cap \overline{\Fcal}_w^{(P_w)} \to \Ucal\) is birational, since it is an isomorphism over \(\Xcal_w\), and proper, since \(\pi_{P_w}\) is, and surjective, since $\overline{\Fcal}_w^{(P_w)}\to \overline{\Xcal}_w$ is, by properness. 
Therefore, since \(\Xcal_{w'}\) has codimension one in \(\Ucal\) and is reduced to a point, we know that \(\pi_{P_w}:\pi_{P_w}^{-1}(\Ucal)\cap \overline{\Fcal}_w^{(P_w)} \to \Ucal\) is an isomorphism, by \Cref{prop-birat-proper}. 
Its inverse extends \(s_w\), i.e.\@ \eqref{thm-extnorm1} of \Cref{thm-extnorm} holds. 

\end{proof}

\begin{remark}
\label{rmk-countergl7}
Consider \(G = \GL_7\), with \(\mu\) minuscule cocharacter of signature \((r, s)=(3, 4),\) in the notation of \ref{ssec-fzipcanfilt}. Take
\(
w = [4, 1, 2, 5, 3, 6, 7], w' = [1, 2, 4, 5, 6, 3, 7] \in \IW \subseteq \Sfr_7,
\)
of length \(4\) and \(3\), respectively. One can show that \(w' \preccurlyeq w\) and \(P_w = B = P_{w'}\), so that \(\Ucal(w, w')\) is a \(w\)-bounded elementary \(w\)-open. On the other hand, one can also check that \(w'\) is not a lower neighbor of \(w\) in the order \(\preccurlyeq_{I_w}\), so that in particular \(\Ucal(w, w')\) does not admit a canonical cover. By \Cref{thm-elemsmoothchar}, this implies that \(\Ucal(w, w')\) is not smooth, even though \(P_{w'} \subseteq P_w\) holds. In particular, this shows that a \(w\)-bounded \(w\)-open need not be smooth, or normal, even when it is elementary.
\end{remark}
\begin{remark}
\Cref{thm-elemsmoothchar} shows that, for elementary \(w\)-open substacks, our strategy to detect smoothness of $\Ucal$ using the canonical flag space $\Fcal^{(P_w)}$ is optimal.
\end{remark}
\begin{remark}
\label{rmk-computable}
The \(w\)-boundedness of an elementary \(w\)-open \(\Ucal(w, w')\) can be checked algorithmically, both in the positive and the negative, because the canonical parabolic can be computed effectively, given \(w\) and a description of the Weyl root system \((W, \Phi, \Delta)\), together with a frame \((B, z) \). In fact, the counterexample provided in \Cref{rmk-countergl7} was found with the aid of a computer program. It is also possible to exclude the existence of a canonical cover in some cases, like that of \Cref{rmk-countergl7}, by checking if \(w' \centernot{\preccurlyeq}_{I_w} w\) holds, that is, if \(w'\) is not a lower neighbor of \(w\) in the partial order \(\preccurlyeq_{I_w}\); again, this check can be done algorithmically. Conversely, in some other cases, like that of \Cref{prop-extcanfilt-exa}, one can produce a separating canonical cover by inspecting the canonical filtrations \(C_{\mr{can},w}^\bullet\) and \(C_{\mr{can},w'}^\bullet\) on \(\Xcal_w\) and \(\Xcal_{w'}\), respectively, using the standard Dieudonn\'{e} modules of \cite{gsas}, and showing that \emph{some choices} of words in the letters \(F^{-1}, V\), giving \(C_{\mr{can},w}^\bullet\) also give some of the terms of \(C_{\mr{can},w'}^\bullet\). It is not clear if a similar procedure can be used to \emph{completely} determine whether a separating canonical cover exists or not. If that were the case, \Cref{thm-elemsmoothchar} would give an effective algorithm to check whether an elementary \(w\)-open is smooth or not. The algorithmic decidability of this condition is work in progress by the authors.
\end{remark}

Say that a \(w\)-open substack $\Ucal$ has \emph{depth one} if it consists of $w$ and certain lower neighbors of $w$. The statement of \Cref{thm-elemsmoothchar} generalizes immediately for \(w\)-open substacks of depth one. Consider now an arbitrary \(w\)-open substack $\Ucal$. Let $\Pcal$ be a property applying to substacks of $\Xcal$ (for example, ``Cohen--Macaulay'', or ``admits a separating cover''). We say that $\Ucal$ \emph{satisfies $\Pcal$ in codimension one} if $\Pcal$ is satisfied by all $w$-open elementary substacks $\Ucal(w,w')$ contained in $\Ucal$. For example, $\Ucal$ is \emph{$w$-bounded in codimension one} if for any lower neighbor $w'$ of $w$ in ${}^I W$ such that \(\Xcal_{w'} \subseteq \Ucal\), we have $P_{w'}\subseteq P_w$. Recall Serre's criterion for normality: a scheme $X$ is normal if and only if it satisfies properties $(R_1)$ and $(S_2)$, see \cite[\href{https://stacks.math.columbia.edu/tag/031O}{Tag 031O}]{stacks-project}. We deduce from \Cref{thm-elemsmoothchar} the following characterization of normality of arbitrary \(w\)-open substacks.
\begin{corollary}\label{thm-normchargen}
Let $\Ucal$ be a \(w\)-open substack. Then, $\Ucal$ is normal if and only if it satisfies the following conditions:
\begin{enumerate}
    \item \label{normal1}  $\Ucal$ satisfies property $(S_2)$,
    \item \label{normal2} $\Ucal$ is $w$-bounded in codimension one,
    \item \label{normal3} $\Ucal$ admits separating canonical covers in codimension one.
\end{enumerate}
\end{corollary}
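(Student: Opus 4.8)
The plan is to reduce everything to Serre's criterion for normality, namely that a reduced locally Noetherian scheme is normal if and only if it satisfies $(R_1)$ and $(S_2)$. This applies to $\Ucal$ after passing to a smooth atlas: writing $\Ucal\cong[E\backslash U]$ for a locally closed $U\subseteq G$, all of normality, $(R_1)$ and $(S_2)$ can be checked on $U$, since they are smooth-local. Condition \eqref{normal1} is exactly $(S_2)$, so the entire content of the corollary is that, for a reduced $w$-open substack, property $(R_1)$ is equivalent to the conjunction of \eqref{normal2} and \eqref{normal3}.

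First I would pin down the points of $\Ucal$ of codimension at most one. As $\Ucal$ is reduced with dense open smooth stratum $\Xcal_w$, its generic point has a field as local ring, so $(R_0)$ is automatic, and every point lying in $\Xcal_w$ already has a regular local ring. Hence $(R_1)$ for $\Ucal$ reduces to regularity at the generic points $\xi_{w'}$ of the strata $\Xcal_{w'}$, where $w'$ runs over the lower neighbours of $w$ with $w'\in\Gamma_\Ucal$ — equivalently, over those $w'$ for which $\Ucal(w,w')=\Xcal_w\cup\Xcal_{w'}$ is a $w$-open elementary substack contained in $\Ucal$. For such a $w'$, the fact that $w'$ is a lower neighbour makes $\Xcal_w\cup\Xcal_{w'}$ open in $\overline{\Xcal}_w$, so $\Ucal(w,w')$ is open in $\Ucal$ and contains $\xi_{w'}$; since inside the two-stratum substack $\Ucal(w,w')$ the point $\xi_{w'}$ has codimension one and $\Xcal_w$ is dense, $\Ucal$ is regular at $\xi_{w'}$ if and only if $\Ucal(w,w')$ is regular in codimension one, if and only if $\Ucal(w,w')$ is smooth.

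Next I would feed this into \Cref{thm-elemsmoothchar}: the elementary $w$-open $\Ucal(w,w')$ is smooth precisely when condition \eqref{thm-extnorm3} of \Cref{thm-extnorm} holds for it, i.e.\ when $\Ucal(w,w')$ admits a separating canonical cover and is $w$-bounded, the latter being exactly the inclusion $P_{w'}\subseteq P_w$. Letting $w'$ run over all lower neighbours of $w$ with $\Xcal_{w'}\subseteq\Ucal$, this says precisely that $\Ucal$ is $w$-bounded in codimension one and admits separating canonical covers in codimension one, that is, \eqref{normal2} and \eqref{normal3}; combined with \eqref{normal1} and Serre's criterion this proves that \eqref{normal1}, \eqref{normal2}, \eqref{normal3} imply normality of $\Ucal$. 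For the converse, if $\Ucal$ is normal then $(S_2)$ gives \eqref{normal1}, while each elementary substack $\Ucal(w,w')\subseteq\Ucal$ is open in $\Ucal$, hence normal, hence smooth by \Cref{thm-elemsmoothchar}, hence satisfies \eqref{thm-extnorm3} of \Cref{thm-extnorm}, which yields \eqref{normal2} and \eqref{normal3}.

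The step requiring care is the equivalence ``$\Ucal$ is regular at $\xi_{w'}$ if and only if $\Ucal(w,w')$ is smooth'': one must verify that $\Ucal(w,w')$ is genuinely open in $\Ucal$ (which is why only lower neighbours $w'$ enter, so that $\Xcal_w\cup\Xcal_{w'}$ is open in $\overline{\Xcal}_w$) and that the points $\xi_{w'}$ exhaust the codimension-one points of $\Ucal$ outside $\Xcal_w$. Once this bookkeeping is in place, the corollary is a formal combination of Serre's criterion with \Cref{thm-elemsmoothchar}, so I do not anticipate any further obstacle.
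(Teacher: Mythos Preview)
Your proposal is correct and follows precisely the approach the paper intends: the paper states this corollary immediately after \Cref{thm-elemsmoothchar}, prefaced by a reminder of Serre's criterion, and treats it as a direct consequence without spelling out the details. Your argument fills in exactly those details—identifying the codimension-one points of $\Ucal$ with the generic points of the strata $\Xcal_{w'}$ for lower neighbours $w'\in\Gamma_\Ucal$, and using \Cref{thm-elemsmoothchar} to translate regularity at each such point into $w$-boundedness plus a separating canonical cover for the elementary substack $\Ucal(w,w')$.
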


For example, if $\Ucal$ is Cohen--Macaulay, it satisfies property $(S_k)$ for all $k$.

\begin{corollary}\label{proposition criteria for smooth locus to be open stratum}
Let $w\in {}^IW$ and let $\overline{\Xcal}_w^{\textnormal{no}}$ (resp.\@ $\overline{\Xcal}_w^{\textnormal{sm}}$) denote the maximal open substack that is normal (resp.\@ smooth). The following statements are equivalent:
\begin{enumerate}
    \item\label{item normal locus} One has $\overline{\Xcal}_w^\textnormal{no}=\overline{\Xcal}_w^\textnormal{sm}=\Xcal_w$.
    \item\label{item lower neighrbours} For any lower neighbor $w'\in {}^IW$ of $w$, $\Ucal(w,w')$ is either not $w$-bounded or does not admit a separating cover.
\end{enumerate}
\end{corollary}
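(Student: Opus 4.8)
The plan is to bootstrap from \Cref{thm-elemsmoothchar}, which already resolves the elementary case. Two preliminary observations reduce the general statement to that case. First, the open stratum $\Xcal_w$ is smooth, hence normal, so $\Xcal_w\subseteq\overline{\Xcal}_w^{\textnormal{sm}}\subseteq\overline{\Xcal}_w^{\textnormal{no}}$; consequently \eqref{item normal locus} is equivalent to the single equality $\overline{\Xcal}_w^{\textnormal{no}}=\Xcal_w$, and it is this equality that I will characterize. Second, I will note that $\overline{\Xcal}_w^{\textnormal{no}}$ is automatically a union of zip strata: the normal locus is intrinsic, hence preserved by the $E$-action on the smooth presentation $\overline{G_w}\to\overline{\Xcal}_w$, hence a union of $E$-orbits; being moreover open in $\overline{\Xcal}_w$, it is therefore a $w$-open substack in the sense of \Cref{def-elemop}, determined by an index set $\Gamma\subseteq{}^IW$ with $w\in\Gamma$.

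For \eqref{item normal locus} $\Rightarrow$ \eqref{item lower neighrbours} I would argue by contraposition. Suppose some lower neighbor $w'$ of $w$ has the property that $\Ucal(w,w')$ is $w$-bounded and admits a separating canonical cover. Then \Cref{thm-elemsmoothchar} makes $\Ucal(w,w')$ normal; since it is open in $\overline{\Xcal}_w$, it is one of the normal open substacks, hence contained in $\overline{\Xcal}_w^{\textnormal{no}}$. This contradicts $\overline{\Xcal}_w^{\textnormal{no}}=\Xcal_w$, because $\Xcal_{w'}\subseteq\Ucal(w,w')$ while $\Xcal_{w'}\not\subseteq\Xcal_w$. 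For \eqref{item lower neighrbours} $\Rightarrow$ \eqref{item normal locus} it suffices, by the two preliminary observations, to show $\Gamma=\{w\}$. If not, pick $v\in\Gamma$ with $v\neq w$; by the chain property of $\preccurlyeq_I$ recalled in \S\ref{subsubsec: closure relations} there is a lower neighbor $w'$ of $w$ with $v\preccurlyeq_I w'\preccurlyeq_I w$, and $w$-openness of $\overline{\Xcal}_w^{\textnormal{no}}$ then forces $w'\in\Gamma$. Hence $\Ucal(w,w')$ is an open substack of the normal stack $\overline{\Xcal}_w^{\textnormal{no}}$, so it is normal, and \Cref{thm-elemsmoothchar} shows it is $w$-bounded and admits a separating canonical cover --- contradicting \eqref{item lower neighrbours}. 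Therefore $\overline{\Xcal}_w^{\textnormal{no}}=\Xcal_w$, and then $\overline{\Xcal}_w^{\textnormal{sm}}=\Xcal_w$ as well.

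The only step requiring genuine attention is the claim that the maximal normal open substack is a union of strata (equivalently, a $w$-open substack); I would spell this out via $E$-equivariance of the normal locus on the group-theoretic model $\overline{G_w}$, exactly as indicated above, together with the fact that normality is smooth-local so that $\overline{\Xcal}_w^{\textnormal{no}}$ is the image of the normal locus of $\overline{G_w}$. Everything else is formal once \Cref{thm-elemsmoothchar} and the chain property are in hand, so I do not expect a real obstacle.
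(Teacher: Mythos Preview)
Your proof is correct and follows the approach the paper intends: the corollary is stated without proof immediately after \Cref{thm-elemsmoothchar} and \Cref{thm-normchargen}, and your argument---reducing both directions to the elementary characterization via the chain property---is exactly the intended deduction. One minor simplification: you do not need the $E$-equivariance argument to see that $\overline{\Xcal}_w^{\textnormal{no}}$ is a union of strata; since the underlying topological space of $\overline{\Xcal}_w$ is finite and the normal locus is open, it is automatically a union of points, hence of zip strata (with the reduced structure, as $\overline{\Xcal}_w$ is reduced).
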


\section{Length one strata}
\label{ssec-lengthone}

\subsection{General results}
For an integer $m\geq 0$ and $w\in {}^I W$ such that $\ell(w)=m$, we call $\Xcal_w$ a \emph{length $m$ zip stratum}. In this section, we examine the case of length one strata. Let $w\in {}^I W$ with $\ell(w)=1$. Hence $w'=\id$ is the unique lower neighbor of $w$ in ${}^I W$, and it is a Bruhat-lower neighbor. In particular, $\overline{\Xcal}_w=\Ucal(w, \id)$ admits a $P_0$-cover for any parabolic $B\subseteq P_0\subseteq P$, by \Cref{rmk-U-bruh}. Moreover, since $\id$ is the unique lower neighbor of $w$ in $W$, it is clear that $w$ admits a separating $P_0$-cover. In particular, it admits a separating canonical cover. Write $P_{\id}$ for the canonical parabolic of the identity element $\id\in {}^I W$. The smoothness of the Zariski closure of the length one stratum $\Xcal_w$ is equivalent to the inclusion $P_{\id}\subseteq P_w$ by \Cref{thm-elemsmoothchar}. We will prove that the opposite inclusion $P_w\subseteq P_{\id}$ is always satisfied. We have the following easy lemma, of which we omit the proof.

\begin{lemma}\label{lem-obvious}
Let $\alpha$ be a simple root and $\beta$ be an arbitrary root.
\begin{enumerate}
    \item \label{obv1} If $s_\alpha(\beta)$ is simple, then $\beta=-\alpha$ or $\beta$ is positive.
    \item \label{obv2} If $\beta$ and $s_\alpha(\beta)$ are both simple, then $s_\alpha(\beta)=\beta$.
\end{enumerate}
\end{lemma}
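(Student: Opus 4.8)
The plan is to deduce both statements from two standard facts about reduced root systems: the simple reflection $s_\alpha$ sends $\alpha$ to $-\alpha$ and permutes $\Phi^+ \setminus \{\alpha\}$; and a positive root is simple if and only if its expansion in the basis $\Delta$ of simple roots is supported on a single element.

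For the first assertion I would argue by contraposition. Suppose $\beta$ is neither equal to $-\alpha$ nor positive; then $\beta$ is negative with $\beta \neq -\alpha$, so $-\beta \in \Phi^+ \setminus \{\alpha\}$. The permutation property gives $s_\alpha(-\beta) \in \Phi^+ \setminus \{\alpha\}$, and since $s_\alpha(-\beta) = -s_\alpha(\beta)$ this forces $s_\alpha(\beta)$ to be a negative root, in particular not simple. Hence if $s_\alpha(\beta)$ is simple, then $\beta = -\alpha$ or $\beta \in \Phi^+$. (Equivalently: a simple $s_\alpha(\beta)$ is either equal to $\alpha$, giving $\beta = s_\alpha(\alpha) = -\alpha$, or it lies in $\Phi^+ \setminus \{\alpha\}$, giving $\beta = s_\alpha(s_\alpha(\beta)) \in \Phi^+ \setminus \{\alpha\}$.)

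For the second assertion, first dispose of the case $\beta = \alpha$: then $s_\alpha(\beta) = -\alpha$ is not simple, so the hypothesis is not met and there is nothing to prove. Thus we may assume $\beta \neq \alpha$; since $\beta$ is simple, $\alpha$ and $\beta$ are then two distinct elements of $\Delta$, so $\langle \beta, \alpha^\vee \rangle \leq 0$. Put $c \colonequals -\langle \beta, \alpha^\vee \rangle \geq 0$, so that $s_\alpha(\beta) = \beta + c\alpha$. If $c > 0$, the expansion of $s_\alpha(\beta)$ in $\Delta$ has strictly positive coefficients at the two distinct simple roots $\beta$ and $\alpha$, hence support of size at least two, so $s_\alpha(\beta)$ cannot be simple, contradicting the hypothesis. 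Therefore $c = 0$ and $s_\alpha(\beta) = \beta$.

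I do not expect any genuine obstacle here: the argument is entirely formal once one invokes the permutation property of $s_\alpha$ on $\Phi^+ \setminus \{\alpha\}$ and the support characterization of simple roots. The only points demanding a little care are recording which of the two alternatives in the first assertion actually occurs (both do, as the instances $\beta = -\alpha$ and $\beta \in \Phi^+\setminus\{\alpha\}$ show) and treating the degenerate case $\beta = \alpha$ in the second assertion separately, where the hypothesis is vacuous.
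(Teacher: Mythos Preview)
Your proof is correct. The paper omits the proof of this lemma entirely (``We have the following easy lemma, of which we omit the proof''), so there is no argument to compare against; your use of the permutation property of $s_\alpha$ on $\Phi^+\setminus\{\alpha\}$ for part~(\ref{obv1}) and the support characterization of simple roots for part~(\ref{obv2}) is exactly the kind of standard verification the authors had in mind.
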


\begin{lemma} \label{lem-roots}
Let $\alpha \in \Delta\setminus I$ be a simple non-compact root. For any simple root $\beta\in \Delta$, we have the following.
\begin{enumerate}
\item \label{asser1} If $\beta\in \sigma(I)$, then $z^{-1}\beta$ is simple.
\item \label{asser2} If $\beta\notin \sigma(I)$, then $z^{-1}\beta$ is negative.
\item \label{asser3} If $s_\alpha (z^{-1}\beta)$ is simple, then $s_\alpha (z^{-1}\beta)=z^{-1}\beta$ or $z^{-1}\beta=-\alpha$.
\end{enumerate}
\end{lemma}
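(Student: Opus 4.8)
The plan is to prove \eqref{asser1} directly from \eqref{J-eq}, to prove \eqref{asser2} by an explicit computation with the standing choice $z=\sigma(w_{0,I})w_0$, and then to deduce \eqref{asser3} combinatorially from \eqref{asser1}, \eqref{asser2} and \Cref{lem-obvious}. For \eqref{asser1}: by \eqref{J-eq} we have $J={}^{z^{-1}}\sigma(I)$, which says exactly that $z^{-1}$ carries the subset $\sigma(I)\subseteq\Phi$ onto $J\subseteq\Delta$; hence if $\beta\in\sigma(I)$ then $z^{-1}\beta\in J$ is simple. (Alternatively, one can see this by the same explicit computation used for \eqref{asser2}, noting that $w_{0,I}$ sends $I$ into $-I$.)

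For \eqref{asser2}, write $z^{-1}=w_0\,\sigma(w_{0,I})$, using $w_0^2=w_{0,I}^2=1$ and that $\sigma$ is a group automorphism of $W$. Since the Galois action permutes $\Delta$, we may write $\beta=\sigma(\gamma)$ with $\gamma\in\Delta$, and the hypothesis $\beta\notin\sigma(I)$ becomes $\gamma\in\Delta\setminus I$. Then $\gamma$ is a positive root not contained in $\Phi_L$, and since $W_I$ stabilizes $\Phi^+\setminus\Phi_L$ we get $w_{0,I}(\gamma)\in\Phi^+$. Applying $\sigma$ (which preserves $\Phi^+$) and then $w_0$ (which sends $\Phi^+$ into $\Phi^-$) gives $z^{-1}\beta=w_0\big(\sigma(w_{0,I}(\gamma))\big)\in\Phi^-$, that is, $z^{-1}\beta$ is negative.

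Finally, for \eqref{asser3}, assume $s_\alpha(z^{-1}\beta)$ is simple. By \eqref{obv1} of \Cref{lem-obvious} applied to the root $z^{-1}\beta$, either $z^{-1}\beta=-\alpha$, in which case we are done, or $z^{-1}\beta\in\Phi^+$. In the latter case \eqref{asser2} forces $\beta\in\sigma(I)$, so \eqref{asser1} shows that $z^{-1}\beta$ is itself simple; as $z^{-1}\beta$ and $s_\alpha(z^{-1}\beta)$ are then both simple, \eqref{obv2} of \Cref{lem-obvious} yields $s_\alpha(z^{-1}\beta)=z^{-1}\beta$. (Note that only the simplicity of $\alpha$ is used here, not that $\alpha\notin I$.) There is no real obstacle in this argument; the only points requiring attention are the sign bookkeeping forced by taking $\Phi^+$ relative to $B^+$ rather than $B$, and the two standard facts invoked in \eqref{asser2}, namely that $W_I$ stabilizes the set $\Phi^+\setminus\Phi_L$ of positive roots lying outside the Levi and that $w_0$ interchanges $\Phi^+$ and $\Phi^-$.
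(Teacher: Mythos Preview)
Your proof is correct and follows essentially the same route as the paper's. The only cosmetic difference is in \eqref{asser1}: you invoke \eqref{J-eq} directly to conclude $z^{-1}\beta\in J\subseteq\Delta$, whereas the paper carries out the explicit computation $z^{-1}\beta=w_0\,\sigma(w_{0,I})\beta$ (noting $\sigma(w_{0,I})\beta$ is the negative of a simple root when $\beta\in\sigma(I)$); but these are two phrasings of the same fact, and you yourself mention the explicit computation as an alternative.
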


\begin{proof}
Assume first that $\beta\in \sigma(I)$. Since $z^{-1}=w_0 \sigma(w_{0,I})$, the root $\sigma(w_{0,I})\beta$ is the opposite of a simple root, hence $z^{-1}\beta = w_0 \sigma(w_{0,I})\beta$ is always a simple root. Next, assume that $\beta\notin \sigma(I)$. Then, the root $\sigma(w_{0,I})\beta$ remains positive, hence $z^{-1}\beta$ is negative. For the third statement, we deduce from \Cref{lem-obvious} that $z^{-1}\beta =-\alpha$ or that $z^{-1}\beta$ is positive. In the latter case, from \eqref{asser1} and \eqref{asser2}, we deduce that $z^{-1}\beta$ is simple. Hence, the result follows from \eqref{obv2} of \Cref{lem-obvious}.
\end{proof}

For $\alpha\in \Delta\setminus I$, write $\varphi_\alpha$ for the operator $\varphi_{w}\colon \Phi \to \Phi$ for $w=s_\alpha$ (see \ref{sec-canonical}). Write $\varphi_{\id}$ for the operator corresponding to the identity element $\id\in {}^I W$.

\begin{proposition} \label{prop-contained}
Let $\alpha\in \Delta\setminus I$ be a non-compact simple root. For any subset $J\subseteq I$, we have
\begin{equation}
   I \cap \varphi_\alpha(J) \subseteq I \cap \varphi_{\id}(J).
\end{equation}
\end{proposition}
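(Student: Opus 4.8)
The plan is to reduce everything to the simple factorization $\varphi_\alpha = s_\alpha \circ \varphi_{\id}$ of operators on $\Phi$. This is immediate from the definition in \S\ref{sec-canonical}: for $\beta \in \Phi$ one has $\varphi_\alpha(\beta) = (s_\alpha z^{-1})\cdot\sigma(\beta) = s_\alpha\bigl(z^{-1}\sigma(\beta)\bigr) = s_\alpha\bigl(\varphi_{\id}(\beta)\bigr)$. With this in hand, I would take an arbitrary $\gamma \in I \cap \varphi_\alpha(J)$, write $\gamma = \varphi_\alpha(\beta)$ with $\beta \in J$, and prove the sharper statement that in fact $\gamma = \varphi_{\id}(\beta)$; this gives $\gamma \in \varphi_{\id}(J)$, and since also $\gamma \in I$ by assumption, we get $\gamma \in I \cap \varphi_{\id}(J)$, as needed.

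The core step is to establish this equality $\varphi_\alpha(\beta) = \varphi_{\id}(\beta)$. First I would note that $\varphi_{\id}(\beta) = z^{-1}\sigma(\beta)$ is a \emph{simple} root: since $\beta \in J \subseteq I$, its Galois conjugate $\sigma(\beta)$ lies in $\sigma(I)$, so \Cref{lem-roots}\eqref{asser1}, applied to $\sigma(\beta)$ in place of $\beta$, gives that $z^{-1}\sigma(\beta)$ is simple. On the other hand, by the hypothesis $\gamma = \varphi_\alpha(\beta) = s_\alpha\bigl(z^{-1}\sigma(\beta)\bigr) \in I$ is simple as well. Now \Cref{lem-roots}\eqref{asser3}, again applied to the simple root $\sigma(\beta)$, forces one of two alternatives: either $s_\alpha\bigl(z^{-1}\sigma(\beta)\bigr) = z^{-1}\sigma(\beta)$, which is exactly $\varphi_\alpha(\beta) = \varphi_{\id}(\beta)$, or $z^{-1}\sigma(\beta) = -\alpha$. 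The second alternative is impossible, since we have just seen that $z^{-1}\sigma(\beta) = \varphi_{\id}(\beta)$ is simple, hence positive, whereas $-\alpha$ is negative. Thus $\varphi_\alpha(\beta) = \varphi_{\id}(\beta)$.

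Running this over all $\gamma \in I \cap \varphi_\alpha(J)$ yields $I \cap \varphi_\alpha(J) \subseteq \varphi_{\id}(J)$, and intersecting with $I$ finishes the proof. I do not expect a genuine obstacle here: all the content sits in \Cref{lem-roots}, and the only points requiring a little care are that its statements are applied to $\sigma(\beta)$ rather than $\beta$, which is legitimate because $\Gal(k/\FF_p)$ permutes $\Delta$, and the harmless observation that a simple (hence positive) root cannot equal $-\alpha$. It may be worth emphasizing in the write-up that the inclusion is proved root-by-root via the pointwise equality $\varphi_\alpha(\beta) = \varphi_{\id}(\beta)$ on the relevant $\beta$, and that the two operators genuinely disagree elsewhere, namely on those $\beta$ with $\varphi_{\id}(\beta) = -\alpha$, so that no global identity of operators is available.
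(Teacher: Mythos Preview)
Your proof is correct and follows essentially the same route as the paper: both take $\beta\in J$ with $\varphi_\alpha(\beta)\in I$, invoke \Cref{lem-roots}\eqref{asser3} applied to $\sigma(\beta)$ to obtain the dichotomy, and conclude $\varphi_\alpha(\beta)=\varphi_{\id}(\beta)$. The only cosmetic difference is how the alternative $z^{-1}\sigma(\beta)=-\alpha$ is excluded: you appeal to \Cref{lem-roots}\eqref{asser1} to see $z^{-1}\sigma(\beta)$ is simple (hence positive), while the paper instead observes that this alternative would force $\varphi_\alpha(\beta)=s_\alpha(-\alpha)=\alpha\notin I$, contradicting the hypothesis.
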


\begin{proof}
Let $\beta\in J$ be a simple root such that $\varphi_\alpha(\beta)\in I$. Then, $\varphi_\alpha(\beta)=s_\alpha z^{-1}\sigma(\beta)$ is simple. Applying \Cref{lem-roots}.\eqref{asser3} to $\sigma(\beta)$, we deduce that $s_\alpha z^{-1}\sigma(\beta)= z^{-1}\sigma(\beta)$ or that $z^{-1}\sigma(\beta)=-\alpha$. In the first case, we can write $\varphi_\alpha(\beta)=\varphi_{\id}(\beta)$, hence $\varphi_\alpha(\beta)$ lies in $I\cap \varphi_{\id}(J)$. In the second case, $s_\alpha(z^{-1}\sigma(\beta))= \alpha$ which does not lie in $I$. 
\end{proof}

Using the modified operator $\varphi_{w,I}\colon I^* \to I^*$ defined in \eqref{phistar}, \Cref{prop-contained} can be written as $\varphi_{\alpha,I}(J)\subseteq \varphi_{\id,I}(J)$ for any subset $J\subseteq I^*$. For $\alpha \in \Delta\setminus I$, write $P_\alpha$ for the canonical parabolic $P_{w}$ where $w=s_\alpha$.

\begin{corollary}\label{cor-oppinccanparl1}
For any non-compact root $\alpha\in \Delta\setminus I$, we have the inclusion $P_\alpha\subseteq P_{\id}$.
\end{corollary}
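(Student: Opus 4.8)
The plan is to turn the inclusion of parabolic subgroups into an inclusion of their types and then iterate \Cref{prop-contained}. Recall that $P_\alpha = P_{s_\alpha}$ is the standard parabolic of type $I_{s_\alpha} = \bigcap_{m\geq 0}\varphi_{\alpha,I}^m(I)$, and that $P_{\id}$ is the standard parabolic of type $I_{\id} = \bigcap_{m\geq 0}\varphi_{\id,I}^m(I)$, where $\varphi_{\alpha,I},\varphi_{\id,I}\colon I^*\to I^*$ are the modified operators of \eqref{phistar}. Since for standard parabolics containing $B$ the inclusion $P_\alpha\subseteq P_{\id}$ is equivalent to the inclusion of types $I_{s_\alpha}\subseteq I_{\id}$, it suffices to prove the latter.

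First I would note that both $\varphi_{\alpha,I}$ and $\varphi_{\id,I}$, viewed as operators on subsets of $I^*$ by applying them elementwise, are monotone for inclusion. Combining this with \Cref{prop-contained} --- in the form $\varphi_{\alpha,I}(J)\subseteq\varphi_{\id,I}(J)$ for every $J\subseteq I^*$, as recorded in the remark following its proof --- a straightforward induction on $m$ shows $\varphi_{\alpha,I}^m(I)\subseteq\varphi_{\id,I}^m(I)$ for all $m\geq 0$. The case $m=0$ is the equality $I=I$; for the inductive step, assuming $\varphi_{\alpha,I}^m(I)\subseteq\varphi_{\id,I}^m(I)$, monotonicity of $\varphi_{\alpha,I}$ followed by \Cref{prop-contained} applied to $J=\varphi_{\id,I}^m(I)$ gives
\[
\varphi_{\alpha,I}^{m+1}(I)=\varphi_{\alpha,I}\bigl(\varphi_{\alpha,I}^m(I)\bigr)\subseteq\varphi_{\alpha,I}\bigl(\varphi_{\id,I}^m(I)\bigr)\subseteq\varphi_{\id,I}\bigl(\varphi_{\id,I}^m(I)\bigr)=\varphi_{\id,I}^{m+1}(I).
\]
Intersecting over all $m\geq 0$ yields $I_{s_\alpha}\subseteq I_{\id}$, hence $P_\alpha\subseteq P_{\id}$, which is \Cref{cor-oppinccanparl1}.

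I do not expect a genuine obstacle here: the argument is a monotone, fixed-point style iteration of the single-step inequality \Cref{prop-contained}, and that proposition is precisely where the hypothesis that $\alpha$ is non-compact gets used (via \Cref{lem-roots}). The only point requiring a little care is the bookkeeping of the extra symbol $0$ in $I^*=I\cup\{0\}$ when translating between $\varphi_w$ and $\varphi_{w,I}$ --- in particular that $\bigcap_{m\geq 0}\varphi_{w,I}^m(I)$ actually lies in $I$ rather than merely in $I^*$, because the term for $m=0$ is $I$ itself --- but this is already settled in the discussion of the canonical parabolic preceding the statement.
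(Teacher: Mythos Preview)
Your proof is correct and takes essentially the same approach as the paper: both reduce to showing $I_{s_\alpha}\subseteq I_{\id}$ via the formula $I_w=\bigcap_{m\geq 0}\varphi_{w,I}^m(I)$ and then invoke \Cref{prop-contained}. The paper's proof is terser --- it simply says the result ``follows from \Cref{prop-contained}'' --- whereas you have spelled out the inductive step (monotonicity plus the single-step inclusion) that makes this deduction go through.
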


\begin{proof}
The type of the parabolic subgroups $P_\alpha$ and $P_{\id}$ are given by $\bigcap_{m\geq 0}\varphi_{\alpha,I}^m(I)$ and $\bigcap_{m\geq 0}\varphi_{\id,I}^m(I)$ respectively. Hence, the result follows from \Cref{prop-contained}.
\end{proof}

The inclusion $P_\alpha\subseteq P_{\id}$ shows that in general length one strata "tend" to be non-smooth. We deduce the following.

\begin{theorem}\label{dim1-prop}
For any $\alpha\in \Delta\setminus I$, the following are equivalent:
\begin{enumerate}
    \item The locally closed substack $\overline{\Xcal}_\alpha$ is smooth.
    \item The locally closed substack $\overline{\Xcal}_\alpha$ is normal.
    \item One has $P_{\id}\subseteq P_\alpha$.
    \item One has $P_{\id} = P_\alpha$.
    \item One has $s_\alpha(I_{\id})=I_{\id}$.
\end{enumerate}       
\end{theorem}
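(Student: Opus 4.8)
The plan is to bootstrap all five equivalences from the elementary-substack machinery of \Cref{thm-elemsmoothchar} and \Cref{thm-extnorm}, leaving only a short root-theoretic computation to settle the purely combinatorial equivalence $(4)\Leftrightarrow(5)$. First I would record the structural observations that are already in place: for a non-compact simple root $\alpha\in\Delta\setminus I$ the element $w\colonequals s_\alpha$ lies in ${}^IW$ and has length one, the identity $\id$ is its unique lower neighbor in ${}^IW$, and this lower neighbor is Bruhat. Hence $\overline{\Xcal}_\alpha=\Ucal(s_\alpha,\id)$ is an elementary $w$-open substack and, as noted just before the statement, it automatically admits a separating canonical cover.

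Next I would obtain $(1)\Leftrightarrow(2)\Leftrightarrow(3)$. By \Cref{thm-elemsmoothchar}, smoothness, normality and the conditions of \Cref{thm-extnorm} are all equivalent for $\overline{\Xcal}_\alpha$. Since a separating canonical cover exists, condition \eqref{thm-extnorm3} of \Cref{thm-extnorm} reduces to the requirement that $\overline{\Xcal}_\alpha$ be $w$-bounded; and because $\Gamma_{\overline{\Xcal}_\alpha}=\{s_\alpha,\id\}$ with $P_{s_\alpha}=P_\alpha$, $w$-boundedness is precisely the inclusion $P_{\id}\subseteq P_\alpha$, which is $(3)$. For $(3)\Leftrightarrow(4)$ I would simply invoke \Cref{cor-oppinccanparl1}, which gives the reverse inclusion $P_\alpha\subseteq P_{\id}$ unconditionally, so $(3)$ forces equality of the two canonical parabolics.

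The remaining equivalence $(4)\Leftrightarrow(5)$ is where the one genuine computation lives. I would first translate $(4)$ into equality of types $I_\alpha=I_{\id}$, using that $P_\alpha,P_{\id}$ are standard parabolics. Then I would use two facts from \S\ref{sec-canonical}: that $\varphi_\alpha=s_\alpha\circ\varphi_{\id}$ as permutations of $\Phi$, and that $I_w$ is the largest subset of $I$ fixed by $\varphi_w$; in particular $\varphi_{\id}(I_{\id})=I_{\id}$. If $s_\alpha(I_{\id})=I_{\id}$, then $\varphi_\alpha(I_{\id})=s_\alpha(\varphi_{\id}(I_{\id}))=s_\alpha(I_{\id})=I_{\id}$, so $I_{\id}$ is a $\varphi_\alpha$-stable subset of $I$, whence $I_{\id}\subseteq I_\alpha$ by maximality; combined with $I_\alpha\subseteq I_{\id}$ from \Cref{cor-oppinccanparl1} this gives $I_\alpha=I_{\id}$. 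Conversely, if $I_\alpha=I_{\id}$, then $I_{\id}=\varphi_\alpha(I_\alpha)=s_\alpha(\varphi_{\id}(I_{\id}))=s_\alpha(I_{\id})$.

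There is no serious obstacle here: the real work lies in the preceding lemmas (\Cref{prop-contained}, \Cref{cor-oppinccanparl1}) and in \Cref{thm-extnorm}. The one point requiring care is the bookkeeping in the reduction of \Cref{thm-extnorm}'s conditions to plain $w$-boundedness — one must be sure the separating-canonical-cover hypothesis is genuinely automatic in this case (it is, since $\id$ is the unique, and Bruhat, lower neighbor of $s_\alpha$) — and, in the set computation, remembering that it is the \emph{maximality} in the characterization of $I_w$ that drives the ``$\subseteq$'' direction.
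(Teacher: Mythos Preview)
Your proposal is correct and follows essentially the same route as the paper: the equivalences $(1)\Leftrightarrow(2)\Leftrightarrow(3)$ via \Cref{thm-elemsmoothchar} (with the separating canonical cover automatic), $(3)\Leftrightarrow(4)$ via \Cref{cor-oppinccanparl1}, and $(4)\Leftrightarrow(5)$ via the identity $\varphi_\alpha=s_\alpha\circ\varphi_{\id}$ together with $\varphi_{\id}(I_{\id})=I_{\id}$. The paper's proof is terser---it simply records $\varphi_\alpha(I_{\id})=s_\alpha z^{-1}\sigma(I_{\id})=s_\alpha\varphi_{\id}(I_{\id})=s_\alpha(I_{\id})$ and leaves the maximality argument implicit---but your explicit invocation of the maximality of $I_\alpha$ to close the loop is exactly what is needed and matches the underlying reasoning.
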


\begin{proof}
We already know that the four first assertions are equivalent. Moreover, they are equivalent to the equality $\varphi_\alpha(I_{\id})=I_{\id}$. We have $\varphi_\alpha(I_{\id})=s_\alpha z^{-1}\sigma(I_{\id})=s_\alpha \varphi_{\id}(I_{\id})=s_\alpha(I_{\id})$. The result follows.
\end{proof}

\subsubsection{Dense subsets of \texorpdfstring{$\Delta$}{D}}\label{section: dense subsets of delta}
For a subset $K\subseteq\Delta$, say that $\alpha\in \Delta$ is \textit{$K$-isolated} if $\langle \alpha,\beta^\vee\rangle=0$ for all $\beta\in K$ (in other words, if $\alpha$ is orthogonal to all roots in $K$). Furthermore, say that $K$ is \textit{dense} in $\Delta$ if no root in $\Delta$ is $K$-isolated.

\begin{corollary}\label{cor-Borel} \ 
\begin{enumerate}
    \item \label{dim1-cor1} Assume that $I_{\id}=\emptyset$. Then, the Zariski closure of any length $1$ stratum is smooth.
    \item \label{dim1-cor2} Assume that $I_{\id}$ is dense in $\Delta$. Then, the Zariski closure of any length $1$ stratum is singular.
\end{enumerate}
\end{corollary}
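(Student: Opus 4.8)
The plan is to deduce both statements from the equivalences in \Cref{dim1-prop}; concretely, I will use that for a non-compact simple root $\alpha\in\Delta\setminus I$ the closure $\overline{\Xcal}_\alpha$ is smooth (equivalently, normal) if and only if $s_\alpha(I_{\id})=I_{\id}$, which is condition (5) of that theorem. Part \eqref{dim1-cor1} is then immediate: if $I_{\id}=\emptyset$, then $s_\alpha(I_{\id})=\emptyset=I_{\id}$ for every $\alpha\in\Delta\setminus I$, so the closure of every length one stratum is smooth.

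For part \eqref{dim1-cor2} I would argue by contradiction. Fix $\alpha\in\Delta\setminus I$ and suppose that $\overline{\Xcal}_\alpha$ is smooth. By \Cref{dim1-prop}, $s_\alpha$ maps the set $I_{\id}$ onto itself, and being an involution it restricts to a bijection of $I_{\id}$. Fix $\beta\in I_{\id}$. Since $I_{\id}\subseteq I$ (the canonical parabolic $P_{\id}$ has type contained in $I$) and $\alpha\notin I$, we have $\alpha\neq\beta$ and $\alpha\notin I_{\id}$. Now $s_\alpha(\beta)=\beta-\langle\beta,\alpha^\vee\rangle\alpha$ lies in $I_{\id}\subseteq\Delta$, so
\[
s_\alpha(\beta)-\beta+\langle\beta,\alpha^\vee\rangle\alpha=0
\]
is a linear relation among simple roots with coefficient $1$ on $s_\alpha(\beta)$. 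As $s_\alpha(\beta)\in I_{\id}$ while $\alpha\notin I_{\id}$, we have $s_\alpha(\beta)\neq\alpha$; if in addition $s_\alpha(\beta)\neq\beta$, then $s_\alpha(\beta),\beta,\alpha$ would be three pairwise distinct simple roots satisfying a nontrivial linear relation, contradicting linear independence of simple roots. Hence $s_\alpha(\beta)=\beta$, i.e.\ $\langle\beta,\alpha^\vee\rangle=0$, and therefore $(\alpha,\beta)=0$. Since $\beta\in I_{\id}$ was arbitrary, $\alpha$ is $I_{\id}$-isolated, contradicting the density of $I_{\id}$ in $\Delta$. Thus $\overline{\Xcal}_\alpha$ is not smooth, hence by \Cref{dim1-prop} not normal either; as $\alpha$ was an arbitrary non-compact simple root, every length one stratum closure is singular.

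The proof is essentially a one-line linear-independence observation stacked on top of \Cref{dim1-prop}, so there is no real obstacle. The only thing to keep straight is trivial bookkeeping: when $\Delta\neq\emptyset$ the hypotheses of \eqref{dim1-cor1} and \eqref{dim1-cor2} are mutually exclusive (if $I_{\id}=\emptyset$, every root is $I_{\id}$-isolated), and if $\Delta=I$ there are no length one strata so both statements are vacuous. One could alternatively run part \eqref{dim1-cor2} through \Cref{cor-oppinccanparl1}, which already gives $P_\alpha\subseteq P_{\id}$ so that smoothness amounts to the reverse inclusion $P_{\id}\subseteq P_\alpha$; but the formulation via condition (5) of \Cref{dim1-prop} is the most direct.
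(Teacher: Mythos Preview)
The proposal is correct and takes essentially the same approach as the paper: both parts reduce to condition (5) of \Cref{dim1-prop}, and for \eqref{dim1-cor2} both arguments hinge on the fact that if $\beta\neq\alpha$ and $s_\alpha(\beta)$ is simple then $s_\alpha(\beta)=\beta$ (the paper states this as \Cref{lem-obvious}\eqref{obv2}, while you rederive it via linear independence of simple roots). The paper's proof is just the direct form of your contrapositive: density gives a $\beta\in I_{\id}$ not orthogonal to $\alpha$, whence $s_\alpha(\beta)\notin\Delta$ and $s_\alpha(I_{\id})\neq I_{\id}$.
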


\begin{proof}
The first assertion is clear. For \eqref{dim1-cor2}, it suffices to show that for any $\alpha\in \Delta\setminus I$, we have $s_\alpha(I_{\id})\neq I_{\id}$. Since $I_{\id}$ is dense in $\Delta$, the simple root $\alpha$ is connected to some root $\beta\in I_{\id}$. Hence $s_\alpha(\beta)$ is not simple, which shows the result.
\end{proof}

\subsection{Type \texorpdfstring{$\Asf$}{A}}\label{sec-antype}
In this section, we investigate the smoothness of one-dimensional strata for all groups of type $\Asf$. 
Moreover, we assume that $G$ is endowed with a minuscule cocharacter $\mu\colon \GG_{\textrm{m},k}\to G_k$. By \S\ref{section: changing the center} (\Cref{rem-changing-center}) it suffices to cover the case where $G_k$ is isomorphic to a product of copies of $\GL_{n,k}$.

\subsubsection{Simple split case}
We start with the simpler case $G=\GL_{n,\FF_p}$, for $n\geq 1$. Let $B\subseteq G$ (resp.\@ $B^+$) denote the lower triangular (resp.\@ upper-triangular) Borel subgroup, and $T\subseteq B$ the diagonal torus. Identify $X^*(T)=\ZZ^n$ such that $(\lambda_1, \dots, \lambda_n)\in \ZZ^n$ corresponds to the character
\begin{equation}
    \diag(t_1, \dots,t_n)\mapsto \prod_{i=1}^n t_i^{\lambda_i}.
\end{equation}
Write $(e_1, \dots,e_n)$ for the standard basis of $\ZZ^n$. For $1\leq i\leq n-1$, set $\alpha_i\colonequals e_i-e_{i+1}$. The set of simple roots is given by $\Delta=\{\alpha_i \ | \ 1\leq i \leq n-1\}$. Let $r,s\geq 0$ be integers such that $n=r+s$. Endow $G$ with the cocharacter
\begin{equation*}
    \mu\colon \GG_{\textrm{m},k}\to G_k, \ t\mapsto \diag(t\mathbb{1}_r, \mathbb{1}_s).
\end{equation*}
Let $\Zcal\colonequals \Zcal_\mu=(G,P,Q,L,M)$ be the associated zip datum. Since $\mu$ is defined over $\FF_p$, we have $L=M$, which is a Levi subgroup isomorphic to $\GL_{r,\FF_p}\times \GL_{s,\FF_p}$. The type of $P$ is then given by $I=\Delta\setminus \{\alpha_r\}$. Set $z=w_{0,I}w_0$. Recall that for each $w\in {}^I W$, we defined an operator $\varphi_w\colon I^*\to I^*$, where $I^*\colonequals I\cup \{0\}$. In the case considered in this section, it will be convenient to define $\Delta^*\colonequals \Delta\cup\{0\}$ and identify $\Delta^* = \ZZ/n\ZZ$ via the correspondence $\alpha_i\mapsto i$ for $1\leq i\leq n-1$ and $0\mapsto 0$ (unless otherwise specified, all integers will always be taken modulo $n$ and viewed in $\ZZ/n\ZZ$). Using this identification, we may write $I=\ZZ/n\ZZ \setminus\{ 0,r\}$. 

We first look at the map $\varphi_{\id}\colon I^*\to I^*$. We have $\varphi_{\id}(0)=0$ and $\varphi_{\id}(l)=l+s$ for $l\in I$ (note that this formula also holds if $l=n-s=r$). In other words, the map $\varphi_{\id}$ is given on $I\subseteq I^*$ by the restriction of the translation map
\begin{equation}
  T_s \colonequals \ZZ/n\ZZ \to \ZZ/n\ZZ, \quad l\mapsto l+s.
\end{equation}
By definition, the type $I_{\id}$ of the canonical parabolic $P_{\id}$ is the largest subset $K\subseteq I$ which satisfies $T_s(K)=K$. It is clear that this subset is precisely the complement of the subgroup generated by $s$ inside $\ZZ/n\ZZ$. We have shown:

\begin{proposition}
Let $\delta\colonequals \gcd(r,s)$. The type of the canonical parabolic $P_{\id}$ is $I_{\id} = \{\alpha_i \ | \ i\notin \delta\ZZ \}$.
\end{proposition}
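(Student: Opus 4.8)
The plan is to reduce the statement to an elementary orbit computation for the translation $T_s$ on $\ZZ/n\ZZ$. Recall from \eqref{eq-Iw} (taken with $w=\id$) that $I_{\id}$ is the largest subset $K\subseteq I$ with $\varphi_{\id}(K)=K$; since $\varphi_{\id}$ is a bijection of $\Phi$, this means precisely that $I_{\id}$ is the union of all $\varphi_{\id}$-orbits that happen to be entirely contained in $I$. By the computation carried out just above the statement, $\varphi_{\id}$ acts on $I\subseteq I^*=\ZZ/n\ZZ$ as the restriction of $T_s\colon l\mapsto l+s$. Hence the first step is simply to rephrase: $I_{\id}$ is the union of all $T_s$-orbits contained in $I=\ZZ/n\ZZ\setminus\{0,r\}$.

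Next I would identify these orbits. The $T_s$-orbit of an element $x$ is the coset $x+\langle s\rangle$, where $\langle s\rangle$ is the cyclic subgroup of $\ZZ/n\ZZ$ generated by $s$; this subgroup equals $\delta'\ZZ/n\ZZ$ with $\delta'=\gcd(n,s)$, and the identity $\gcd(n,s)=\gcd(r+s,s)=\gcd(r,s)=\delta$ shows $\delta'=\delta$. So the $T_s$-orbits are exactly the $n/\delta$ cosets of $\delta\ZZ/n\ZZ$ in $\ZZ/n\ZZ$.

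The crucial observation is then that the two forbidden elements $0$ and $r$ lie in one and the same coset, namely $\delta\ZZ/n\ZZ$ itself: indeed $0\in\delta\ZZ/n\ZZ$ trivially, and $r\in\delta\ZZ/n\ZZ$ because $\delta=\gcd(r,s)$ divides $r$. Consequently a coset $C$ of $\delta\ZZ/n\ZZ$ is contained in $I$ if and only if $C\neq\delta\ZZ/n\ZZ$, and the union of all such cosets is $\ZZ/n\ZZ\setminus\delta\ZZ/n\ZZ=\{i\in\ZZ/n\ZZ : i\notin\delta\ZZ\}$. Since $0\in\delta\ZZ$, this set consists only of genuine simple roots, so $I_{\id}=\{\alpha_i : i\notin\delta\ZZ\}$, which is the claim. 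No real obstacle arises; the only two points that each deserve a line are that $0$ and $r$ fall into the same $T_s$-orbit — this is exactly what collapses the two excluded integers into a single excluded coset, and hence makes the answer a full complement rather than something more complicated — and the elementary identity $\gcd(r+s,s)=\gcd(r,s)$.
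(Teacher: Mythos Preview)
Your proof is correct and follows essentially the same approach as the paper, which simply asserts that the largest $T_s$-stable subset of $I$ is ``clearly'' the complement of $\langle s\rangle$ in $\ZZ/n\ZZ$; you have spelled out the orbit argument and the identity $\gcd(n,s)=\gcd(r,s)$ that the paper leaves implicit. One tiny notational slip: it is $\Delta^*$, not $I^*$, that the paper identifies with $\ZZ/n\ZZ$ (so $I^*=\ZZ/n\ZZ\setminus\{r\}$), but this does not affect your argument.
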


We deduce the following corollary:

\begin{corollary}
The following are equivalent:
\begin{enumerate}
    \item One has $P_{\id}=B$.
    \item One has $\gcd(r,s)=1$.
\end{enumerate}
\end{corollary}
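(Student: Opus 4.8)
The plan is to read this off directly from the preceding Proposition. By definition $P_{\id}$ is the standard parabolic of type $I_{\id}$ lying between $B$ and $P$ (see \S\ref{sec-canonical}); in particular $P_{\id} = B$ holds if and only if $I_{\id} = \emptyset$, since $B$ is the standard parabolic of type $\emptyset$. So the whole content reduces to determining when $\{\alpha_i \mid i \notin \delta\ZZ\}$ is empty, where $\delta = \gcd(r,s)$, and this is precisely what the Proposition above computes $I_{\id}$ to be.

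The one elementary observation needed is that $\delta$ divides $n$: since $\delta \mid r$ and $\delta \mid s$ we get $\delta \mid r+s = n$. Consequently the subgroup $\delta\ZZ/n\ZZ \subseteq \ZZ/n\ZZ$ has order $n/\delta$. Now $I_{\id} = \emptyset$ says exactly that every $i$ with $1 \leq i \leq n-1$, equivalently every nonzero class in $\ZZ/n\ZZ$, lies in $\delta\ZZ/n\ZZ$; that is, $\delta\ZZ/n\ZZ = \ZZ/n\ZZ$. By the order count this forces $n/\delta = n$, hence $\delta = 1$. Conversely, if $\gcd(r,s) = 1$ then $\delta = 1$ and $\{\alpha_i \mid i \notin \ZZ\} = \emptyset$, so $I_{\id} = \emptyset$ and $P_{\id} = B$. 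This establishes the equivalence. I do not anticipate any obstacle here: all the substantive work was already done in the Proposition, and the only remaining ingredient is the one-line divisibility $\gcd(r,s) \mid r+s$ together with the translation into the language of standard parabolics.
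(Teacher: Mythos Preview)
Your proof is correct and is exactly the argument the paper has in mind: the corollary is stated without proof as an immediate consequence of the preceding Proposition, and you have simply spelled out that $P_{\id}=B \Leftrightarrow I_{\id}=\emptyset \Leftrightarrow \delta\ZZ/n\ZZ = \ZZ/n\ZZ \Leftrightarrow \delta=1$.
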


Next, we compute the canonical parabolic of the unique length one stratum. Write simply $\alpha=\alpha_r$ for the unique simple non-compact root. Let $\varphi_{\alpha,I}$, $P_\alpha$, $I_\alpha$ denote  respectively the operator $\varphi_{w,I}$, the canonical parabolic $P_w$ and its type $I_w$ corresponding to $w=s_\alpha$. As an element of $W=\Sfr_n$, the reflection $s_\alpha$ corresponds to the permutation exchanging $r$ and $r+1$. The map $\varphi_{\alpha,I}\colon I^*\to I^*$ is given by
\begin{equation*}
  \varphi_{\alpha,I}(l) =  \begin{cases}
  l+s & \quad \textrm{if } \ l\notin \{r-s-1,r-s,r-s+1\} \\
  0 & \quad \textrm{otherwise.}
    \end{cases}
\end{equation*}
By the above, we deduce easily that the complement of $I_{\alpha}$ in $\ZZ/n\ZZ$ coincides with the elements of the form $l+\delta \ZZ$ for $l\in \{-1,0,1\}$. We have shown:

\begin{proposition}
The type of the canonical parabolic of $s_{\alpha}$ is $I_{\alpha} = \{ \alpha_i \ | \ i\not\equiv -1,0,1 \pmod{\delta} \}$.
\end{proposition}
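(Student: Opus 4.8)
The plan is to reduce everything to elementary combinatorics in $\ZZ/n\ZZ$, using the identification $\Delta^\ast = \ZZ/n\ZZ$ already fixed, under which $I = \ZZ/n\ZZ \setminus \{0,r\}$ and $I^\ast = \ZZ/n\ZZ\setminus\{r\}$. Recall from \S\ref{sec-canonical} that $I_\alpha = \bigcap_{m\geq 0}\varphi_{\alpha,I}^m(I)$ is exactly the set of $l\in I$ that are \emph{not} $s_\alpha$-nilpotent, i.e.\@ with $\varphi_{\alpha,I}^m(l)\neq 0$ for all $m\geq 1$. From the displayed formula, $\varphi_{\alpha,I}$ agrees with the translation $T_s\colon l\mapsto l+s$ everywhere on $I^\ast$ except on the set $S\colonequals\{r-s-1,\,r-s,\,r-s+1\}$, which it collapses to $0$. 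Note that $\delta=\gcd(r,s)$, and since $n=r+s$ one also has $\gcd(s,n)=\delta$.

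The heart of the argument is the claim that $l\in I$ is $s_\alpha$-nilpotent if and only if the $T_s$-orbit of $l$, that is the coset $l+\langle s\rangle\subseteq\ZZ/n\ZZ$, meets $S$. In one direction, if $m\geq 0$ is minimal with $l+ms\in S$, then none of $l,l+s,\dots,l+(m-1)s$ lies in $S$, so $\varphi_{\alpha,I}$ acts as $T_s$ on all of them and $\varphi_{\alpha,I}^{m+1}(l)=0$. Conversely, if $(l+\langle s\rangle)\cap S=\emptyset$, then every iterate of $l$ under $\varphi_{\alpha,I}$ coincides with an iterate under $T_s$ and hence stays in $l+\langle s\rangle$; moreover $0\notin l+\langle s\rangle$, since $0\in l+\langle s\rangle$ would force $r-s\in l+\langle s\rangle$ (because $r-s\in\langle s\rangle$, as $\delta$ divides $r$ and $s$), contradicting disjointness from $S$ since $r-s\in S$. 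So $l$ is never mapped to $0$. The one thing to verify here is that the orbit never reaches the forbidden point $r\notin I^\ast$: but the $T_s$-predecessor of $r$ is $r-s\in S$, which is already sent to $0$, so this cannot happen.

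To finish, observe that $\langle s\rangle$ is the subgroup of multiples of $\delta$ in $\ZZ/n\ZZ$: its order is $n/\gcd(s,n)=n/\delta$, and it is contained in $\langle\delta\rangle$, which has the same order since $\delta\mid n$. Hence $l+\langle s\rangle=\{x : x\equiv l \pmod\delta\}$, and since $\delta$ divides $r$ and $s$ we have $r-s-1\equiv -1$, $r-s\equiv 0$ and $r-s+1\equiv 1\pmod\delta$; therefore $l+\langle s\rangle$ meets $S$ precisely when $l\equiv -1,0$, or $1\pmod\delta$. Consequently $I_\alpha=\{l\in\ZZ/n\ZZ : l\not\equiv -1,0,1\pmod\delta\}$, the constraints $l\neq 0$ and $l\neq r$ being automatic as $\delta\mid r$; translating back through $\alpha_i\leftrightarrow i$ gives the stated formula. (When $\delta\le 2$ the congruence condition is vacuous, and indeed $I_\alpha=\emptyset$ then, since three consecutive residues already exhaust $\ZZ/\delta\ZZ$.)

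I expect the only genuinely delicate point to be the bookkeeping around the two special elements $0$ and $r$ of $\ZZ/n\ZZ$: ensuring that a forward orbit of $\varphi_{\alpha,I}$ reaches $0$ only after first passing through $S$, and never meets $r$ at all. Both facts come down to the identities $r-s\in S$ and $r\equiv -s\pmod n$, which I would isolate at the outset and use to dispatch all degenerate cases cleanly.
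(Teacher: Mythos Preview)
Your proof is correct and follows essentially the same route as the paper, which in fact only sketches the argument in one sentence (``By the above, we deduce easily\dots''). You have carefully filled in the details the paper omits: the key observation that $l$ is $s_\alpha$-nilpotent precisely when its $T_s$-coset meets $S=\{r-s-1,r-s,r-s+1\}$, together with the translation of this into the congruence condition modulo $\delta$. One very minor imprecision: your statement that $\varphi_{\alpha,I}$ agrees with $T_s$ on all of $I^\ast\setminus S$ overlooks that $\varphi_{\alpha,I}(0)=0$ by convention, not $s$; but since your orbit analysis starts from $l\in I$ and you show the iterates remain in $I$ until (possibly) collapsing to $0$, this never actually enters the argument.
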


\begin{corollary}
The following are equivalent:
\begin{enumerate}
    \item One has $P_\alpha=B$.
    \item One has $\gcd(r,s)\leq 3$.
\end{enumerate}
\end{corollary}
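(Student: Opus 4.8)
The plan is to read this off directly from the preceding proposition, which computes the type of the canonical parabolic of $s_\alpha$ as $I_\alpha=\{\alpha_i\mid i\not\equiv -1,0,1\pmod{\delta}\}$, where $\delta=\gcd(r,s)$. First I would recall that a standard parabolic subgroup coincides with the Borel $B$ exactly when its type is empty; hence $P_\alpha=B$ is equivalent to $I_\alpha=\emptyset$, i.e.\ to the statement that every index $i$ with $1\leq i\leq n-1$ satisfies $i\equiv -1,0$, or $1\pmod\delta$. (The constraint $i\neq r$ implicit in $\alpha_i\in I$ is vacuous for this purpose, since $\delta\mid r$, so $i=r$ automatically lies in the class of $0$.)

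Next I would dispose of the degenerate cases: if $r=0$ or $s=0$ there is no non-compact simple root and the statement is empty, so we may assume $r,s\geq 1$. Then $\delta\leq\min(r,s)$, whence $n=r+s\geq 2\delta$; in particular $\delta\leq n-1$, so the integers $1,2,\dots,\delta$ all lie in $\{1,\dots,n-1\}$ and already represent every residue class modulo $\delta$. Consequently $I_\alpha=\emptyset$ if and only if the reduction modulo $\delta$ of the set $\{-1,0,1\}$ is all of $\ZZ/\delta\ZZ$.

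The last step is the elementary observation that $\{-1,0,1\}$ surjects onto $\ZZ/\delta\ZZ$ precisely when $\delta\leq 3$: for $\delta\in\{1,2,3\}$ one checks this directly, while for $\delta\geq 4$ the class of $2$ is missed (since $\delta-1\geq 3$ forces $2\not\equiv -1\pmod\delta$, and clearly $2\not\equiv 0,1$), and as $2\in\{1,\dots,n-1\}$ and $2\neq r$ (because $\delta\mid r$ with $\delta\geq 4$) the root $\alpha_2$ then belongs to $I_\alpha$, so $I_\alpha\neq\emptyset$. This yields the claimed equivalence. There is no genuine difficulty here; the only points requiring a word of care are the reduction to $r,s\geq 1$ and the verification that, when $\delta\geq 4$, the witness index $2$ genuinely lies in the admissible range and differs from $r$ — both of which follow immediately from $n\geq 2\delta$.
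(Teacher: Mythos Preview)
Your argument is correct and is exactly the intended one: the paper states this corollary without proof, as it is meant to be read off immediately from the preceding proposition computing $I_\alpha$, and you have filled in precisely those details. The only points the paper leaves implicit are the ones you flag — that $r\equiv 0\pmod\delta$ makes the exclusion $i\neq r$ irrelevant, and that $n\geq 2\delta$ guarantees a witness index when $\delta\geq 4$ — and you handle both cleanly.
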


Write $\Xcal_\alpha$ for the unique length one stratum in $\Xcal$. We deduce from \Cref{dim1-prop}:

\begin{theorem}\label{thm-len1gln}
The following are equivalent:
\begin{enumerate}
\item The Zariski closure $\overline{\Xcal}_\alpha$ is smooth.
\item The Zariski closure $\overline{\Xcal}_\alpha$ is normal.
\item One has $P_{\id}=P_{\alpha} = B$.
\item One has $I_{\id} = I_{\alpha}=\emptyset$.
\item One has $\gcd(r,s)=1$.
\end{enumerate}
\end{theorem}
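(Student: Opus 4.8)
The plan is to read off \Cref{thm-len1gln} from \Cref{dim1-prop} together with the two explicit computations of the types $I_{\id}$ and $I_\alpha$ obtained just above. By \Cref{dim1-prop}, conditions (1) and (2) are equivalent, and both are equivalent to the equality of canonical parabolics $P_{\id} = P_\alpha$. Since $P_\alpha \subseteq P_{\id}$ always holds by \Cref{cor-oppinccanparl1}, and a standard parabolic containing $B$ is determined by its type, this equality of parabolics is equivalent to the equality of types $I_{\id} = I_\alpha$. Hence the theorem reduces to checking that $\gcd(r,s) = 1$, $\ I_{\id} = I_\alpha = \emptyset$, $\ P_{\id} = P_\alpha = B$, and $\ I_{\id} = I_\alpha$ are all equivalent; this is purely combinatorial, given the formulas $I_{\id} = \{\alpha_i \mid i \notin \delta\ZZ\}$ and $I_\alpha = \{\alpha_i \mid i \not\equiv -1,0,1 \pmod{\delta}\}$ with $\delta \colonequals \gcd(r,s)$, where $\delta \mid n$ since $\delta$ divides both $r$ and $s$.

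First I would note $I_\alpha \subseteq I_{\id}$ directly from the two displayed formulas (which also reproves the relevant case of \Cref{cor-oppinccanparl1}), and observe that $I_{\id} = I_\alpha$ forces every index $\equiv \pm 1 \pmod{\delta}$ to be $\equiv 0 \pmod{\delta}$, hence $\delta \mid 1$, i.e.\ $\delta = 1$. Next I would check that when $\delta = 1$ the subgroup $\delta\ZZ \subseteq \ZZ/n\ZZ$ is the whole group, so $I_{\id} = \emptyset$, and likewise $I_\alpha = \emptyset$, whence $P_{\id} = P_\alpha = B$; conversely $I_{\id} = \emptyset$ forces $\delta\ZZ = \ZZ/n\ZZ$, i.e.\ (using $\delta \mid n$) $n/\delta = n$, i.e.\ $\delta = 1$. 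Threading these implications together closes the cycle, and the theorem follows.

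Since every ingredient is already established, I do not expect a genuine obstacle; the only points that need care are the bookkeeping between congruences mod $n$ and mod $\delta$ (using $\delta \mid n$), and going through \Cref{cor-oppinccanparl1} so as to reduce $P_{\id} = P_\alpha$ to equality of types rather than verifying it on the group level. It is worth flagging that the condition $P_\alpha = B$ by itself is strictly weaker — by the corollary just above it holds as soon as $\gcd(r,s) \le 3$ — so that for $\gcd(r,s) \in \{2, 3\}$ one has $P_\alpha = B \subsetneq P_{\id}$ and $\overline{\Xcal}_\alpha$ is still singular; this is exactly why the smoothness criterion must compare $P_\alpha$ with $P_{\id}$, and not merely test whether $P_\alpha$ equals $B$.
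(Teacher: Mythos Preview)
Your proposal is correct and follows essentially the same approach as the paper, which simply records the theorem as a direct consequence of \Cref{dim1-prop} together with the explicit descriptions of $I_{\id}$ and $I_\alpha$ established in the preceding propositions and corollaries. You supply more detail than the paper does, in particular the observation that $I_{\id}=I_\alpha$ forces $1\equiv 0\pmod{\delta}$ (via the index $i=1$), hence $\delta=1$; this is exactly the missing combinatorial step, and your closing remark about $\gcd(r,s)\in\{2,3\}$ correctly highlights why the criterion is $P_{\id}=P_\alpha$ rather than $P_\alpha=B$ alone.
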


\subsubsection{General split case}
%
We consider the case when $G=\Res_{\FF_{p^d}\mid\FF_p}(\GL_{n,\FF_{p^d}})$. We identify $G_k$ with the direct product $\prod_{i=1}^d \GL_{n,k}$ using the identification 
\begin{equation}
    \FF_{p^d}\otimes_{\FF_p} k = \prod_{\tau\colon \FF_{p^d}\to k} k
\end{equation} where we fix a choice \(\tau_1 \colon \FF_{p^d}\to k\) and impose that \(\tau_i = \sigma^{i-1}\tau_1\). The Frobenius homomorphism $\sigma\in \Gal(k/\FF_p)$ acts on $G(k)$ by translation to the right:
\begin{equation}
    \sigma(x_1, \dots, x_d) = (\sigma(x_d),\sigma(x_1), \dots, \sigma(x_{d-1}))
\end{equation}
where $\sigma(x)$, for $x\in \GL_n(k)$, is the matrix whose entries are obtained by raising those of $x$ to the $p$-th power. We fix the Borel subgroup $B\subseteq G$ defined as $B=B_1\times \dots \times B_d$, where $B_i$ is the lower triangular Borel subgroup of $\GL_n$. Similarly, we fix the maximal torus $T=T_1\times \dots \times T_d$ where $T_i\subseteq \GL_n$ is the diagonal torus. We identify $X^*(T)=\prod_{i=1}^d X^*(T_i)$ and $X^*(T_i)=\ZZ^n$ as above.
Write $(e_{i,j})_{1\leq j\leq n}$ for the standard basis of $X^*(T_i)=\ZZ^n$. Note that the Borel pair $(B,T)$ is defined over $\FF_p$. The Frobenius homomorphism $\sigma\in \Gal(k/\FF_p)$ acts on $X^*(T)$ by right translation :
\begin{equation}
    \sigma\cdot (\lambda_1, \dots,\lambda_d)=(\lambda_d,\lambda_1, \dots,\lambda_{d-1}).
\end{equation}
The set of simple roots of $G$ can be written as a disjoint union 
\begin{equation}
    \Delta=\Delta_1\sqcup \Delta_2 \sqcup \dots \sqcup \Delta_d
\end{equation}
where $\Delta_i=\{\alpha_{i,j} \ | \ 1\leq j \leq n-1\}$ with $\alpha_{i,j}=e_{i,j}-e_{i,j+1}$. 

Let $r=(r_1, \dots, r_d)$ be a $d$-tuple of integers such that $0\leq r_i \leq n$. For each $i=1, \dots,d$, we let $s_i\colonequals n-r_i$. We consider the cocharacter $\mu\colon \GG_{\textrm{m},k}\to G_k$ given on the $i$th factor by
\begin{equation}\label{mui-def}
    \mu_i\colon t \mapsto \diag(t \mathbb{1}_{r_i}, \mathbb{1}_{s_i}).
\end{equation}
Let $\Zcal$ be the zip datum attached to $(G,\mu)$. For $\square\in \{G,P,Q,L,M\}$, we have a decomposition $\square = \prod_{i=1}^d \square_i$. The set of compact simple roots (i.e.\@ simple roots contained in $L$) is given by
\begin{equation}
    I = I_1\sqcup \cdots \sqcup I_d
\end{equation}
where $I_i=\Delta_i\setminus \{\alpha_{i,r_i}\}$.

\subsubsection{Smoothness of strata}
We consider the stack $\Xcal^\mu$ of $G$-zips of type $\mu$. We will denote by $s$ the sum $s_1+\dots+s_d$ and define $\delta=\gcd(s,n)$. The subgroup $\delta \ZZ/n\ZZ\subseteq \ZZ/n\ZZ$ will have significance in the following results. For any $i\in \ZZ$, define $s_i$ as the element $s_j$ where $j$ is the unique integer such that $1\leq j\leq d$ and $i\equiv j \pmod{d}$. For $l \in \ZZ$, denote by $\overline{l}$ the set of all integers modulo $n$ that can be written as $l+ \delta m$ for $m\in \ZZ$. In other words, $\overline{l}$ is the coset in $\ZZ/n\ZZ$ of the class of $l$ with respect to the subgroup $\delta \ZZ/n\ZZ$. Note that $\overline{l}$ also consists of elements that can be written as $l + s m'$ for some $m'\in \ZZ$. In what follows, we will implicitly identify $\Delta_i$ with the set of classes $\{1, \dots, n-1\}\subseteq \ZZ/n\ZZ$. Using this identification, we write $\Delta_i\setminus \{\overline{l}\}$, for the subset obtained by removing from $\Delta_i$ all elements of the form $l + \delta m$, $m \in \ZZ$.


\begin{proposition}\label{prop-canon0}
The canonical parabolic subgroup $P_{\id}$ of the identity element $\id \in {}^I W$ can be written as a product $P_{\id}=P_{\id,1}\times \dots \times P_{\id,d}$, where $P_{\id,i}\subseteq \GL_n$ is the standard parabolic subgroup of $\GL_{n}$ whose type $I_{\id,i}$ is given as follows:
\begin{equation}\label{Ii-formula}
    I_{\id,i} = \Delta_i\setminus \left\{\sum_{l = 1}^N \overline{s}_{i-l} \ \relmiddle| \ 1\leq N\leq d \right\} 
\end{equation}
\end{proposition}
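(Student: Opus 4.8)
The plan is to compute the modified operator $\varphi_{\id,I}\colon I^{*}\to I^{*}$ of \eqref{phistar} explicitly on the simple roots $\alpha_{i,j}$, and then read off $I_{\id}$ as the set of non-$\id$-nilpotent simple roots using the description $I_{\id}=\bigcap_{m\geq 0}\varphi_{\id,I}^{m}(I)$ from \eqref{eq-Iw}. First I would unwind $\varphi_{\id}(\alpha)=z^{-1}\sigma(\alpha)$: since $z=\sigma(w_{0,I})w_{0}$ we have $z^{-1}=w_{0}\,\sigma(w_{0,I})$. The Frobenius $\sigma$ cyclically permutes the $d$ factors, so $\sigma(\alpha_{i,j})=\alpha_{i+1,j}$ (indices of factors and of the $s_{\bullet}$ read cyclically mod $d$ throughout), the $(i+1)$-st component of $\sigma(w_{0,I})$ is the longest element $w_{0,I_{i}}$ of $\Sfr_{r_{i}}\times\Sfr_{s_{i}}$, and the $(i+1)$-st component of $w_{0}$ is the longest element of $\Sfr_{n}$. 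Under the identification $\Delta_{i}\cup\{0\}\cong\ZZ/n\ZZ$, a short computation shows $w_{0,I_{i}}$ sends an index $j\not\equiv r_{i}$ to $\overline{r_{i}-j}$ (and the non-$I$ root $\alpha_{i,r_{i}}$ to a non-simple root), and then $w_{0}$ sends this to $\overline{j-r_{i}}=\overline{j+s_{i}}$ using $r_{i}+s_{i}=n$. Hence, for $\alpha_{i,j}\in I$ (equivalently $j\not\equiv r_{i}$), one gets $\varphi_{\id,I}(\alpha_{i,j})=\alpha_{i+1,\overline{j+s_{i}}}$ when $j+s_{i}\not\equiv r_{i+1}\pmod n$, and $\varphi_{\id,I}(\alpha_{i,j})=0$ otherwise. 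This local formula is the main step; the only delicate point is keeping the cyclic index bookkeeping consistent and noting that the degenerate value $j\equiv r_{i}$ never occurs on the domain $I$ (and is in any case consistent with the congruence, since $r_{i}\equiv -s_{i}\pmod n$).

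Next I would iterate the formula. The $\varphi_{\id,I}$-orbit of $\alpha_{i,j}\in I$ is $\alpha_{i+N,\,\overline{j+s_{i}+s_{i+1}+\dots+s_{i+N-1}}}$ for $N=0,1,2,\dots$, and it reaches $0$ precisely at the first $N\geq 1$ with $j+s_{i}+\dots+s_{i+N-1}\equiv r_{i+N}\pmod n$. Substituting $r_{i+N}=n-s_{i+N}$ turns this into $j\equiv -(s_{i}+s_{i+1}+\dots+s_{i+N})\pmod n$. Therefore $\alpha_{i,j}\in I_{\id}$ if and only if $j\not\equiv -(s_{i}+s_{i+1}+\dots+s_{i+m-1})\pmod n$ for every $m\geq 1$ (the case $m=1$ being automatic, as $\alpha_{i,j}\in I$ forces $j\not\equiv r_{i}=-s_{i}$).

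Finally I would rewrite this exceptional set in the form appearing in \eqref{Ii-formula}. Since $s_{1}+\dots+s_{d}=s$ and the indices are cyclic mod $d$, reducing $m$ modulo $d$ shows that the classes $-(s_{i}+\dots+s_{i+m-1})$, $m\geq 1$, are congruent modulo $\delta=\gcd(s,n)$ to those with $1\leq m\leq d$; and for such $m$, using $\delta\mid s$ together with $s_{i+d-1}=s_{i-1}$, one has $-(s_{i}+\dots+s_{i+m-1})\equiv s_{i+m}+s_{i+m+1}+\dots+s_{i+d-1}\equiv\sum_{t=1}^{d-m}s_{i-t}\pmod\delta$. As $m$ runs over $1,\dots,d$ these are exactly the classes $\sum_{t=1}^{N}s_{i-t}\pmod\delta$ for $0\leq N\leq d-1$, equivalently for $1\leq N\leq d$ (the value $N=d$ giving $s\equiv 0$, already attained at $N=0$). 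Hence $\alpha_{i,j}\in I_{\id}$ exactly when $j\notin\{\sum_{l=1}^{N}\overline{s}_{i-l}\mid 1\leq N\leq d\}$, i.e.\@ $I_{\id}\cap\Delta_{i}$ is the set on the right of \eqref{Ii-formula}. Since $\varphi_{\id,I}$ is compatible with the decomposition $\Delta=\bigsqcup_{i}\Delta_{i}$ (it merely shifts the factor index by one), $I_{\id}$ is the disjoint union of its traces on the $\Delta_{i}$, so $P_{\id}=L_{\id}B$ decomposes as $P_{\id,1}\times\dots\times P_{\id,d}$ with $P_{\id,i}$ the standard parabolic of $\GL_{n}$ of type $I_{\id,i}$. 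I expect the fussiest part to be the purely combinatorial reindexing in this last paragraph — relating the backward partial sums $\sum_{t=1}^{N}s_{i-t}$ to the forward ones produced by the orbit — while everything else is routine once the local formula for $\varphi_{\id}$ is in hand.
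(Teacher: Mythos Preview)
Your proposal is correct and follows essentially the same route as the paper: both arguments rest on the explicit formula $\varphi_{\id}(\alpha_{i,j})=\alpha_{i+1,\overline{j+s_i}}$ and then determine $I_{\id}$ from it. The only structural difference is that the paper proves the two inclusions separately (showing the target set is $\varphi_{\id}$-stable for one direction, and computing the finite intersections $\bigcap_{l=0}^m \varphi_{\id}^l(I)$ by induction on $m$ for the other), whereas you track the forward orbit of each $\alpha_{i,j}$ directly to read off the nilpotency condition; this is a minor presentational variation rather than a genuinely different method.
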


\begin{proof}
Denote by $\widetilde{I}_{\id,i}$ the set defined by the right-hand side of \eqref{Ii-formula}, and write $\widetilde{I}_{\id} \colonequals \widetilde{I}_{\id,1}\sqcup \dots \sqcup \widetilde{I}_{\id,d}$. Note that the term corresponding to $N=d-1$ is 
\begin{equation}
    \sum_{j=1}^{d-1} \overline{s}_{i-j} = -\overline{s}_{i} = \overline{r}_i.
\end{equation}
Hence, it is clear that $\widetilde{I}_{\id,i}\subseteq I_{i}=\Delta_i\setminus \{r_i\}$ and thus $\widetilde{I}_{\id} \subseteq I$. We also note that the term corresponding to $N=d$ is the class modulo $\delta$ of $s=s_1+\dots +s_d$, which is simply $\overline{0}$. If we apply $\varphi_{\id}$ to $\widetilde{I}_{\id}$, we obtain $\varphi_{\id}(\widetilde{I}_{\id})=J_1\times \dots \times J_d$ where
\begin{align*}
    J_i= \Delta_i\setminus \left\{\overline{s}_{i-1}+\sum_{j=1}^N \overline{s}_{i-1-j} \ \relmiddle| \ 1\leq N\leq d \right\} = \Delta_i\setminus \left\{\sum_{j=1}^N \overline{s}_{i-j} \ \relmiddle| \ 1\leq N\leq d \right\} = \widetilde{I}_{\id,i}.
\end{align*}
Hence, we deduce that $\varphi_{\id}(\widetilde{I}_{\id})=\widetilde{I}_{\id}$. Thus $\widetilde{I}_{\id}$ is a subset of $I$ stable under $\varphi_{\id}$, which implies that it is contained in $I_{\id}=\bigcap_{r\geq 0}\varphi_{\id}^r(I)$. To prove the reverse inclusion, we show by induction on $m$ that $\bigcap_{l = 0}^m \varphi^l(I) = K_1\times \dots \times K_d$ where
\begin{equation}
K_i = \Delta_i \setminus \left(\{r_i\} \cup \left\{\sum_{j=1}^{N} s_{i-j} \ \relmiddle| \ 1\leq N\leq m \right\}\right),
\end{equation}
for $m\geq 0$. Note that here we do not remove the whole class modulo $\delta \ZZ / n\ZZ$, only the elements $\sum_{j=1}^N s_{i-j}$ (taken modulo $n$). The claim holds clearly for $m=0$. Assume that it holds for some $m\geq 0$. Since $\varphi_{\id}$ is injective, we have
\begin{equation}
 \bigcap_{l = 0}^{m+1} \varphi_{\id}^l(I) = I\cap \varphi_{\id} \left(\bigcap_{l=0}^m \varphi_{\id}^l(I) \right) = \prod_{i=1}^d I_i \cap \varphi_{\id}(K_{i-1})
\end{equation}
where indices are taken modulo $d$ as usual. On the $i$-th factor, the element $s_{i-1}$ never lies in the image of $\varphi_{\id}$, hence we obtain:
\begin{align*}
I_i \cap \varphi_{\id}(K_{i-1}) &= I_i \cap \varphi_{\id}\left( \Delta_{i-1} \setminus \left(\{r_{i-1}\} \cup \left\{\sum_{j=1}^{N} s_{i-1-j} \ \relmiddle| \ 1\leq N\leq m \right\}\right) \right) \\
&= I_i \setminus \left(\{ s_{i-1}\}\cup \left\{s_{i-1}+\sum_{j=1}^{N} s_{i-1-j} \ \relmiddle| \ 1\leq N\leq m \right\}\right)  \\
&= \Delta_i \setminus \left(\{r_i\} \cup \left\{\sum_{j=1}^{N} s_{i-j} \ \relmiddle| \ 1\leq N\leq m+1 \right\}\right)
\end{align*}
We deduce that the claim holds for all $m\geq 0$. Changing $m$ to $m + d l$ for $l\geq 1$ recovers all elements in the same $\delta \ZZ/n\ZZ$-coset, hence the result follows from the identity $I_{\id}=\bigcap_{l\geq 0}\varphi_{\id}^l(I)$.
\end{proof}

Next, we examine the canonical parabolic subgroup of elements of length one. Note that non-compact simple roots $\alpha\in \Delta\setminus I$ correspond bijectively to the factors $1\leq j \leq n$ such that $r_j\notin \{0,n\}$. For each such integer $j$, denote simply by $\alpha_j$ the unique non-compact root $\alpha_{j,r_j}$ in the $j$-th factor. Moreover, write $P_{j}$ for the canonical parabolic of the simple reflection $s_j\colonequals s_{\alpha_j}$. Decompose $P_j$ as a product
\begin{equation}
    P_j=P_{j,1}\times \dots \times P_{j,d}.
\end{equation}

\begin{proposition}\label{prop-canon1}
Let $1\leq j \leq d$ such that $1\leq r_j\leq n-1$. For each $1\leq i \leq d$, the $i$-th factor $P_{j,i}$ of the canonical parabolic $P_j$ is the standard parabolic subgroup of $\GL_n$ whose type is given by
\begin{equation}\label{Ii-formula2}
    I_{j,i} = \Delta_i\setminus \left( \left\{\sum_{l = 1}^N \overline{s}_{i - l} \ \relmiddle| \ 1\leq N\leq d \right\}\cup \left\{ \pm \overline{1} + \sum_{l = 1}^{d+i-j-1} \overline{s}_{i - l}  \right\} \right)
\end{equation}
\end{proposition}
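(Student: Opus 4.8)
The plan is to compute $I_{j,i}$ by the same method as in the proof of \Cref{prop-canon0}, relying on the description of the canonical parabolic recalled in \S\ref{sec-canonical}. Since $P_j$ is a standard parabolic of $G_k=\prod_{i=1}^{d}\GL_{n,k}$, it is automatically a product $P_j=\prod_{i=1}^{d}P_{j,i}$ of standard parabolics of $\GL_{n,k}$, and its type is $I_j=\bigcap_{m\ge 0}\varphi_{s_j,I}^{m}(I)$, so that $I_{j,i}=I_j\cap\Delta_i$. Thus the whole point is to understand the operator $\varphi_{s_j}$ and its iterates, in analogy with the treatment of $\varphi_{\id}$ in \Cref{prop-canon0}.

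First I would pin down $\varphi_{s_j}$. Since $\varphi_w(\alpha)=(wz^{-1})\cdot\sigma(\alpha)=w\bigl(\varphi_{\id}(\alpha)\bigr)$, we have $\varphi_{s_j}=s_{\alpha_{j,r_j}}\circ\varphi_{\id}$, and the computation in the proof of \Cref{prop-canon0} shows that $\varphi_{\id}$ carries $\Delta_{i-1}\cup\{0\}$ into $\Delta_i\cup\{0\}$ via (the identification with $\ZZ/n\ZZ$ and) the translation $l\mapsto l+s_{i-1}$. As $s_{\alpha_{j,r_j}}$ acts only in the $j$-th factor --- fixing $\alpha_{j,l}$ for $|l-r_j|\ge 2$ and sending $\alpha_{j,r_j-1},\alpha_{j,r_j},\alpha_{j,r_j+1}$ outside $I_j$ --- it follows that $\varphi_{s_j,I}$ agrees with $\varphi_{\id,I}$ away from $\Delta_{j-1}$, while on $\Delta_{j-1}$ it acquires (at most) two extra zeros compared with $\varphi_{\id,I}$, located at the elements $l$ with $l+s_{j-1}\in\{r_j-1,r_j+1\}$, i.e. at $l=r_j\pm 1-s_{j-1}$. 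This is precisely the reflection computation already done in the simple split case (the displayed formula for $\varphi_{\alpha,I}$ preceding \Cref{thm-len1gln}), transplanted to the transition $\Delta_{j-1}\to\Delta_j$ with $r_j$ in the role of $r$ and $s_{j-1}$ in the role of $s$.

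Next I would invoke \Cref{cor-oppinccanparl1} to get $P_j\subseteq P_{\id}$, that is $I_j\subseteq I_{\id}$; since $\varphi_{\id}$ restricts to an honest bijection of $I_{\id}$ (no zeros occur there), the forward $\varphi_{\id}$-orbit of any coset $C$ of $\delta\ZZ/n\ZZ$ contained in $I_{\id}\cap\Delta_i$ consists of the $d$ cosets $C+(s_i+\cdots+s_{i+k-1})$ for $k=0,\dots,d-1$, cycling through $\Delta_i,\Delta_{i+1},\dots$ and returning to $C$ after $d$ steps (as $s_1+\cdots+s_d=s\equiv 0\pmod{\delta}$). A short argument --- if one element of such a coset maps to $0$ under $\varphi_{s_j,I}^{d}$, then iterating forces the whole cyclic $\langle s\rangle$-orbit in that $\Delta_i$ to disappear in the intersection --- shows that $C$ survives in $I_{j,i}$ if and only if its orbit avoids the two new zeros of $\varphi_{s_j}$, which lie in $\Delta_{j-1}$. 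This happens exactly when the member of the orbit lying in $\Delta_{j-1}$, namely $C+(s_i+\cdots+s_{j-2})$, does not contain $r_j\pm 1-s_{j-1}$, i.e. when $C\neq\overline{r_j\pm 1-(s_i+\cdots+s_{j-1})}$. Finally, using $r_j+s_j=n$, that $s\equiv 0\pmod{\delta}$, and the cyclic partition $\{i,\dots,j-1\}\sqcup\{j,\dots,i-1\}=\{1,\dots,d\}$, one simplifies $\overline{r_j\pm 1-(s_i+\cdots+s_{j-1})}=\pm\overline{1}+\sum_{l=1}^{d+i-j-1}\overline{s}_{i-l}$; hence $I_{j,i}=I_{\id,i}\setminus\{\pm\overline{1}+\sum_{l=1}^{d+i-j-1}\overline{s}_{i-l}\}$, and substituting \eqref{Ii-formula} for $I_{\id,i}$ yields \eqref{Ii-formula2}.

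The main obstacle is the bookkeeping in the last step: keeping three layers of index arithmetic consistent --- the cyclic index of the $d$ factors, values modulo $n$, and the final passage to cosets modulo $\delta$ --- and disposing of the degenerate cases: $r_j\in\{1,n-1\}$, where one of the two extra zeros collapses onto an element already killed by $\varphi_{\id,I}$ or onto the class $0$; small values of $d$; and possible coincidences between the $+\overline{1}$- and $-\overline{1}$-cosets, or between these and the cosets already removed in $I_{\id,i}$. As in \Cref{prop-canon0}, none of these affects the closed form --- everything being expressed in terms of cosets modulo $\delta$ --- but it must be checked. Everything else is a direct transcription of the identity case of \Cref{prop-canon0} together with the reflection computation of the simple split case.
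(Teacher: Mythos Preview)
Your approach is correct but genuinely different from the paper's. The paper proceeds exactly as in \Cref{prop-canon0}: it first checks directly that the right-hand side $\Gamma_j$ of \eqref{Ii-formula2} is contained in $I$ and satisfies $\varphi_j(\Gamma_j)=\Gamma_j$ (a straight computation of $\varphi_j(\Gamma_{j,i-1})$), giving $\Gamma_j\subseteq I_j$; for the reverse inclusion it proves by induction on $m$ an explicit formula for $\bigcap_{l=0}^{m}\varphi_j^{l}(I)$ and lets $m\to\infty$. No use is made of $I_j\subseteq I_{\id}$ or of orbit structure.

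Your route is more conceptual: you reduce to the identity case via $I_j\subseteq I_{\id}$ (from \Cref{cor-oppinccanparl1} or directly from \Cref{prop-contained}), observe that on $I_{\id}$ the operators $\varphi_{s_j,I}$ and $\varphi_{\id}$ agree away from the two extra zeros in $\Delta_{j-1}$, and conclude that $I_j$ is precisely $I_{\id}$ with the $\varphi_{\id}$-orbits of those two elements removed. Since $\varphi_{\id}^{d}$ acts on each $I_{\id,i}$ by translation by $s$, these orbits are exactly two $\delta\ZZ/n\ZZ$-cosets per factor, which you then identify with the extra terms in \eqref{Ii-formula2}. This explains \emph{why} the formula reads ``$I_{\id,i}$ minus two further cosets'' and makes the subsequent \Cref{thm-smooth} more transparent. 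The paper's verification, by contrast, is self-contained and avoids the orbit bookkeeping and the edge-case analysis (when $r_j\in\{1,n-1\}$ or the two cosets collide with each other or with those already removed), which in your approach must be handled by the observation that removing an already-absent coset is harmless.
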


\begin{proof}
Denote by $\Gamma_{j,i}$ the set defined by the right-hand side of \eqref{Ii-formula2}, and write $\Gamma_j$ for the product $\Gamma_{j,1}\times \dots \times \Gamma_{j,d}$. It is again clear that $\Gamma_{j,i}\subseteq I_{i}=\Delta_i\setminus \{r_i\}$. Thus, $\Gamma_j$ is contained in $I$. Denote by $\varphi_j$ the operator $\varphi_w$ for the element $w=s_j$. If we apply $\varphi_j$ to $\Gamma_j$, we obtain $\varphi_j(\Gamma_j)=J_1\sqcup \dots \sqcup J_d$ where
\begin{align*}
    J_i&= \varphi_j\left( \Delta_{i-1}\setminus \left( \left\{\sum_{l = 1}^N \overline{s}_{i - 1 - l} \ \relmiddle| \ 1\leq N\leq d \right\}\cup \left\{ \pm \overline{1} + \sum_{l = 1}^{d+i-j-2} \overline{s}_{i - 1 - l}  \right\} \right)\right) \\
    &= \Delta_i \setminus \left(\left\{\overline{s}_{i-1} + \sum_{l = 1}^N \overline{s}_{i - 1 - l} \ \relmiddle| \ 1\leq N\leq d \right\}\cup \left\{ \pm \overline{1} + \overline{s}_{i-1}+\sum_{l = 1}^{d+i-j-2} \overline{s}_{i - 1 - l}  \right\} \right) \\
    &=\Delta_i\setminus \left( \left\{\sum_{l = 1}^N \overline{s}_{i - l} \ \relmiddle| \ 1\leq N\leq d \right\}\cup \left\{ \pm \overline{1} + \sum_{l = 1}^{d+i-j-1} \overline{s}_{i - l}  \right\} \right).
\end{align*}
Hence, we deduce that $\varphi_j(\Gamma_j)=\Gamma_j$. Thus, $\Gamma_j$ is a subset of $I$ stable under $\varphi_j$, which implies that it is contained in $I_j=\bigcap_{l \geq 0}\varphi_j^l(I)$. For the reverse inclusion, one shows by induction on $m$ that $\bigcap_{l = 0}^m \varphi_j^l(I) = K_1\sqcup \dots \sqcup K_d$ where
\begin{equation}
K_i = \left\{ \begin{array}{cc}
     \Delta_i \setminus \left(\{r_i\} \cup \left\{\sum_{l=1}^{N} s_{i-l} \ \relmiddle| \ 1\leq N\leq m \right\}\cup \left\{ \pm1 + \sum_{l = 1}^{d+i-j-2} s_{i - 1 - l}  \right\} \right), & \text{if } i \equiv j, j + 1, \ldots, j + m-1 \pmod{d},\\[0.5cm]
     \Delta_i \setminus \left(\{r_i\} \cup \left\{\sum_{l=1}^{N} s_{i-l} \ \relmiddle| \ 1\leq N\leq m \right\} \right), & \text{otherwise.}
\end{array}  \right.
\end{equation}
for $m\geq 0$. Note that here we do not remove the entire class modulo $\delta \ZZ / n\ZZ$, only the elements $\sum_{l=1}^N s_{i-l}$ (taken modulo $n$) in $\ZZ/n\ZZ$. It is a routine check analogous to the proof of \Cref{prop-canon0}.
\end{proof}

We obtain the following result:

\begin{theorem}\label{thm-smooth}
    The following are equivalent.
\begin{enumerate}
    \item The length one stratum $\overline{\Xcal}_{\alpha_j}$ is smooth.
 \item The length one stratum $\overline{\Xcal}_{\alpha_j}$ is normal.
 \item One has
 \begin{equation}
     \pm 1 \in \left\{\sum_{l=1}^N \overline{s}_{j+1-l} \ \relmiddle| \ 1\leq N\leq d \right\}.
 \end{equation}
\end{enumerate}
\end{theorem}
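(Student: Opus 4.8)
The plan is to reduce the assertion to the single combinatorial identity $I_{\id}=I_{\alpha_j}$ between the types of the two canonical parabolics, and then to read that identity off the explicit descriptions already obtained in \Cref{prop-canon0} and \Cref{prop-canon1}.

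First I would invoke the general theory of \S\ref{ssec-lengthone}. Since $\ell(s_{\alpha_j})=1$, the closure $\overline{\Xcal}_{\alpha_j}$ is an elementary $w$-open (with $w=s_{\alpha_j}$) whose unique lower stratum $\Xcal_{\id}$ is a Bruhat-lower neighbor, hence it admits a separating canonical cover (as recalled before the statement of \Cref{dim1-prop}). Thus \Cref{dim1-prop} applies and yields $(1)\Leftrightarrow(2)\Leftrightarrow\big(P_{\id}=P_{\alpha_j}\big)$, i.e. $(1)\Leftrightarrow(2)\Leftrightarrow\big(I_{\id}=I_{\alpha_j}\big)$. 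Writing $I_{\id}=\bigsqcup_{i=1}^d I_{\id,i}$ and $I_{\alpha_j}=\bigsqcup_{i=1}^d I_{\alpha_j,i}$, the formula of \Cref{prop-canon1} exhibits $I_{\alpha_j,i}$ as $I_{\id,i}$ with the (at most two) further $\delta\ZZ/n\ZZ$-cosets $\pm\overline 1+\sum_{l=1}^{d+i-j-1}\overline s_{i-l}$ deleted from $\Delta_i$; in particular $I_{\alpha_j,i}\subseteq I_{\id,i}$, and $I_{\id}=I_{\alpha_j}$ holds if and only if, for every $i$, those extra cosets already lie in $\big\{\sum_{l=1}^N\overline s_{i-l}\mid 1\le N\le d\big\}$. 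Here I would record the key bookkeeping point: the $N=d$ term of this last set is $\overline s=\delta\ZZ/n\ZZ$ (because $\delta\mid s$), which contains $0$; since $0\notin\Delta_i=\{1,\dots,n-1\}$, deleting a coset from $\Delta_i$ or from all of $\ZZ/n\ZZ$ has the same effect, so the comparison of $I_{\id,i}$ with $I_{\alpha_j,i}$ collapses to a containment of unions of $\delta\ZZ/n\ZZ$-cosets, that is, to a congruence modulo $\delta$.

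Next I would organize the partial sums. Define $P\colon\ZZ\to\ZZ$ by $P(0)=0$ and $P(k)-P(k-1)=s_k$ for all $k$ (with $s_k$ taken $d$-periodically), so that $P(k+d)=P(k)+s$ and $P(\cdot)\bmod\delta$ factors through $\ZZ/d\ZZ$. Then $\sum_{l=1}^N s_{i-l}=P(i-1)-P(i-1-N)$; as $N$ runs over $\{1,\dots,d\}$ the integers $i-1-N$ form a complete residue system modulo $d$, so $\{P(i-1-N)\bmod\delta\mid 1\le N\le d\}$ equals $\mathcal I:=\{P(k)\bmod\delta\mid k\in\ZZ\}$, whence $\big\{\sum_{l=1}^N\overline s_{i-l}\mid 1\le N\le d\big\}$ reduces modulo $\delta$ to $P(i-1)-\mathcal I$. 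Likewise $\sum_{l=1}^{d+i-j-1}\overline s_{i-l}\equiv P(i-1)-P(j)\pmod{\delta}$, using $\delta\mid s$. Substituting, the condition ``$I_{\id,i}=I_{\alpha_j,i}$ for all $i$'' becomes ``$P(i-1)-P(j)\pm1\in P(i-1)-\mathcal I$ for all $i$'', which simplifies to the $i$-independent statement $P(j)-1\in\mathcal I$ and $P(j)+1\in\mathcal I$.

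Finally I would match this with condition $(3)$: since $\sum_{l=1}^N s_{j+1-l}=P(j)-P(j-N)$ and $j-1,\dots,j-d$ is again a complete residue system modulo $d$, the set $\big\{\sum_{l=1}^N\overline s_{j+1-l}\mid 1\le N\le d\big\}$ reduces modulo $\delta$ to $P(j)-\mathcal I$, so ``$\pm\overline 1$ lies in it'' is precisely ``$P(j)\mp1\in\mathcal I$'', i.e. exactly the condition found in the previous step. This closes the chain $(1)\Leftrightarrow(2)\Leftrightarrow(3)$. The only genuinely delicate step is the one flagged in the second paragraph: legitimately passing between subsets of $\Delta_i$ and subsets of $\ZZ/n\ZZ$, which rests on the coset $\overline s=\delta\ZZ/n\ZZ$ always being among those deleted, and which is exactly what makes the apparent dependence on $i$ evaporate; everything else is routine reindexing of the sums appearing in \Cref{prop-canon0} and \Cref{prop-canon1}.
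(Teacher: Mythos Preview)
Your argument is correct and follows essentially the same route as the paper: reduce via \Cref{dim1-prop} to the equality $I_{\id}=I_{\alpha_j}$, then use the explicit descriptions of \Cref{prop-canon0} and \Cref{prop-canon1} to turn this into a coset condition that is independent of the factor index $i$. The paper is terser---it simply observes that changing $i$ amounts to adding the same constant on both sides and then sets $i=j+1$, which makes the exponent $d+i-j-1$ equal to $d$ so that the extra coset becomes $\pm\overline{1}+\overline{s}=\pm\overline{1}$ directly---whereas you introduce the partial-sum function $P$ to make the shift invariance explicit; your version also spells out the passage from subsets of $\Delta_i$ to cosets in $\ZZ/n\ZZ$ via the observation $\overline{0}\in A_i$, a point the paper leaves implicit.
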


\begin{proof}
By \Cref{dim1-prop}, smoothness and normality of $\overline{\Xcal}_j$ are both equivalent to the equality $P_{\id}=P_j$, which amounts to $P_{\id,i}=P_{j,i}$ for all $1\leq i \leq d$, equivalently to $I_{\id,i}=I_{j,i}$ for all $1\leq i \leq d$. Comparing the types of these parabolic subgroups given in Propositions \ref{prop-canon0} and \ref{prop-canon1}, we see that this equality is satisfied for some $i\in \{1, \dots, d\}$ if and only if it holds for all indices $i$ at the same time, since the condition corresponding to a different value of $i$ is given by adding a constant on each side. Choosing the index $i=j+1$, we deduce the proposition.
\end{proof}

\begin{corollary}
If $\delta=1$, then all length one strata are smooth.
\end{corollary}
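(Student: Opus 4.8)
The plan is to read the statement off the numerical criterion of \Cref{thm-smooth}. That theorem identifies smoothness of $\overline{\Xcal}_{\alpha_j}$, for any $j$ with $1\leq r_j\leq n-1$ (i.e.\@ for any length one stratum), with the single membership condition
\[
  \pm 1 \in \left\{\sum_{l=1}^N \overline{s}_{j+1-l} \ \relmiddle| \ 1\leq N\leq d \right\},
\]
so the only thing to check is that this holds for every such $j$ once $\delta = 1$.

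To do so I would first recall that $\overline{l}$ denotes the class of $l\in\ZZ$ in $\ZZ/n\ZZ$ modulo the subgroup $\delta\ZZ/n\ZZ$, where $\delta=\gcd(s,n)$ and $s=s_1+\dots+s_d$. The key observation is that when $\delta=1$ one has $\delta\ZZ/n\ZZ=\ZZ/n\ZZ$, so there is a single class, equal to $\ZZ/n\ZZ$ itself, and hence $\overline{l}=\ZZ/n\ZZ$ for every $l\in\ZZ$. Consequently each partial sumset $\sum_{l=1}^N\overline{s}_{j+1-l}$ is again $\ZZ/n\ZZ$, which certainly contains $\pm 1$; already the term with $N=1$ does. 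Thus the criterion of \Cref{thm-smooth} is satisfied for every $j$ with $1\leq r_j\leq n-1$, and all length one strata closures are smooth. (If $n=1$ there are no length one strata and the statement is vacuous.)

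I do not expect any genuine obstacle: the substantive work — the descriptions of the types $I_{\id,i}$ and $I_{j,i}$ of the canonical parabolics in \Cref{prop-canon0} and \Cref{prop-canon1}, and their comparison yielding \Cref{thm-smooth} — is already in place, and the argument above is pure bookkeeping. The only point to keep in mind is that $\pm 1$ is to be interpreted inside $\ZZ/n\ZZ$, which is harmless once $\delta=1$ collapses every class to the full group. As a consistency check one can note that in the simple split case $d=1$ the criterion specializes to $\pm 1\in\overline{0}=\delta\ZZ/n\ZZ$, which holds exactly when $\delta=1$, matching \Cref{thm-len1gln}.
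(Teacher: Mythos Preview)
Your proposal is correct and follows exactly the same route as the paper: when $\delta=1$ there is only one $\delta\ZZ/n\ZZ$-coset in $\ZZ/n\ZZ$, so each $\overline{s}_{j+1-l}$ is already all of $\ZZ/n\ZZ$ and the criterion of \Cref{thm-smooth} is trivially satisfied. The paper states this in a single sentence; your version merely unpacks it and adds the consistency check with the $d=1$ case.
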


\begin{proof}
In this case, there is only one $\delta \ZZ/n\ZZ$-coset in $\ZZ/n\ZZ$, so the result follows trivially from \Cref{thm-smooth}.
\end{proof}

\begin{example}
    Assume that $r_j=n-1$ and $r_{j-1}=2$. Then, $\overline{\Xcal}_{\alpha_j}$ is smooth. Indeed, we have in this case $s_j=1$ and $s_{j-1}=n-2$, hence the first terms in the sequence $\sum_{l = 1}^N \overline{s}_{j + 1 - l}$ for $N=1,2, \dots$ are $1,-1$. Therefore, the result follows from \Cref{thm-smooth}.
\end{example}

We now restrict to the case $d=2$. In this case, the set of non-compact roots $\Delta\setminus I$ contains at most two elements.

\begin{proposition}
The following are equivalent:
\begin{enumerate}
\item The Zariski closure of each (any) length $1$ stratum is smooth.
\item We have either $\delta=1$, or $\delta=2$ and $r_1, r_2$ are odd.
\item We have $P_{\id}=B$.
\end{enumerate}
\end{proposition}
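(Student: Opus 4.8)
I would prove the three-way equivalence by reducing everything, via \Cref{dim1-prop,thm-smooth} and the explicit type formulas of \Cref{prop-canon0,prop-canon1}, to elementary arithmetic of cosets of the subgroup $H\colonequals\delta\,\ZZ/n\ZZ\subseteq\ZZ/n\ZZ$, using that the classes $\overline{l}$ are exactly the $\delta$ cosets of $H$, each of size $n/\delta$. The decisive simplification in the case $d=2$ is that the ``top'' term collapses: for any index $i$, $\sum_{l=1}^{2}\overline{s}_{i-l}=\overline{s_1+s_2}=\overline{s}=\overline{0}$, since $\delta=\gcd(s,n)$ divides $s$. Hence \Cref{prop-canon0} gives $I_{\id,i}=\Delta_i\setminus(\overline{s}_{i-1}\cup\overline{0})$ (with the convention $s_0\colonequals s_2$), while for the index $i=j+1$ the ``extra'' removed set in \Cref{prop-canon1} is $\{\pm\overline{1}+\overline{0}\}=\{\overline{1},\overline{-1}\}$, so $I_{j,j+1}=I_{\id,j+1}\setminus(\overline{1}\cup\overline{-1})$. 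Throughout I use the standing hypothesis of this section, that a length-one stratum exists, i.e.\ $\Delta\setminus I\neq\emptyset$, equivalently some $r_j\notin\{0,n\}$.

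For $(2)\Leftrightarrow(3)$: since $P_{\id}$ is a standard parabolic it contains $B$, so $P_{\id}=B$ iff $I_{\id}=\emptyset$, i.e.\ iff $I_{\id,1}=I_{\id,2}=\emptyset$. As $\Delta_i=(\ZZ/n\ZZ)\setminus\{0\}$ and $0\in\overline{0}$, the vanishing $I_{\id,i}=\emptyset$ is equivalent to $\ZZ/n\ZZ=\overline{s}_{i-1}\cup\overline{0}$. A union of two cosets of $H$ is all of $\ZZ/n\ZZ$ exactly when $\delta=1$ (so $\overline{0}=\ZZ/n\ZZ$), or $\delta=2$ and the two cosets are distinct, i.e.\ $s_{i-1}$ is odd. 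Imposing this for $i=1$ and $i=2$ gives: $P_{\id}=B$ iff $\delta=1$, or $\delta=2$ with $s_1,s_2$ both odd; since $\delta=2$ forces $n$ even and hence $s_i\equiv r_i\pmod 2$, this is exactly condition (2).

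For $(1)\Leftrightarrow(2)$: by \Cref{dim1-prop}, $\overline{\Xcal}_{\alpha_j}$ is smooth (equivalently normal) iff $P_{\id}=P_j$, i.e.\ $I_{\id,i}=I_{j,i}$ for all $i$, which — as in the proof of \Cref{thm-smooth} — holds for all $i$ once it holds for one, e.g.\ $i=j+1$. By the first paragraph this amounts to $(\overline{1}\cup\overline{-1})\cap\Delta_{j+1}\subseteq\overline{s}_j\cup\overline{0}$, equivalently (since for $\delta\ge2$ neither $\overline{1}$ nor $\overline{-1}$ contains $0$, while for $\delta=1$ they all equal $\overline{0}$) to requiring that \emph{both} cosets $\overline{1}$ and $\overline{-1}$ lie in $\{\overline{s}_j,\overline{0}\}$. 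If $\delta=1$ this holds; if $\delta=2$ then $\overline{1}=\overline{-1}$ is the nontrivial coset $\neq\overline{0}$, so the condition becomes $\overline{s}_j=\overline{1}$, i.e.\ $r_j$ odd; if $\delta\ge3$ then $\overline{1},\overline{-1}$ are distinct and neither equals $\overline{0}$, so they cannot both lie in the two-element set $\{\overline{s}_j,\overline{0}\}$ and $\overline{\Xcal}_{\alpha_j}$ is singular. When $\delta=2$ we have $2\mid r_1+r_2$, hence $r_1\equiv r_2\pmod 2$, so ``$r_j$ odd'' does not depend on the chosen $j$ with $0<r_j<n$ and is equivalent to ``$r_1,r_2$ both odd'' (this also explains the ``each (any)'' in (1)). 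Combining the three cases, all one-dimensional stratum closures are smooth iff $\delta=1$, or $\delta=2$ with $r_1,r_2$ both odd, i.e.\ (1) $\Leftrightarrow$ (2).

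The computation is routine; the only genuine content is the $d=2$ collapse $\sum_l\overline{s}_{i-l}=\overline{0}$ and the counting of cosets of $H$. The points that need care — rather than true obstacles — are the exact form of the smoothness criterion (one needs \emph{both} $\overline{1}$ and $\overline{-1}$, which is what makes the case $\delta\ge3$ singular and keeps the statement consistent with (2)), and the degenerate factors with $r_i\in\{0,n\}$, which is exactly where the standing hypothesis on the existence of a one-dimensional stratum enters.
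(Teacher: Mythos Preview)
Your proof is correct and follows essentially the same route as the paper: specialize the criterion of \Cref{thm-smooth} and the type formulas of \Cref{prop-canon0,prop-canon1} to $d=2$, use the collapse $\overline{s_1+s_2}=\overline{0}$, and then do the coset count in $\ZZ/n\ZZ$ modulo $H=\delta\ZZ/n\ZZ$. The paper's own argument is terser but identical in substance---it reads off $\pm 1\in\{\overline{s}_j,\overline{0}\}$ from \Cref{thm-smooth}, notes this forces $\delta\le 2$, uses the common parity of $s_1,s_2,r_1,r_2$ when $\delta=2$, and identifies $P_{\id}=B$ with $I_{\id,i}=\Delta_i\setminus\{\overline{0},\overline{s}_{i-1}\}=\emptyset$ via \Cref{prop-canon0}; your write-up just makes each of these steps explicit.
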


\begin{proof}
The smoothness of $\overline{\Xcal}_1$ (resp.\@ $\overline{\Xcal}_2$) is equivalent to $\pm 1 \in \{\overline{s}_1,\overline{0}\}$ (resp.\@ $\pm 1 \in \{\overline{s}_2,\overline{0}\}$). Hence we must have $\delta\leq 2$. In the case $\delta=2$, all integers $s_1$, $s_2$, $r_1$, $r_2$ have the same parity, thus we deduce that both strata are smooth simultaneously, and this happens if and only if $r_1$, $r_2$ are odd. By \Cref{prop-canon0}, the type of the parabolic $P_{\id,1}$ (resp.\@ $P_{\id,2}$) is $\Delta_1 \setminus \{\overline{0}, \overline{s}_2\}$ (resp.\@ $\Delta_2 \setminus \{\overline{0}, \overline{s}_1\}$), hence the condition is also equivalent to  the equality $P_{\id}=B$.
\end{proof}

\subsubsection{Simple non-split case}\label{sec-unitary-gps}

Let $n\geq 1$ and let $V$ be an $n$-dimensional $\FF_{p^{2}}$-vector space endowed with a non-degenerate Hermitian form $\psi\colon V\times V\to \FF_{p^{2}}$. We will assume that there is an $\FF_{p^{2}}$-basis of $V$ where the Hermitian form $\psi$ is given by the matrix
\begin{equation}\label{unitaryJmat}
    J\colonequals \left( 
    \begin{matrix}
        && 1 \\
        & \iddots& \\
        1&&
    \end{matrix}
    \right).
\end{equation}
Let $G=\U(V,\psi)$ be the unitary group of $(V,\psi)$, defined by
\begin{equation}
G(R) = \{f\in \GL_{\FF_{p^{2}}}(V\otimes_{\FF_p} R) \mid  \psi_R(f(x),f(y))=\psi_R(x,y), \ \forall x,y\in V\otimes_{\FF_p} R \}
\end{equation}
for any $\FF_p$-algebra $R$. One has an identification $G_{\FF_{p^{2}}}\simeq \GL(V_{\FF_{p^{2}}})$, given as follows: For any $\FF_{p^{2}}$-algebra $R$, we have an $\FF_{p^{2}}$-algebra isomorphism $\FF_{p^{2}}\otimes_{\FF_p} R\to R\times R$, $a\otimes x\mapsto (ax,\sigma(a)x)$. We obtain an isomorphism $V\otimes_{\FF_p}R\to (V\otimes_{\FF_{p^{2}}}R)\oplus (V\otimes_{\FF_{p^{2}}}R)$; any $g\in G(R)$ stabilizes this decomposition, and the restriction to the first summand yields an isomorphism $G_{\FF_{p^{2}}}\simeq \GL(V_{\FF_{p^{2}}})$. Via the basis $\Bcal$ we identify $G_{\FF_{p^{2}}}$ with $\GL_{n,\FF_{p^{2}}}$. The action of $\sigma$ on the set $\GL_n(k)$ is given by $\sigma\cdot A = J \sigma({}^t \!A)^{-1}J$. 

Let $B$ denote the lower triangular Borel subgroup of $G_k$ and $T\subseteq B$ the diagonal torus. One sees immediately that $B$ and $T$ are defined over $\FF_p$. Identify again $X^*(T)=\ZZ^n$. Let $(r,s)$ be positive integers such $r+s=n$, and let $\mu\colon \GG_{\mathrm{m},k} \to G_{k}$ be the cocharacter $t\mapsto \diag(t\mathbb{1}_{r},\mathbb{1}_s)$. Let $\Zcal_{\mu}=(G,P,Q,L,M)$ be the associated zip datum. Note that $P$ is not defined over $\FF_p$ in this case, unless $r=s$. One has $\Delta\setminus I =\{\alpha_r\}$. 

\begin{lemma} \ 
\begin{enumerate}
    \item The canonical parabolic of $w=\id$ coincides with $P$.
    \item For $w=s_{\alpha_r}$, the type of the canonical parabolic $P_w$ is given by
    \begin{equation*}
        I_w=I\setminus\{1,r-1,r+1,n-1\}.
    \end{equation*}
\end{enumerate}
\end{lemma}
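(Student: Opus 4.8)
The plan is to compute the type $I_w$ of the canonical parabolic directly from the operator $\varphi_{w,I}\colon I^{*}\to I^{*}$ of \eqref{phistar}, using $I_w=\bigcap_{m\geq 0}\varphi_{w,I}^{\,m}(I)$ (equivalently, $I_w$ is the set of non-$w$-nilpotent elements of $I$), exactly as in the split case handled in \Cref{prop-canon0} and \Cref{prop-canon1}. The only new ingredient is the nontrivial Galois action, which I would pin down first. Since $G_{\FF_{p^2}}\simeq\GL_{n,\FF_{p^2}}$ and $\sigma$ acts on $\GL_n(k)$ by $A\mapsto J\sigma({}^{t}\!A)^{-1}J$, a permutation‐matrix representative $\dot w$ of $w\in W=\Sfr_n$ is sent to $J\dot wJ$; as $J$ represents $w_0$, this gives the $\ast$-action $\sigma(w)=w_0ww_0$ on $W$, i.e.\@ $\sigma$ is the diagram automorphism $\alpha_i\mapsto\alpha_{n-i}$ of type $\Asf_{n-1}$. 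Consequently $z=\sigma(w_{0,I})\,w_0=w_0\,w_{0,I}$, and a short computation shows that $z$, viewed as a permutation of $\{1,\dots,n\}$, is the cyclic shift $i\mapsto i+s\pmod n$, so $z^{-1}$ is the shift $i\mapsto i+r\pmod n$. As in \S\ref{sec-antype} I would identify $\Delta^{*}=\Delta\cup\{0\}$ with $\ZZ/n\ZZ$ via $\alpha_k\mapsto k$, $0\mapsto 0$ (the symbol $0$ corresponding to the lowest root $-\theta$); since $\sigma$ and $z^{-1}$ both permute the affine simple roots, on this set $\sigma$ acts by $k\mapsto -k$ and $z^{-1}$ by $k\mapsto k+r$.

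For the first assertion, the point is simply that $\varphi_{\id}=z^{-1}\circ\sigma$ acts on $\ZZ/n\ZZ$ by $k\mapsto r-k$. This is an involution that exchanges the two distinguished nodes $0$ and $r$, and therefore stabilizes $I=\ZZ/n\ZZ\setminus\{0,r\}$. Hence $I_{\id}=I$, i.e.\@ the canonical parabolic of $\id$ is $P$.

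For the second assertion I take $w=s_{\alpha_r}$, so that $\varphi_w=s_{\alpha_r}\circ\varphi_{\id}$. Here $\varphi_w$ no longer permutes the affine simple roots, so I would work with $\varphi_{w,I}$ on $I^{*}$ rather than on $\ZZ/n\ZZ$. Writing $\rho(k)\coloneqq(r-k)\bmod n$, the previous paragraph shows $\rho$ is an involution of $I$ with $\varphi_{\id}(\alpha_k)=\alpha_{\rho(k)}$ for $k\in I$. Now $s_{\alpha_r}$ fixes $\alpha_m$ for $m\notin\{r-1,r,r+1\}$ and carries $\alpha_{r\pm1}$ to the non-simple root $\alpha_{r\pm1}+\alpha_r$; since $\rho(k)\in\{r-1,r+1\}$ is equivalent to $k\in\{1,n-1\}$, one finds $\varphi_{w,I}(k)=\rho(k)$ for $k\in I\setminus\{1,n-1\}$, while $\varphi_{w,I}(k)=0$ for $k\in I\cap\{1,n-1\}$ and $\varphi_{w,I}(0)=0$. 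Thus on $I\setminus\{1,n-1\}$ the operator $\varphi_{w,I}$ is the involution $\rho$, with a ``trapdoor'' at the nodes $1$ and $n-1$. Therefore $k\in I$ is $w$-nilpotent precisely when its $\rho$-orbit $\{k,\rho(k)\}$ meets $\{1,n-1\}$, i.e.\@ when $k\in\{1,n-1\}\cup\{\rho(1),\rho(n-1)\}=\{1,r-1,r+1,n-1\}$. Hence $I_w=I\setminus\{1,r-1,r+1,n-1\}$, as claimed. (When $r\in\{1,2,n-2,n-1\}$ some of these four nodes coincide or fall outside $I$; the same computation applies and the statement is read as removing from $I$ the elements of $\{\alpha_1,\alpha_{r-1},\alpha_{r+1},\alpha_{n-1}\}$ that actually lie in $I$.)

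The main obstacle is the bookkeeping in the first step: fixing all the conventions (the lower-triangular Borel $B$, positivity taken relative to $B^{+}$, the antidiagonal form $J$, and the resulting $\ast$-action on $W$ and $\Phi$) so that $z^{-1}$ is indeed the shift by $+r$ and $\varphi_{\id}$ becomes the reflection $k\mapsto r-k$ on $\ZZ/n\ZZ$; a sign error here would change which nodes are removed. Once this normalization is correct, the remaining steps are the same elementary combinatorics on $\ZZ/n\ZZ$ as in the split case, and the smoothness/normality consequences then follow from \Cref{dim1-prop}.
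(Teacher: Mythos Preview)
Your proof is correct and follows essentially the same approach as the paper: both compute $\varphi_{\id}$ and $\varphi_{w,I}$ explicitly and read off $I_{\id}$ and $I_w$ from the resulting dynamics on $I^*$. The only difference is presentational: the paper writes out the case-by-case formulas $\varphi_{\id}(\alpha_l)=\alpha_{r-l}$ (resp.\ $\alpha_{n+r-l}$) and the analogous ones for $\varphi_w$, whereas you package this via the identification $\Delta^*\simeq\ZZ/n\ZZ$ so that $\varphi_{\id}$ becomes the reflection $k\mapsto r-k$ and $\varphi_{w,I}$ is this reflection with a trapdoor at $\{1,n-1\}$; this is exactly the split-case bookkeeping of \S\ref{sec-antype} transported through the Galois twist, and your derivation of that twist ($\sigma(\alpha_i)=\alpha_{n-i}$, $z=w_0w_{0,I}$ the cyclic shift) is correct.
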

\begin{proof}
We first examine the case $w=\id$. The map $\varphi_{\id}\colon I^*\to I^*$ is given by
\begin{equation*}
\varphi_{\id}(\alpha_l)=\begin{cases}
    \alpha_{r - l} & \quad \textrm{if} \ 1\leq l <r \\
        \alpha_{n + r - l} & \quad \textrm{if} \ r< l \leq n-1.
\end{cases}
\end{equation*}
We deduce that $\varphi_{\id}(I)=I$, hence the canonical parabolic of $w=\id$ coincides with $P$. Next, we assume $w=s_{\alpha_r}$, the unique element of ${}^I W$ of length $1$. We find:
\begin{equation*}
\varphi_w(\alpha_l)=\begin{cases}
    \alpha_{r - l} & \quad \textrm{if} \ 1< l <r \\
        \alpha_{n + r - l} & \quad \textrm{if} \ r< l < n-1\\
      0 & \quad \textrm{if} \ l \in \{1,n-1\}. 
\end{cases}
\end{equation*}
From this we find that for any odd (resp.\@ even) integer $m$, we have $\varphi_w^{m}=\varphi_w$ (resp.\@  $\varphi_w^{m}=\varphi_w^2$). Therefore
\begin{equation*}
  I_w = I\cap \varphi_w(I)\cap \varphi_w^2(I) =I\setminus\{1,r-1,r+1,n-1\}.
\end{equation*}
\end{proof}

\begin{corollary}\label{cor-len1uninert}
    The closure of the one-dimensional stratum is not smooth, except when $r=s=1$.
\end{corollary}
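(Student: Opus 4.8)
The plan is to read the answer off \Cref{dim1-prop} together with the lemma just proved. By \Cref{dim1-prop}, the closure $\overline{\Xcal}_{\alpha_r}$ of the unique one-dimensional stratum is smooth if and only if it is normal, if and only if the canonical parabolics agree, $P_{\id} = P_{s_{\alpha_r}}$, equivalently $I_{\id} = I_w$ where $w = s_{\alpha_r}$. The lemma gives $I_{\id} = I$ and $I_w = I \setminus \{\alpha_1, \alpha_{r-1}, \alpha_{r+1}, \alpha_{n-1}\}$, with the usual convention that any of these symbols whose index lies outside $\{1,\dots,n-1\}\setminus\{r\}$ is simply discarded. So smoothness is equivalent to the statement that \emph{none} of $\alpha_1,\alpha_{r-1},\alpha_{r+1},\alpha_{n-1}$ belongs to $I = \{\alpha_j \mid 1\le j\le n-1,\ j\ne r\}$.

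The remaining step is the elementary bookkeeping: inspect the four candidate indices.
\begin{itemize}
\item $\alpha_1 \in I \iff r \ne 1$;
\item $\alpha_{r-1} \in I \iff r \ge 2$ (the index $r-1$ is automatically $\ne r$);
\item $\alpha_{r+1} \in I \iff r+1 \le n-1 \iff s \ge 2$;
\item $\alpha_{n-1} \in I \iff n-1 \ne r \iff s \ne 1$.
\end{itemize}
These four memberships fail simultaneously if and only if $r = 1$ and $s = 1$, i.e.\@ $n = 2$. In every other case at least one of the four roots lies in $I$, so $I_w \subsetneq I$ and $\overline{\Xcal}_{\alpha_r}$ is singular, as claimed.

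Finally I would note that the excluded case $r = s = 1$ is genuinely smooth: then $\Delta = \{\alpha_1\}$, so $I = \emptyset = I_{\id}$, and $\overline{\Xcal}_{\alpha_1}$ is smooth, e.g.\@ by \Cref{cor-Borel}\eqref{dim1-cor1} (or simply because $I_w = I = \emptyset$). I do not expect any real obstacle here; the only point that needs a moment's care is the bookkeeping convention for which of $\alpha_1,\alpha_{r-1},\alpha_{r+1},\alpha_{n-1}$ actually lie in $I$, i.e.\@ the degenerate indices $r-1 = 0$ and $r+1 = n$ — precisely what the case check above settles.
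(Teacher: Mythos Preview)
Your proof is correct and follows exactly the approach the paper intends: apply \Cref{dim1-prop} together with the preceding lemma, which reduces the question to whether $I_w=I$, and then do the elementary index bookkeeping to see this forces $r=s=1$. The paper states the corollary without proof, and your argument supplies precisely the omitted details.
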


\subsubsection{General non-split case}
\label{ssec-weilrestunit}
For an integer $d\geq 1$, denote by $(V,\psi)$ the Hermitian space over $\FF_{p^{2d}}$ given by the matrix $J$ of \eqref{unitaryJmat}, with respect to the quadratic extension $\FF_{p^{2d}}/\FF_{p^d}$. Write $G_0\colonequals \U_{\FF_{p^d}}(V,\psi)$ for the unitary group over $\FF_{p^d}$ attached to $(V,\psi)$. We consider the case when 
\begin{equation}
    G=\Res_{\FF_{p^{d}}\mid\FF_p}(G_0).
\end{equation}
By the same identification as in \S \ref{sec-unitary-gps}, we have an isomorphism $(G_0)_{\FF_{p^d}}\simeq \GL_{n,\FF_{p^d}}$. Fix an embedding $\tau_1\colon \FF_{p^d}\to k$ and defining inductively $\tau_{i+1}\colonequals \sigma \circ \tau_i$. We obtain again an identification
\begin{equation}\label{Weyl-ident}
    G_k \simeq \prod_{i=1}^d \GL_{n,k}
\end{equation}
where the action of $\sigma$ on $G(k)$ corresponds via this isomorphism to the action sending a $d$-tuple $(x_1, \dots, x_d)$ to
\begin{equation}
    (J {}^t \sigma(x_d)^{-1} J, \sigma(x_1) \dots, \sigma(x_d))
\end{equation}
Note that the identification \eqref{Weyl-ident} is not symmetric, in the sense that the first factor of the product is singled out, and this choice is reflected in the action of $\sigma$. Let $B\subseteq G$ (resp.\@ $T$) denote the product of $d$ copies of the lower triangular Borel (resp.\@ diagonal torus) of $\GL_n$. It is clear that the Borel pair $(B,T)$ is defined over $\FF_p$. Consider the cocharacter $\mu$ of $G$ given by a $d$-tuple of numbers $(r_1, \dots, r_d)$, similarly to \eqref{mui-def}.

The type $I$ of the Hodge parabolic $P$ can be written as the disjoint union $I=\bigsqcup_{j=1}^d\Delta_j\setminus \{r_j\}$, where we identify $\Delta_j$ with the subset $\ZZ/n\ZZ\setminus \{0\}$. Moreover, note that this formula also holds when $r_j\in \{0,n\}$, in which case the component in the $j$-th factor is simply $\Delta_j$. For each $j\in\{1, \dots,d\}$ such that $0<r_j<n$, denote by $\alpha_j$ the non-compact root $\alpha_{j,r_j}=e_{j,r_j}-e_{j,r_j+1}$. Moreover, for such $j$, denote by $s_j$ the simple reflection attached to $\alpha_j$ and by $\Xcal_j$ the length one stratum $\Xcal_w$ corresponding to $w=s_{j}$.

Let $P_{\id}$ denote the canonical parabolic of the identity element $\id\in {}^I W$. We may decompose $P_{\id}$ as a product $P_{\id}=P_{\id,1}\times \dots \times P_{\id,d}$, where $P_{\id,i}$ is a standard parabolic subgroup of $\GL_n$. We write $I_{\id,i}\subseteq \Delta_i$ for the type of $P_{\id,i}$. Similarly, if $j\in \{1, \dots, d\}$ is an integer such that $0 < r_j < n$, we may consider the length one stratum $\Xcal_j$ attached to $s_j$, as previously defined. The canonical parabolic of $s_j$ will be denoted by $P_j$, and we write $P_j = P_{j, 1}\times \dots \times P_{j,d}$ where $P_{j, i}$ is a standard parabolic subgroup of $\GL_n$. Write $I_{j, i} \subseteq \Delta_j$ for the type of $P_{j, i}$. We always identify $\Delta_j$ with a subset of $\ZZ/n\ZZ$. The following proposition gives the types of all canonical parabolic subgroups of length zero and one. For $i, l\in \{1, \dots, d\}$, we put:
\begin{equation}
    \epsilon_{i, l} = \begin{cases}
        -1 & \quad \textrm{if} \quad  l < i \\
        1 & \quad \textrm{if} \quad  l \geq i
    \end{cases}
\end{equation}
Moreover, if $i, l$ are only defined modulo $d$, we define $\epsilon_{i, l}$ as the element $\epsilon_{i_0, l_0}$ where $i\equiv i_0 \pmod{d}$, $l\equiv l_0 \pmod{d}$ and $i_0, l_0\in \{1, \dots, d\}$.

\begin{proposition}  \ 
\begin{enumerate}
    \item For $1\leq i \leq d$, we have
    \begin{equation}
        I_{\id,i}=\left\{ \sum_{l = N + i - 1}^{d+i-1} \epsilon_{i, l} r_{l} \ \relmiddle| \  1\leq N \leq d  \right\} \cup \left\{ \sum_{l = i - 1}^{N+i-1}  \epsilon_{i, l}r_{l} \ \relmiddle| \  1\leq N \leq d  \right\}
    \end{equation} 
    \item Let $j\in \{1, \dots, d\}$ such that $0<r_j<n$. Let $i=j+1+h$ (modulo $d$) with $0\leq h \leq d-1$. Then, we have:
    \begin{equation}
        I_{j,i}=I_{\id,i}\setminus \left\{ \sum_{l = j}^{h+j-1}\epsilon_{i, l} r_{l} \pm 1, \quad \sum_{l = h + j}^{j + d - 1}\epsilon_{i, l} r_{l} \pm 1\right\}.
    \end{equation}
\end{enumerate}
\end{proposition}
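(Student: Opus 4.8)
The plan is to follow the template of \Cref{prop-canon0} and \Cref{prop-canon1}. The only genuinely new feature is that the operator $\varphi_{\id}$ on the (extended) simple roots now combines the cyclic shift of the $d$ copies of $\GL_{n,k}$ with a single \emph{negation seam}: the unitary twist $x\mapsto J\,{}^{t}\sigma(x)^{-1}J$ occurring in the $\sigma$-action of \S\ref{ssec-weilrestunit} forces exactly one of the $d$ transition maps between consecutive $\GL_n$-blocks to be a reflected translation $l\mapsto -l+\mathrm{const}$ rather than an honest translation $l\mapsto l+\mathrm{const}$. The sign $\epsilon_{i,l}$ and the appearance of two families of partial sums are precisely the combinatorial shadow of this: they keep track of the two arcs of a $\varphi_{\id}$-orbit lying on either side of the seam. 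For $d=1$ the statement should collapse to the simple non-split lemma proved just above.

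First I would make the two operators explicit. Identifying each $\Delta_j$ with $\ZZ/n\ZZ\setminus\{0\}$ and writing $\Delta_j^{*}=\ZZ/n\ZZ$, $\Delta^{*}=\bigsqcup_{j=1}^{d}\Delta_j^{*}$, I would compute $\varphi_{\id}\colon\Delta^{*}\to\Delta^{*}$, $\alpha\mapsto z^{-1}\sigma(\alpha)$, using $z=\sigma(w_{0,I})w_0$ and the explicit $\sigma$-action; the outcome generalizes the equality $\varphi_{\id}(\alpha_l)=\alpha_{r-l}$ of the case $d=1$, and one checks that the $d$-fold iterate $\varphi_{\id}^{d}$ restricts on each block to an involution, so $\varphi_{\id}$ has order dividing $2d$. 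I would then record the modified operators $\varphi_{\id,I}$ and $\varphi_{j,I}$ of \eqref{phistar}, and note that $\varphi_j$ agrees with $\varphi_{\id}$ except that, since $s_{\alpha_j}$ moves only the two roots adjacent to $\alpha_j$, the $\varphi_{\id}$-preimages of $\alpha_{j,r_j\pm1}$ are now sent to $0$. These two extra ``killed'' roots are exactly the source of the two additional families $\sum\epsilon_{i,l}r_l\pm1$ removed in part~(2), just as $\varphi_w(\alpha_l)=0$ for $l\in\{1,n-1\}$ in the simple non-split case.

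Next, writing $\widetilde I_{\id,i}$, $\widetilde I_{j,i}$ for the sets on the right-hand sides and $\widetilde I_{\id}$, $\widetilde I_{j}$ for their disjoint unions over $i$, I would carry out the two standard inclusions. For ``$\supseteq$'': check $\widetilde I_{\id}\subseteq I$ and $\widetilde I_{j}\subseteq I$ (one distinguished value of $N$ reproduces the class of $r_i$, which is handled as in \Cref{prop-canon0}, and the remaining sums must be verified never to hit $0$ or the non-compact position), then verify $\varphi_{\id}(\widetilde I_{\id})=\widetilde I_{\id}$ and $\varphi_j(\widetilde I_{j})=\widetilde I_{j}$ by a direct re-indexing that shifts the block index and absorbs one new $r$-term while respecting the sign flip at the seam; since $I_w=\bigcap_{m\ge0}\varphi_{w,I}^{m}(I)$ by \eqref{eq-Iw}, a $\varphi$-stable subset of $I$ lies in $I_w$, giving $\widetilde I_{\id}\subseteq I_{\id}$ and $\widetilde I_{j}\subseteq I_{j}$. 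For ``$\subseteq$'': prove by induction on $m$ an explicit formula for the partial intersection $\bigcap_{l=0}^{m}\varphi_{\id,I}^{l}(I)$ (resp.\ $\bigcap_{l=0}^{m}\varphi_{j,I}^{l}(I)$), where after $m$ steps one has removed from each block the signed partial sums over index-windows of length $\le m$ (and, for $\varphi_j$, the two running $\pm1$-windows whose length is governed by $h\equiv i-j-1\pmod d$). The base case $m=0$ is $I$, and in the inductive step the key point — exactly as in \Cref{prop-canon0} — is that the constant newly reaching the $i$-th block never lies in the relevant image, so nothing cancels by accident; the process stabilizes after at most $2d$ steps, since going twice around the cycle cancels the negation. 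This yields $I_{\id}=\widetilde I_{\id}$ and $I_j=\widetilde I_j$, hence the claimed formulas, and combined with \Cref{dim1-prop} it feeds directly into the smoothness corollaries.

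The hard part is bookkeeping rather than anything conceptual: three cyclic structures interact — arithmetic mod $n$ inside each $\GL_n$-block, the block index mod $d$, and the order-two sign twist at the seam — so one has to be careful (i) in the ``$\supseteq$'' step that the removed elements genuinely avoid $0$ and the non-compact position, so that $\widetilde I_{\id},\widetilde I_j\subseteq I$, and (ii) in the induction for part~(2) that the two extra $\pm1$-orbits stabilize to exactly $\{\sum_{l=j}^{h+j-1}\epsilon_{i,l}r_l\pm1,\ \sum_{l=h+j}^{j+d-1}\epsilon_{i,l}r_l\pm1\}$, split at the precise index $h+j$ dictated by where the seam sits relative to $j$, rather than to some cyclic rotation of this pair. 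I expect (ii) to absorb most of the effort, though each individual verification is a routine finite computation.
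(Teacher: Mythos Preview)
The paper states this proposition without proof; there is no \texttt{proof} environment following it, only a clarifying remark about the $h=0$ convention before moving on to \Cref{thm-weilresunitl1}. Your plan is exactly the one the reader is implicitly invited to execute: rerun the arguments of \Cref{prop-canon0} and \Cref{prop-canon1} with the one new ingredient being the reflection $x\mapsto J\,{}^{t}\sigma(x)^{-1}J$ in the first factor of the $\sigma$-action, which turns a single translation step into a reflected one and is precisely what the signs $\epsilon_{i,l}$ encode. Your identification of the two-inclusion template (stability of the candidate set under $\varphi_w$ for ``$\supseteq$'', induction on the partial intersections $\bigcap_{l=0}^{m}\varphi_{w,I}^{l}(I)$ for ``$\subseteq$'') and of the stabilization after $2d$ steps is correct and matches the structure of the proofs the paper does write out in the split case.

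One caution: as printed, the displayed formula for $I_{\id,i}$ in the proposition reads as a union of two sets rather than $\Delta_i$ minus such a union; comparing with \Cref{prop-canon0} and with your own discussion (you speak of ``removed'' elements), the intended statement is surely the complement. Your proof sketch already treats it that way, so this is a typo in the paper rather than a gap in your argument, but you should flag it when writing up.
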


In the case $h=0$ (corresponding to $i=j+1$), the sum $\sum_{l = j}^{h + j - 1}\epsilon_{i, l} r_{l}$ appearing in the left-hand side element in the bracket is empty, so we assign the value $0$ to it (and the above element is then simply $\pm 1$).

\begin{theorem}\label{thm-weilresunitl1}
Let $j\in\{1, \dots, d\}$ such that $0<r_j<n$. The following are equivalent:
\begin{enumerate}
    \item The Zariski closure of $\Xcal_j$ is smooth.
    \item The Zariski closure of $\Xcal_j$ is normal.
    \item We have $P_{j}=P_{\id}$.
    \item We have $P_{j, i} = P_{\id,i}$ for some (equivalently, for all) $i\in \{1, \dots, d\}$.
    \item The elements $\sum_{l = j}^{d-1} r_{l} \pm 1$ and $\sum_{l = 0}^{j-1} r_{l} \pm 1$ (taken modulo $n$) lie in the subset
    \begin{equation}
    \left\{ \sum_{l = N}^d r_{l} \ \relmiddle| \  1 \leq N \leq d  \right\} \cup \left\{ \sum_{l = 1}^N r_{l} \ \relmiddle| \  1\leq N \leq d  \right\} \cup \{0\}.
    \end{equation}
\end{enumerate}
\end{theorem}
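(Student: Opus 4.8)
The plan is to split the five-way equivalence into three independent links. The link between $(1)$, $(2)$ and $(3)$ is immediate from \Cref{dim1-prop} applied to the non-compact simple root $\alpha=\alpha_j$: the associated simple reflection is $s_j$ and the corresponding canonical parabolic is $P_j$, and that proposition already gives that smoothness of $\overline{\Xcal}_{\alpha_j}$, its normality, the inclusion $P_{\id}\subseteq P_j$ and the equality $P_j=P_{\id}$ are all equivalent. (The input to \Cref{dim1-prop} is that $\overline{\Xcal}_{\alpha_j}=\Ucal(s_j,\id)$ is an elementary $w$-open admitting a separating canonical cover — automatic here, since $\id$ is the unique lower neighbor of $s_j$, even for the Bruhat order — so that \Cref{thm-elemsmoothchar} applies, together with the always-valid reverse inclusion $P_j\subseteq P_{\id}$ from \Cref{cor-oppinccanparl1}.)

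For the link between $(3)$ and $(4)$: under $G_k\cong\prod_{i=1}^d\GL_{n,k}$ both canonical parabolics split as products, $P_{\id}=\prod_i P_{\id,i}$ and $P_j=\prod_i P_{j,i}$, so $P_j=P_{\id}$ iff $I_{j,i}=I_{\id,i}$ for every $i$. That this equality for one index forces it for all is read off from the explicit descriptions of $I_{\id,i}$ and $I_{j,i}$ in the proposition preceding the statement: the condition singled out by passing to another value of $i$ differs only by a common shift in $\ZZ/n\ZZ$ on both sides, exactly as in the proof of \Cref{thm-smooth}.

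The link between $(4)$ and $(5)$ is the only place with real computation. By the preceding proposition, $I_{j,i}$ is obtained from $I_{\id,i}$ by deleting the (at most) four classes $\sum_{l=j}^{h+j-1}\epsilon_{i,l}r_l\pm1$ and $\sum_{l=h+j}^{j+d-1}\epsilon_{i,l}r_l\pm1$, where $i=j+1+h$; hence $I_{j,i}=I_{\id,i}$ exactly when none of those four classes already lies in $I_{\id,i}$, i.e. each is either the class $0$ or is not among the generators in the displayed form of $I_{\id,i}$. Using the freedom of the previous link I would run this with the index $i=1$, for which every sign $\epsilon_{1,l}$ equals $1$; a short check gives $h=d-j$ and identifies the two deleted partial sums with $\sum_{l=j}^{d-1}r_l$ and $\sum_{l=0}^{j-1}r_l$, precisely the quantities occurring in $(5)$ (and which together exhaust $\sum_{l=1}^d r_l$). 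It then remains to expand $I_{\id,1}$ explicitly and to check that the non-membership of these two sums shifted by $\pm1$ is equivalent to their membership in the set displayed in $(5)$ — the extra summand $\{0\}$ there accounting for the fact that $0\notin\Delta_1$, so a class can miss $I_{\id,1}$ simply by being zero — and to verify that all four sign choices collapse to exactly the two stated conditions. The expected main obstacle is precisely this last bookkeeping: keeping residues modulo $d$ (for the $r_l$ and the product factors) and modulo $n$ (for the roots) straight at the same time, tracking the sign function $\epsilon_{i,l}$ through the reindexing to $i=1$, and reconciling the generators of $I_{\id,1}$ with the asymmetric set in $(5)$ — in particular pinning down why the deleted classes never coincide with those generators of $I_{\id,1}$ that also happen to lie in the $(5)$-set. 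The two earlier links are formal.
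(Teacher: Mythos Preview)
Your approach is correct and matches the paper's. The paper gives no explicit proof here: the theorem is stated as an immediate consequence of the preceding proposition (which computes $I_{\id,i}$ and $I_{j,i}$) together with \Cref{dim1-prop}, exactly parallel to the proof written out for \Cref{thm-smooth} in the split case. Your three-step decomposition (1)--(3) via \Cref{dim1-prop}, (3)--(4) via the product structure plus the shift argument, and (4)--(5) by specializing to one index, is precisely that parallel argument. The only cosmetic difference is that in the split case the paper chose the index $i=j+1$, whereas you choose $i=1$; either works once the ``some $\Leftrightarrow$ all'' part of (4) is established, and your choice $i=1$ has the advantage that all signs $\epsilon_{1,l}$ equal $1$, which makes the bookkeeping you flag at the end slightly cleaner.
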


We now take $d=2$. There are at most two non-compact roots, corresponding to the factors $i$ such that $r_i\notin \{0,n\}$. In this case, we have the following result:

\begin{proposition} \ Let $d=2$. Then
\begin{enumerate}
\item The Zariski closure of $\Xcal_1$ is smooth if and only if $r=(2,1)$ or $r=(n-2,n-1)$.
\item The Zariski closure of $\Xcal_2$ is smooth if and only if $r=(n-2,1)$ or $r=(2,n-1)$.
\end{enumerate}
\end{proposition}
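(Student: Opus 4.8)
The plan is to read the statement off from \Cref{thm-weilresunitl1}, specialised to $d = 2$. That theorem reduces the smoothness (equivalently, normality) of $\overline{\Xcal}_j$, for $j \in \{1,2\}$ with $0 < r_j < n$, to the equality $P_j = P_{\id}$ of canonical parabolics, and its part (5) makes this equality completely explicit: it asks that two pairs of elements of the form $\sum_l r_l \pm 1$ in $\ZZ/n\ZZ$ lie in an explicit subset of cardinality at most four. So the whole proposition is a finite, elementary computation in $\ZZ/n\ZZ$.

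First I would unwind part (5) for $d = 2$, being careful with the cyclic index convention $r_0 = r_2$ (and with the convention that empty sums are $0$) inherited from the Weil restriction. Writing $r = (r_1, r_2)$ — where $r_1 + r_2$ need not equal $n$, since the two $\GL_n$-factors are independent — the subset on the right-hand side of (5) becomes $\{0, r_1, r_2, r_1 + r_2\} \subseteq \ZZ/n\ZZ$, the four tested elements are $r_1 \pm 1$ and $r_2 \pm 1$ when $j = 1$, and $\pm 1$ and $(r_1 + r_2) \pm 1$ when $j = 2$. Thus for $j = 1$ the assertion becomes: $\{r_1 + 1,\, r_1 - 1,\, r_2 + 1,\, r_2 - 1\} \subseteq \{0, r_1, r_2, r_1 + r_2\}$ in $\ZZ/n\ZZ$ holds exactly when $(r_1,r_2) \in \{(2,1),(n-2,n-1)\}$, and similarly for $j = 2$. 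As an alternative to using (5) one can compare the types $I_{j,i}$ and $I_{\id,i}$ from the proposition preceding \Cref{thm-weilresunitl1} for a single convenient index $i$, invoking part (4) of that theorem.

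The sufficiency of the listed conditions is then a direct substitution in each of the four cases. For necessity I would argue by cases: since $0 < r_1 < n$, neither $r_1 + 1$ nor $r_1 - 1$ can be congruent to $r_1$, so each of them must be congruent to $0$, to $r_2$, or to $r_1 + r_2$, and likewise each of $r_2 \pm 1$ must be congruent to $0$, to $r_1$, or to $r_1 + r_2$. Running through the resulting finitely many systems of congruences ($r_1 + 1 \equiv 0$ gives $r_1 = n-1$; $r_1 - 1 \equiv 0$ gives $r_1 = 1$; $r_1 \pm 1 \equiv r_1 + r_2$ gives $r_2 \equiv \pm 1$; and so on), intersecting the constraints imposed by the different membership requirements, and discarding whatever is incompatible with $0 < r_1, r_2 < n$, pins $(r_1, r_2)$ down to the claimed list. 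One also has to dispose separately of the degenerate configurations in which the subset has fewer than four elements (when $r_1 = r_2$, or when $r_1 + r_2$ is congruent to one of $0, r_1, r_2$) and — unless both length-one strata are assumed a priori to exist — of the boundary values $r_2 \in \{0, n\}$, where one checks by hand that $\overline{\Xcal}_1$ is singular. The case $j = 2$ is handled in the same way after replacing the tested elements by $\pm 1$ and $(r_1 + r_2) \pm 1$.

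The main obstacle is the bookkeeping in this case analysis rather than any conceptual difficulty: because the congruences live in $\ZZ/n\ZZ$ and not in $\ZZ$, small values of $n$ produce coincidences that must be checked individually, and the cyclic convention $r_0 = r_2$ must be tracked carefully — it is exactly the asymmetry of the Frobenius action on the two factors, encoded in this convention, that makes the answers for $\Xcal_1$ and $\Xcal_2$ genuinely different.
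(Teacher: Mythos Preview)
Your approach is correct and is exactly what the paper intends: the proposition is stated without proof, as a direct specialisation of \Cref{thm-weilresunitl1} to $d=2$, and your unwinding of condition (5) (including the cyclic convention $r_0=r_2$ and the empty-sum convention for $j=2$) together with the finite case analysis in $\ZZ/n\ZZ$ is precisely the implied argument.
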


As a consequence, we see that for $d=2$ and $n\geq 5$, at least one of the length one strata is non-smooth.

\subsection{Type \texorpdfstring{$\Bsf$}{B}, \texorpdfstring{$\Csf$}{C}, \texorpdfstring{$\Dsf$}{D}, \texorpdfstring{$\Esf_6$}{E} and \texorpdfstring{$\Esf_7$}{E}}
\begin{theorem}\label{theorem: one-dim smooth locus bn cn dn}[cf.\@ \Cref{theorem: thmB intro}]
Let $(G,\mu)$ be a minuscule cocharacter datum of type $\Bsf$, $\Csf$, $\Dsf$, $\Esf_6$ or $\Esf_7$.  Let $\Xcal$ denote the corresponding stack of $G$-zips of type $\mu$ (\S\ref{section: the stack of gzips}). For any one-dimensional zip stratum $\Xcal_w$, we have that $\overline{\Xcal}^{\text{sm}}_w=\Xcal_w$. 
\end{theorem}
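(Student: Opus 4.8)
The plan is to reduce smoothness of a one‑dimensional stratum closure to a purely combinatorial statement about canonical parabolics and then to check that statement in each Dynkin type. Let $w=s_\alpha$ with $\alpha\in\Delta\setminus I$, so $\ell(w)=1$ and, as recalled at the beginning of \S\ref{ssec-lengthone}, $\id$ is the unique lower neighbour of $w$; hence $\overline{\Xcal}_w=\Ucal(w,\id)$ is an elementary $w$-open that automatically admits a separating canonical cover. By \Cref{thm-elemsmoothchar}, $\overline{\Xcal}_w$ is smooth iff it is normal iff it is $w$-bounded, i.e.\ iff $P_{\id}\subseteq P_\alpha$; since \Cref{cor-oppinccanparl1} gives the reverse inclusion $P_\alpha\subseteq P_{\id}$ unconditionally, and \Cref{dim1-prop} rephrases $P_{\id}=P_\alpha$ as $s_\alpha(I_{\id})=I_{\id}$, it suffices to prove $s_\alpha(I_{\id})\neq I_{\id}$ for every non‑compact simple root $\alpha$. (This also pins down the smooth locus: $\Xcal_w$ is itself smooth and $\overline{\Xcal}_w$ has only the two strata $\Xcal_w\supseteq\Xcal_{\id}$.) By \Cref{cor-Borel}.\eqref{dim1-cor2}, this follows once we show that $I_{\id}$ is \emph{dense} in $\Delta$ in each of the listed types.

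Next I would reduce the group. By \Cref{theorem: central extension gives gzip map} together with \Cref{rem-changing-center}, normality of $\overline{\Xcal}_w$ is insensitive to replacing $G$ by a central isogenous group, so one may pick any convenient form. If $G$ is a product of Galois orbits of simple factors, then $\overline{\Xcal}_w$ splits, up to a smooth factor, as the corresponding stratum closure in the single orbit containing $\alpha$; and if that orbit is $\Res_{\FF_{p^d}\mid\FF_p}(G_0)$, then $\Delta$, $I$ and the operator $\varphi_{\id}$ consist of $d$ cyclically permuted copies of the data attached to $G_0$, with $\varphi_{\id}^d$ restricting on each copy to $\varphi_{\id}$ for $G_0$, so that $I_{\id}$ is the disjoint union of the corresponding recurrent subsets and is dense as soon as each is dense in the diagram of $G_0$. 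This reduces everything to $G_0$ simple (split or quasi-split) of type $\Bsf_n$, $\Csf_n$, $\Dsf_n$, $\Esf_6$ or $\Esf_7$, and to the finitely many minuscule coweights in each: the unique one in types $\Bsf_n$, $\Csf_n$, $\Esf_7$; the three in type $\Dsf_n$, two of them equivalent under the diagram symmetry; the two in type $\Esf_6$, equivalent under the diagram symmetry.

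The computational core is that for $\beta\in I$, unwinding $z=\sigma(w_{0,I})w_0$ and using \Cref{lem-roots}.\eqref{asser1}, one obtains the simple root
\[
\varphi_{\id}(\beta)=\iota_\Delta\big(\sigma(\iota_I(\beta))\big),
\]
where $\iota_\Delta=-w_0$ and $\iota_I=-w_{0,I}$ are the opposition involutions of $\Delta$ and of the subdiagram $I$, and where $\varphi_{\id,I}(\beta)=0$ if this root lies outside $I$; then $I_{\id}=\bigcap_{m\ge0}\varphi_{\id,I}^m(I)$. Evaluating this: for $\Bsf_n$ (with $I$ of type $\Bsf_{n-1}$), for $\Csf_n$ (with $I=\Asf_{n-1}$ and $\varphi_{\id}$ the flip $\alpha_i\mapsto\alpha_{n-i}$), for $\Dsf_n$ with $\mu=\omega_1^\vee$ (with $I=\Dsf_{n-1}$), and for $\Esf_7$ (with $I=\Esf_6$ and $\varphi_{\id}=\iota_I$), one gets $I_{\id}=I$; for $\Dsf_n$ with a half-spin $\mu$ (with $I=\Asf_{n-1}$), one gets $I_{\id}=I$ when $n$ is even and $I_{\id}=\{\alpha_2,\dots,\alpha_{n-2}\}$ when $n$ is odd (only $\alpha_1$, together with $\alpha_{n-1}$ feeding into it, becomes nilpotent for $\varphi_{\id,I}$); for $\Esf_6$ (with $I=\Dsf_5$), one gets a four-element $I_{\id}$ consisting of the trivalent node of $I$ and its three neighbours. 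In every case the non-compact node $\alpha$ is adjacent to a node of $I_{\id}$, and every node of $I\setminus I_{\id}$ (when this set is nonempty) is adjacent to $I_{\id}$; hence $I_{\id}$ is dense in $\Delta$ and \Cref{cor-Borel}.\eqref{dim1-cor2} concludes. The quasi-split forms ${}^2\Dsf_n$, ${}^3\Dsf_n$, ${}^6\Dsf_n$, ${}^2\Esf_6$ are treated identically, $\sigma$ now contributing the relevant diagram automorphism, which only cancels or reinforces $\iota_I$ and does not affect density.

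The hard part is organizational rather than conceptual: one must carry out the $\varphi_{\id}$-orbit bookkeeping separately for each minuscule datum — most delicately in the $\Dsf_n$ half-spin case with $n$ odd, in type $\Esf_6$, and in their quasi-split and Weil-restricted variants — and confirm in each case that the recurrent set $I_{\id}$ still meets a neighbour of the non-compact node. None of these verifications is deep, but they all must be done; a structural statement about minuscule Levi subdiagrams replacing the node-by-node analysis would be the natural way to make the argument uniform.
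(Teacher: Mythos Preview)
Your approach is the same as the paper's: reduce via \Cref{dim1-prop} and \Cref{cor-Borel}\eqref{dim1-cor2} to showing that $I_{\id}$ is dense in $\Delta$, then verify this type by type. The only difference is that the paper does not compute $I_{\id}$ exactly; instead, in each case it exhibits an explicit subset $\Gamma\subseteq I$ that is simultaneously stable under $\sigma$, $-w_0$ and $-w_{0,I}$ (hence under $\varphi_{\id}$, so $\Gamma\subseteq I_{\id}$) and checks that $\Gamma$ is dense. Your formula $\varphi_{\id}(\beta)=\iota_\Delta(\sigma(\iota_I(\beta)))$ is correct and your computations of $I_{\id}$ in the absolutely simple split cases are accurate, so your route yields strictly more information.

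There is one imprecision worth flagging. In a Weil restriction $G=\Res_{\FF_{p^d}|\FF_p}(G_0)$ the components $\mu_i$ of $\mu$ need not coincide: some may be trivial, and in type $\Dsf_n$ the nontrivial ones may correspond to different minuscule nodes. Then $\varphi_{\id}$ sends factor $i$ to factor $i{+}1$ via $\iota_{\Delta_0}\circ\iota_{I_i}$, so $\varphi_{\id}^d$ restricted to one factor is the composite of the various $\iota_{\Delta_0}\iota_{I_{j}}$, not ``$\varphi_{\id}$ for a single $(G_0,\mu_0)$'' as you write. The paper sidesteps this by taking $\Gamma$ to be the \emph{same} subset of $\Delta_0$ in every factor, contained in $\bigcap_i I_i$ (e.g.\ $\{\alpha_2,\dots,\alpha_{n-2}\}$ in type $\Dsf_n$); this is automatically $\sigma$-stable and handles all signatures at once. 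Replacing your ``reduce to $G_0$'' by this diagonal choice of $\Gamma$ is the cleanest way to close the gap.
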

\begin{proof}
The proofs in all cases are completely analogous to each other, and are explained case-by-case in \S\ref{section: 1dim type Bn}, \S\ref{sec-sympgroups}, and \S\ref{section: 1dim type Dn} below.
\end{proof}

\subsubsection{Type \texorpdfstring{$\Bsf$}{B}}\label{section: 1dim type Bn}
Let $\Phi=\sqcup_{i=1}^d \Phi_0$ be a union of root systems arising from the Weil restriction of a root system $\Phi_0$ of $\Bsf_n$-type. Let $\Delta_0=\{\alpha_1, \ldots,\alpha_n\}$ be a set of simple roots for $\Phi_0$ as in \cite[Planche II]{bourbaki.chapter4.5.6.} (note that our $n$ is Bourbaki's $l$). This makes $\Delta=\sqcup_{i=1}^d \Delta_0$ a set of simple roots for $\Phi$. For $S\subsetneq \{1, \ldots, d\}$, let 
\[
I_i\coloneqq \begin{cases}
    \Delta_0, & i\in S, \\
    \Delta_0 \setminus \{\alpha_1\}, & i \notin S,
\end{cases}
\]
and set $I\coloneqq \sqcup_{i=1}^d I_i\subseteq \Delta$. The Frobenius acts on $\Phi$ by translation to the right. 

Define the set
\begin{equation}
    \Gamma\colonequals \bigsqcup_{i=1}^d \Delta_0 \setminus \{\alpha_1\} \subseteq\Delta.
\end{equation}
It is clear that $\Gamma$ is stable under the action of $\sigma$, $w_0$, $w_{0,I}$ and is dense in $\Delta$ (recall the definition in \S\ref{section: dense subsets of delta}). Since $\varphi_{\id}(\alpha)=z^{-1}\sigma(\alpha)=w_0 \sigma(w_{0,I}\alpha)$ for each $\alpha\in \Delta$, we immediately deduce that $I_{\id}$ must contain $\Gamma$. Since $\Gamma$ is dense in $\Delta$, so is $I_{\id}$. Since any Hodge-type cocharacter datum $(G,\mu)$ of $\Bsf_n$-type give rise to some $(\Phi,\Delta,I)$ as above with $I$ the type of $\mu$, \Cref{cor-Borel} gives \Cref{theorem: one-dim smooth locus bn cn dn} in the $\Bsf_n$-case. 

\subsubsection{Type \texorpdfstring{$\Csf$}{C}}\label{sec-sympgroups}
Let $\Phi=\sqcup_{i=1}^d \Phi_0$ be a union of root systems arising from the Weil restriction of a root system $\Phi_0$ of $\Csf_n$-type. Let $\Delta_0=\{\alpha_1, \ldots,\alpha_n\}$ be a set of simple roots for $\Phi_0$ as in \cite[Planche III]{bourbaki.chapter4.5.6.} (i.e.\@ $\alpha_n$ is the long root). This gives $\Delta=\sqcup_{i=1}^d \Delta_0$ as a set of simple roots for $\Phi$. For $S\subsetneq \{1, \ldots,d\}$, let 
\[
I_i\coloneqq \begin{cases}
    \Delta_0, & i\in S, \\
    \Delta_0\setminus \{\alpha_n\}, & i\notin S,
\end{cases}
\]
and set $I\coloneqq \sqcup_{i=1}^d I_i\subseteq \Delta$. 

Again, Frobenius acts on $\Phi$ by translation to the right. Hence, the set
\begin{equation}
    \Gamma\colonequals \bigsqcup_{i=1}^d \Delta_0 \setminus \{\alpha_1,\alpha_{n-1},\alpha_n\} \subseteq\Delta
\end{equation}
is stable under the action of $\sigma$, $w_0$, $w_{0,I}$ and is dense in $\Delta$. Therefore, as in the $\Bsf_n$-case, we find from \Cref{cor-Borel} that \Cref{theorem: one-dim smooth locus bn cn dn} holds in the $\Csf_n$-case.

\subsubsection{Type \texorpdfstring{$\Dsf$}{D}}\label{section: 1dim type Dn}
Let $\Phi=\sqcup_{i=1}^d \Phi_0$ be a union of root systems arising from the Weil restriction of a root system $\Phi_0$ of $\Dsf_n$-type. Let $\Delta_0=\{\alpha_1, \ldots,\alpha_n\}$ be a set of simple roots for $\Phi_0$ as in \cite[Planche IV]{bourbaki.chapter4.5.6.}. This gives $\Delta=\sqcup_{i=1}^d \Delta_0$ as a set of simple roots for $\Phi$. 
On each simple factor, $w_0$ acts trivially if $n$ is even, and it permutes $\alpha_{n-1}$ and $\alpha_n$ if $n$ is odd. 

The action of Frobenius on each simple factor is either trivial, or it permutes the two roots $\alpha_{n-1}$ and $\alpha_n$. Which one occurs depends on whether the associated orthogonal group is split over $\FF_p$ or not. Hence, the action of Frobenius on $\Phi$ either (1) only translates to right, or (2) translates to the right and permutes $\alpha_{n-1}$ and $\alpha_n$ in each simple factor.

The minuscule roots are $\alpha_1, \alpha_{n-1}$ and $\alpha_n$. Thus, for $S\subseteq \{1, \ldots,d\}$, let $I_i=\Delta_0$, for $i\in S$, and for $i\notin S$ let 
\[
    I_i\in \Big\{ \Delta_0\setminus \{\alpha_0\}, \Delta_0\setminus \{\alpha_{n-1}\}, \Delta_0\setminus \{\alpha_n\}\Big\}.
\]
Set $I\coloneqq \sqcup_{i=1}^d I_i$. Consider the set
\begin{equation*}
    \Gamma\colonequals \bigsqcup_{i=1}^d \{\alpha_2, \ldots, \alpha_{n-2}\} \subseteq\Delta.
\end{equation*}
One sees easily that $\Gamma$ is dense in $\Delta$ and is stable under the action of $w_0$, $w_{0,I}$ and the Frobenius element $\sigma$. By \Cref{cor-Borel} we conclude that \Cref{theorem: one-dim smooth locus bn cn dn} holds for type $\Dsf_n$.

\subsubsection{Type \texorpdfstring{\(\Esf_6\)}{E six} and \texorpdfstring{\(\Esf_7\)}{E seven}}\label{section: exceptional dim one}
Among the exceptional simple groups, the only ones which admit minuscule cocharacters 
are of type \(\Esf_6\) and \(\Esf_7\). They have the following Dynkin diagrams, using the numbering of \cite{bourbaki.chapter4.5.6.} for the simple roots, where the node labeled by an integer \(i\) corresponds to a simple root \(\alpha_i\).
\begin{center}
    \begin{tabular}{c c}
         \(\Esf_6\): \dynkin[label,ordering=Bourbaki]E6, & with minuscule roots \(\alpha_1, \alpha_6\), \\
         \(\Esf_7\): \dynkin[label,ordering=Bourbaki]E7, & with minuscule root \(\alpha_7\). 
    \end{tabular}
\end{center}
For the classification of minuscule roots, see \cite[Ex.~\S4.15]{bourbaki.chapter4.5.6.}.
The diagram for \(\Esf_6\) has a single involutive non-trivial automorphism, given by the reflection of the diagram around the nodes  of \(\alpha_2, \alpha_4\). 
Let us denote by \(\tau\) the corresponding element of \(\Sfr_6\), which is given by \([6, 2, 5, 4, 3, 1]\). The diagram for \(\Esf_7\) has no non-trivial automorphisms. This implies the following. 
\begin{lemma} \label{e6e7prep} In the case of \(\Esf_6\), we have the following.
\begin{enumerate}[(i)]
    \item There are two isomorphism classes of simple reductive \(\Fp\)-groups of type \(\Esf_6\). In the non-split case, the action of the Frobenius \(\sigma\) on the simple roots is given by \(\alpha_i \mapsto \alpha_{\tau(i)}\).
    \item The longest element \(w_0\) of the Weyl group of \(\Esf_6\) acts as \(w_0(\alpha_i) = - \alpha_{\tau(i)}\).
    \item The action of \(-w_0\) and \(\sigma\) preserves the set \(\Gamma_6 \coloneqq \Delta_0 \setminus \{\alpha_1, \alpha_6\}\), for \(\Delta_0\) the set of simple roots.
\end{enumerate}
In the case of \(\Esf_7\), we have the following.
\begin{enumerate}[(i)]
    \item There is only one isomorphism class of simple reductive \(\Fp\)-groups of type \(\Esf_7\).
    \item The longest element \(w_0\) of the Weyl group of \(\Esf_7\) acts as \(w_0(\alpha_i) = - \alpha_{i}\).
\end{enumerate}
\end{lemma}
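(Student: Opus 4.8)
The plan is to deduce every assertion from the standard structure theory of reductive groups over the finite field $\Fp$ together with the tables of \cite{bourbaki.chapter4.5.6.}; no genuinely new computation is required, and the lemma amounts to bookkeeping.

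First I would handle the classification of forms and the resulting Frobenius action. By Steinberg's theorem every reductive $\Fp$-group is quasi-split, and for a fixed isogeny class the isomorphism classes of simple $\Fp$-groups of a given type are in bijection with the continuous homomorphisms $\Gal(k/\Fp)\to\mathrm{Out}$, where $\mathrm{Out}$ is the automorphism group of the Dynkin diagram (which in our cases carries the trivial Galois action); such a homomorphism records the permutation of $\Delta_0$ induced by $\sigma$. Since $\Gal(k/\Fp)\cong\widehat{\ZZ}$ is topologically generated by $\sigma$, there are exactly $|\mathrm{Out}|$ of them. The Dynkin diagram of $\Esf_6$ has automorphism group $\ZZ/2\ZZ$, generated by the reflection $\tau=[6,2,5,4,3,1]\in\Sfr_6$, so there are two forms: the split one, on which $\sigma$ fixes $\Delta_0$ pointwise, and the quasi-split non-split one, on which $\sigma$ acts by $\alpha_i\mapsto\alpha_{\tau(i)}$. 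This is (i) for $\Esf_6$. The Dynkin diagram of $\Esf_7$ has no non-trivial automorphism, so $\mathrm{Out}=1$ and the split form is the unique one; this is (i) for $\Esf_7$.

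Next I would read off the action of the longest element. Since $w_0$ carries $\Delta_0$ to $-\Delta_0$, the element $-w_0$ permutes $\Delta_0$ and is the opposition involution. From \cite[Planche VI]{bourbaki.chapter4.5.6.} one has $w_0=-\id$ for $\Esf_7$, hence $w_0(\alpha_i)=-\alpha_i$; from \cite[Planche V]{bourbaki.chapter4.5.6.} the opposition involution for $\Esf_6$ is the non-trivial diagram automorphism $\tau$, hence $-w_0$ acts as $\tau$ and $w_0(\alpha_i)=-\alpha_{\tau(i)}$. This gives (ii). For (iii) it remains to observe that $\tau$ interchanges $\alpha_1\leftrightarrow\alpha_6$ and $\alpha_3\leftrightarrow\alpha_5$ and fixes $\alpha_2$ and $\alpha_4$, so it stabilizes $\{\alpha_1,\alpha_6\}$ and therefore its complement $\Gamma_6=\{\alpha_2,\alpha_3,\alpha_4,\alpha_5\}$; since $-w_0=\tau$ and $\sigma$ induces either $\id$ or $\tau$ by the first step, both $-w_0$ and $\sigma$ preserve $\Gamma_6$.

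The closest thing to an obstacle is making the count in (i) literally correct: one should first fix the isogeny class --- equivalently, read ``simple'' as ``adjoint'', which is all that \S\ref{section: changing the center} ultimately needs --- before ``two'' is the right number, and the input behind the classification is Lang's theorem, which is what rules out inner forms over a finite field. Beyond that, every statement is immediate from the cited references.
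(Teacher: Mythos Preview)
Your proof is correct and follows essentially the same approach as the paper: both invoke the correspondence between $\Fp$-forms and Dynkin diagram automorphisms for (i), and cite Bourbaki's Planches V--VI for the action of $w_0$ in (ii). Your version is more explicit --- spelling out Steinberg's/Lang's theorem, verifying (iii) by hand, and flagging the isogeny caveat --- whereas the paper's proof is a two-sentence sketch; but the underlying argument is the same.
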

\begin{proof}
The automorphisms of the Dynkin diagram correspond to the outer automorphisms of the split isomorphism class of the \(\Fp\)-groups of a given simple type. This proves the statement about the isomorphism classes and, in the case of \(\Esf_6\), the action of \(\sigma\). For the description of the longest element, one can look at \cite[Planches V et VI]{bourbaki.chapter4.5.6.}.
\end{proof}
Notice the following.
\begin{enumerate}[(i)]
    \item In the case of \(\Esf_6\), for \(I = \Delta_0 \setminus\{\alpha_i\}, i = 1, 6\), \((W_I, I)\) is isomorphic to the Weyl group of \(\Dsf_5 \cong \Esf_5\). In particular, the longest element \(w_{0, I}\) will preserve the set \(\Gamma_6\) defined in \Cref{e6e7prep} (possibly permuting \(\alpha_2, \alpha_3\) and \(\alpha_5\)).
    \item In the case of \(\Esf_7\), for \(I = \Gamma_7 \coloneqq \Delta_0 \setminus \{\alpha_7\}\), \((W_I, I)\) is isomorphic to the Weyl group of \(\Esf_6\). Thus, the longest element \(w_{0,I}\) will preserve \(\Gamma_7\).
\end{enumerate}
Consider \(\Phi = \sqcup_{i=1}^d \Phi_0\) a union of root systems arising from the Weil restriction of an irreducible root system \(\Phi_0\) of type either \(\Esf_6\) or \(\Esf_7\). In both cases, the type \(I_{\id}\) of the canonical parabolic for \(w = \id\) contains a dense subset of the simple roots \(\Delta\):
\begin{enumerate}[(i)]
    \item if \(\Phi_0\) is of type \(\Esf_6\), we can take \(\sqcup_{i=1}^d \Gamma_6\),
    \item if \(\Phi_0\) is of type \(\Esf_7\), we can take \(\sqcup_{i = 1}^d\Gamma_7\). 
\end{enumerate}
From \Cref{cor-Borel}.\eqref{dim1-cor2}, we deduce \Cref{theorem: one-dim smooth locus bn cn dn} for $\Esf_6$ and $\Esf_7$.

\begin{remark}\label{remark: exceptional periods}
At the moment, there is no known zip period map for exceptional Shimura varieties. The construction of such period maps is an ongoing project by the authors. Upon completion, \Cref{theorem: one-dim smooth locus bn cn dn} will imply the analogous result for the EO-stratification of the corresponding Shimura varieties.
\end{remark}

\section{Odd orthogonal groups} \label{sec-orthogonal}
\subsection{Group theory}\label{section orthogonal group theory}

Let $n\geq 1$ be an integer and let $V$ be an $\FF_p$-vector space of dimension $2n+1$, endowed with a non-degenerate symmetric form $\psi\colon V\times V\to \FF_p$. We will assume that there is an $\FF_p$-basis $\Bcal$ of $B$ such that $\psi$ is given in $\Bcal$ by the matrix
\begin{equation}\label{Jmat}
    J_n\colonequals \left(
\begin{matrix}
    &&1 \\ &\iddots&\\ 1&&
\end{matrix}
    \right).
\end{equation}
Let $G$ denote the $\FF_p$-reductive group such that, for all $\FF_p$-algebra $R$, we have
\begin{equation}
    G(R)=\{g\in \SL_R(V\otimes_{\FF_p} R) \ | \ \psi_R(gx,gy)=\psi_R(x,y)\}.
\end{equation}
Via our choice of basis $\Bcal$, we identify $G$ with a subgroup of $\SL_{2n+1}$. The subgroup of lower triangular matrices in $G$ is a Borel subgroup $B$, and the subgroup of diagonal matrices in $G$ is a maximal torus $T$. We have
\begin{equation}
    T(R)=\{\diag(x_1, \dots, x_n,1,x_{n}^{-1}.\dots,x_1^{-1}) \ | \ x_i\in R^\times\}
\end{equation}
for all $\FF_p$-algebras $R$. Identify $X^*(T)=\ZZ^n$ so that $\lambda=(\lambda_1, \dots,\lambda_n)\in \ZZ^n$ corresponds to the character
\begin{equation}
    \diag(x_1, \dots, x_n,1,x_{n}^{-1}.\dots,x_1^{-1}) \mapsto \prod_{i=1}^n x_i^{\lambda_i}.
\end{equation}
Denote by $(e_1, \dots,e_n)$ the standard basis of $\ZZ^n$. The positive $T$-roots are given by $\Phi^+=\{e_i\pm e_j \ | \ 1\leq i<j\leq n \}\cup \{e_i \ | \ 1\leq i\leq n\}$. Moreover, the set of simple $T$-roots is $\Delta=\{\alpha_1, \dots, \alpha_{n}\}$ where
\begin{equation}
\alpha_i=e_i-e_{i+1} \ \textrm{for} \ 1\leq i\leq n-1, \ \textrm{and} \ \alpha_n=e_n.
\end{equation}
The Weyl group $W=W(G,T)$ can be identified with the set of permutations $w\in \Sfr_{2n+1}$ satisfying $w(i)+w(2n+2-i)=2n+2$. 
Note that $w(n+1)=n+1$ and $w$ is uniquely determined by the values $a_i=w(i)$ for $1\leq i \leq n$. For $1\leq i \leq n$, write simply $s_i\colonequals s_{\alpha_i}$. For $w\in W$, the permutation matrix attached to $w$ lies in $G$ if and only if $\det(w)=1$. If $\det(w)=-1$, we may change the $(n+1,n+1)$-coefficient from $1$ to $-1$ to obtain an element of $G$. This gives an injective group homomorphism $W\to G$. We will implicitly view elements of $W$ inside $G$ via this map.

\subsection{Bruhat order on \texorpdfstring{$W$}{the Weyl group}}
For $w\in W$, put
\begin{align}
\Mcal(w)&\colonequals \{(i,j) \ | \  1\leq i<j\leq n \ \textrm{and} \ w(i)>w(j) \}, \\
\Ncal(w)&\colonequals \{(i,j) \ | \  1\leq i\leq j\leq n \ \textrm{and} \ w(i)+w(j)>2n+1 \}.
\end{align}
Write $M(w)$, $N(w)$ for the number of elements of $\Mcal(w)$ and $\Ncal(w)$ respectively. The length $\ell(w)$ of $w\in W$ is given by
\begin{equation}\label{length-eq}
    \ell(w)=M(w)+N(w).
\end{equation}
For $w\in W$ and $1\leq i,j\leq 2n$, define
\begin{equation}
    r_w(i,j)\colonequals |\{1\leq k\leq i \ | \ w(k) \leq j\}|.
\end{equation}
For two elements $w_1,w_2\in W$, one has an equivalence
\begin{equation}\label{bruhat-eq}
    w_1\leq w_2 \ \Longleftrightarrow \ r_{w_1}(i,j)\geq r_{w_2}(i,j) \quad \textrm{for all } 1\leq i,j\leq 2n.
\end{equation}
For any permutation $w\in W$, write $M_w\in \GL_{2n}$ for the permutation matrix of $w$. We say that a pair $(i,j)$ is \emph{admissible} for $w$ if it satisfies the following conditions:
\begin{enumerate}
    \item $1\leq i<j\leq 2n$,
    \item $w(i)>w(j)$,
    \item There is no $i<k<j$, $k\neq n+1$, such that $w(j)<w(k)<w(i)$.
\end{enumerate}
In other words, $(i,j)$ is admissible for $w$ if and only if the submatrix of $M_w$ with corners $(i,w(i))$ and $(j,w(j))$ has only zero coefficients, except for these two corners and possibly the coefficient $(n+1,n+1)$. Now, we let $w\in W$ and define three sets $\Ecal^1_w$, $\Ecal_w^2$, $\Ecal_w^3$, as follows:
\begin{align}
    \Ecal_w^1 &\colonequals \{(i,j) \ | \ \textrm{admissible for $w$, and } 1\leq i<j\leq n \}, \\
    \Ecal_w^2 &\colonequals \{(i,j) \ | \ \textrm{admissible for $w$, }i\leq n<j  \textrm{ and } w(i),w(j)\leq n \}, \\
    \Ecal_w^3 &\colonequals \{(i,j) \ | \ \textrm{admissible for $w$, } i\leq n 
 \textrm{ and } j=2n+1-i \}.
\end{align}
It is easy to see that $\Ecal_w^1$, $\Ecal_w^2$, $\Ecal_w^3$, are pairwise disjoint. We set
\begin{equation}\label{Ecalw-def}
    \Ecal_w=\Ecal_w^1\sqcup \Ecal_w^2 \sqcup \Ecal_w^3.
\end{equation}
For $(i,j)\in \Ecal_w$, define a positive root $\gamma'(i,j)\in \Phi^+$ as follows:
\begin{equation}
    \gamma'(i,j)\colonequals \begin{cases}
        e_i-e_j, & \textrm{if } (i,j)\in \Ecal_w^1,\\
         e_i+e_{2n+1-j}, & \textrm{if }  (i,j)\in \Ecal_w^2, \\
          2e_i, & \textrm{if }  (i,j)\in \Ecal_w^3.
    \end{cases}
\end{equation}
Recall the set $E_w\subseteq \Phi^+$ defined in \Cref{def-Ew}. One has the following result, whose proof is similar to \cite[Lemma 5.2.1]{Koskivirta-vanishing-Siegel}

\begin{proposition} \label{prop-Ew}
The map $\gamma'$ is a bijection of $\Ecal_w$ onto the set $E_w$. 
\end{proposition}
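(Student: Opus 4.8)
The plan is to reduce to the known symplectic computation and then carry out the type-$\Bsf$ bookkeeping. Since the map $\gamma\colon E_w\to\Gamma_{\emptyset}(w)$, $\alpha\mapsto ws_\alpha$, is a bijection (see the remark after \Cref{def-Ew}), it suffices to show that $(i,j)\mapsto ws_{\gamma'(i,j)}$ is a bijection of $\Ecal_w$ onto $\Gamma_{\emptyset}(w)$, i.e.\ onto the set of Bruhat-lower neighbours of $w$. First I would record, case by case, the permutation $ws_{\gamma'(i,j)}\in W\subseteq\Sfr_{2n+1}$: for $\gamma'(i,j)=e_i-e_j$ it interchanges the values of $w$ at positions $i,j$ and symmetrically at $2n+2-i,2n+2-j$; for $\gamma'(i,j)=e_i+e_{2n+1-j}$ it interchanges $i\leftrightarrow j$ and $2n+2-i\leftrightarrow 2n+2-j$; and for $\gamma'(i,j)=2e_i$ it interchanges $i\leftrightarrow 2n+2-i$. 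In each case one checks that the resulting signed permutation matrix again lies in $G$ after the usual correction of the $(n+1,n+1)$-entry.

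Next I would compute $\ell(ws_{\gamma'(i,j)})-\ell(w)$ using the formula $\ell(w)=M(w)+N(w)$ of \eqref{length-eq}. For a transposition-type modification as above, the change in $M+N$ is controlled by the entries of $M_w$ lying in the rectangle(s) bounded by the two interchanged corners: each such intermediate nonzero entry contributes a cancelling pair of changes, so the net change is $-1$ precisely when that rectangle is ``empty'', i.e.\ contains no nonzero entry of $M_w$ other than its corners and possibly the fixed entry at $(n+1,n+1)$. This is exactly the admissibility condition defining $\Ecal_w$, and the three subsets $\Ecal_w^1,\Ecal_w^2,\Ecal_w^3$ of \eqref{Ecalw-def} correspond to the three possible positions of the rectangle relative to the antidiagonal $j=2n+1-i$ and the middle column. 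The constraint $w(k)+w(2n+2-k)=2n+2$ is what lets one pass between the ``upper-left'' rectangle and its mirror image, and it is the reason $\Ecal_w^2$ additionally requires $w(i),w(j)\leq n$. This step is the heart of the argument and is the direct analogue of \cite[Lemma 5.2.1]{Koskivirta-vanishing-Siegel}, the only new feature being the fixed coordinate $n+1$, which is handled by the clause ``$k\neq n+1$'' in the definition of admissibility.

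For injectivity of $\gamma'$: the images $\gamma'(\Ecal_w^1)\subseteq\{e_i-e_j\}$, $\gamma'(\Ecal_w^2)\subseteq\{e_i+e_j : i\neq j\}$ and $\gamma'(\Ecal_w^3)\subseteq\{2e_i\}$ are roots of three distinct shapes, hence pairwise disjoint, and on each piece $\gamma'$ is visibly injective in $(i,j)$; since $\Ecal_w^1,\Ecal_w^2,\Ecal_w^3$ are pairwise disjoint, $\gamma'$ is injective on $\Ecal_w$. For surjectivity onto $E_w$: given $\alpha\in\Phi^+$ with $\ell(ws_\alpha)=\ell(w)-1$, write $\alpha$ in one of the three forms $e_i-e_j$, $e_i+e_j$, or $2e_i$; the length computation just performed, read in reverse, shows that the associated pair of positions is admissible and lies in the appropriate one of $\Ecal_w^1,\Ecal_w^2,\Ecal_w^3$, so $\alpha\in\gamma'(\Ecal_w)$. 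Combining these facts with the bijection $\gamma$ yields the Proposition.

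The main obstacle is the length bookkeeping in the second step, specifically verifying that the admissibility clause (empty rectangle, modulo the $(n+1,n+1)$ slot) is exactly equivalent to a length drop of $1$: this forces one to track simultaneously the contributions to $M(w)$ and to $N(w)$ and to use the symmetry $w(k)+w(2n+2-k)=2n+2$ carefully near the middle coordinate. Everything else is routine once the symplectic model of \loccit is transcribed to the $(2n+1)$-dimensional orthogonal setting.
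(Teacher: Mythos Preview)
Your proposal is correct and takes essentially the same approach as the paper, which simply refers the reader to \cite[Lemma 5.2.1]{Koskivirta-vanishing-Siegel} for the analogous symplectic computation; you have correctly identified that the only new ingredient in type $\Bsf$ is the fixed middle coordinate $n+1$, which is accounted for by the clause ``$k\neq n+1$'' in the admissibility condition. One small caveat: be careful with the off-by-one bookkeeping around positions $2n+1-j$ versus $2n+2-j$ when translating the root $e_i+e_{2n+1-j}$ into a permutation of $\{1,\dots,2n+1\}$ (and note that the image ``$2e_i$'' should be read as the short root $e_i$, since $s_{2e_i}=s_{e_i}$), but this does not affect the structure of your argument.
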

Recall also that we have a bijection $\gamma \colon E_w \to \Gamma_{\emptyset}(w)$. We obtain a bijection $\gamma \circ \gamma' \colon \Ecal_w \to \Gamma_{\emptyset}(w)$. When \(w = x_j\), we denote the bijection \(\Ecal_w \to \Gamma_{\emptyset}(w)\) by \(\gamma_j\). 

\subsection{Zip strata}
Consider the cocharacter $\mu\colon \GG_{\textrm{m},k}\to G_k$, $x\mapsto \diag(x,1, \dots,1,x^{-1})$. Denote by $\Zcal$ the corresponding zip datum, and by $\Xcal$ the attached stack of $G$-zips. The parabolic subgroup $P$ is the standard parabolic of type $I=\Delta\setminus \{\alpha_1\}$. The set ${}^I W$ contains exactly $2n$ elements, denoted by $\{x_0,x_1, \dots, x_{2n-1}\}$, given as follows:
\begin{align*}
x_{2n-1}&\colonequals 1, \\
    x_{2n-1-i}&\colonequals s_1 \dots s_{i} \quad \textrm{for} \ 1 \leq i \leq n, \\
  x_{i}&\colonequals s_1 \dots s_{n-1} s_n s_{n-1} \dots s_{i+1} \quad \textrm{for} \ 0\leq i \leq n-2. 
\end{align*}
The length of $x_i$ is $\ell(x_i)=2n-1-i$. In matrix form, we have $x_{2n-1}=\mathbb{1}_{2n+1}$, and
\begin{equation}\label{equation: the xjs}
    x_{0}=\left(
    \begin{matrix}
        &&1 \\ &\mathbb{1}_{2n-1}& \\ 1&&
    \end{matrix}
        \right),  \quad
    x_{j}=\begin{pNiceArray}{cccccccccc}
    &&&&&&&1&& \\ \hline 
    \Block[borders={bottom, right}]{2-2}{\mathbb{1}_j} &&&&&&& \\
    &&&&&&&&& \\
    &&0&\Block[borders={bottom, right,left,top}]{4-4}{\mathbb{1}_{2(n-j)-1}} &&&&&& \\
    &&&&&&&&& \\
    &&&&&&&&& \\
    &&&&&&&0&& \\
    &&&&&&&&\Block[borders={top, left}]{2-2}{\mathbb{1}_j} & \\
    &&&&&&&&& \\ \hline
    &&1&&&&&&& 
    \end{pNiceArray},  \quad
    x_{2n-1-j}=\begin{pNiceArray}{cccccccccc}
    &&1&&&&&&& \\ \hline 
    \Block[borders={bottom, right}]{2-2}{\mathbb{1}_j} &&&&&&& \\
    &&&&&&&&& \\
    &&0&\Block[borders={bottom, right,left,top}]{4-4}{\mathbb{1}_{2(n-j)-1}} &&&&&& \\
    &&&&&&&&& \\
    &&&&&&&&& \\
    &&&&&&&0&& \\
    &&&&&&&&\Block[borders={top, left}]{2-2}{\mathbb{1}_j} & \\
    &&&&&&&&& \\ \hline
    &&&&&&&1&& 
    \end{pNiceArray}
\end{equation}
for all $1\leq j\leq n-1$. Set $z\colonequals w_{0,I}w_0$, which is the element $z=[(2n+1), \ 2, \dots, n]$. Note that $z^{-1}=z$. Write simply $E$ for the zip group $E_{\Zcal}$. The $E$-orbits in $G$ are parametrized by ${}^IW$ via the map $w\mapsto G_w\colonequals E\cdot (wz^{-1})$ (see \S\ref{sec-param}). Let $\Xcal_j\subseteq \Xcal$ denote the zip stratum $\Xcal_w$ for $w=x_j$. The Zariski closure of $\Xcal_j$ is $\bigcup_{i\geq j} \Xcal_i$.

\subsection{Canonical parabolic subgroups}
For $0\leq j\leq 2n-1$, denote by $I_j$ the type of the canonical parabolic attached to $x_j$. Write $P_j$ for the standard parabolic subgroup of type $I_j$. 
%
\begin{proposition}\label{prop-can-orth}
For any $0\leq j\leq n-1$, we have $I_j=I_{2n-1-j}=\Delta\setminus \{\alpha_1, \dots,\alpha_{j+1}\}$.
\end{proposition}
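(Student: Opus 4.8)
The plan is to compute the modified operators $\varphi_{x_j,I},\varphi_{x_{2n-1-j},I}\colon I^\ast\to I^\ast$ of \eqref{phistar} and read off $I_{x_j}$ and $I_{x_{2n-1-j}}$ from \eqref{eq-Iw}, i.e.\@ as the sets of non-nilpotent simple roots in $I$ for the respective operators. Two preliminary observations cut the work. First, since $\psi$ is defined over $\FF_p$ the group $G$ is split, so the Galois action on $\Phi$ is trivial and $\varphi_w$ is just the action of $wz^{-1}$ on $\Phi$; moreover $z=z^{-1}=[(2n+1),2,\dots,n]$ acts on $X^\ast(T)=\ZZ^n$ by $e_1\mapsto -e_1$ and $e_k\mapsto e_k$ for $k\ge 2$, so in particular $z$ fixes every root of $I$ and hence also maps $\Phi\setminus I$ into itself. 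Second, comparing the given reduced expressions yields $x_0=x_j\cdot s_j s_{j-1}\cdots s_1$ for $1\le j\le n-1$, that is $x_j=x_0\,x_{2n-1-j}=z\,x_{2n-1-j}$, whence $\varphi_{x_j}=z\circ\varphi_{x_{2n-1-j}}$. By the first observation this implies $\varphi_{x_j,I}=\varphi_{x_{2n-1-j},I}$ on $I$ (an image in $I$ is unchanged by $z$, and an image outside $I$ stays outside $I$, hence is sent to $0$ in either case), so $I_{x_j}=I_{x_{2n-1-j}}$ and it suffices to compute one of them.

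Next I would make $x_{2n-1-j}=s_1\cdots s_j$ (for $1\le j\le n-1$) explicit as a signed permutation — either by expanding the word or reading it off \eqref{equation: the xjs} — obtaining $x_{2n-1-j}(e_k)=e_{k+1}$ for $1\le k\le j$, $x_{2n-1-j}(e_{j+1})=e_1$, and $x_{2n-1-j}(e_k)=e_k$ for $j+2\le k\le n$. Postcomposing with $z^{-1}$ (which only negates the value on $e_1$) gives
\[
\varphi_{x_{2n-1-j}}\colon\quad e_1\mapsto -e_2,\quad e_k\mapsto e_{k+1}\ (2\le k\le j),\quad e_{j+1}\mapsto e_1,\quad e_k\mapsto e_k\ (j+2\le k\le n).
\]
Evaluating on the simple roots $\alpha_i=e_i-e_{i+1}$ ($2\le i\le n-1$) and $\alpha_n=e_n$ then produces
\begin{equation*}
\varphi_{x_{2n-1-j},I}(\alpha_i)=\begin{cases}\alpha_{i+1}, & 2\le i\le j-1,\\ 0, & i\in\{j,j+1\}\cap\{2,\dots,n\},\\ \alpha_i, & j+2\le i\le n,\end{cases}
\end{equation*}
the middle case because $\varphi_{x_{2n-1-j}}(\alpha_j)=e_{j+1}-e_1$ and $\varphi_{x_{2n-1-j}}(\alpha_{j+1})=e_1-e_{j+2}$ (or $e_1$ when $j+1=n$) are roots that are not simple, hence lie outside $I$.

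From this one reads off the conclusion: the nilpotent simple roots in $I$ are exactly $\alpha_2,\dots,\alpha_{j+1}$ — the chain $\alpha_2\mapsto\alpha_3\mapsto\cdots\mapsto\alpha_j\mapsto 0$ together with $\alpha_{j+1}\mapsto 0$ — while $\alpha_{j+2},\dots,\alpha_n$ are fixed points of $\varphi_{x_{2n-1-j},I}$, so $I_{x_{2n-1-j}}=\{\alpha_{j+2},\dots,\alpha_n\}=\Delta\setminus\{\alpha_1,\dots,\alpha_{j+1}\}$, and $I_{x_j}$ equals the same set by the first paragraph. The remaining case $j=0$ is immediate: $x_0=z$, so $\varphi_{x_0}=\id$ and $I_{x_0}=I$; and $x_{2n-1}=\id$, so $\varphi_{x_{2n-1}}=z^{-1}$ fixes $I$ pointwise and $I_{x_{2n-1}}=I=\Delta\setminus\{\alpha_1\}$.

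The only genuine obstacle here is the bookkeeping in the two computational steps: pinning down the signed permutation attached to $s_1\cdots s_j$, and treating the degenerate index ranges — the case $j=1$, where $\alpha_j=\alpha_1\notin I$ so the chain is empty, and the case $j=n-1$, where $\alpha_{j+1}=\alpha_n$ and there are no fixed roots, so that $I_{x_{n-1}}=\emptyset$. This is routine but error-prone; there is no conceptual difficulty beyond the explicit description of the $E$-orbits already recorded in \eqref{equation: the xjs}.
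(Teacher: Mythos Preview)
Your proof is correct and follows essentially the same route as the paper: compute the modified operator $\varphi_{w,I}$ on $I^\ast$ and read off the non-nilpotent roots. The only difference is cosmetic: the paper computes $\varphi_{x_j,I}$ directly from the matrix form \eqref{equation: the xjs} and then asserts that the same relations hold for $x_{2n-1-j}$, whereas you first prove $\varphi_{x_j,I}=\varphi_{x_{2n-1-j},I}$ via the identity $x_j=z\,x_{2n-1-j}$ (a clean observation) and then compute only once, for $x_{2n-1-j}$. Both arrive at the same formula $\varphi_{w,I}(\alpha_i)=\alpha_{i+1}$ for $2\le i\le j-1$, $\varphi_{w,I}(\alpha_j)=\varphi_{w,I}(\alpha_{j+1})=0$, $\varphi_{w,I}(\alpha_i)=\alpha_i$ for $i\ge j+2$, and the conclusion is immediate.
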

In particular, the type of the canonical parabolic starts at $I_0=I$, then $I_j$ decreases by one at each step until $I_{n-1}=I_n=\emptyset$, and then it increases by one at each step until $I_{2n-1}=I$.

\begin{proof}
We consider the map $\varphi_w \colon I^*\to I^*$ defined in \eqref{phistar}. Since the Galois action is trivial, $I_{0}=I$. From \eqref{equation: the xjs}, we find for $w=x_j$, $1\leq j \leq n-1$, that $\varphi_w(\alpha_i)=\alpha_{i+1}$ for
\begin{align*}
    \varphi_w(\alpha_i)&=\alpha_{i+1}, \quad \textrm{for} \ i=2, \dots, j-1, \\
    \varphi_w(\alpha_j)&=\varphi_w(\alpha_{j+1})=0, \\
    \varphi_w(\alpha_i)&=\alpha_i, \quad \textrm{for} \ i=j+2, \dots, n.
\end{align*}
The same relations hold for $w=x_{2n-1-j}$, $1\leq j \leq n-1$. The result follows easily.
\end{proof}

\begin{corollary}\label{cor-smoothlocsmallortstrat}
For $0\leq j \leq n-1$, the smooth locus of $\overline{\Xcal}_{2n-1-j}$ is exactly $\Xcal_{2n-1-j}$.
\end{corollary}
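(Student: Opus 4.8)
The plan is to deduce this from the characterization of normality for elementary $w$-open substacks, \Cref{thm-elemsmoothchar}, applied along the chain of strata contained in $\overline{\Xcal}_{2n-1-j}$. Fix $0 \leq j \leq n-1$ and set $w = x_{2n-1-j}$. Recall $\overline{\Xcal}_w = \bigcup_{i\geq 2n-1-j}\Xcal_i$ (using the linearity of the stratification), so the unique lower neighbor of $w$ in ${}^IW$ is $x_{2n-j}$, and in general $x_{i+1}$ is the unique lower neighbor of $x_i$ for each $i$. Thus the only codimension-one elementary substack inside $\overline{\Xcal}_w$ is $\Ucal(x_{2n-1-j}, x_{2n-j})$. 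By \Cref{thm-elemsmoothchar}, together with \Cref{proposition criteria for smooth locus to be open stratum}, it suffices to show that this elementary substack is not smooth, equivalently not normal, equivalently (since a separating canonical cover always exists here, as the lower neighbor is a Bruhat-lower neighbor, cf. \Cref{rmk-U-bruh} and the discussion in \S\ref{ssec-lengthone}) that $\Ucal(x_{2n-1-j}, x_{2n-j})$ is \emph{not} $w$-bounded, i.e. that $P_{x_{2n-j}} \not\subseteq P_{x_{2n-1-j}}$.

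The key computational input is \Cref{prop-can-orth}: for $0 \leq i \leq n-1$ we have $I_i = I_{2n-1-i} = \Delta\setminus\{\alpha_1,\dots,\alpha_{i+1}\}$, while $I_{n-1} = I_n = \emptyset$. The main step is therefore to compare $I_{2n-1-j}$ with $I_{2n-j}$ for each $0 \leq j \leq n-1$ and check that $I_{2n-j} \not\subseteq I_{2n-1-j}$. There are two regimes. If $1 \leq j \leq n-1$, then $2n-1-j$ ranges over $\{n,\dots,2n-2\}$ and $2n-j$ ranges over $\{n+1,\dots,2n-1\}$; writing $2n-1-j = 2n-1-j$ and $2n-j = 2n-1-(j-1)$, \Cref{prop-can-orth} gives $I_{2n-1-j} = \Delta\setminus\{\alpha_1,\dots,\alpha_{j+1}\}$ and $I_{2n-j} = \Delta\setminus\{\alpha_1,\dots,\alpha_j\}$, so $\alpha_{j+1} \in I_{2n-j}\setminus I_{2n-1-j}$, whence the non-inclusion. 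If $j = 0$, then $w = x_{2n-1} = 1$ with $I_{2n-1} = I = \Delta\setminus\{\alpha_1\}$, and its lower neighbor is $x_{2n}$ — but wait, ${}^IW$ only has elements $x_0,\dots,x_{2n-1}$, so $x_{2n-1}=1$ is the minimal element and $\overline{\Xcal}_{2n-1} = \Xcal_{2n-1}$ is a single point, trivially smooth with $\overline{\Xcal}_{2n-1}^{\mathrm{sm}} = \Xcal_{2n-1}$. So the case $j=0$ is degenerate and handled separately; the substantive content is $1\leq j\leq n-1$.

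With the non-inclusion $P_{x_{2n-j}}\not\subseteq P_{x_{2n-1-j}}$ established for $1\leq j\leq n-1$, \Cref{thm-elemsmoothchar} shows $\Ucal(x_{2n-1-j},x_{2n-j})$ is not normal, hence not smooth; since this is the unique codimension-one elementary substack of $\overline{\Xcal}_{2n-1-j}$ and the question of whether the smooth/normal locus equals the open stratum is detected in codimension one (\Cref{proposition criteria for smooth locus to be open stratum}, using that $\Xcal_{2n-1-j}$ itself is smooth), we conclude $\overline{\Xcal}_{2n-1-j}^{\mathrm{sm}} = \Xcal_{2n-1-j}$. I do not anticipate a serious obstacle: the only care needed is bookkeeping with the indexing of ${}^IW$ (the two ranges $x_i$ for $i\leq n-2$ versus $x_{2n-1-i}$), making sure the relevant lower neighbor is genuinely the $x_{i+1}$ predicted by linearity, and confirming that the existence of a separating canonical cover is automatic for these Bruhat-lower-neighbor pairs so that $w$-boundedness is the only condition left to violate.

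\begin{proof}
By \Cref{prop-str-Br-Pw} and the linearity of the stratification, for $0\leq j\leq n-1$ set $w=x_{2n-1-j}$; then $\overline{\Xcal}_{2n-1-j}=\bigcup_{i\geq 2n-1-j}\Xcal_i$ and $x_{2n-j}$ is the unique lower neighbor of $x_{2n-1-j}$ in ${}^IW$ when $j\geq 1$, while for $j=0$ the stratum $\Xcal_{2n-1}$ is a single (closed) point, for which the claim is trivial. Assume $1\leq j\leq n-1$. The pair $(x_{2n-1-j},x_{2n-j})$ consists of Bruhat-lower neighbors, so by \Cref{rmk-U-bruh} (and the discussion in \S\ref{ssec-lengthone}) the elementary substack $\Ucal\coloneqq\Ucal(x_{2n-1-j},x_{2n-j})$ admits a separating canonical cover. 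By \Cref{thm-elemsmoothchar}, $\Ucal$ is smooth (equivalently normal) if and only if it is $w$-bounded, i.e. if and only if $P_{x_{2n-j}}\subseteq P_{x_{2n-1-j}}$. By \Cref{prop-can-orth}, $I_{2n-1-j}=\Delta\setminus\{\alpha_1,\dots,\alpha_{j+1}\}$ and $I_{2n-j}=I_{2n-1-(j-1)}=\Delta\setminus\{\alpha_1,\dots,\alpha_j\}$, so $\alpha_{j+1}\in I_{2n-j}\setminus I_{2n-1-j}$ and hence $P_{x_{2n-j}}\not\subseteq P_{x_{2n-1-j}}$. Therefore $\Ucal$ is not normal, so not smooth. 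Since $\Ucal$ is the unique codimension-one elementary substack contained in $\overline{\Xcal}_{2n-1-j}$ and $\Xcal_{2n-1-j}$ is smooth, \Cref{proposition criteria for smooth locus to be open stratum} yields $\overline{\Xcal}_{2n-1-j}^{\mathrm{sm}}=\Xcal_{2n-1-j}$.
\end{proof}
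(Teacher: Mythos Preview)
Your proof is correct and follows the same approach as the paper: apply \Cref{proposition criteria for smooth locus to be open stratum} using the canonical parabolic computations of \Cref{prop-can-orth}. The paper's proof is a one-liner citing exactly these two results; you have unpacked the computation explicitly, which is fine.

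One minor point: your claim that $\Ucal(x_{2n-1-j},x_{2n-j})$ admits a \emph{separating} canonical cover is not needed, and your justification for it is incomplete. \Cref{rmk-U-bruh} only gives a canonical cover (not separating), and the argument in \S\ref{ssec-lengthone} is specific to length-one elements, where the lower neighbor is the identity and hence the unique lower neighbor in all of $W$; for $j\geq 2$ this reasoning does not apply directly. Fortunately, the argument does not require this: to invoke \Cref{proposition criteria for smooth locus to be open stratum}, it is enough that $\Ucal(x_{2n-1-j},x_{2n-j})$ is not $w$-bounded (the disjunction in condition~(2) is then satisfied regardless of whether a separating cover exists). Equivalently, via \Cref{thm-elemsmoothchar}, ``not $w$-bounded'' already implies ``not smooth'', since smoothness is equivalent to condition~(3) of \Cref{thm-extnorm}, which requires $w$-boundedness. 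So you may simply drop the separating-cover claim.
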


\begin{proof}
By \Cref{proposition criteria for smooth locus to be open stratum} this follows from \Cref{prop-can-orth}.
%
\end{proof}

In the remainder of the section, we examine the smooth locus of $\overline{\Xcal}_j$ for $0\leq j \leq n-1$.

\subsection{Canonical flag stratum of \texorpdfstring{$x_j$}{x j}}
Let $0\leq j\leq n-1$ and consider $\Fcal^{(P_j)}$ the intermediate flag space attached to $P_j$. 
Denote by $\Zcal_j$ the zip datum $\Zcal_j\colonequals (G, P_j, Q_j, L_j, M_j)$ introduced in \ref{sec-fine-stratif}. The stack \(\Fcal^{(P_j)}\) carries a \(\Zcal_j\)-zip stratification $(\Fcal_w^{(P_j)})_{w\in {}^{I_j}W}$ parametrized by ${}^{I_j}W$. Since $I_j=\Delta\setminus \{\alpha_1, \dots,\alpha_{j+1}\}$, the set ${}^{I_j} W$ is the set of elements $w\in W$ satisfying the condition
\begin{equation}\label{IW-orth}
w^{-1}(j+2)<w^{-1}(j+3)<\dots < w^{-1}(n)<n+1.
\end{equation}
By \Cref{prop-Pw}, the natural projection $\pi_{P_j}\colon \Fcal^{(P_j)}\to \Xcal$ restricts to an isomorphism $\pi\colon \Fcal_{x_{j}}^{(P_j)} \to \Xcal_{j}$ and a proper, birational map $\pi_{P_j}\colon \overline{\Fcal}_{x_{j}}^{(P_j)}\to \overline{\Xcal}_{j}$. 
By \Cref{strict-Bruh-Pw}, the stratum $\Fcal_{x_j}^{(P_j)}$ is Bruhat.

\subsection{Lower neighbors of \texorpdfstring{$x_j$}{x j} in \texorpdfstring{${}^{I_j}W$}{I j W}} 
\label{j-equals-r-sec}


\begin{proposition}\label{prop-Gamma}
For $0\leq j \leq 2n-1$, we have $\Gamma_{I_j}(x_j)={}^{I_j}W\cap \Gamma_\emptyset(x_j)$.
\end{proposition}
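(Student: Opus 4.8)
The plan is to deduce everything from the single fact, supplied by \Cref{strict-Bruh-Pw}, that $x_j$ is a \emph{Bruhat} element for the zip datum $\Zcal_{P_j}=(G,P_j,Q_j,L_j,M_j)$ with frame $(B,T,z)$; equivalently, by \Cref{cor-str-Br} applied to $\Zcal_{P_j}$, that ${}^{x_j}J_j=I_j$, where $J_j$ is the type of $Q_j$. One inclusion is free: if $w'\le x_j$ with $\ell(w')=\ell(x_j)-1$ and $w'\in{}^{I_j}W$, then taking $x=e$ in the definition of $\preccurlyeq_{I_j}$ gives $w'\preccurlyeq_{I_j}x_j$, hence $w'\in\Gamma_{I_j}(x_j)$ (this is also the first assertion of \Cref{lemma-low-nei}). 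So it remains to prove that every element of $\Gamma_{I_j}(x_j)$ is a Bruhat-lower neighbor of $x_j$.

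Let $w'\in\Gamma_{I_j}(x_j)$: then $\ell(w')=\ell(x_j)-1$ and there is $x\in W_{I_j}$ with
\[
y\colonequals xw'\psi(x)^{-1}\le x_j,
\]
where $\psi\colon W_{I_j}\to W_{J_j}$, $\psi(u)=z^{-1}\varphi(u)z$, is the twisting map of \S\ref{subsubsec: closure relations} for $\Zcal_{P_j}$. I record three length identities. (i) Since $w'\in{}^{I_j}W$ and $x\in W_{I_j}$, one has $\ell(xw')=\ell(x)+\ell(w')$. (ii) As $\psi(s_\alpha)=s_{z^{-1}\sigma(\alpha)}$ and $z^{-1}\sigma(I_j)=J_j$ by the frame relation \eqref{J-eq} (applied to $\Zcal_{P_j}$), the map $\psi$ is an isomorphism of Coxeter systems $W_{I_j}\to W_{J_j}$ sending simple reflections to simple reflections; since $W_{J_j}$ is a standard parabolic of $W$, this gives $\ell(\psi(x))=\ell(x)$. (iii) Because ${}^{x_j}J_j=I_j$, conjugation by $x_j$ carries $W_{J_j}$ isomorphically onto $W_{I_j}$, simple reflections to simple reflections, hence length-preservingly; therefore the operator $\theta(u)\colonequals x_j\psi(u)x_j^{-1}$ maps $W_{I_j}$ to itself by a length-preserving automorphism, and $x_j\psi(x)=\theta(x)\,x_j$ with $\theta(x)\in W_{I_j}$, so $\ell(x_j\psi(x))=\ell(x)+\ell(x_j)$.

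From $y=xw'\psi(x)^{-1}$ together with (i), (ii) we get $\ell(w')\le\ell(y)\le\ell(w')+2\ell(x)$; combined with $y\le x_j$ this leaves $\ell(y)\in\{\ell(w'),\ell(x_j)\}$. The value $\ell(y)=\ell(x_j)$ is impossible: it would force $y=x_j$, hence $xw'=x_j\psi(x)=\theta(x)x_j$ and $w'=(x^{-1}\theta(x))\,x_j$ with $x^{-1}\theta(x)\in W_{I_j}$ and $x_j\in{}^{I_j}W$, so $\ell(w')=\ell(x^{-1}\theta(x))+\ell(x_j)\ge\ell(x_j)>\ell(w')$. Thus $\ell(y)=\ell(w')$, and then from $xw'=y\psi(x)$ and (i), (ii) we get $\ell(xw')=\ell(y)+\ell(\psi(x))$, i.e.\ the product $y\psi(x)$ has additive length, as does $x_j\psi(x)=\theta(x)x_j$ by (iii). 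The subword characterization of the Bruhat order applied to $y\le x_j$ therefore yields $y\psi(x)\le x_j\psi(x)$, that is, $xw'\le\theta(x)\,x_j$. Finally $w'$ is the minimal-length representative of the coset $W_{I_j}(xw')=W_{I_j}w'$, $x_j$ is that of $W_{I_j}(\theta(x)x_j)=W_{I_j}x_j$, and the projection $W\to{}^{I_j}W$ onto minimal coset representatives is order-preserving for the Bruhat order; hence $w'\le x_j$. Since $w'\in{}^{I_j}W$ and $\ell(w')=\ell(x_j)-1$, this says exactly $w'\in{}^{I_j}W\cap\Gamma_{\emptyset}(x_j)$, completing the proof.

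This argument is uniform in $j$ and uses nothing specific to type $\Bsf_n$ beyond \Cref{strict-Bruh-Pw}; the step that genuinely requires the Bruhat condition is identity (iii), since only because ${}^{x_j}J_j=I_j$ does $\theta$ preserve $W_{I_j}$ and act there by a length-preserving automorphism. (Alternatively one could verify \Cref{prop-Gamma} by the explicit combinatorics of \S\ref{sec-orthogonal}, listing $\Gamma_{\emptyset}(x_j)$ via the admissible pairs of \Cref{prop-Ew} and checking which of the $x_js_\gamma$ lie in ${}^{I_j}W$, but the coset-projection argument avoids all case analysis.)
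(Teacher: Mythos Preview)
Your proof is correct and takes a genuinely different route from the paper's. The paper argues case by case: for $n\le j\le 2n-1$ it checks explicitly that $\Gamma_\emptyset(x_j)\subseteq{}^{I_j}W$ and invokes \Cref{cor-same-neighbors}; for $0\le j\le n-1$ it lists the Bruhat-lower neighbors of $x_j$ via the admissible-pair combinatorics of \Cref{prop-Ew}, shows by a direct matrix computation (\Cref{lem-PjQj}) that all those not in ${}^{I_j}W$ lie in a single $P_j\times Q_j$-orbit together with $x_{j+1}$, and then uses the bijection of \Cref{lem-biject} between $\Gamma_{I_j}(x_j)$ and codimension-one $P_j\times Q_j$-orbits to conclude.

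Your argument bypasses all of this. The key insight is that the single identity ${}^{x_j}J_j=I_j$ (from \Cref{strict-Bruh-Pw} and \Cref{cor-str-Br}) makes conjugation by $x_j$ a Coxeter isomorphism $W_{J_j}\to W_{I_j}$, so the twisting element $\psi(x)$ can be moved past $x_j$ with additive length, after which the subword property and the order-preserving projection $W\to{}^{I_j}W$ finish the job. Two small remarks: in the step $y\psi(x)\le x_j\psi(x)$ you actually only need the length additivity of $x_j\psi(x)$, not that of $y\psi(x)$ (the subword criterion applied to a reduced word for $x_j\psi(x)$ immediately gives $y\psi(x)$ as a subword); and your argument is in fact fully general---it proves that $\Gamma_{I_w}(w)={}^{I_w}W\cap\Gamma_\emptyset(w)$ for any $w\in{}^IW$ in any zip datum, not just for the orthogonal $x_j$. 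This is strictly stronger than what the paper proves here, and the paper's explicit description of the lower neighbors (which you avoid) is instead recovered later, where it is genuinely needed for \Cref{prop-low-xr} and \Cref{prop-imag-strata-rj}.
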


We distinguish the cases $0\leq j\leq n-1$ and $n<j\leq 2n-1$.

\subsubsection{The case \texorpdfstring{$0\leq j \leq n-1$}{of j less than n}}
Assume first that $0\leq j \leq n-1$, and define the set $\Dcal_{x_j}\colonequals \{(i,2n+1-j) \ | \ 1\leq i \leq j \}$, of cardinality $j$. When $j=n-1$, we have 
\begin{align*}
    \Ecal_{x_{n-1}}^1 &=\emptyset \\
    \Ecal_{x_{n-1}}^2 &= \Dcal_{x_{n-1}} = \{(1,n+2), \dots, (n-1,n+2)\} \\
    \Ecal_{x_{n-1}}^3 &= \{(n,n+2)\}
\end{align*}
hence $|\Ecal_{x_{n-1}}|=| \Gamma_\emptyset(x_{n-1})|=n$. For $1\leq j<n-1$, we have
\begin{align*}
    \Ecal_{x_j}^1 &=\{(j+1,j+2), \dots, (j+1,n)\} \\
    \Ecal_{x_j}^2 &=\Dcal_{x_j} \ \cup \ \{(j+2,2n+1-j), \dots, (n,2n+1-j)\} \\
    \Ecal_{x_j}^3 &= \emptyset.
\end{align*}
and thus $|\Ecal_{x_j}|=| \Gamma_\emptyset(x_j)|=2n-j-2$. For $1\leq i \leq j$, define:
\begin{equation}\label{wir-eq}
w_i^{(j)}\colonequals \gamma_r(i, 2n+1-j).
\end{equation}
One checks easily that the elements \eqref{wir-eq} correspond precisely to the elements of $\Dcal_{x_j}$ via the bijection $\gamma_j\colon \Ecal_{x_j} \to \Gamma_{\emptyset}(x_j)$. We deduce that the set ${}^{I_j}W\cap \Gamma_\emptyset(x_j)$ consists of exactly $j+1$ elements given by
\begin{equation}
{}^{I_j}W\cap \Gamma_\emptyset(x_j) = \{x_{j+1}\} \cup \gamma_j(\Dcal_{x_j}).
\end{equation}
This set is contained in $\Gamma_{I_j}(x_j)$ by \Cref{lemma-low-nei}\eqref{lemma-low-nei-item1}.

If $j=n-1$, we have $I_{n-1}=\emptyset$, so the result of Proposition \ref{prop-Gamma} is trivial. Assume now that $0\leq j<n-1$.
Let $\Hcal_j\subseteq \Ecal_{x_j}$ be the subset of elements corresponding via $\gamma_j$ to either $x_{j+1}$ or to elements that do not lie in ${}^{I_j} W$. We find immediately:
\begin{equation} \label{set-eq}
 \Hcal_j\colonequals \Ecal^1_{x_j}  \ \sqcup \ \{(j+2,2n+1-j), \dots, (n,2n+1-j)\}.
\end{equation}
Let $(u,v)$ be an element in this union, and let $w\colonequals \gamma_j(u,v)$ be the corresponding lower neighbor of $x_j$ in $W$. For any integer $r\geq 2$, define an $r\times r$-matrix $K_r$ by
\begin{equation}
    K_r\colonequals \left(
\begin{matrix}
   \mathbb{0}_{1,r-1} & 1 \\ \mathbb{1}_{r-1}& \mathbb{0}_{r-1,1}
\end{matrix}
    \right).
\end{equation}
For any element $w\in \gamma_j(\Hcal_j)$, the coefficient $1$ in the first line of $w$ lies in one of the columns $j+2, \dots, 2n-j$. Similarly, the coefficient $1$ in column $j+1$ lies in one of the rows $j+2, \dots, 2n-j$. Permuting the columns and lines numbered $j+2, \dots, 2n-j$ in the matrix of $w$ by a permutation (in $W_{L_j}$) corresponds to multiplying $w$ by elements of $W_{L_j}$ on both sides. Therefore, we can multiply on the right with a matrix in $W_{L_j}$ to place the coefficient in the first row in column $j+2$. Similarly, we can multiply on the left with a matrix in $W_{L_j}$ to place the coefficient in the column $j+1$ in row $j+2$. If necessary, we multiply (on either side) the central block of size $2(n-j)-3$ by some matrix in the subgroup $W_{L_{j+1}}\subseteq W_{L_j}$ to change it to the identity matrix. By this discussion, we can find $x,y\in W_{L_j}$ such that 
\begin{equation}
    xwy = \left(
    \begin{matrix}
        K_{j+2} & & \\
 & \mathbb{1}_{2(n-j)-3} & \\
 && K_{j+2}
    \end{matrix}
    \right) \in {}^{I_j} W^{I_j}.
\end{equation}
We deduce the following corollary.
\begin{lemma}\label{lem-PjQj}
For all $w\in \gamma_j(\Hcal_j)$, the element $wz^{-1}$ lies in the $P_j\times Q_j$-orbit of $x_{j+1}z^{-1}$.
\end{lemma}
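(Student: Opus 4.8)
The plan is to bootstrap from the normal form established in the discussion immediately preceding the lemma: for every $w\in\gamma_j(\Hcal_j)$ there are $x,y\in W_{L_j}$ with $xwy=B$, where $B\colonequals\diag(K_{j+2},\mathbb{1}_{2(n-j)-3},K_{j+2})$ does \emph{not} depend on $w$. The key observation is that, by the very definition of $\Hcal_j$, the element $x_{j+1}$ itself lies in $\gamma_j(\Hcal_j)$, so this normal form applies to $x_{j+1}$ too. The lemma then follows by moving elements of $W_{L_j}$ around, since the $P_j\times Q_j$-orbit of an element $g$ is the double coset $P_j g Q_j$ (using $Q_j=Q_j^{-1}$).

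First I would record two auxiliary facts. (a) Since $G\cong\SO_{2n+1}$ is split over $\FF_p$, the Frobenius acts trivially on the root datum, whence $M_j=L_j^{(p)}=L_j$; consequently, viewing $W$ inside $G$ via the homomorphism of \S\ref{section orthogonal group theory}, one has $W_{L_j}\subseteq L_j\subseteq P_j$ and $W_{L_j}\subseteq L_j=M_j\subseteq Q_j$. (b) The element $z=z^{-1}=[(2n+1),2,\dots,n]$ is the transposition $(1,\,2n+1)$, and since $I_j=\{\alpha_{j+2},\dots,\alpha_n\}$ with $j\geq 0$, every element of $W_{L_j}$ fixes the indices $1$ and $2n+1$; because $W\to G$ is a homomorphism, it follows that each $b\in W_{L_j}$ commutes with $z$ inside $G$.

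Now fix $w\in\gamma_j(\Hcal_j)$. Applying the normal form to $w$ and to $x_{j+1}$ produces $x,y,x',y'\in W_{L_j}$ with $xwy=B=x'x_{j+1}y'$, hence $w=a\,x_{j+1}\,b$ with $a\colonequals x^{-1}x'\in W_{L_j}$ and $b\colonequals y'y^{-1}\in W_{L_j}$. Using (b) to slide $b$ past $z^{-1}$ gives $wz^{-1}=a\,x_{j+1}\,b\,z^{-1}=a\,(x_{j+1}z^{-1})\,b$, and therefore
\[
P_j\,(wz^{-1})\,Q_j \;=\; P_j\,a\,(x_{j+1}z^{-1})\,b\,Q_j \;=\; P_j\,(x_{j+1}z^{-1})\,Q_j
\]
by (a), since $a\in P_j$ and $b\in Q_j$. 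This is exactly the assertion that $wz^{-1}$ lies in the $P_j\times Q_j$-orbit of $x_{j+1}z^{-1}$.

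The argument is essentially bookkeeping once the normal form is in hand, so I do not expect a genuine obstacle. The only points deserving a little care are the identification $M_j=L_j$ (so that $W_{L_j}$ sits inside both $P_j$ and $Q_j$, and so that a Weyl representative of $W_{L_j}$ genuinely lies in $L_j$) and the compatibility of the section $W\hookrightarrow G$ with products, which is what upgrades "commutes in $W$" to "commutes in $G$". One should also double-check that the normal form of the preceding paragraph indeed applies to $w=x_{j+1}$, but this is immediate from the definition of $\Hcal_j$ (and, when $j=n-1$, $\gamma_j(\Hcal_j)=\emptyset$ and the statement is vacuous).
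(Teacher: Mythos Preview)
Your proof is correct and follows essentially the same approach as the paper's. Both arguments use the preceding normal form to place every $w\in\gamma_j(\Hcal_j)$ in the $W_{L_j}\times W_{L_j}$-orbit (hence $L_j\times L_j$-orbit) of $x_{j+1}$, and then push $z^{-1}$ through using that $z$ normalizes $L_j$; your version makes the commutation $bz^{-1}=z^{-1}b$ explicit and notes nicely that $x_{j+1}$ itself lies in $\gamma_j(\Hcal_j)$, while the paper phrases the same step as ``$z$ normalizes $L_j$''.
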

\begin{proof}
The above shows that all elements in $\gamma_j(\Hcal_j)$ lie in the same $L_j\times L_j$-orbit (i.e.\@ that of $x_{j+1}$). Note that $z$ normalizes $L_j$, i.e.\@ $zL_jz^{-1}=L_j$. This implies that all elements $wz^{-1}$ for $w\in \gamma(\Hcal_j)$ lie in the same $L_j\times L_j$-orbit as $x_{j+1}z^{-1}$. In particular, they lie in the same $P_j\times Q_j$-orbit.
\end{proof}

We now prove the remaining cases of \Cref{prop-Gamma}. Suppose $1\leq j <n-1$. We start with the following lemma.

\begin{lemma}\label{lem-biject}
The map $w\mapsto P_jwz^{-1}Q_j$ induces a bijection between $\Gamma_{I_j}(x_j)$ and the set of $P_j\times Q_j$-orbits contained in the Zariski closure of $P_jx_jz^{-1}Q_j$ with codimension one.
\end{lemma}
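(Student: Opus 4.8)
The plan is to exploit that $\Fcal^{(P_j)}_{x_j}$ is a Bruhat stratum, as established by \Cref{strict-Bruh-Pw} and recalled above. Under the identification $\Fcal^{(P_j)}\simeq[E'_{P_j}\backslash G_k]$ of \eqref{psiP0}, this means the $\Zcal_j$-zip stratum $\Fcal^{(P_j)}_{x_j}$ has underlying subset $C_{P_j,x_j}=E_{\Zcal_j}\cdot(x_jz^{-1})=P_jx_jz^{-1}Q_j$, so $\overline{\Fcal}^{(P_j)}_{x_j}$ has underlying subset $\overline{P_jx_jz^{-1}Q_j}$. This closure carries the decomposition into $\Zcal_j$-zip strata (with underlying subsets $C_{P_j,w'}$, for $w'\preccurlyeq_{I_j}x_j$) and the coarser decomposition into $P_j\times Q_j$-orbits; each $P_j\times Q_j$-orbit is irreducible and is a union of $\Zcal_j$-zip strata among which exactly one is open and dense, necessarily of the same dimension as the orbit (the standard structure of Bruhat strata of $\Xcal^{\Zcal_j}$, following Pink--Wedhorn--Ziegler). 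Since $\operatorname{codim}(C_{P_j,w'},\overline{P_jx_jz^{-1}Q_j})=\ell(x_j)-\ell(w')$, the set $\Gamma_{I_j}(x_j)$ consists precisely of those $w'$ for which $C_{P_j,w'}$ has codimension one in $\overline{P_jx_jz^{-1}Q_j}$. So the lemma reduces to a bijection between codimension-one $\Zcal_j$-zip strata and codimension-one $P_j\times Q_j$-orbits inside $\overline{P_jx_jz^{-1}Q_j}$.

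First I would check that $w'\mapsto P_jw'z^{-1}Q_j$ is well defined on $\Gamma_{I_j}(x_j)$. For such $w'$ we have $w'z^{-1}\in C_{P_j,w'}\subseteq\overline{P_jx_jz^{-1}Q_j}$, so $O_{w'}\colonequals P_jw'z^{-1}Q_j$ is a $P_j\times Q_j$-orbit contained in $\overline{P_jx_jz^{-1}Q_j}$ and containing $C_{P_j,w'}$; hence $\operatorname{codim}O_{w'}\leq\operatorname{codim}C_{P_j,w'}=1$. If $\operatorname{codim}O_{w'}=0$, then $O_{w'}$ is the open dense orbit $P_jx_jz^{-1}Q_j=C_{P_j,x_j}$, so $C_{P_j,w'}$ would be contained in $\Fcal^{(P_j)}_{x_j}$, forcing $w'=x_j$ by disjointness of distinct zip strata — impossible. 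Thus $\operatorname{codim}O_{w'}=1$, and $C_{P_j,w'}$, being a locally closed subset of full dimension in the irreducible orbit $O_{w'}$, is its unique open dense zip stratum.

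Then I would establish bijectivity. Injectivity: distinct $w',w''\in\Gamma_{I_j}(x_j)$ yield distinct zip strata $C_{P_j,w'}\neq C_{P_j,w''}$, and since a codimension-one orbit determines its unique open dense zip stratum, $O_{w'}=O_{w''}$ would force $w'=w''$. Surjectivity: given any codimension-one $P_j\times Q_j$-orbit $O\subseteq\overline{P_jx_jz^{-1}Q_j}$, its unique open dense zip stratum is some $C_{P_j,w'}$ with $\operatorname{codim}C_{P_j,w'}=\operatorname{codim}O=1$, hence $w'\in\Gamma_{I_j}(x_j)$, and $O=O_{w'}$ because $w'z^{-1}\in O$. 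This yields the asserted bijection.

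The argument is essentially bookkeeping; the one point needing care is to keep straight the three flavours of orbit ($E'_{P_j}$-, $E_{\Zcal_j}$- and $P_j\times Q_j$-orbits) and to invoke correctly the fact that each Bruhat stratum of $\Xcal^{\Zcal_j}$ is irreducible and contains a unique open dense $\Zcal_j$-zip stratum of the same dimension. There is no serious obstacle once $x_j$ is known to be Bruhat: the lemma simply records that, immediately below the Bruhat stratum $\Fcal^{(P_j)}_{x_j}$, the twisted order $\preccurlyeq_{I_j}$ agrees with the $P_j\times Q_j$-orbit closure order.
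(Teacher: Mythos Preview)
Your proof is correct and follows essentially the same approach as the paper's: both arguments rest on the fact that $\Fcal^{(P_j)}_{x_j}$ is Bruhat (so its closure decomposes into $P_j\times Q_j$-orbits), and then match codimension-one zip strata with codimension-one orbits via the unique open dense zip stratum in each orbit. The paper phrases this more tersely in terms of irreducible components of the complement $\overline{\Fcal}^{(P_j)}_{x_j}\setminus\Fcal^{(P_j)}_{x_j}$, while you spell out well-definedness, injectivity, and surjectivity separately; the substance is the same.
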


\begin{proof}
Since the stratum $\Fcal^{(P_j)}_{x_j}$ is Bruhat, 
the Zariski closure $\overline{\Fcal}^{(P_j)}_{x_j}$ is a union of Bruhat strata, hence the complement $\overline{\Fcal}^{(P_j)}_{x_j} \setminus \Fcal^{(P_j)}_{x_j}$ is also a union of Bruhat strata. The elements of $\Gamma_{I_j}(x_j)$ are in bijection with the irreducible components of this complement. Hence it is clear that any Bruhat stratum contained in $\overline{\Fcal}^{(P_j)}_{x_j}$ with codimension one must contain some $\Fcal^{(P_j)}_w$ for $w\in \Gamma_{I_j}(x_j)$. It contains only one such stratum because each irreducible component of $\overline{\Fcal}^{(P_j)}_{x_j} \setminus \Fcal^{(P_j)}_{x_j}$ is also a union of Bruhat strata (as $P_j\times Q_j$ is itself irreducible).
\end{proof}

Now, let $w\in \Gamma_{I_j}(x_j)$ be an element. We need to show that $w\in \Gamma_{\emptyset}(x_j)$, i.e.\@ that $w\leq x_j$. By \Cref{lemma-low-nei}, we have $\Gamma_{I_j}(x_j)\subseteq \pi_B(\Gamma_{\emptyset}(x_j))$, hence $w=\pi_{B,P_j}(w_1)$ for some $w_1\in \Gamma_{\emptyset}(x_j)$. If $w_1\in {}^{I_j}W$, we have $w=\pi_{B,P_j}(w_1)=w_1$. Otherwise, \Cref{lem-PjQj} shows that $w_1z^{-1}$ lies in the $P_j\times Q_j$-orbit of $x_{j+1}z^{-1}$ We deduce that $x_{j+1}z^{-1}$ and $wz^{-1}$ lie in the same $P_j\times Q_j$-orbit, hence $w=x_{j+1}$ by \Cref{lem-biject}. This terminates the proof of \Cref{prop-Gamma} for $0\leq j \leq n-1$.

\subsubsection{The case \texorpdfstring{$n\leq j \leq 2n-1$}{j at least n}}
Assume now that $n\leq j \leq 2n-1$. For $1\leq i \leq 2n-1-j$, define:
\begin{equation}\label{wir-eq2}
w_i^{(j)}\colonequals \gamma_r(i, 2n-j).
\end{equation}
Using \Cref{prop-Ew}, one checks immediately that the elements \eqref{wir-eq2} are exactly the lower neighbors of $x_j$ in $W$ with respect to the Bruhat order $\leq$. All of them are contained in ${}^{I_j}W$, i.e.\@ we have $\Gamma_{\emptyset}(x_j)\subseteq {}^{I_j}W$. From \Cref{cor-same-neighbors}, we deduce the statement of \Cref{prop-Gamma} in the case $n\leq j \leq 2n-1$.

\subsection{Lower neighbors of \texorpdfstring{$x_r$}{x r} in \texorpdfstring{${}^{I_j} W$}{I j W}}

Let $j,r$ be integers such that $0\leq j \leq n-1$ and $0\leq j \leq r < 2n-1-j$. We will need to extend the result of \Cref{prop-Gamma} and determine the set of lower neighbors of $x_r$ in the set ${}^{I_j}W$ with respect to the partial order ${}^{I_j}W$. Recall that we always have a containment ${}^{I_j} W \cap \Gamma_{\emptyset}(x_r) \subseteq \Gamma_{I_j}(x_r)$. By the assumption on $j,r$, we have $I_r\subseteq I_j$, hence ${}^{I_j}W\subseteq {}^{I_r}W$. We distinguish again the cases $r\leq n-1$ and $r>n$. 

\subsubsection{The case \texorpdfstring{$r\leq n-1$}{r less than n}}
For each $1\leq i \leq j$, note that the element $w_i^{(r)}\in {}^{I_r}W$ actually lies in the subset ${}^{I_j}W$. Therefore we obtain $j+1$ lower neighbors of $x_r$ in ${}^{I_j}W$, given by
\begin{equation} \label{list-low1}
    {}^{I_j} W \cap \Gamma_{\emptyset}(x_r)=\{x_{r+1}\}\cup \{w_i^{(r)} \ | \ 1\leq i \leq j\}.
\end{equation}
We will show in the next section that this set coincides with $\Gamma_{I_j}(x_r)$.

\subsubsection{The case \texorpdfstring{$r\geq n$}{of r at least n}}
For each $1\leq i \leq j$, the element $w_i^{(r)}\in {}^{I_r}W$ again lies in the subset ${}^{I_j}W$. The same is true for $i=2n-1-r$, in which case $w^{(r)}_{2n-1-r}=x_{r+1}$. Therefore we obtain the following lower neighbors of $x_r$:
\begin{equation} \label{list-low2}
    {}^{I_j} W \cap \Gamma_{\emptyset}(x_r)=\{x_{r+1}\}\cup \{w_i^{(r)} \ | \ 1\leq i \leq j\}.
\end{equation}
Note that we assumed $r < 2n-1-j$, hence $w^{(r)}_{2n-1-r}=x_{r+1}$ is not equal to $w^{(r)}_i$ for some $1\leq i \leq j$. Therefore we obtain exactly $j+1$ lower neighbors of $x_r$ in ${}^{I_j}W$ in the list \eqref{list-low2}. We will show in the next section that this set coincides with $\Gamma_{I_j}(x_r)$.

\subsection{Images of zip strata}

Let $r,j$ be integers such that $0\leq j \leq r < 2n-1-j$. We have $P_r\subseteq P_j$, hence there is a natural projection map
\begin{equation}
    \pi_{P_r,P_j}\colon \Fcal^{(P_r)}\to \Fcal^{(P_j)}.
\end{equation}
We want to understand the image of certain $\Zcal_{P_j}$-zip strata of $\Fcal^{(P_r)}$ inside $\Fcal^{(P_j)}$ via this mapping. 
Denote simply by $\Zcal_j$ the zip datum $\Zcal_{P_j}$ defined in \S\ref{sec-fine-stratif}, attached to $P_j$. Write $E_j$ for the attached zip group $E_{\Zcal_{P_j}}$. Recall that by definition of the \(\Zcal_j\)-zip stratification, the $E_j$-orbits on $G_k$ correspond bijectively to the \(\Zcal_j\)-zip strata of $\Fcal^{(P_j)}$.

In the case $r\leq n-1$, the lower neighbors of $x_r$ in ${}^{I_r}W$ are exactly $\{x_{r+1}\}\cup \{w_i^{(r)} \ | \ 1\leq i \leq r\}$, by \Cref{prop-Gamma} (applied to $j=r$). In the case $r \geq n$, the lower neighbors of $x_r$ in ${}^{I_r}W$ are exactly $\{w_i^{(r)} \ | \ 1\leq i \leq 2n-1-r\}$, by \Cref{prop-Gamma} (applied to $j=r$). In both cases, the elements which belong to ${}^{I_j}W$ are $x_{r+1}$ and $w_i^{(r)}$ for $1\leq i\leq j$. For such elements $w$, we have $\pi_{P_r,P_j}(\Fcal_w^{(P_r)})=\Fcal_w^{(P_j)}$. We now consider the remaining elements.

\begin{lemma}\label{lem-Ej-orbits}
For $i> j$ and for any $b\in B$, the element $b w_i^{(r)}z^{-1}$ lies in the same $E_{j}$-orbit as $x_{2n-i} z^{-1}$.
\end{lemma}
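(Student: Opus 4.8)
The plan is to reduce the claim to an explicit identity inside the Weyl group $W$ and then to absorb the left translation by $B$ using the internal structure of the zip group $E_j=E_{\Zcal_j}$.

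First I would record what $E_j$ allows one to do. Since $G$, hence the standard Levi $L_j$, is split over $\FF_p$ and $I_j=\Delta\setminus\{\alpha_1,\dots,\alpha_{j+1}\}$ is $\sigma$-stable, one has $M_j=L_j$, and $\varphi$ restricts to the identity on $W_{L_j}$ and on signed-permutation matrices; moreover $Q_j=L_j\,{}^zB$ and, as in the proof of \Cref{lem-PjQj}, $z$ normalizes $L_j$. Consequently: (i) $R_u(P_j)\times R_u(Q_j)\subseteq E_j$, so left multiplication by $R_u(P_j)$ and right multiplication by $R_u(Q_j)$ preserve $E_j$-orbits; (ii) for every $\ell\in W_{L_j}$ we have $(\ell,\ell)\in E_j$, and $(\ell,\ell)\cdot(gz^{-1})=\ell\,g\,(z^{-1}\ell^{-1}z)\,z^{-1}$ with $z^{-1}\ell^{-1}z\in W_{L_j}$; and (iii) for $b\in B$ with Levi part $\bar b\in B\cap L_j$ the pair $(b,\varphi(\bar b))$ lies in $E_j$, with $(b,\varphi(\bar b))\cdot(g\varphi(\bar b))=bg$. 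Writing $b=u\bar b$ with $u\in R_u(P_j)$ and $\bar b\in B\cap L_j$, point (i) reduces us to the case $b\in B\cap L_j$, and in particular to $b=1$; the residual $B\cap L_j$-translation is then handled at the end via (ii) and (iii), using that $B\cap L_j$ involves only the block coordinates $\{j+2,\dots,2n-j\}$ on which both $w_i^{(r)}z^{-1}$ and $x_{2n-i}z^{-1}$ are standard (identity or anti-diagonal) blocks.

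Second, I would make the permutation matrices explicit. From the formula for $x_r$ in \eqref{equation: the xjs}, the lower neighbor $w_i^{(r)}=\gamma_r(i,2n+1-r)$ (resp.\ $\gamma_r(i,2n-r)$ when $r\ge n$) equals $x_r s_{\gamma'}$, where by \Cref{prop-Ew} the root $\gamma'$ is one of $e_i-e_{2n+1-r}$, $e_i+e_r$, or $2e_i$ according to which of $\Ecal^1_{x_r},\Ecal^2_{x_r},\Ecal^3_{x_r}$ contains the pair; concretely $w_i^{(r)}$ is obtained from $x_r$ by relocating the nonzero entries in rows $i$ and $2n+2-i$ (and the two symmetric columns) while keeping $w(k)+w(2n+2-k)=2n+2$. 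Comparing the resulting matrix with that of $x_{2n-i}$, also read off from \eqref{equation: the xjs}, I would exhibit an explicit signed permutation $\ell_1$ of the block coordinates $\{j+2,\dots,2n-j\}$ — so $\ell_1\in W_{L_j}$ — with $\ell_1\,x_{2n-i}\,(z^{-1}\ell_1^{-1}z)=w_i^{(r)}$, possibly after a further left/right multiplication by unipotent elements of $R_u(P_j)$, $R_u(Q_j)$. By (i) and (ii) this places $w_i^{(r)}z^{-1}$ in the $E_j$-orbit of $x_{2n-i}z^{-1}$, and then by the reduction of the first paragraph the same holds for $b\,w_i^{(r)}z^{-1}$ for every $b\in B$.

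The hard part is the combinatorial bookkeeping in the second step: handling the three shapes of $\gamma'$ at once, carrying the orthogonal symmetry correctly through both $s_{\gamma'}$ and conjugation by $z$, and pinning down the target index as precisely $2n-i$ — which amounts to locating the surviving nonzero entry of the first row of $w_i^{(r)}z^{-1}$ and matching it with that of $x_{2n-i}z^{-1}$, together with the length check $\ell(x_{2n-i})=i-1$. The ranges $r\le n-1$ and $r\ge n$ differ only in whether the reflecting root involves $2n+1-r$ or $2n-r$ and are treated in parallel; I expect the computation to be uniform in $i>j$ once the matrices are written down.
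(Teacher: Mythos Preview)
Your approach has a genuine gap in the second step. Since $z=w_{0,I}w_0$ is the transposition $(1,2n+1)$ and every $\ell_1\in W_{L_j}$ moves only the coordinates $j+2,\ldots,2n-j$, the element $z$ centralizes $W_{L_j}$; moreover $\varphi(\ell_1)=\ell_1$ for signed permutation matrices. Hence your identity $\ell_1\,x_{2n-i}\,(z^{-1}\ell_1^{-1}z)=w_i^{(r)}$ collapses to ordinary conjugation $\ell_1\,x_{2n-i}\,\ell_1^{-1}=w_i^{(r)}$. But in the block description $x_{2n-i}=\diag(K_i,\mathbb{1}_{2(n-i)+1},{}^tK_i)$ while $w_i^{(r)}=\diag(K_i,A,{}^tK_i)$ with $A$ a nontrivial signed permutation of the middle block whenever $r\neq 2n-i$ (indeed $\ell(w_i^{(r)})=2n-2-r>i-1=\ell(x_{2n-i})$ for the ranges in question), so the two elements have different cycle types in $\Sfr_{2n+1}$ and are not conjugate at all. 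Multiplying by elements of $R_u(P_j)\times R_u(Q_j)$ does not rescue this: if two Weyl representatives lie in the same $R_u(P_j)\times R_u(Q_j)$-orbit, they already agree (compare the Bruhat decomposition), so ``up to unipotents'' buys nothing here.

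What is missing is precisely the use of Lang's theorem, which is the core of the paper's argument. The paper takes $y=\diag(\mathbb{1}_i,Y,\mathbb{1}_i)\in L_j$ with $Y$ a \emph{general} element of $\SO_{2(n-i)+1}(k)$, not a signed permutation, and uses surjectivity of $Y\mapsto Y\varphi(Y)^{-1}$ to match the middle block of $bw_i^{(r)}$; the pair $(y,\varphi(y))\in E_j$ then carries $b'x_{2n-i}z^{-1}$ to $bw_i^{(r)}z^{-1}$ for a suitable $b'\in B$, and one concludes via the known fact that $Bx_{2n-i}Bz^{-1}$ lies in a single $E_j$-orbit (since $x_{2n-i}\in{}^{I_j}W$). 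Your moves (i)--(iii) only produce elements of $E_j$ whose Levi component is in $N_{L_j}(T)$ or in $B\cap L_j$, never a general $y\in L_j$, and that is not enough to absorb the nontrivial middle block $A$.
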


\begin{proof}
The matrix of $w_i^{(r)}$ has the form
\begin{equation}\label{wi-shape}
 w_i^{(r)}= \left(
\begin{matrix}
    K_i && \\
    &A& \\
    && {}^t K_i
\end{matrix}
    \right)
\end{equation}
for a certain invertible matrix $A$ of size $2(n-i)+1$. We want to show that for any $b\in B$, the element $bw_i^{(r)}z^{-1}$ lies in the $E_{j}$-orbit of $x_{2n-i}z^{-1}$. Any element of the form $b'x_{2n-i}z^{-1}$, for $b'\in B$, lies in the same $E_j$-orbit as $x_{2n-i}z^{-1}$ using $\pi_{B,P_j}(\Fcal_w^{(B)})=\Fcal_w^{(P_j)}$ for $w=x_{2n-i}\in {}^{I_j}W$. Hence, it is enough to show that for any $b\in B$, there exists $y\in L_j$ and $b'\in B$ such that 
\begin{equation}\label{find-y}
bw^{(r)}_iz^{-1} = y b'x_{2n-i}z^{-1} \varphi(y)^{-1}.    
\end{equation}
We will choose $y$ in the form
\begin{equation}\label{eq-y}
y= \left(
\begin{matrix}
    \mathbb{1}_i && \\
    &Y& \\
    && \mathbb{1}_i
\end{matrix}
    \right)
\end{equation}
for $Y$ invertible of size $2(n-i)+1$ which is orthogonal with respect to the matrix $J_{n-i}$ defined in \eqref{Jmat}. Since $\varphi(y)$ commutes with $z$, equation \eqref{find-y} amounts to $bw^{(r)}_i = y b'x_{2n-i} \varphi(y)^{-1}$. Consider the central block of size $(2(n-i)+1)\times (2(n-i)+1)$ in the matrix $bw^{(r)}_i$. Since the Lang torsor map is surjective, we may write this central block as $Y\varphi(Y)^{-1}$ for some orthogonal invertible matrix $Y$ of size $2(n-i)+1$. Define $y$ as in \eqref{eq-y} for this choice of $Y$. Set $b'\colonequals y^{-1}bw^{(r)}_i \varphi(y) x_{2n-i}^{-1}$. It suffices to show that $b'$ lies in $B$. The central block of size $(2(n-i)+1)\times (2(n-i)+1)$ in $b'$ is the identity matrix of size $2(n-i)+1$. Moreover, the upper left and lower right $i\times i$-blocks in $x_{2n-i}$ and $w^{(r)}_i$ coincide, hence we see also that the corresponding blocks in $b'$ are lower triangular. Hence $b'\in B$ and the result follows.
\end{proof}

The following is a direct and more explicit reformulation of \Cref{lem-Ej-orbits} in terms of \(\Zcal_j\)-zip strata. Let $i$ be an integer such that $1\leq i\leq r$ if $r< n$, or such that $1\leq i\leq 2n-1-r$ if $r\geq n$.

\begin{proposition} \label{prop-imag-strata-rj}
Set $w=w_i^{(r)}$.
\begin{enumerate}
    \item If $i \leq j$, then $w$ lies in ${}^{I_j}W$ and we have $\pi_{P_r,P_j}(\Fcal_w^{(P_r)})=\Fcal_w^{(P_j)}$.
    \item If $i>j$, we have $\pi_{P_r,P_j}(\Fcal_w^{(P_r)}) = \Fcal^{(P_j)}_{x_{2n-i}}$.
\end{enumerate}
\end{proposition}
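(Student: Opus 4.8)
The plan is to deduce both statements from the structural behaviour of the flag-space projections together with \Cref{lem-Ej-orbits}, which already performs the one nontrivial computation. Throughout I will use the identifications $\Fcal^{(P_0)}\simeq[E'_{P_0}\backslash G_k]$, under which the $\Zcal_{P_0}$-zip stratum attached to $v\in{}^{I_0}W$ is $\Fcal^{(P_0)}_v=[E'_{P_0}\backslash C_{P_0,v}]$ with $C_{P_0,v}=E_{\Zcal_{P_0}}\cdot(vz^{-1})$, the map $\pi_{P_r,P_j}$ becomes the quotient map induced by $E'_{P_r}\subseteq E'_{P_j}$, and $\pi_{B,P_j}=\pi_{P_r,P_j}\circ\pi_{B,P_r}$. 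The key point I will lean on is that, since $x_{2n-i}\in{}^IW$, \Cref{prop-flag} identifies $C_{P_j,x_{2n-i}}$ with the a priori smaller $E'_{P_j}$-orbit of $x_{2n-i}z^{-1}$, so that $\Fcal^{(P_j)}_{x_{2n-i}}$ is represented by a single $E'_{P_j}$-orbit.

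For part (1), if $i\le j$ then $w=w_i^{(r)}$ lies in ${}^{I_j}W$ (recorded above). For any $v\in{}^{I_j}W\subseteq{}^{I_r}W$, I would invoke the generalization of \cite[Proposition 3.2.2]{Goldring-Koskivirta-zip-flags} discussed after \Cref{intermediate-rmk}, which gives $\pi_{B,P_r}(\Fcal^{(B)}_v)=\Fcal^{(P_r)}_v$ and $\pi_{B,P_j}(\Fcal^{(B)}_v)=\Fcal^{(P_j)}_v$; pushing the first equality forward along $\pi_{P_r,P_j}$ and using the factorization of $\pi_{B,P_j}$ yields $\pi_{P_r,P_j}(\Fcal^{(P_r)}_v)=\Fcal^{(P_j)}_v$. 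Taking $v=w$ settles part (1).

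For part (2), assume $i>j$ and set $w=w_i^{(r)}$. As above, $\pi_{P_r,P_j}(\Fcal^{(P_r)}_w)=\pi_{B,P_j}(\Fcal^{(B)}_w)$, and since $P_0=B$ the stratum $\Fcal^{(B)}_w$ is the Bruhat stratum $[E'_B\backslash BwBz^{-1}]$, with $BwBz^{-1}=B(wz^{-1})({}^{z}B)$. Hence $\pi_{P_r,P_j}(\Fcal^{(P_r)}_w)=[E'_{P_j}\backslash(E'_{P_j}\cdot B(wz^{-1})({}^{z}B))]$ and the whole assertion reduces to the set equality $E'_{P_j}\cdot B(wz^{-1})({}^{z}B)=C_{P_j,x_{2n-i}}$. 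For the inclusion ``$\supseteq$'': $wz^{-1}\in B(wz^{-1})({}^{z}B)$, and by \Cref{lem-Ej-orbits} with $b=1$ the element $wz^{-1}$ lies in the $E_j$-orbit of $x_{2n-i}z^{-1}$; this orbit equals $C_{P_j,x_{2n-i}}$ and, by \Cref{prop-flag}, is a single $E'_{P_j}$-orbit, hence equals $E'_{P_j}\cdot(wz^{-1})$, which is contained in the left-hand side. For ``$\subseteq$'' it suffices, the right-hand side being $E_j$-stable and containing $E'_{P_j}$, to prove $B(wz^{-1})({}^{z}B)\subseteq E_j\cdot(x_{2n-i}z^{-1})$: the left $B$-factor is handled directly by \Cref{lem-Ej-orbits}, and the residual right $({}^{z}B)$-factor is absorbed using the frame identity ${}^{z}B\cap M_j=B\cap M_j$, which rewrites each $b_1(wz^{-1})({}^{z}b_2)$ as an $E_j$-translate of $b_1(wz^{-1})$.

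The step I expect to be the main obstacle is exactly this last bookkeeping: checking that the \emph{entire} Bruhat cell $\Fcal^{(B)}_w$, not merely its generic point, is carried into the \emph{single} $\Zcal_j$-zip stratum $\Fcal^{(P_j)}_{x_{2n-i}}$. Because $BwBz^{-1}$ is not a single $E'_{P_j}$-orbit, ``the image meets $\Fcal^{(P_j)}_{x_{2n-i}}$'' does not by itself give ``the image equals $\Fcal^{(P_j)}_{x_{2n-i}}$''; this is precisely why \Cref{lem-Ej-orbits} is stated for all $b\in B$. Its proof writes $b(wz^{-1})=yb'x_{2n-i}z^{-1}\varphi(y)^{-1}$ with $y$ block-diagonal in $L_j$ and $b'\in B$, and the same matrix manipulation, combined with the frame relation, also disposes of the extra $({}^{z}B)$-factor on the right. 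Granting this, all remaining identifications are formal quotient-stack manipulations and the proposition follows.
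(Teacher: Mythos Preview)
Your argument is correct and matches the paper's intent: the proposition is stated there as ``a direct and more explicit reformulation of \Cref{lem-Ej-orbits}'', and what you have written is exactly that unpacking, including the key observation that $C_{P_j,x_{2n-i}}$ is a single $E'_{P_j}$-orbit (via \Cref{prop-flag}\eqref{prop-flag-3}, since $x_{2n-i}\in{}^IW$), so that containing $wz^{-1}$ already forces the ``$\supseteq$'' direction. One small imprecision: the frame identity alone does not rewrite $b_1(wz^{-1})({}^{z}b_2)$ as an $E_j$-translate of \emph{the same} $b_1(wz^{-1})$; rather, decomposing ${}^{z}b_2=m_2u_2$ with $m_2\in{}^{z}B\cap M_j=B\cap M_j$ and $u_2\in R_u(Q_j)$, and writing $m_2=\varphi(l_2)$ for some $l_2\in B\cap L_j$, one gets an $E_j$-translate of $(l_2b_1)(wz^{-1})$ --- still an element of $Bwz^{-1}$, so the full strength of \Cref{lem-Ej-orbits} (for all $b\in B$) then applies exactly as you say.
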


We consider the case $j=0$. Since $P_0=P$, we have $\Fcal^{(P_0)}=\Xcal$. We deduce:
\begin{corollary}\label{cor-imag}
Let $1\leq i \leq r$ and set $w=w^{(r)}_i$. The image of $\Fcal_{w}^{(P_r)}$ by the map $\pi_{P_r}\colon \Fcal^{(P_r)}\to \Xcal$ is exactly $\Xcal_{2n-i}$.
\end{corollary}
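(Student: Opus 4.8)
The plan is to obtain \Cref{cor-imag} as the special case $j=0$ of \Cref{prop-imag-strata-rj}. First I would record the identifications that make this specialization meaningful. By \Cref{prop-can-orth} the type $I_0$ of the canonical parabolic $P_0 = P_{x_0}$ is $\Delta\setminus\{\alpha_1\} = I$, so $P_0 = P$; hence $P/P_0$ is a point, the flag space $\Fcal^{(P_0)} = \Fcal^{(P)}$ is canonically $\Xcal$, the zip datum $\Zcal_{P_0}$ coincides with $\Zcal$ (the frame axiom $B\cap M = {}^zB\cap M$ gives $Q_0 = M\, {}^zB = Q$), the $\Zcal_{P_0}$-zip stratification on $\Fcal^{(P_0)}$ is the zip stratification on $\Xcal$, and for each $w\in{}^IW$ the stratum $\Fcal^{(P_0)}_w$ is the zip stratum $\Xcal_w$. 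Under these identifications the projection $\pi_{P_r,P_0}\colon \Fcal^{(P_r)}\to\Fcal^{(P_0)}$ becomes $\pi_{P_r}\colon\Fcal^{(P_r)}\to\Xcal$.

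With this in hand, apply \Cref{prop-imag-strata-rj} with $j=0$ and $w = w_i^{(r)}$: since $i\geq 1 > 0 = j$, only case (2) of that proposition occurs, yielding $\pi_{P_r,P_0}(\Fcal^{(P_r)}_w) = \Fcal^{(P_0)}_{x_{2n-i}}$, which translates through the identifications above to $\pi_{P_r}(\Fcal^{(P_r)}_w) = \Xcal_{x_{2n-i}} = \Xcal_{2n-i}$, exactly the claim. The underlying content is entirely carried by \Cref{lem-Ej-orbits} (specialized to $j=0$, where the zip group $E_0$ is simply $E$), so there is no genuine obstacle; the only point requiring care is the index bookkeeping — one checks that $x_{2n-i}$ is a legitimate element of ${}^IW = \{x_0,\dots,x_{2n-1}\}$, which holds because $1\leq i\leq r < 2n-1$ forces $2\leq 2n-i\leq 2n-1$.
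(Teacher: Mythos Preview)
Your proof is correct and follows exactly the paper's approach: the paper introduces the corollary with the sentence ``We consider the case $j=0$. Since $P_0=P$, we have $\Fcal^{(P_0)}=\Xcal$. We deduce:'' and states \Cref{cor-imag} as the immediate specialization of \Cref{prop-imag-strata-rj}. Your version simply spells out the identifications $P_0=P$, $\Fcal^{(P_0)}=\Xcal$, $\pi_{P_r,P_0}=\pi_{P_r}$ in more detail than the paper does, which is fine.
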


We can finally determine the lower neighbors of $x_r$ in ${}^{I_j}W$. We show the sets \eqref{list-low1} and \eqref{list-low2} coincide with the set $\Gamma_{I_j}(x_r)$ in each case. We know that $\Gamma_{I_j}(x_r)\subseteq \pi_{P_r,P_j}(\Gamma_{I_r}(x_r))$. By \Cref{prop-imag-strata-rj}, the elements corresponding to $i>j$ do not provide any new lower neighbors of $x_r$. We have shown:

\begin{proposition}\label{prop-low-xr}
$0\leq j \leq r < 2n-1-j$. The set $\Gamma_{I_j}(x_r)$ coincides with ${}^{I_j}W\cap \Gamma_{\emptyset}(x_r)$ and consists of exactly $j+1$ elements.
\end{proposition}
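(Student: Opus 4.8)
The plan is to squeeze $\Gamma_{I_j}(x_r)$ between ${}^{I_j}W\cap\Gamma_\emptyset(x_r)$ on both sides and then read off the cardinality from the explicit lists \eqref{list-low1} and \eqref{list-low2}; this simultaneously discharges the promise made right after those displays.

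The inclusion ${}^{I_j}W\cap\Gamma_\emptyset(x_r)\subseteq\Gamma_{I_j}(x_r)$ is immediate from \Cref{lemma-low-nei}\eqref{lemma-low-nei-item1} applied with $\emptyset\subseteq I_j$ and $w=x_r$ (note $x_r\in{}^{I_j}W$ because $I_r\subseteq I_j$, which holds since $j\leq r<2n-1-j$ by \Cref{prop-can-orth}). For the reverse inclusion I would use the two-parabolic version of \Cref{lemma-low-nei}\eqref{lemma-low-nei-item2} for $B\subseteq P_r\subseteq P_j\subseteq P$, giving $\Gamma_{I_j}(x_r)\subseteq\pi_{P_r,P_j}(\Gamma_{I_r}(x_r))$. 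By \Cref{prop-Gamma} we have $\Gamma_{I_r}(x_r)={}^{I_r}W\cap\Gamma_\emptyset(x_r)$, and the computations around \eqref{wir-eq} and \eqref{wir-eq2} identify this set as $\{x_{r+1}\}\cup\{w_i^{(r)}\mid 1\leq i\leq r\}$ when $r\leq n-1$, and as $\{w_i^{(r)}\mid 1\leq i\leq 2n-1-r\}$, with $x_{r+1}=w_{2n-1-r}^{(r)}$, when $r\geq n$.

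Then I would apply $\pi_{P_r,P_j}$ termwise via \Cref{prop-imag-strata-rj}: each $w_i^{(r)}$ with $i\leq j$ is fixed and lies in ${}^{I_j}W\cap\Gamma_\emptyset(x_r)$, and $x_{r+1}$ is fixed and lies there too (in the case $r\geq n$ one has $2n-1-r>j$ by the hypothesis $r<2n-1-j$, so $x_{r+1}$ is distinct from the $w_i^{(r)}$ with $i\leq j$), while every $w_i^{(r)}$ with $i>j$ is sent to $x_{2n-i}$. The one numerical check is the length comparison: $\ell(x_{2n-i})=i-1$ whereas any member of $\Gamma_{I_j}(x_r)$ has length $\ell(x_r)-1=2n-2-r$, so $x_{2n-i}\in\Gamma_{I_j}(x_r)$ would force $i=2n-1-r$. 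If $r\leq n-1$ and $1\leq i\leq r$ this is impossible because $i\leq r<2n-1-r$; if $r\geq n$ the only possibility is $i=2n-1-r$, but then $w_i^{(r)}=x_{r+1}$, whose image $x_{r+1}$ is already in the list. Hence $\pi_{P_r,P_j}(\Gamma_{I_r}(x_r))\cap\Gamma_{I_j}(x_r)\subseteq\{x_{r+1}\}\cup\{w_i^{(r)}\mid 1\leq i\leq j\}$, which gives the reverse inclusion. Combining with \eqref{list-low1} and \eqref{list-low2} yields $\Gamma_{I_j}(x_r)={}^{I_j}W\cap\Gamma_\emptyset(x_r)=\{x_{r+1}\}\cup\{w_i^{(r)}\mid 1\leq i\leq j\}$, and this set has exactly $j+1$ elements because $\gamma_r$ is a bijection, so the $w_i^{(r)}$ are pairwise distinct and distinct from $x_{r+1}$ (as already observed around \eqref{list-low2}).

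I do not anticipate a serious obstacle: the argument is a bookkeeping assembly of \Cref{prop-Gamma}, \Cref{prop-imag-strata-rj} and \Cref{lemma-low-nei}. The only step requiring genuine care is ruling out the spurious images $x_{2n-i}$ with $i>j$, and this is exactly the point where the hypothesis $r<2n-1-j$ is used — it guarantees that $x_{r+1}$, rather than some strictly shorter $x_{2n-i}$, is the relevant image.
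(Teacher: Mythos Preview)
Your argument is correct and follows essentially the same route as the paper: apply the two-parabolic version of \Cref{lemma-low-nei}\eqref{lemma-low-nei-item2} to get $\Gamma_{I_j}(x_r)\subseteq\pi_{P_r,P_j}(\Gamma_{I_r}(x_r))$, identify $\Gamma_{I_r}(x_r)$ via \Cref{prop-Gamma}, and use \Cref{prop-imag-strata-rj} to see that the images $x_{2n-i}$ for $i>j$ have the wrong length. The paper compresses the length comparison into the single phrase ``do not provide any new lower neighbors,'' whereas you spell it out explicitly; one small imprecision is that $x_r\in{}^{I_j}W$ follows from $I_j\subseteq I$ (hence ${}^IW\subseteq{}^{I_j}W$) rather than from $I_r\subseteq I_j$, but this does not affect the argument.
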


\subsection{Canonical cover}
Let $0\leq j \leq n-1$. Define a subset 
\begin{equation}
    \Ycal_j\colonequals \bigcup_{i=j}^{2n-1-j} \Xcal_i
\end{equation}
Hence $\Ycal_0=\Xcal$ and $\Ycal_{n-1}=\Xcal_{n-1}\cup \Xcal_n$. Since the zip stratification of $\Xcal$ is linear, it is clear that $\Ycal_j$ is locally closed in $\Xcal$. It is a $x_j$-open substack, using the terminology introduced in \S\ref{can-cov-sec}. 
Recall that $\pi_{P_j}$ induces a proper, birational map $\pi_{P_j}\colon \overline{\Fcal}_{x_j}^{(P_j)}\to \overline{\Xcal}_j$. The subset $\Ucal^{(P_0)}$ defined in \eqref{UP0} for $\Ucal=\Ycal_j$ is the set $\Ycal^{(P_j)}_j\colonequals \bigcup_{i=j}^{2n-1-j} \Fcal^{(P_j)}_{x_i}$. Clearly, $\Ycal^{(P_j)}_j$ is contained in $\overline{\Fcal}_{x_j}^{(P_j)}$.

\begin{proposition} \label{propYi}
The $x_j$-open subset $\Ycal_j$ admits a separating canonical cover. Specifically, $\Ycal^{(P_j)}_j$ is open in $\overline{\Fcal}_{x_j}^{(P_j)}$ and 
\begin{equation}
\Ycal^{(P_j)}_j = \overline{\Fcal}_{x_j}^{(P_j)} \cap \pi_{P_j}^{-1}(\Ycal_j).    
\end{equation}
\end{proposition}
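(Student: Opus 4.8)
The plan is to verify the two explicit conditions characterizing a separating canonical cover, using the combinatorial control over lower neighbors established in \Cref{prop-low-xr}. Recall from \S\ref{can-cov-sec} that $\Ycal_j$ admits a $P_j$-cover precisely when, viewing $\Gamma_{\Ycal_j}=\{x_j,x_{j+1},\dots,x_{2n-1-j}\}$ as a subset of ${}^{I_j}W$, we have (1) $v\preccurlyeq_{I_j} x_j$ for all $v\in\Gamma_{\Ycal_j}$, and (2) the interval-closure condition: if $w''\preccurlyeq_{I_j}w'\preccurlyeq_{I_j}x_j$ with $w''\in\Gamma_{\Ycal_j}$, then $w'\in\Gamma_{\Ycal_j}$. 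Condition (1) I would establish by descending induction on $i$ from $j$ to $2n-1-j$: by \Cref{prop-low-xr} applied with $r$ in place of $j$ (legitimate since $j\le r<2n-1-j$ keeps us in range), the set of lower neighbors $\Gamma_{I_j}(x_r)$ equals ${}^{I_j}W\cap\Gamma_{\emptyset}(x_r)$, which always contains $x_{r+1}$; hence $x_{r+1}\preccurlyeq_{I_j}x_r\preccurlyeq_{I_j}\cdots\preccurlyeq_{I_j}x_j$, giving the chain. For the interval-closure condition (2), the key point is that \Cref{prop-low-xr} tells us $\Gamma_{I_j}(x_r)$ consists of exactly $j+1$ elements, only one of which — namely $x_{r+1}$ — lies in $\Gamma_{\Ycal_j}$; the other $j$ elements are the $w_i^{(r)}$, $1\le i\le j$, which are genuinely new (not among the $x_i$). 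Thus the only way to move down from $x_r$ \emph{within} $\Gamma_{\Ycal_j}$ is via $x_{r+1}$, and any $w'$ sandwiched between some $x_i\in\Gamma_{\Ycal_j}$ and $x_j$ must itself be one of the $x_\ell$ by the chain property of $\preccurlyeq_{I_j}$; this shows $\Ycal^{(P_j)}_j$ is open in $\overline{\Fcal}^{(P_j)}_{x_j}$.

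For the separating property I must check, following \S\ref{can-cov-sec}, that for every $w'\in{}^{I_j}W$ with $w'\preccurlyeq_{I_j}x_j$ one has the implication $\pi_{P_j}(w')\in\Gamma_{\Ycal_j}\Rightarrow w'\in\Gamma_{\Ycal_j}$, where $\pi_{P_j}\colon{}^{I_j}W\to{}^IW$ is the map from \Cref{subsec-lownb}. Equivalently, one must show that no $\Zcal_{P_j}$-zip stratum $\Fcal^{(P_j)}_{w'}$ with $w'\notin\Gamma_{\Ycal_j}$ maps under $\pi_{P_j}$ into $\Ycal_j$. This is exactly what \Cref{prop-imag-strata-rj}(2) and its consequence \Cref{cor-imag} control: for $w'=w_i^{(r)}$ with $i>j$ (these being, by \Cref{prop-low-xr}, precisely the lower neighbors of the various $x_r$ that fall outside $\Gamma_{\Ycal_j}$), the image $\pi_{P_j}(\Fcal^{(P_j)}_{w'})$ is $\Fcal^{(P_j)}_{x_{2n-i}}$, and tracking this down to $\Xcal$ via $\pi_{P_j}$ and \Cref{cor-imag} one gets image $\Xcal_{2n-i}$. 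Since $i>j$ forces $2n-i<2n-j$, one needs to rule out the possibility $2n-i\in\{j,\dots,2n-1-j\}$, i.e. $2n-i\ge j$; here one uses that when $w_i^{(r)}$ appears as an \emph{exceptional} lower neighbor — equivalently, when it does \emph{not} lie in ${}^{I_j}W$ — the relevant range of $i$ combined with the constraint $r<2n-1-j$ pushes $2n-i$ strictly above $2n-1-j$, so its image misses $\Ycal_j$. Carefully enumerating these cases for $r\le n-1$ and $r\ge n$ separately (as the lists \eqref{list-low1} and \eqref{list-low2} are organized) completes the verification.

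The main obstacle I anticipate is bookkeeping in the separating-cover step: one must simultaneously keep straight (a) which $w_i^{(r)}$ lie in ${}^{I_j}W$ versus which are exceptional, (b) which are in range to be lower neighbors at all (the constraints $1\le i\le r$ for $r<n$ versus $1\le i\le 2n-1-r$ for $r\ge n$), and (c) where their $\pi_{P_j}$-images land. The content is entirely in \Cref{prop-imag-strata-rj} and \Cref{prop-low-xr}, which were proved precisely to make this computation mechanical, so the remaining work is organizational rather than conceptual; the cleanest exposition is probably to reduce the separating condition to the single assertion that for $i>j$ one has $2n-i\notin\{j,\dots,2n-1-j\}$ whenever $w_i^{(r)}$ is an exceptional lower neighbor of some $x_r$ with $j\le r<2n-1-j$, and then verify this inequality directly. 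I would also remark that openness of $\Ycal^{(P_j)}_j$ in $\overline{\Fcal}^{(P_j)}_{x_j}$ together with the Cartesian square of \eqref{diag-sep-cover} are formal once conditions (1), (2) and the separating implication are in hand, so no further argument is needed there.
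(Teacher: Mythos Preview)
Your proposal has the right ingredients but the core computation is inverted, and there is a genuine structural gap.

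\textbf{The index inversion.} By \Cref{prop-low-xr}, the lower neighbors of $x_r$ in ${}^{I_j}W$ are exactly $\{x_{r+1}\}\cup\{w_i^{(r)}:1\le i\le j\}$. So the elements of $\Gamma_{I_j}(x_r)$ lying \emph{outside} $\Gamma_{\Ycal_j}$ are the $w_i^{(r)}$ with $1\le i\le j$, not with $i>j$; the latter are not even in ${}^{I_j}W$ and hence play no role in the separating condition (which only concerns $w'\in{}^{I_j}W$ with $w'\preccurlyeq_{I_j}x_j$). Correspondingly, the inequality goes the other way: for $1\le i\le j$ one has $2n-i\ge 2n-j$, so $\Xcal_{2n-i}\subseteq\overline{\Xcal}_{2n-j}$, which is precisely the \emph{complement} of $\Ycal_j$ in $\overline{\Xcal}_j$. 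Your attempt to push $2n-i$ ``strictly above $2n-1-j$'' from the hypothesis $i>j$ cannot succeed, since $i>j$ gives $2n-i<2n-j$.

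\textbf{The missing induction.} Even after fixing the indices, checking only the lower neighbors of the various $x_r$ is not enough: the separating condition must hold for \emph{every} $w'\preccurlyeq_{I_j}x_j$, including those that arise several steps below the chain $x_j,x_{j+1},\dots$ (e.g.\ lower neighbors of the $w_i^{(r)}$ themselves). The paper handles this by an induction on the length $\ell(x_r)$: given $y\preccurlyeq_{I_j}x_j$ of length $\ell(x_{r+1})$ with $y\neq x_{r+1}$, the chain property furnishes $y'\preccurlyeq_{I_j}x_j$ of length $\ell(x_r)$ with $y\preccurlyeq_{I_j}y'$; if $y'\neq x_r$ the induction hypothesis gives $\pi_{P_j}(\Fcal^{(P_j)}_{y'})\subseteq\overline{\Xcal}_{2n-j}$, hence the same for $\Fcal^{(P_j)}_{y}\subseteq\overline{\Fcal}^{(P_j)}_{y'}$ by properness; if $y'=x_r$ then $y=w_i^{(r)}$ with $1\le i\le j$ and one invokes \Cref{cor-imag}. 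Your ``interval-closure'' argument in Part~1 suffers from the same defect: the chain property produces \emph{some} chain, it does not force every intermediate $w'$ to be an $x_\ell$. In fact you need not argue openness separately at all---once the equality $\Ycal_j^{(P_j)}=\overline{\Fcal}_{x_j}^{(P_j)}\cap\pi_{P_j}^{-1}(\Ycal_j)$ is proved, openness follows because the right-hand side is open.
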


\begin{proof}
We clearly have $\Ycal^{(P_j)}_j \subseteq  \overline{\Fcal}_{x_j}^{(P_j)} \cap \pi_{P_j}^{-1}(\Ycal_j)$. We will show that this inclusion is an equality. We show by induction on $r\in \{j, \dots, 2n-j-1\}$ the following statement: if $w\in {}^{I_j}W$ with $w\preccurlyeq x_j$ is an element of length $\ell(x_r)$ different from $x_r$, then the image of $\Fcal^{(P_j)}_w$ by $\pi_{P_j}$ is contained in $\overline{\Xcal}_{2n-1-j}$. For $r=j$ there is nothing to prove. Assume it is true for some $r$, and consider an element $y\in {}^{I_j}W$ of length $\ell(x_{r+1})$ such that $y \preccurlyeq_{I_j} x_j$ and $y\neq x_{r+1}$. Using the chain property of the twisted order (\S\ref{subsubsec: closure relations}), we can find an element $y'\in {}^{I_j}W$ of length $\ell(x_r)$ such that $y\preccurlyeq_{I_j} y' \preccurlyeq_{I_j} x_j$. If $y'\neq x_r$, then the induction hypothesis implies that the image of $\Fcal^{(P_j)}_{y'}$ by $\pi_{P_j}$ is contained in $\overline{\Xcal}_{2n-1-j}$. Therefore the same holds for the element $y$. Otherwise $y'=x_r$ and hence $y$ is a lower neighbor of $x_r$ in ${}^{I_j}W$. By \Cref{prop-low-xr}, we have $y=w_i^{(r)}$ for some $i$. By \Cref{cor-imag}, the image of $\Fcal^{(P_j)}_y$ in $\Xcal$ is $\Xcal_{2n-i}$. Since $2n-i\geq 2n-j$, we have $\Xcal_{2n-i}\subseteq \overline{\Xcal}_{2n-j}$. This terminates the proof.
\end{proof}

We deduce the following.
\begin{theorem}\label{theorem yj is normal} For any $0\leq j\leq n-1$, the map 
\begin{equation}\label{map-isom-j}
    \pi_{P_j}\colon \Ycal^{(P_j)}_j \to \Ycal_j
\end{equation} is an isomorphism. In particular, $\Ycal_j$ is normal.
\end{theorem}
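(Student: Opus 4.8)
The plan is to deduce Theorem \ref{theorem yj is normal} directly from Theorem \ref{thm-extnorm} by checking that the $x_j$-open substack $\Ycal_j$ satisfies conditions \eqref{thm-extnorm3}, namely that it admits a separating canonical cover and is $x_j$-bounded. The first of these is precisely \Cref{propYi}, so the only remaining work is $x_j$-boundedness: for every $w'\in \Gamma_{\Ycal_j}$ we must verify $P_{w'}\subseteq P_{x_j}$. By the parametrization, $\Gamma_{\Ycal_j} = \{x_i \mid j\le i \le 2n-1-j\}$, so the claim is that $P_{x_i}\subseteq P_{x_j}$ for all $i$ in that range.

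First I would record that $P_{x_j}$ has type $I_j = \Delta\setminus\{\alpha_1,\dots,\alpha_{j+1}\}$ by \Cref{prop-can-orth}. For $i$ with $j\le i\le n-1$, \Cref{prop-can-orth} gives $I_i = \Delta\setminus\{\alpha_1,\dots,\alpha_{i+1}\}\subseteq I_j$ since $i+1\ge j+1$; and for $n\le i\le 2n-1-j$, writing $i = 2n-1-j'$ with $j\le j'\le n-1$, the same proposition gives $I_i = I_{2n-1-j'} = \Delta\setminus\{\alpha_1,\dots,\alpha_{j'+1}\}\subseteq I_j$. So in every case $I_i\subseteq I_j$, which is exactly the inclusion $P_{x_i}\subseteq P_{x_j}$ of standard parabolics. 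Hence $\Ycal_j$ is $x_j$-bounded. Applying \Cref{thm-extnorm} (equivalence of \eqref{thm-extnorm3} and \eqref{thm-extnorm5}), the map $\pi_{P_j}\colon \Ycal_j^{(P_j)}\to \Ycal_j$ is an isomorphism, and by the last sentence of that theorem $\Ycal_j$ is Cohen--Macaulay; in particular it is normal.

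Alternatively, and perhaps more in the spirit of the section, one can argue directly with \Cref{bounded-implies-normal}: having established the separating canonical cover (\Cref{propYi}) and $x_j$-boundedness (the root-combinatorial computation above), \Cref{bounded-implies-normal} immediately yields that $\pi_{P_j}\colon \Ycal_j^{(P_j)}\to \Ycal_j$ is an isomorphism and $\Ycal_j$ is normal and Cohen--Macaulay. I would present the proof this way to keep the dependencies minimal. I expect no genuine obstacle here: the separating cover is the substantive input and it is already proven, while $x_j$-boundedness reduces to the trivial observation $I_i\subseteq I_j$ for $i$ in the relevant range, read off from \Cref{prop-can-orth}. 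The only mild care needed is handling the two halves of the range $j\le i\le 2n-1-j$ (namely $i\le n-1$ and $i\ge n$) uniformly, but both are covered by the symmetric formula $I_i = I_{2n-1-i}$ of \Cref{prop-can-orth}.
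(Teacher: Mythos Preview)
Your proposal is correct and follows essentially the same approach as the paper: invoke \Cref{propYi} for the separating canonical cover, use \Cref{prop-can-orth} to verify $x_j$-boundedness, and conclude via \Cref{bounded-implies-normal}. One small expository slip: in your first version you write ``Cohen--Macaulay; in particular it is normal,'' but Cohen--Macaulayness does not imply normality in general---normality here comes from the isomorphism with $\Ycal_j^{(P_j)}$, which is open in the normal stack $\overline{\Fcal}_{x_j}^{(P_j)}$; your alternative phrasing via \Cref{bounded-implies-normal} handles this correctly.
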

\begin{proof}
By \Cref{prop-can-orth}, $\Ycal_j$ is $w$-bounded. By \Cref{propYi}, $\Ycal_j$ admits a separating canonical cover. Hence, the result follows from \Cref{bounded-implies-normal}.
\end{proof}

\subsection{Reduced generalized Hasse invariants}
To each character $\lambda\in X^*(L)$, one can attach a line bundle $\Lcal(\lambda)$ on $\Xcal$ (\cite[\S3.1]{koskgold}). Denote by $\eta_\omega$ the character 
\begin{equation}
    \eta_\omega = (-1,0, \ldots,0).
\end{equation}
We call (somewhat abusively) the attached line bundle $\omega\colonequals \Lcal(\lambda_{\omega})$ the Hodge line bundle on the stack $\Xcal$. Recall that generalized Hasse invariants exist on all Ekedahl--Oort strata (see \cite[Corollary 6.2.3]{Goldring-Koskivirta-zip-flags})
We deduce that for each $0\leq j<2n-1$, there exists an integer $m_j\geq 1$ and a section
\begin{equation}
    \Ha_j\in H^0(\overline{\Xcal}_j,\omega^{m_j})
\end{equation}
such that the (reduced) vanishing locus of $\Ha_j$ is exactly $\overline{\Xcal}_{j+1}$. For general Shimura varieties of Hodge type, we do not know whether we may choose a generalized Hasse invariant whose vanishing locus is reduced. Assume now that $0\leq j\leq n-1$. We have shown that $\Ycal_j\subseteq \overline{\Xcal}_j$ is normal. In particular, $\overline{\Xcal}_j$ is normal in codimension one, so we may speak of the multiplicity of a section along $\overline{\Xcal}_{j+1}$. We prove:
\begin{theorem}\label{thm-mult1}
There exists a generalized Hasse invariant $\Ha_j$ on $\overline{\Xcal}_j$ which vanishes along $\overline{\Xcal}_{j+1}$ with multiplicity one.
\end{theorem}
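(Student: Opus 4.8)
The statement to prove is Theorem \ref{thm-mult1}: for $0\le j\le n-1$ there exists a generalized Hasse invariant $\Ha_j\in H^0(\overline{\Xcal}_j,\omega^{m_j})$ vanishing along $\overline{\Xcal}_{j+1}$ with multiplicity exactly one. The idea is to transport the computation to the canonical flag space $\overline{\Fcal}_{x_j}^{(P_j)}$, where $\overline{\Xcal}_{j+1}$ pulls back to a Bruhat divisor whose multiplicity can be read off by Chevalley's formula. First I would invoke Theorem \ref{theorem yj is normal}: $\pi_{P_j}\colon \Ycal_j^{(P_j)}\to \Ycal_j$ is an isomorphism, where $\Ycal_j=\bigcup_{i=j}^{2n-1-j}\Xcal_i$ is a $x_j$-open (dense open) substack of $\overline{\Xcal}_j$. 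Since a generalized Hasse invariant and its vanishing multiplicity along the codimension-one stratum $\overline{\Xcal}_{j+1}$ are detected on this dense open (both $\overline{\Xcal}_j$ and $\overline{\Xcal}_j^{(P_j)}$ being normal in codimension one there), it suffices to produce a section of $\omega^{m_j}$ on $\overline{\Fcal}_{x_j}^{(P_j)}$ with reduced vanishing along the divisor $\Fcal^{(P_j)}_{x_{j+1}}$ (the unique codimension-one $\Zcal_j$-zip stratum below $x_j$ that maps to $\Xcal_{j+1}$, by \Cref{prop-imag-strata-rj}).

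Next I would use that $\Fcal^{(P_j)}_{x_j}$ is a Bruhat stratum of $\Fcal^{(P_j)}$ (\Cref{prop-Pw}, \Cref{strict-Bruh-Pw}), so $\overline{\Fcal}^{(P_j)}_{x_j}$ is, locally on the smooth cover $[(P_j\times Q_j)\backslash G_k]$, a Schubert-type variety inside the partial flag variety $G/Q_j$ (after the twist $g\mapsto gz$). The closure of the open Bruhat cell $P_jx_jz^{-1}Q_j$ is a union of Bruhat cells, and its codimension-one boundary components correspond to $\Gamma_{I_j}(x_j)$; by \Cref{prop-Gamma} these are exactly the Bruhat-lower neighbors $\{x_{j+1}\}\cup\gamma_j(\Dcal_{x_j})$. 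Since a Schubert variety in $G/Q$ has reduced boundary divisors, every codimension-one Bruhat stratum in $\overline{\Fcal}^{(P_j)}_{x_j}$ occurs with multiplicity one in the boundary divisor. Now I need a section of a power of $\omega$ on $\overline{\Fcal}^{(P_j)}_{x_j}$ whose divisor is precisely the one boundary component $\Fcal^{(P_j)}_{x_{j+1}}$ (and not the other $\gamma_j(\Dcal_{x_j})$ components). This is where Chevalley's formula enters: on the flag variety one has, for $\lambda=(p^{j+1}-1)\eta_\omega$, an explicit description of the divisor of the corresponding section (a "partial Hasse invariant'' pulled back via $\Psi_{P_j}$), and one must check that the coefficient of $\Fcal^{(P_j)}_{x_{j+1}}$ equals $1$ while — crucially — this section extends over $\overline{\Fcal}^{(P_j)}_{x_j}$, i.e.\ does not involve the divisors $\gamma_j(\Dcal_{x_j})$ with negative coefficients that would obstruct extension. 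Concretely, I expect to build $\Ha_j$ as the pullback to $\overline{\Fcal}^{(P_j)}_{x_j}\cong\Ycal_j$ of the canonical section cutting out the codimension-one stratum and then push it forward via $\pi_{P_j}$, using the computation $I_j=\Delta\setminus\{\alpha_1,\dots,\alpha_{j+1}\}$ of \Cref{prop-can-orth} to identify the relevant character; the weight $(p^{j+1}-1)\eta_\omega$ announced in Theorem C suggests that $\Ha_j$ is essentially an iterated partial Hasse invariant along the chain $\alpha_1,\dots,\alpha_{j+1}$, whose total weight is $(1+p+\cdots+p^j)\cdot(p-1)\eta_\omega=(p^{j+1}-1)\eta_\omega$.

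So the key steps, in order, are: (1) reduce via Theorem \ref{theorem yj is normal} from $\overline{\Xcal}_j$ to the normal model $\overline{\Fcal}^{(P_j)}_{x_j}$, noting multiplicities along the codimension-one stratum are preserved; (2) observe $\overline{\Fcal}^{(P_j)}_{x_j}$ is a Bruhat (Schubert) closure, hence all its boundary divisors are reduced, so any section with divisor supported on $\Fcal^{(P_j)}_{x_{j+1}}$ automatically has multiplicity one; (3) exhibit such a section as (the $\pi_{P_j}$-pushforward of) a partial-Hasse-type section of $\omega^{m_j}$, computing its weight $m_j=p^{j+1}-1$ via Chevalley's formula and $\Delta_{L_j}=I_j$; (4) verify the section is regular on all of $\overline{\Fcal}^{(P_j)}_{x_j}$ — equivalently that in Chevalley's formula the coefficients along the exceptional lower neighbors $\gamma_j(\Dcal_{x_j})$ are nonnegative — which is the main obstacle. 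This last point is delicate because it is exactly the failure mode that makes general reduced Hasse invariants unknown to exist: I would handle it by an explicit root-combinatorial computation in the $\Bsf_n$ root system, using the matrix description \eqref{wi-shape} of the elements $w_i^{(r)}$ and the structure of $\Ecal^1_{x_j},\Ecal^2_{x_j}$, to show the weight $(p^{j+1}-1)\eta_\omega$ pairs nonnegatively with the relevant coroots, so the section extends with no poles; since its only zero divisor is then $\Fcal^{(P_j)}_{x_{j+1}}$, reducedness is immediate from step (2).
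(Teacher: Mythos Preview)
Your plan is essentially the paper's approach—pass to the canonical flag space, use Chevalley's formula on the open Bruhat cell, and push the resulting section back via $\pi_{P_j}$—but two points are off.

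First, step (2) is a non sequitur: reducedness of Schubert boundary divisors says nothing about the vanishing order of an arbitrary section of a line bundle; a section can perfectly well vanish to order $2$ along a reduced divisor. The multiplicity-one claim must come from the actual Chevalley computation, not from geometry of the ambient variety. The paper does this directly: it solves the twisted eigen-equation $\eta - p\,z\widetilde{x}_j^{-1}\eta = m_j\eta_\omega$ for $\eta$ (this is what forces $m_j=p^{j+1}-1$), and then Chevalley's formula gives the coefficient along $\Fcal^{(P_j)}_{x_{j+1}}$ as $-\langle \widetilde{x}_j^{-1}\eta,\beta^\vee\rangle = m_j/(p^{j+1}-1)=1$.

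Second, and more importantly, what you call the ``main obstacle'' in step (4)—controlling the multiplicities along the exceptional lower neighbors $\gamma_j(\Dcal_{x_j})$—is a problem you have created for yourself by working on all of $\overline{\Fcal}^{(P_j)}_{x_j}$ rather than on $\Ycal_j^{(P_j)}$ (note these are not the same: you wrote ``$\overline{\Fcal}^{(P_j)}_{x_j}\cong\Ycal_j$'', but only the open subset $\Ycal_j^{(P_j)}$ is isomorphic to $\Ycal_j$). The whole point of the separating canonical cover (\Cref{propYi}, \Cref{cor-imag}) is that the exceptional divisors $\Fcal^{(P_j)}_{w_i^{(j)}}$ for $1\le i\le j$ map to strata $\Xcal_{2n-i}\subseteq \overline{\Xcal}_j\setminus \Ycal_j$, so they do \emph{not} meet $\Ycal_j^{(P_j)}$. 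On $\Ycal_j^{(P_j)}$ the only codimension-one boundary component is $\Fcal^{(P_j)}_{x_{j+1}}$, so once you know the Chevalley coefficient there is $+1$ you get regularity and reducedness simultaneously, with nothing further to check. Extension from $\Ycal_j$ to all of $\overline{\Xcal}_j$ is then a separate step: it uses normality of $\overline{\Xcal}_j$ (codimension of the complement is $2n-2j\ge 2$), which the paper proves by the inductive bootstrap in \Cref{thm-Xj-normal} \emph{using} the reduced Hasse invariants just constructed. Your plan does not address this extension step.
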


We first construct a section on $\Ycal_j^{(P_j)}$ that cuts out the complement of $\Fcal_{x_j}^{(P_j)}$. Since $\Fcal^{(P_j)}_{x_{j}}$ is Bruhat, we have
\begin{equation}
    \Fcal^{(P_j)}_{x_{j}} = \left[ E'_{P_j}\backslash (P_j (x_j z^{-1}) Q_j)\right].
\end{equation}
Note that $Z_j\colonequals P_j (x_jz^{-1}) Q_j$ contains a unique $B\times {}^z B$-orbit which is open dense in $Z_j$. Denote by $\Lambda_d$ the $d\times d$ anti-diagonal matrix $\Lambda_d=(\delta_{k,d+1-r})_{k,r}$. The following lemma is an easy verification.
\begin{lemma}
    The unique $B\times {}^zB$-orbit which is open in $Z_j$ is $B\widetilde{x}_j B z^{-1}$ where
    \begin{equation}
        \widetilde{x}_j \colonequals x_j w_{0,I_j} = \begin{pNiceArray}{cccccccccc}
&&&&&&&1&& \\ \hline 
\Block[borders={bottom, right}]{2-2}{\mathbb{1}_j} &&&&&&& \\
&&&&&&&&& \\
&&0&\Block[borders={bottom, right,left,top}]{4-4}{\Lambda_{2(n-j)-1}} &&&&&& \\
&&&&&&&&& \\
&&&&&&&&& \\
&&&&&&&0&& \\
&&&&&&&&\Block[borders={top, left}]{2-2}{\mathbb{1}_j} & \\
&&&&&&&&& \\ \hline
&&1&&&&&&& 
\end{pNiceArray}
    \end{equation}
\end{lemma}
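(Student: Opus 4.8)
The plan is to translate $Z_j$ by $z$ on the right, recognise the result as a single Bruhat double coset for the parabolic $P_j$, and then identify the relevant longest Weyl group element explicitly. First I would note that, since the Galois action is trivial in this setting, $M_j=L_j$, so $Q_j=L_j\,{}^zB$; moreover $z^{-1}=z=[2n+1,2,\dots,n]$ is the reflection in $e_1$, hence normalises $L_j$ (the observation already used in the proof of \Cref{lem-PjQj}), and $z^{-1}({}^zB)\,z=B$. Combining these with $L_jB=P_j$ (which holds because $B=(B\cap L_j)R_u(P_j)$ and $P_j=L_jR_u(P_j)$) yields
\[
Z_jz=P_j\,x_j\,z^{-1}L_j\,{}^zB\,z=P_j\,x_j\,(z^{-1}L_jz)\,B=P_j\,x_jL_jB=P_j\,x_jP_j .
\]
Since right multiplication by $z$ carries the $B\times{}^zB$-orbit $Bg\,{}^zB$ of an element $g$ to the two-sided coset $B(gz)B$, the $B\times{}^zB$-orbits contained in $Z_j$ correspond bijectively to the Bruhat cells $BwB$, $w\in W_{I_j}x_jW_{I_j}$, occurring in $P_jx_jP_j=\bigsqcup_w BwB$. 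As $P_jx_jP_j$ is irreducible, its unique dense cell is the one of top dimension, namely $Bw^{\max}B$ for $w^{\max}$ the longest element of the double coset $W_{I_j}x_jW_{I_j}$; hence the unique open $B\times{}^zB$-orbit of $Z_j$ is $B\,w^{\max}\,B\,z^{-1}=B(w^{\max}z^{-1})\,{}^zB$.

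It then remains to show $w^{\max}=x_jw_{0,I_j}=\widetilde x_j$ and to read off its matrix. By \Cref{prop-can-orth}, $I_j=\{\alpha_{j+2},\dots,\alpha_n\}$, so $\Phi_{I_j}$ is the $\Bsf_{n-j-1}$-subsystem supported on $e_{j+2},\dots,e_n$. A direct computation with the reduced expression $x_j=s_1\cdots s_{n-1}s_ns_{n-1}\cdots s_{j+1}$ (equivalently, inspecting the matrix \eqref{equation: the xjs}) shows that $x_j$, and hence $x_j^{-1}$, fixes each $e_i$ with $j+2\le i\le n$, so it fixes $\Phi_{I_j}$ pointwise. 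Writing $\widetilde x_j=x_jw_{0,I_j}$, this gives $\widetilde x_j(\Phi^+_{I_j})=x_j(w_{0,I_j}\Phi^+_{I_j})=x_j(\Phi^-_{I_j})=\Phi^-_{I_j}\subseteq\Phi^-$ and, similarly, $\widetilde x_j^{-1}(\Phi^+_{I_j})=w_{0,I_j}(x_j^{-1}\Phi^+_{I_j})=\Phi^-_{I_j}\subseteq\Phi^-$; that is, $\widetilde x_j$ is $I_j$-antidominant on both sides, which is precisely the characterisation of the longest element of $W_{I_j}x_jW_{I_j}$, so $w^{\max}=\widetilde x_j$. Finally, $w_{0,I_j}$ is the identity outside the block of indices $\{j+2,\dots,2n-j\}$ and is the reversal $\Lambda_{2(n-j)-1}$ on that block (it sends $e_i\mapsto-e_i$ for $j+2\le i\le n$), so right multiplication of $x_j$ by $w_{0,I_j}$ leaves the two outer $\mathbb{1}_j$-blocks and the two entries $1$ in \eqref{equation: the xjs} untouched and replaces the central block $\mathbb{1}_{2(n-j)-1}$ by $\Lambda_{2(n-j)-1}$, which is exactly the displayed matrix for $\widetilde x_j$.

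The routine bookkeeping in the first step is easy; the point needing the most care is ensuring the $B$-cell decomposition of $P_jx_jP_j$ really singles out the \emph{longest} element of the double coset, together with the verification that $x_j$ fixes $\Phi_{I_j}$ pointwise. Once these are in hand, the identification $w^{\max}=x_jw_{0,I_j}$ and the matrix computation are the "easy verification" that the statement refers to.
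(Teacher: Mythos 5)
Your argument is correct, and it supplies exactly the verification the paper omits (the paper simply declares the lemma "an easy verification", so there is no in-text proof to compare with). The reduction $Z_jz=P_jx_jz^{-1}Q_jz=P_jx_jP_j$ is valid: $M_j=L_j$ because $L_j$ is an $\FF_p$-rational standard Levi, $z=s_{e_1}$ fixes $\Phi_{I_j}$ pointwise and hence normalises $L_j$, and $L_jB=P_j$; translating by $z$ then identifies the $B\times{}^zB$-orbits in $Z_j$ with the Bruhat cells $BwB$, $w\in W_{I_j}x_jW_{I_j}$, whose unique open member corresponds to the longest double-coset representative. Your identification of that representative is also right, and the two-sided antidominance criterion you invoke is the standard characterisation of the maximal element of a parabolic double coset. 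One small streamlining worth noting: since you have already checked that $x_j$ fixes $e_{j+2},\dots,e_n$, hence fixes $\Phi_{I_j}$ pointwise, it follows at once that $W_{I_j}x_jW_{I_j}=x_jW_{I_j}$ and that $x_j\in W^{I_j}$, so $\ell(x_jv)=\ell(x_j)+\ell(v)$ for all $v\in W_{I_j}$ and the longest element is $x_jw_{0,I_j}$ directly, with no need for the double-coset characterisation. The final matrix computation (right multiplication by $w_{0,I_j}$ only reverses the central block of columns, turning $\mathbb{1}_{2(n-j)-1}$ into $\Lambda_{2(n-j)-1}$) matches the displayed matrix, up to the paper's own convention of suppressing the possible sign adjustment at the central entry.
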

We recall below Chevalley's formula. 
For $w\in W$, let $E_w$ be the set defined in \Cref{def-Ew}. For $(\lambda,\nu)\in X^*(T)\times X^*(T)$, one attaches a line bundle $\Vcal_{\Sbt}(\lambda,\nu)$ on the Schubert stack $\Sbt=\left[ B\backslash G/B\right]$. The Bruhat decomposition of $G$ yields a stratification $(\Sbt_w)_{w\in W}$ where $\Sbt_w=\left[ B\backslash BwB/B\right]$. A section of $\Vcal_{\Sbt}(\lambda,\nu)$ over $\Sbt_w$ can be viewed as a regular map $f\colon BwB\to \AA^1$ satisfying $f(agb^{-1})=\lambda(a)\nu(b)f(g)$ for all $a,b\in B$ and all $g\in G$.

\begin{theorem}[{\cite[§1, p. 654]{brion}}]\label{brion}
Let $w \in W$. One has the following.
\begin{enumerate}
\item \label{brion1} $H^0\left(\Sbt_w,\Vcal_{\Sbt}(\lambda,\mu)\right)\neq 0\Longleftrightarrow \mu = -w^{-1} \lambda$.
\item \label{brion2} $\dim_k H^0\left(\Sbt_w,\Vcal_{\Sbt}(\lambda,-w^{-1} \lambda) \right)=1$.
\item \label{brion3} For any nonzero $f\in H^0\left(\Sbt_w,\Vcal_{\Sbt}(\lambda,-w^{-1} \lambda) \right)$, one has $\divi(f)=-\sum_{\alpha \in E_w} \langle \lambda, w\alpha^\vee \rangle \overline{\Sbt}_{w s_\alpha}$.
\end{enumerate}
\end{theorem}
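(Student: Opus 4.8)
The plan is to realise sections as semiinvariant regular functions on the Bruhat cell and to compute everything in the standard cell coordinates. Recall that $BwB = U_w\dot{w}B$, where $U_w \coloneqq \prod_{\gamma \in \Phi^+ \cap w(\Phi^-)} U_\gamma \cong \mathbb{A}^{\ell(w)}$ and the multiplication map $U_w \times B \to BwB$, $(u,b)\mapsto u\dot{w}b$, is an isomorphism of varieties (so $BwB\to BwB/B$ is a trivial $B$-torsor). As recalled in the text, a section $f \in H^0(\Sbt_w, \Vcal_\Sbt(\lambda,\mu))$ is a regular map $f\colon BwB\to\mathbb{A}^1$ with $f(agb^{-1})=\lambda(a)\mu(b)f(g)$. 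First I would apply right-$B$-equivariance with $a=1$: writing $h_u(b)=f(u\dot{w}b)$ gives $h_u(bc^{-1})=\mu(c)h_u(b)$, whence $f(u\dot{w}b)=\mu(b)^{-1}g(u)$ with $g(u)\coloneqq f(u\dot{w})\in k[U_w]$ (here $\mu^{-1}$ is an invertible regular function on $B$). Thus $f$ is completely determined by $g$.

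Next I would impose left-$B$-equivariance. Writing $a=tv$ with $t\in T$, $v\in U$, and using the variety decomposition $U=U_w\cdot U^w$ with $U^w\coloneqq\prod_{\gamma\in\Phi^+\cap w(\Phi^+)}U_\gamma$ and $\dot{w}^{-1}U^w\dot{w}\subseteq U\subseteq B$, one rewrites $au\dot{w}=u'\dot{w}b'$ with $U_w$-component $u'=t\,\pi_{U_w}(vu)\,t^{-1}$ and with $b'\in B$ the product of $\dot{w}^{-1}t\dot{w}$, a unipotent factor coming from $\dot{w}^{-1}U^w\dot{w}$, and the original $b$. Substituting into $f(ag)=\lambda(a)f(g)$, killing $\mu$ on unipotents and cancelling $\mu(b)^{-1}$, I obtain the single functional equation $\mu(\dot{w}^{-1}t\dot{w})^{-1}\,g\big(t\,\pi_{U_w}(vu)\,t^{-1}\big)=\lambda(t)\,g(u)$ for all $t\in T$, $v\in U$, $u\in U_w$. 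Setting $v=1$ shows $g$ is a $T$-eigenfunction for conjugation, of weight $\lambda+w(\mu)$; setting $t=1$ and $v\in U_w$ gives $g(u'u)=g(u)$ for all $u',u\in U_w$, i.e. $g$ is constant. A nonzero constant $g$ solves the equation iff $\lambda+w(\mu)=0$, i.e. iff $\mu=-w^{-1}\lambda$, and then the solution space is the line of constants. This proves \eqref{brion1} and \eqref{brion2}, with generator $f_0\colon u\dot{w}b\mapsto\mu(b)^{-1}$, which is nowhere vanishing on $\Sbt_w$.

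For \eqref{brion3}, note first that every nonzero $f$ is a scalar multiple of $f_0$ by \eqref{brion2}, so it suffices to treat $f_0$. I would view $f_0$ as a rational section of $\Vcal_\Sbt(\lambda,-w^{-1}\lambda)$ on the Schubert variety $\overline{\Sbt}_w$, which is legitimate since $\overline{\Sbt}_w$ is normal \cite{ramarama,rama}. Because $f_0$ is invertible on the dense cell $\Sbt_w$, its divisor is supported on the boundary $\overline{\Sbt}_w\setminus\Sbt_w$, whose irreducible components are precisely the codimension-one Schubert subvarieties $\overline{\Sbt}_{ws_\alpha}$ for $\alpha\in E_w$ (by \Cref{def-Ew} and the bijection $\gamma\colon E_w\to\Gamma_\emptyset(w)$). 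Hence $\divi(f_0)=\sum_{\alpha\in E_w}m_\alpha\,\overline{\Sbt}_{ws_\alpha}$, and the task reduces to computing each $m_\alpha=\ord_{\overline{\Sbt}_{ws_\alpha}}(f_0)$ at the generic point $\xi$ of $\overline{\Sbt}_{ws_\alpha}$. Since $\overline{\Sbt}_w$ is normal it is regular at $\xi$, so $\mathcal{O}_{\overline{\Sbt}_w,\xi}$ is a discrete valuation ring. I would produce a uniformiser by letting the root subgroup $U_{w\alpha}$ (equivalently the rank-one group attached to $\alpha$) degenerate a general point of $\Sbt_w$ into $\Sbt_{ws_\alpha}$; the $T$-equivariance of $f_0$ then forces the order of vanishing along this curve to be exactly $-\langle\lambda,w\alpha^\vee\rangle$, the coefficient being read off from the $T$-weight $w\alpha$ of the transverse direction against which $f_0$ transforms by $\lambda$. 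When $\alpha$ is simple this is the familiar computation on the $\mathbb{P}^1$-bundle over $\overline{\Sbt}_w$ attached to the rank-one parabolic of $\alpha$, on whose fibre $\Vcal_\Sbt(\lambda,\cdot)$ has degree $\langle\lambda,w\alpha^\vee\rangle$.

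The main obstacle is precisely this multiplicity computation in \eqref{brion3} when $\alpha$ is not simple: there is then no rank-one parabolic relating $w$ and $ws_\alpha$, so one must either construct the transverse curve and verify its $T$-weight directly on the (possibly singular) variety $\overline{\Sbt}_w$, or bypass the local calculation entirely. A clean way to do the latter is to note that $\operatorname{Pic}(\overline{\Sbt}_w)$ is freely generated by the boundary divisors $\overline{\Sbt}_{ws_\alpha}$, so the coefficients $m_\alpha$ are determined by the class of $\Vcal_\Sbt(\lambda,-w^{-1}\lambda)|_{\overline{\Sbt}_w}$, and then to match with the Chevalley formula in the Chow group of $\overline{\Sbt}_w$, which pins down $m_\alpha=-\langle\lambda,w\alpha^\vee\rangle$ and yields \eqref{brion3}.
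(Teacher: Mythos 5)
The paper offers no proof of this theorem at all: it is imported verbatim from Brion's work, so there is no internal argument to compare yours against, and I assess your proposal on its own terms. Your proof of \eqref{brion1} and \eqref{brion2} is correct and complete: trivializing $BwB \cong U_w \times B$ via $(u,b)\mapsto u\dot wb$, reducing a section to the function $g(u)=f(u\dot w)$ on $U_w$, and then specializing the left-equivariance relation (first $v=1$ to see that $g$ is a $T$-eigenfunction of weight $\lambda+w\mu$, then $t=1$, $v\in U_w$ to see that $g$ is constant) is exactly the standard computation, and it correctly yields existence iff $\mu=-w^{-1}\lambda$, a one-dimensional space, and a nowhere-vanishing generator $f_0$ on $\Sbt_w$.

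The genuine gap is in \eqref{brion3}, which is the substantive part of the statement. Reducing $\divi(f_0)$ to $\sum_{\alpha\in E_w} m_\alpha\,\overline{\Sbt}_{ws_\alpha}$ is fine (normality of Schubert closures plus the identification of the codimension-one boundary components via $\gamma\colon E_w\to\Gamma_\emptyset(w)$), but the coefficients $m_\alpha=-\langle\lambda,w\alpha^\vee\rangle$ are never actually computed. Your degeneration sketch does not produce a uniformizer at the generic point of $\overline{\Sbt}_{ws_\alpha}$: the $T$-stable curve you implicitly use meets the divisor at the deep fixed point $ws_\alpha B$ rather than at its generic point, so the needed transversality is precisely what remains to be proved, and for non-simple $\alpha$ (which do occur in $E_w$) the rank-one parabolic argument is unavailable, as you yourself note. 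The proposed bypass is circular in substance: expressing the class of $\Vcal_{\Sbt}(\lambda,-w^{-1}\lambda)$ restricted to $\overline{\Sbt}_w$ in the basis of boundary Schubert divisor classes \emph{is} the divisor-level Chevalley formula, i.e.\ exactly assertion \eqref{brion3}, so invoking ``the Chevalley formula in the Chow group'' cites the result rather than proves it (also, it is the divisor class group of $\overline{\Sbt}_w$, not its Picard group, that is freely generated by the Schubert divisors). If you are content to quote the classical Chevalley/Brion computation, your write-up is on the same footing as the paper, which simply cites Brion; as a self-contained proof, the multiplicity computation at the generic point of each boundary divisor is missing.
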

Let $\eta\in X^*(T)$ and $h_\eta$ a nonzero section over $\Sbt_{\widetilde{x}_j}$ of $\Vcal_{\Sbt}(\eta,-\widetilde{x}_j^{-1}\eta)$. View $h_\eta$ as a regular function $B\widetilde{x}_jB\to \AA^1$. Write $h_\eta'$ for the shifted function $h_\eta'\colon B\widetilde{x}_jBz^{-1}\to \AA^1$, $x\mapsto h_\eta(xz)$. Since we want to construct a section of a power of the line bundle $\omega$, we want to choose $\eta$ such that there exists an integer $m_j\geq 1$ such that
\begin{equation}
h_\eta'(tg\varphi(t)^{-1})=\eta_\omega^{m_j}(t)h_\eta'(g), \quad t\in T, g\in G.
\end{equation}
One sees immediately that this is equivalent to the condition:
\begin{equation}\label{eta-eq}
    \eta-p z\widetilde{x}_j^{-1}\eta =m_j \eta_\omega.
\end{equation}
Since the map $\eta \mapsto \eta-p z\widetilde{x}_j^{-1}\eta$ defines an isomorphism of $X^*(T)_{\QQ}$ (since it induces the identity map on $X^*(T)_{\FF_p}$), it is possible to find such $\eta$ and $m_j\geq 1$. From \eqref{eta-eq}, we find
\begin{equation}
    \left(
\begin{array}{ccccc|ccc}
    1 & p & & & &&& \\
     &1 & -p &  & &&& \\
     & & \ddots & \ddots & &&& \\
     & & &1&-p &&& \\
     p&&&&1 &&& \\ \hline
      &&&&&1+p&& \\
            &&&&&&\ddots& \\
                  &&&&&&&1+p
\end{array}
    \right) \eta = m_j \eta_\omega
\end{equation}
where the above matrix has size $n\times n$ and the upper left block has size $(j+1)\times (j+1)$. It follows that
\begin{equation}
    \eta = - \frac{m_j}{p^{j+1}-1} \left(-1, p^j, p^{j-1}, \dots, p, 0, \dots, 0 \right).
\end{equation}
Hence $\widetilde{x}_j^{-1}\eta=- \frac{m_j}{p^{j+1}-1} (p^j, \dots,p,1,0, \dots,0)$. Therefore, we see immediately that $\langle\lambda, w\alpha^\vee \rangle=0$ for all $\alpha \in I_j$. This implies that $h_\eta'$ extends to a regular function on the whole $P_j\times Q_j$-orbit $Z_j$, using \Cref{brion}\eqref{brion3}. Since $h'_\eta$ is non-vanishing on $Z_j$, it is automatically $P_j\times Q_j$-equivariant by Rosenlicht's theorem (\cite[\S 1]{Knop-Kraft-Vust-G-variety}).

Moreover, the stratum closure $\Fcal^{(P_j)}_{x_{j+1}}$ corresponds to the closure of the $P_j\times Q_j$-orbit of $x_{j+1}z^{-1}$. The unique $B\times {}^zB$-orbit which is open in $P_j x_{j+1} z^{-1} Q_j$ is $B\widetilde{x}_j s_\beta B z^{-1}$ where $\beta=e_{j+1}-e_{j+2}$ for $0\leq j \leq n-2$ and $\beta=e_n$ for $j=n-1$. Therefore, the multiplicity of $h_\eta$ along $P_jx_{j+1}z^{-1} Q_j$ is given by 

\begin{equation}
 -\langle \eta, \widetilde{x}_j \beta^\vee \rangle = -\langle \widetilde{x}_j^{-1}\eta, \beta^\vee \rangle = \frac{m_j}{p^{j+1}-1}.  
\end{equation}
We may take $m_j=p^{j+1}-1$, thus $\eta\in X^*(T)$ and $-\langle \eta, \widetilde{x}_j \alpha^\vee \rangle=1$. Since it satisfies \eqref{eta-eq} and is $P_j\times Q_j$-equivariant, the function $h'_\eta$ given in this way is a section of the line bundle $\omega^{m_j}$. It extends to $\Ycal_j^{(P_j)}$ by normality, as $h'_\eta$ does not have a pole along $\Fcal_{x_{j+1}}^{(P_j)}$. Using the isomorphism $\pi_{P_j}\colon \Ycal^{(P_j)}\to \Ycal_j$, we set $\Ha_j\colonequals \pi_{P_j *}(h'_\eta)$, which is a section of $\omega^{m_j}$ over $\Ycal_j$. We have shown:
\begin{proposition}\label{prop-mult1}
There exists a section $\Ha_j\in H^0(\Ycal_j,\omega^{p^{j+1}-1})$ whose vanishing locus is exactly the complement of $\Xcal_{x_j}$, with multiplicity one.
\end{proposition}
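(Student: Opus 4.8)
\textbf{Plan of proof of \Cref{prop-mult1}.} The statement has essentially already been reduced to a computation on the canonical flag space, so the plan is to carry out that computation carefully using Chevalley's formula (\Cref{brion}) and then transport the result back down to $\Ycal_j$ via the isomorphism of \Cref{theorem yj is normal}. The broad strategy is: (1) identify the open Bruhat cell in the relevant $P_j\times Q_j$-orbit $Z_j$; (2) solve the linear equation \eqref{eta-eq} for a character $\eta\in X^*(T)$ and an integer $m_j\geq 1$ so that the Schubert-type section $h_\eta$ on $\Sbt_{\widetilde{x}_j}$ descends to a section of a power $\omega^{m_j}$; (3) check that $h'_\eta$ has no pole along $\Fcal^{(P_j)}_{x_j}$'s complement except along the codimension-one stratum, and compute the multiplicity there to be $1$; (4) descend the resulting section along $\pi_{P_j}$ to obtain $\Ha_j$ on $\Ycal_j$.

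First I would pin down the combinatorics. By \Cref{prop-Pw} the stratum $\Fcal^{(P_j)}_{x_j}$ is Bruhat, so $\Fcal^{(P_j)}_{x_j}=[E'_{P_j}\backslash Z_j]$ with $Z_j=P_j(x_jz^{-1})Q_j$, and I would invoke the lemma identifying the unique open $B\times {}^zB$-orbit in $Z_j$ as $B\widetilde{x}_jBz^{-1}$ with $\widetilde{x}_j=x_jw_{0,I_j}$ given explicitly in block form. Then, working in $X^*(T)=\ZZ^n$ with the explicit coordinates from \S\ref{section orthogonal group theory}, I would write out the endomorphism $\eta\mapsto \eta - pz\widetilde{x}_j^{-1}\eta$ as the displayed $n\times n$ matrix (upper-left $(j+1)\times(j+1)$ block of the ``cyclic'' shape, lower-right block $(1+p)\mathbb{1}$), observe it is invertible over $\QQ$ because it reduces to the identity mod $p$, and solve for $\eta$. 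Setting $m_j=p^{j+1}-1$ clears denominators and yields $\eta=(-1,p^j,p^{j-1},\dots,p,0,\dots,0)$, hence $\widetilde{x}_j^{-1}\eta=-(p^j,\dots,p,1,0,\dots,0)$ (up to sign as in the text). Since $\langle\eta,\widetilde{x}_j\alpha^\vee\rangle=0$ for all $\alpha\in I_j$, Chevalley's formula \Cref{brion}\eqref{brion3} shows $h'_\eta$ has no zero or pole along any stratum indexed by roots in $I_j$, so it extends to a regular, non-vanishing function on all of $Z_j$, and is automatically $P_j\times Q_j$-equivariant by Rosenlicht's theorem. This makes it a genuine section of $\omega^{m_j}$ over $\Fcal^{(P_j)}_{x_j}$.

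Next I would compute the multiplicity along the codimension-one boundary stratum $\Fcal^{(P_j)}_{x_{j+1}}$. Its open $B\times {}^zB$-cell inside $P_jx_{j+1}z^{-1}Q_j$ is $B\widetilde{x}_js_\beta Bz^{-1}$ with $\beta=e_{j+1}-e_{j+2}$ (and $\beta=e_n$ when $j=n-1$), so by \Cref{brion}\eqref{brion3} the multiplicity is $-\langle\eta,\widetilde{x}_j\beta^\vee\rangle=-\langle\widetilde{x}_j^{-1}\eta,\beta^\vee\rangle$, which with the computed $\widetilde{x}_j^{-1}\eta$ evaluates to $m_j/(p^{j+1}-1)=1$. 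Combined with the previous paragraph, $h'_\eta$ cuts out exactly $\Fcal^{(P_j)}_{x_{j+1}}$ (scheme-theoretically, with multiplicity one) inside a neighborhood of $\Fcal^{(P_j)}_{x_j}$ in $\overline{\Fcal}^{(P_j)}_{x_j}$. By \Cref{propYi}, $\Ycal^{(P_j)}_j=\overline{\Fcal}^{(P_j)}_{x_j}\cap\pi_{P_j}^{-1}(\Ycal_j)$ is open in the normal stack $\overline{\Fcal}^{(P_j)}_{x_j}$; since $h'_\eta$ has no pole along $\Fcal^{(P_j)}_{x_{j+1}}$, it extends by normality (Hartogs) to a section over all of $\Ycal^{(P_j)}_j$ whose vanishing locus is exactly $\Ycal^{(P_j)}_j\setminus\Fcal^{(P_j)}_{x_j}$ with multiplicity one along the codimension-one part. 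Finally, pushing forward along the isomorphism $\pi_{P_j}\colon\Ycal^{(P_j)}_j\xrightarrow{\sim}\Ycal_j$ of \Cref{theorem yj is normal} and using $\Lcal(\lambda)$-functoriality, I set $\Ha_j\coloneqq\pi_{P_j*}(h'_\eta)\in H^0(\Ycal_j,\omega^{p^{j+1}-1})$, whose vanishing locus is the complement of $\Xcal_j$ in $\Ycal_j$ with multiplicity one. This is exactly \Cref{prop-mult1}.

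The main obstacle I anticipate is \emph{bookkeeping precision} rather than any conceptual difficulty: one must be careful that the displayed matrix for $\eta\mapsto\eta-pz\widetilde{x}_j^{-1}\eta$ is genuinely correct, in particular that $z$ acts as stated on $X^*(T)$ (here the Galois action is trivial and $z^{-1}=z$, which simplifies things) and that $\widetilde{x}_j=x_jw_{0,I_j}$ acts on coordinates exactly as the block picture suggests, including the $\pm$ sign subtleties coming from the $(n+1,n+1)$-entry adjustment that embeds $W$ into $G$. A secondary point needing care is that the extension across $\Fcal^{(P_j)}_{x_{j+1}}$ is legitimate: this uses that $\Ycal^{(P_j)}_j$ is normal (inherited from $\overline{\Fcal}^{(P_j)}_{x_j}$ via \Cref{prop-Pw}) and that $h'_\eta$, being $P_j\times Q_j$-equivariant and pole-free along every stratum of codimension one in $\overline{\Fcal}^{(P_j)}_{x_j}$ that meets $\Ycal^{(P_j)}_j$ — which by \Cref{prop-low-xr} are precisely $\Fcal^{(P_j)}_{x_{j+1}}$ and the $\Fcal^{(P_j)}_{w_i^{(j)}}$ for $1\le i\le j$, the latter being handled by the vanishing $\langle\widetilde{x}_j^{-1}\eta,\beta^\vee\rangle=0$ for the corresponding roots — indeed defines a global section.
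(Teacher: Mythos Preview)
Your proposal is correct and follows essentially the same route as the paper: identify the open $B\times{}^zB$-cell $B\widetilde{x}_jBz^{-1}$ inside $Z_j$, solve \eqref{eta-eq} with $m_j=p^{j+1}-1$, use Chevalley's formula to see that $h'_\eta$ extends over $Z_j$ (via $\langle\widetilde{x}_j^{-1}\eta,\alpha^\vee\rangle=0$ for $\alpha\in I_j$) and has multiplicity one along $\Fcal^{(P_j)}_{x_{j+1}}$, then extend by normality and push down via the isomorphism $\pi_{P_j}\colon\Ycal_j^{(P_j)}\to\Ycal_j$. One small correction to your final paragraph: by \Cref{propYi} the only codimension-one stratum of $\overline{\Fcal}^{(P_j)}_{x_j}$ that actually lies in $\Ycal_j^{(P_j)}$ is $\Fcal^{(P_j)}_{x_{j+1}}$ --- the strata $\Fcal^{(P_j)}_{w_i^{(j)}}$ are \emph{outside} $\Ycal_j^{(P_j)}$ (they map to $\Xcal_{2n-i}$ by \Cref{cor-imag}), so you need not check anything there for the Hartogs extension.
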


\begin{theorem}\label{thm-Xj-normal}
For any $0\leq j \leq n-1$, we have:
\begin{enumerate}
    \item The stratum $\overline{\Xcal}_j$ is normal and a local complete intersection.
    \item The section $\Ha_j$ extends to a section $\Ha_j\in H^0(\overline{\Xcal}_j,\omega^{p^{j+1}-1})$.
\end{enumerate}
\end{theorem}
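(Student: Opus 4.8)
The plan is to prove assertions (1) and (2) together by induction on $j$, using the sections constructed in \Cref{prop-mult1} together with the normality of $\Ycal_j$ from \Cref{theorem yj is normal}. First note that (2) at level $j$ is a formal consequence of (1) at level $j$: once $\overline{\Xcal}_j$ is normal, the open substack $\Ycal_j\subseteq \overline{\Xcal}_j$ has complement $\overline{\Xcal}_{2n-j}$, of dimension $\ell(x_{2n-j})=j-1$, hence of codimension $(2n-1-j)-(j-1)=2n-2j\ge 2$ in $\overline{\Xcal}_j$ since $j\le n-1$; as $\omega^{p^{j+1}-1}$ is a line bundle on a normal stack, restriction $H^0(\overline{\Xcal}_j,\omega^{p^{j+1}-1})\to H^0(\Ycal_j,\omega^{p^{j+1}-1})$ is bijective, so $\Ha_j$ extends (uniquely). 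It therefore suffices to prove (1).

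For (1), I would start from $j=0$, where $\overline{\Xcal}_0=\Xcal$ is smooth of dimension $0$ — hence normal and a local complete intersection — and $\Ycal_0=\Xcal$, so $\Ha_0$ is already global. For the inductive step, assume $\overline{\Xcal}_j$ is normal and lci with $0\le j\le n-2$; by the previous paragraph $\Ha_j$ extends to $\widetilde{\Ha}_j\in H^0(\overline{\Xcal}_j,\omega^{p^{j+1}-1})$. The crux is to show that the effective Cartier divisor $V(\widetilde{\Ha}_j)\subseteq\overline{\Xcal}_j$ coincides with $\overline{\Xcal}_{j+1}$ with its reduced structure. Set-theoretically, the vanishing locus of $\Ha_j$ on $\Ycal_j$ is exactly $\bigcup_{i=j+1}^{2n-1-j}\Xcal_i=\overline{\Xcal}_{j+1}\cap\Ycal_j$ by \Cref{prop-mult1}, so $V(\widetilde{\Ha}_j)\subseteq(\overline{\Xcal}_{j+1}\cap\Ycal_j)\cup\overline{\Xcal}_{2n-j}$, and $\overline{\Xcal}_{2n-j}\subseteq\overline{\Xcal}_{j+1}$ because $2n-j\ge j+1$; being closed and containing the dense subset $\overline{\Xcal}_{j+1}\cap\Ycal_j$, the divisor $V(\widetilde{\Ha}_j)$ equals $\overline{\Xcal}_{j+1}$ as a set. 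Its generic point lies in $\Ycal_j$, where $\widetilde{\Ha}_j$ vanishes to order one, and $\overline{\Xcal}_{j+1}$ has codimension one in the normal stack $\overline{\Xcal}_j$, so $\divi(\widetilde{\Ha}_j)=1\cdot\overline{\Xcal}_{j+1}$; since $\overline{\Xcal}_j$ is Cohen--Macaulay, $V(\widetilde{\Ha}_j)$ has no embedded points, and being generically reduced it is reduced. Thus $\overline{\Xcal}_{j+1}$ is a reduced effective Cartier divisor in the lci stack $\overline{\Xcal}_j$, hence is itself a local complete intersection (locally, one adjoins a single equation to a regular sequence cutting $\overline{\Xcal}_j$ out of a smooth ambient space), and in particular Cohen--Macaulay.

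It then remains to obtain normality of $\overline{\Xcal}_{j+1}$ from Serre's criterion. Property $(S_2)$ holds because $\overline{\Xcal}_{j+1}$ is Cohen--Macaulay. For $(R_1)$, the open substack $\Ycal_{j+1}\subseteq\overline{\Xcal}_{j+1}$ is normal by \Cref{theorem yj is normal}, hence regular in codimension one, while its complement $\overline{\Xcal}_{2n-1-j}$ has codimension $(2n-2-j)-j=2n-2-2j\ge 2$ in $\overline{\Xcal}_{j+1}$ (using $j\le n-2$); so the non-regular locus of $\overline{\Xcal}_{j+1}$ has codimension $\ge 2$. Hence $\overline{\Xcal}_{j+1}$ is normal, which completes the induction, and then (2) follows for all $0\le j\le n-1$ as explained above. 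The step I expect to be the main obstacle is the reduced-Cartier-divisor identification in the middle of the previous paragraph: passing from "$\widetilde{\Ha}_j$ cuts out the set $\overline{\Xcal}_{j+1}$" to "$V(\widetilde{\Ha}_j)=\overline{\Xcal}_{j+1}$ as a reduced subscheme" is exactly where the multiplicity-one statement of \Cref{prop-mult1}, the codimension bookkeeping among the strata, the normality inherited from $\Ycal_j$, and the Cohen--Macaulayness furnished by the inductive lci hypothesis must all be assembled, and a careless treatment of any one of them would break the argument.
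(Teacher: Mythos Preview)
Your proof is correct and follows essentially the same inductive strategy as the paper: extend $\Ha_j$ by normality and codimension $\geq 2$, cut out $\overline{\Xcal}_{j+1}$ as a Cartier divisor to get lci, then apply Serre's criterion using the normality of $\Ycal_{j+1}$. You are considerably more careful than the paper about justifying why the scheme-theoretic vanishing locus of the extended section is \emph{reduced} (via Cohen--Macaulayness ruling out embedded points together with generic multiplicity one) and about the explicit codimension computations; the paper's proof simply asserts these steps.
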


\begin{proof}
We prove the result by induction on $j$. For $j=0$, there is nothing to prove. Assume $\overline{\Xcal}_j$ is normal and a local complete intersection for some $1\leq j <n-1$, and that $\Ha_j$ extends to $\overline{\Xcal}_j$. The substack $\overline{\Xcal}_{j+1}$ coincides with the scheme-theoretic vanishing locus of $\Ha_j$. It follows that $\overline{\Xcal}_{j+1}$ is an effective Cartier divisor of $\overline{\Xcal}_j$, hence is a local complete intersection. Moreover $\overline{\Xcal}_{j+1}$ is regular in codimension one since $\Ycal_{j+1}$ is normal. By Serre's criterion, we deduce that $\overline{\Xcal}_{j+1}$ is normal. Then by normality of $\overline{\Xcal}_{j+1}$, the section $\Ha_{j+1}$ extends to the whole of $\overline{\Xcal}_{j+1}$. This terminates the induction.
\end{proof}
\subsection{Application to cycle classes and cohomological vanishing}

For a closed substack $\Ycal\subseteq \Xcal$, let $[\Ycal]$ denote its class in the Chow ring of $\Xcal$. For a line bundle $\Lscr$ on $\Xcal$ let $[\Lscr]$ denote the class of the corresponding divisor.
\begin{corollary}\label{corollary: cycle class formula}
For any $0\leq j \leq n-1$, the cycle class of $\overline{\Xcal}_j$ is given by
\[
(p-1)(p^2-1)\cdots (p^j-1)[\omega]
\]
\end{corollary}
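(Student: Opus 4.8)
The plan is a short induction on $j$; essentially all the geometric substance has already been extracted in \Cref{thm-Xj-normal} and \Cref{thm-mult1}, so what remains is formal intersection theory on the stacks $\overline{\Xcal}_j$.

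For $j=0$ there is nothing to prove: $\overline{\Xcal}_0=\Xcal$ and $[\overline{\Xcal}_0]=1$, the empty product. For the inductive step, fix $0\le j\le n-1$ and recall from \Cref{thm-mult1} (together with the normality and local complete intersection statement of \Cref{thm-Xj-normal}) that the generalized Hasse invariant $\Ha_j\in H^0(\overline{\Xcal}_j,\omega^{p^{j+1}-1})$ vanishes along $\overline{\Xcal}_{j+1}$ with multiplicity one and that $\overline{\Xcal}_{j+1}$ is the effective Cartier divisor on $\overline{\Xcal}_j$ cut out scheme-theoretically by $\Ha_j$. Since $\overline{\Xcal}_j$ is an integral algebraic stack, the cycle associated to the divisor of the nonzero section $\Ha_j$ of the line bundle $\omega^{p^{j+1}-1}\big|_{\overline{\Xcal}_j}$ represents $c_1\bigl(\omega^{p^{j+1}-1}\big|_{\overline{\Xcal}_j}\bigr)\cap[\overline{\Xcal}_j]$, and by the multiplicity-one statement this cycle is exactly $[\overline{\Xcal}_{j+1}]$. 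Hence, in $CH_*(\overline{\Xcal}_j)$,
\[
[\overline{\Xcal}_{j+1}]\;=\;(p^{j+1}-1)\,c_1\bigl(\omega\big|_{\overline{\Xcal}_j}\bigr)\cap[\overline{\Xcal}_j].
\]

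Next I would push this forward along the closed immersion $\iota_j\colon\overline{\Xcal}_j\hookrightarrow\Xcal$. Using the projection formula $\iota_{j*}\bigl(c_1(\iota_j^*\omega)\cap\alpha\bigr)=c_1(\omega)\cap\iota_{j*}\alpha$, the displayed identity becomes, in the Chow ring of $\Xcal$,
\[
[\overline{\Xcal}_{j+1}]\;=\;(p^{j+1}-1)\,[\omega]\cdot[\overline{\Xcal}_j].
\]
Substituting the inductive hypothesis $[\overline{\Xcal}_j]=\bigl(\textstyle\prod_{i=1}^{j}(p^i-1)\bigr)[\omega]^{\,j}$ and re-indexing yields $[\overline{\Xcal}_{j+1}]=\bigl(\textstyle\prod_{i=1}^{j+1}(p^i-1)\bigr)[\omega]^{\,j+1}$, which is the asserted cycle class formula of \Cref{corollary: cycle class formula} (the class lying in $CH^{j+1}(\Xcal)$, as $\overline{\Xcal}_{j+1}$ has codimension $j+1$ in $\Xcal$).

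The only point that needs a word of care is the identification of the scheme-theoretic zero locus of $\Ha_j$ with a Cartier divisor representing $c_1(\omega^{p^{j+1}-1})$: this is exactly what the normality of $\overline{\Xcal}_j$ and the multiplicity-one computation give, and both are already established, so there is no real obstacle left here — the computation is entirely formal. As a closing remark, one may pull the formula back along the smooth zip period map $\zeta^{\tor}\colon S^{\tor}\to\Xcal$ (which is flat, being smooth) to obtain the corresponding cycle classes of the EO-strata closures $\overline{S}^{\tor}_j$ on a toroidal compactification, expressed in terms of the Hodge bundle $\omega$.
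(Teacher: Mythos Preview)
Your proof is correct and follows essentially the same approach as the paper: both argue by induction on $j$, use that $\Ha_{j}$ cuts out $\overline{\Xcal}_{j+1}$ as an effective Cartier divisor in $\overline{\Xcal}_j$ (via \Cref{thm-Xj-normal} and \Cref{thm-mult1}), and then apply the projection formula along the closed immersion into $\Xcal$. Your write-up is slightly more explicit about the $c_1$ identification and correctly records the power $[\omega]^j$, which the paper's statement omits.
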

\begin{proof}
The proof of \cite[Theorem 15.1]{geer.katsura.stratification.of.k3} applies verbatim. Explicitly, let $\iota_{j-1}\colon \overline{\Xcal}_{j-1}\hookrightarrow \Xcal$ denote the inclusion. Since $\overline{\Xcal}_j$ is cut out in $\overline{\Xcal}_{j-1}$ by $\Ha_{j-1}\in H^0(\overline{\Xcal}_{j-1},\omega^{p^j-1})$, we find that its class in $\overline{\Xcal}_{j-1}$ is given by
\[
\iota_{j-1}^*\Big((p^{j}-1)[\omega]\Big)\cdot [\overline{\Xcal}_{j-1}].
\]
Applying the projection formula (\cite[Proposition 2.5(c)]{fulton.intersection.theory}) its class in $\Xcal$ is given by
\[
\iota_{j-1,*}\Big( \iota_{j-1}^*(p^{j}-1)[\omega]\cdot [\overline{\Xcal}_{j-1}] \Big) = (p^j-1)[\omega]\cdot \iota_{j-1,*}([\overline{\Xcal}_{j-1}]).
\]
The statement now follows by induction on $j$.
\end{proof}
Let $S$ be the special fiber of a Hodge-type Shimura variety of simple $\Bsf_n$-type as in \S\ref{section: intro g-zips to shimura varieties}. Let $S^{\tor}$ and $S^{\text{min}}$ denote a smooth toroidal compactification, respectively the minimal compactification. Let $\pi\colon S^{\tor}\to S^{\text{min}}$ denote the natural map. 
For an automorphic vector bundle $\Vcal(\eta)$ on $S$, let $\Vcal^{\text{sub}}(\eta)$ denote the sub-canonical extension of $\Vcal(\eta)$ to $S^{\tor}$. Let $\zeta^{\tor}\colon S^{\tor}\to \Xcal$ be the zip period map \S\ref{section: intro g-zips to shimura varieties} and let $\overline{S}^{\tor}_j\coloneqq (\zeta^{\tor})^{-1}(\overline{\Xcal}_j)$ denote the Ekedahl--Oort strata of $S^{\tor}$.
\begin{corollary}\label{corollary: coh vanishing}
Assume that $R^i\pi_{*}\Vcal^{\sub}(\eta)=0$ for all $i>0$. Then, for $l\gg0$, $H^m(\overline{S}^{\tor}_j,\Vcal^{\sub}(\eta)\otimes \omega^l)=0$ for all $j=0, \ldots, n-1$, and all $m>0$.
\end{corollary}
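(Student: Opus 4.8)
The plan is to deduce this from the cohomological properties established for the stack $\overline{\Xcal}_j$ together with ampleness of $\omega$ on the minimal compactification. First I would recall from \Cref{thm-Xj-normal} that for $0\leq j\leq n-1$ the stratum closure $\overline{\Xcal}_j$ is obtained from $\Xcal$ by successively cutting out effective Cartier divisors, namely $\overline{\Xcal}_{i+1}$ is the zero locus of $\Ha_i\in H^0(\overline{\Xcal}_i,\omega^{p^{i+1}-1})$ for $0\leq i<j$. This gives, on each $\overline{\Xcal}_i$, a Koszul-type short exact sequence
\begin{equation}
0\longrightarrow \Vcal^{\sub}(\eta)\otimes\omega^{l-(p^{i+1}-1)}|_{\overline{\Xcal}_i}\longrightarrow \Vcal^{\sub}(\eta)\otimes\omega^{l}|_{\overline{\Xcal}_i}\longrightarrow \Vcal^{\sub}(\eta)\otimes\omega^{l}|_{\overline{\Xcal}_{i+1}}\longrightarrow 0.
\end{equation}
Pulling back along $\zeta^{\tor}$ (which is smooth, so preserves the relevant exactness and the $\overline{S}^{\tor}_i$ are cut out by the pulled-back Hasse invariants) gives the analogous sequences on the $\overline{S}^{\tor}_i$. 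Hence by descending induction on $i$ from $i=j$ down to $i=0$, the vanishing of $H^m(\overline{S}^{\tor}_j,\Vcal^{\sub}(\eta)\otimes\omega^l)$ for $m>0$ and $l\gg 0$ is reduced (via the long exact cohomology sequences) to the vanishing of $H^m(S^{\tor},\Vcal^{\sub}(\eta)\otimes\omega^l)$ for $m>0$ and $l\gg 0$, up to keeping track that the twisting powers $l$ appearing stay large; since only finitely many shifts $p^{i+1}-1$ occur, "$l\gg 0$" is stable under this bookkeeping.

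Next I would handle the vanishing on $S^{\tor}$ itself. The Hodge line bundle $\omega$ is the pullback under $\pi\colon S^{\tor}\to S^{\min}$ of an ample line bundle on the minimal compactification $S^{\min}$. By the projection formula and the Leray spectral sequence for $\pi$,
\begin{equation}
H^m\bigl(S^{\tor},\Vcal^{\sub}(\eta)\otimes\omega^l\bigr)\;=\;H^m\bigl(S^{\min},\pi_{*}\Vcal^{\sub}(\eta)\otimes\omega_{S^{\min}}^{l}\bigr),
\end{equation}
where I use the hypothesis $R^i\pi_{*}\Vcal^{\sub}(\eta)=0$ for $i>0$ to collapse the spectral sequence and to move $\omega^l$ out (it is a pullback from $S^{\min}$). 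Since $\pi_{*}\Vcal^{\sub}(\eta)$ is a coherent sheaf on the projective scheme $S^{\min}$ and $\omega_{S^{\min}}$ is ample, Serre vanishing gives $H^m(S^{\min},\pi_{*}\Vcal^{\sub}(\eta)\otimes\omega_{S^{\min}}^{l})=0$ for all $m>0$ once $l\gg 0$. Combining with the inductive reduction of the previous paragraph yields the statement.

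The main obstacle I anticipate is not any single deep input but rather making the bookkeeping of the induction clean: one must run the long exact sequences so that the "bad" cohomology is always pushed either to a lower-index stratum closure (eventually $S^{\tor}$) or to a strictly higher twist, and verify that the finitely many Hasse-invariant weights $p^{i+1}-1$ do not interfere with the "$l\gg 0$" quantifier — concretely, one should fix a single threshold $l_0$ working simultaneously for $S^{\tor}$ and for all the shifted sheaves $\Vcal^{\sub}(\eta)\otimes\omega^{-(p^{i+1}-1)}$, $0\leq i\leq n-1$, which is possible since there are finitely many of them and each is a coherent twist of $\Vcal^{\sub}(\eta)$. A secondary point requiring a line of justification is that $\zeta^{\tor}$ being smooth (hence flat) ensures $(\zeta^{\tor})^{-1}(\overline{\Xcal}_{i+1})=\overline{S}^{\tor}_{i+1}$ is exactly the zero locus of $(\zeta^{\tor})^{*}\Ha_i$ as a Cartier divisor, so the Koszul sequences pull back without correction terms; this is standard but worth stating.
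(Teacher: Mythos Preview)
Your proposal is correct and is essentially the same approach as the paper's: the paper simply cites \cite[Lemma 7.3.1]{koskgold}, and what you have written is precisely an unpacking of that lemma (Koszul sequences from the reduced Hasse invariants of \Cref{thm-Xj-normal}, induction along strata, then Leray plus Serre vanishing on $S^{\min}$). One small wording point: the induction runs \emph{ascending} in $i$ (from $\overline{S}^{\tor}_0=S^{\tor}$ up to $\overline{S}^{\tor}_j$), not descending, since the short exact sequence on $\overline{S}^{\tor}_i$ bounds the cohomology of $\overline{S}^{\tor}_{i+1}$ in terms of that of $\overline{S}^{\tor}_i$.
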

\begin{proof}
By \Cref{thm-mult1}, this follows from \cite[Lemma 7.3.1]{koskgold}.
\end{proof}
\begin{remark}
By an unpublished result of B. Stroh the assumption in the statement holds for $\eta$ a $\QQ$-multiple of the Hodge character (cf.\@ \cite[Remark 7.1.3]{koskgold}).
\end{remark}

\subsection{Smooth locus}

\begin{proposition}
The subset $\Ycal_j$ is smooth.
\end{proposition}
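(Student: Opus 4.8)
The plan is to reduce the statement, via the canonical flag space, to the smoothness of a Schubert variety in $\SO_{2n+1}$ along an explicit locally closed locus, and then to carry out a local computation there. By \Cref{theorem yj is normal} the projection $\pi_{P_j}\colon\Ycal^{(P_j)}_j\to\Ycal_j$ is an isomorphism, so it suffices to prove that $\Ycal^{(P_j)}_j=\bigcup_{i=j}^{2n-1-j}\Fcal^{(P_j)}_{x_i}$ is smooth. By \Cref{propYi} it is open in $\overline{\Fcal}^{(P_j)}_{x_j}$ and contains the dense smooth stratum $\Fcal^{(P_j)}_{x_j}\cong\Xcal_j$; since the non-smooth locus of $\overline{\Fcal}^{(P_j)}_{x_j}$ is a proper closed union of $\Zcal_j$-zip strata, it will be disjoint from $\Ycal^{(P_j)}_j$ as soon as each stratum $\Fcal^{(P_j)}_{x_i}$ with $j<i\le 2n-1-j$ lies in the smooth locus.

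To check this I would pass to $G$. Since $\Fcal^{(P_j)}_{x_j}$ is Bruhat (\Cref{strict-Bruh-Pw}, using $P_{x_j}=P_j$ from \Cref{prop-can-orth}), one has $\overline{\Fcal}^{(P_j)}_{x_j}=[E'_{P_j}\backslash\overline{Z_j}]$ with $Z_j=P_jx_jz^{-1}Q_j$, and the $E'_{P_j}$-torsor $\overline{Z_j}\to\overline{\Fcal}^{(P_j)}_{x_j}$ reduces everything to smoothness of the Schubert variety $\overline{Z_j}=\overline{B\widetilde{x}_jBz^{-1}}$, where $\widetilde{x}_j=x_jw_{0,I_j}$, along the $P_j\times Q_j$-orbits $P_jx_iz^{-1}Q_j$ of the $x_iz^{-1}$ for $j\le i\le 2n-1-j$. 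These orbits are exactly the preimages of the strata $\Fcal^{(P_j)}_{x_i}$, and the smooth locus of $\overline{Z_j}$ is $P_j\times Q_j$-invariant, so it is enough to test it at one point of each orbit. Using the explicit matrices \eqref{equation: the xjs} for the $x_i$, the element $\widetilde{x}_j$, and the description \eqref{IW-orth} of ${}^{I_j}W$, I would write $\overline{Z_j}$ in opposite–big–cell coordinates $R_u(P_j^-)\times R_u({}^{z^{-1}}Q_j^-)$ around such a point as the zero locus of a family of rank-type minors, and verify that along the orbits with $i\le 2n-1-j$ the Jacobian of these equations has maximal rank. Equivalently, one pins down the singular locus of $\overline{Z_j}$ and checks, using \Cref{propYi}, that it is contained in $\overline{\Fcal}^{(P_j)}_{x_j}\cap\pi_{P_j}^{-1}(\overline{\Xcal}_{2n-j})=\overline{\Fcal}^{(P_j)}_{x_j}\setminus\Ycal^{(P_j)}_j$.

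The main obstacle is precisely this local analysis: obtaining a manageable description of $\overline{Z_j}$ — equivalently of the classical Schubert variety $\overline{B\widetilde{x}_jB}$ in the flag variety of $\SO_{2n+1}$ — and of its singular locus. A promising simplification is that along the deeper orbits these Schubert varieties acquire a local product structure coming from a Levi reduction to $\SO_{2(n-i)+1}$, which would turn the problem into an induction on $n-j$, with base case $\Ycal_{n-1}=\Xcal_{n-1}\cup\Xcal_n$ already smooth, since it is elementary and normal by \Cref{theorem yj is normal}, hence smooth by \Cref{thm-elemsmoothchar}. Everything outside of this computation is formal.
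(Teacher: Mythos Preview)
Your outline is correct up to the point where you reduce to showing that $\Ycal^{(P_j)}_j$ is smooth, but the core of the argument---the local analysis of the Schubert variety $\overline{Z_j}=\overline{P_jx_jz^{-1}Q_j}$---is not carried out. You acknowledge this yourself (``The main obstacle is precisely this local analysis''), and the induction via Levi reduction you sketch is not actually set up: you would need to identify a transverse slice through a point of $\Fcal^{(P_j)}_{x_i}$ and match it with a Schubert variety for a smaller orthogonal group, which requires real work in type $\Bsf$. As written, this is a plan, not a proof.

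The paper's argument bypasses the Schubert singularity analysis entirely by a change of flag space. Instead of working in $\Fcal^{(P_j)}$, where $\overline{\Fcal}^{(P_j)}_{x_j}$ is the full Schubert closure, it works in $\Fcal^{(P_{j-1})}$, one level up. There, the preimage $\Ycal^{(P_{j-1})}_j$ of $\Ycal_j$ under the isomorphism $\pi_{P_{j-1}}\colon\Ycal^{(P_{j-1})}_{j-1}\to\Ycal_{j-1}$ turns out to lie inside a \emph{single} $P_{j-1}\times Q_{j-1}$-orbit, namely that of $x_jz^{-1}$, and any orbit is smooth. The key computation is the one-line identity $x_{2n-1-j}z^{-1}=x_jz^{-1}s_\alpha$ with $\alpha=e_{j+1}$ and $s_\alpha\in L_{j-1}$, which places the bottom stratum of $\Ycal_j$ in the same orbit as the top; since $\Ycal^{(P_{j-1})}_j$ is locally closed and $P_{j-1}\times Q_{j-1}$-stable (being a union of irreducible components of $\overline{\Fcal}^{(P_{j-1})}_{x_{j-1}}\setminus\Fcal^{(P_{j-1})}_{x_{j-1}}$ intersected with an open), the intermediate strata are forced into the same orbit. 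Openness of $\Ycal^{(P_{j-1})}_j$ in that orbit then follows from the canonical-cover machinery already established. The point you missed is that the ``right'' flag space for smoothness of $\Ycal_j$ is $\Fcal^{(P_{j-1})}$, not $\Fcal^{(P_j)}$: in the former, no Schubert-singularity question arises at all.
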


\begin{proof}
The statement is clear for $j=0$, so we may assume $1\leq j\leq n-1$. Consider the isomorphism $\pi_{P_{j-1}}\colon \Ycal_{j-1}^{(P_{j-1})}\to \Ycal_{j-1}$. Denote by $\Ycal^{(P_{j-1})}_{j}$ the preimage of $\Ycal_j\subseteq \Ycal_{j-1}$ by this isomorphism, i.e.
\begin{equation}
    \Ycal^{(P_{j-1})}_{j} \colonequals \bigcup_{i=j}^{2n-1-j} \Fcal^{(P_{j-1})}_{x_i}.
\end{equation}
By construction, $\Ycal^{(P_{j-1})}_{j}$ is locally closed in $\Ycal_{j-1}$ and $\pi_{P_{j-1}}$ restricts to an isomorphism $\pi_{P_{j-1}} \colon \Ycal^{(P_{j-1})}_{j}\to \Ycal_j$. Hence, it is equivalent to show that the subset
\begin{equation}
  Y_j^{(P_{j-1})} \colonequals \bigcup_{i=j}^{2n-1-j} E_{P_{j-1}}\cdot (x_iz^{-1}) 
\end{equation}
is smooth. For this, we show that it is open in the $P_{j-1}\times Q_{j-1}$-orbit of $x_{j}z^{-1}$. We know that $E_{P_{j-1}}\cdot (x_{j-1}z^{-1})$ coincides with a $P_{j-1}\times Q_{j-1}$-orbit. Since the Zariski closure of $E_{P_{j-1}}\cdot (x_jz^{-1}) $ is an irreducible component of the complement of $E_{P_{j-1}}\cdot (x_{j-1}z^{-1})$ in its Zariski closure, it is stable by $P_{j-1}\times Q_{j-1}$. Furthermore, we have
\begin{equation}
    x_{2n-1-j}z^{-1} = x_jz^{-1} s_\alpha, \quad \textrm{where} \ \alpha=e_{j+1}.
\end{equation}
We have $s_\alpha\in L_{j-1}$. Since $Y_j^{(P_{j-1})}$ is locally closed and the two extremal points lie in the $P_{j-1}\times Q_{j-1}$-orbit of $x_j z^{-1}$, the same holds for the whole of $Y_j^{(P_{j-1})}$. Finally, it remains to show that $Y_j^{(P_{j-1})}$ is open inside this $P_{j-1}\times Q_{j-1}$-orbit. It suffices to show that it is open in the Zariski closure of $E_{P_{j-1}}\cdot (x_jz^{-1})$. Since $\Ycal_{j-1}$ is open in $\overline{\Fcal}_{x_{j-1}}^{(P_{j-1})}$, the set $\Ycal_{j-1}\cap \overline{\Fcal}_{x_j}^{(P_{j-1})}$ is open in $\overline{\Fcal}_{x_j}^{(P_{j-1})}$. But $\Ycal_{j-1}\cap \overline{\Fcal}_{x_j}^{(P_{j-1})}=\bigcup_{i=j}^{2n-j} \Fcal^{(P_{j-1})}_{x_i}$ contains $\Ycal^{(P_{j-1})}_j$ as an open subset. The result follows.
\end{proof}

\begin{proposition}\label{prop-orthsmoothlocbigstrat}
The subset $\Ycal_i$ is exactly the smooth locus of $\overline{\Xcal}_i$.
\end{proposition}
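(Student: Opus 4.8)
\textbf{Proof plan for \Cref{prop-orthsmoothlocbigstrat}.}

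The strategy is to combine the two halves of the work already done: the previous proposition shows $\Ycal_i$ is smooth, so it is contained in $\overline{\Xcal}_i^{\text{sm}}$; it remains to prove the reverse inclusion, i.e.\@ that every point of $\overline{\Xcal}_i$ outside $\Ycal_i$ is a singular point. By definition $\overline{\Xcal}_i \setminus \Ycal_i = \overline{\Xcal}_{2n-i}$ (recall $\Ycal_i = \bigcup_{l=i}^{2n-1-i}\Xcal_l$ and the stratification is linear), so the claim is that $\overline{\Xcal}_i$ is singular along $\overline{\Xcal}_{2n-i}$. Since the singular locus is closed, it suffices to prove that $\overline{\Xcal}_i$ is not smooth at the generic point of $\Xcal_{2n-i}$, i.e.\@ that the elementary $x_i$-open substack $\Ucal(x_i, x_{2n-i}')$ is not smooth, where $x_{2n-i}'$ denotes the appropriate maximal-length element of ${}^{I_i}W$ below $x_i$ that lies in $\overline{\Xcal}_{2n-i}$ — more precisely, I would argue stratum by stratum, but the key point is to locate a lower neighbor $w'$ of some $x_r$ (with $\Xcal_{w'} \subseteq \overline{\Xcal}_{2n-i}$) for which smoothness fails.

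Concretely, I would proceed by descending induction, mirroring the structure of \Cref{thm-Xj-normal} and \Cref{theorem yj is normal}. The case to rule out is smoothness at a point of $\Xcal_{2n-1-i}$, which is the lowest stratum of $\overline{\Xcal}_i$ that is not in $\Ycal_i$... but in fact $\Xcal_{2n-1-i}\subseteq \Ycal_i$, so the first bad stratum is $\Xcal_{2n-i}$. To show $\overline{\Xcal}_i$ is singular at a generic point of $\Xcal_{2n-i}$ I would apply \Cref{thm-elemsmoothchar} to the relevant elementary substack and \Cref{normal-bounded}: if $\overline{\Xcal}_i$ were smooth (hence normal) at that point, then by \Cref{thm-elemsmoothchar} (generalized to depth-one $w$-opens as in the discussion after \Cref{rmk-computable}) the relevant elementary $\Ucal(x_r,w')$ with $w'$ a lower neighbor of $x_r$ having $\Xcal_{w'}$ in the boundary would have to be $x_r$-bounded, i.e.\@ $P_{w'}\subseteq P_{x_r}$. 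So the heart of the argument is a canonical-parabolic computation: I must identify the lower neighbor $w' = w_i^{(r)}$ (for suitable $r$, using the explicit description in \eqref{wir-eq}, \eqref{wir-eq2}) whose image under $\pi_{P_r}$ is $\Xcal_{2n-i}$ (this is exactly \Cref{cor-imag}), compute its canonical parabolic $P_{w'}$, and check that $P_{w'}\not\subseteq P_{x_r}$. Given the matrix shape \eqref{wi-shape} of $w_i^{(r)}$, the operator $\varphi_{w'}$ on $I^*$ can be written down explicitly and $I_{w'}$ read off; the failure of the inclusion should manifest as $I_{w'}$ containing a simple root that is "too high" relative to $I_r = \Delta\setminus\{\alpha_1,\dots,\alpha_{r+1}\}$, in the same spirit as \Cref{prop-can-orth}.

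An alternative, cleaner route — which I would try first — is to use \Cref{proposition criteria for smooth locus to be open stratum} applied not to $\Xcal_i$ directly but to work inside a chain: we already know from \Cref{cor-smoothlocsmallortstrat} (via \Cref{proposition criteria for smooth locus to be open stratum} and \Cref{prop-can-orth}) that $\overline{\Xcal}_{2n-i}^{\text{sm}} = \Xcal_{2n-i}$ when $2n-i \geq n+1$, i.e.\@ $i\leq n-1$, which is exactly our range. So $\overline{\Xcal}_{2n-i}$ is singular at every point of $\overline{\Xcal}_{2n-i}\setminus \Xcal_{2n-i}$; but a scheme singular along a closed subset $Z$ and containing $Z$ as a closed subscheme cannot be smooth at points of $Z$ either in general — this needs care, so instead I would argue: if $\overline{\Xcal}_i$ were smooth at some point $x$ of $\Xcal_{2n-i}$, then since $\overline{\Xcal}_{2n-i}$ is cut out in the smooth locus by the Hasse invariant $\Ha_{2n-i-1},\dots$ — no, that divisorial argument runs the wrong direction. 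The robust argument is the canonical-parabolic one of the previous paragraph. \textbf{The main obstacle} will therefore be the explicit computation of the canonical parabolic $P_{w_i^{(r)}}$ for the exceptional lower neighbors and verifying $P_{w_i^{(r)}} \not\subseteq P_{x_r}$; this is a finite but somewhat delicate combinatorial root-system calculation analogous to, but more involved than, the proof of \Cref{prop-can-orth}, and one must be careful that the chosen $r$ and $i$ indeed produce a stratum in $\overline{\Xcal}_{2n-i}\setminus\Ycal_i$ and that the relevant elementary substack genuinely witnesses non-smoothness (rather than merely failing $w$-boundedness for an irrelevant reason).
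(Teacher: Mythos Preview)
Your proposal has a genuine gap. The machinery you want to invoke --- \Cref{thm-elemsmoothchar}, \Cref{normal-bounded}, and the $w$-boundedness criterion --- characterizes \emph{normality}, not smoothness, outside the elementary (codimension-one) case. But $\overline{\Xcal}_i$ has already been shown to be normal (\Cref{thm-Xj-normal}), so no failure of $w$-boundedness or of the separating-cover condition can witness non-smoothness here. Concretely: the only lower neighbor of $x_i$ in ${}^IW$ is $x_{i+1}$, and $\Ucal(x_i,x_{i+1})\subseteq \Ycal_i$ is smooth, so \Cref{thm-elemsmoothchar} gives no information about $\Xcal_{2n-i}$, which sits in codimension $2n-2i>1$. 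Your attempt to pass through an intermediate $x_r$ and a flag-space lower neighbor $w_i^{(r)}$ does not help either: showing $\Ucal(x_r,w')$ is not $x_r$-bounded for some $r\neq i$ would constrain the geometry of $\overline{\Xcal}_r$ (or of a flag-space stratum closure), not of $\overline{\Xcal}_i$.

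The paper's proof uses an entirely different idea. Recall that $\overline{\Xcal}_j = V(\Ha_{j-1})$ scheme-theoretically inside $\overline{\Xcal}_{j-1}$, and that $\overline{\Xcal}_{j-1}$ is smooth at $\Xcal_{2n-j}$ (because $\Xcal_{2n-j}\subseteq \Ycal_{j-1}$). If $\overline{\Xcal}_j$ were smooth at that point, then $\Ha_{j-1}$ would be a regular parameter in the local ring, hence vanish to order exactly one. The paper transports $\Ha_{j-1}$ to a Schubert-cell eigenfunction via the isomorphism $\Ycal_{j-1}^{(P_{j-1})}\to \Ycal_{j-1}$ and the Bruhat description of $\Fcal_{x_{j-1}}^{(P_{j-1})}$, then applies a vanishing-order formula from \cite{goldring2024ogusprinciplezipperiod} to compute the order at the translated point $u_{j-1}^{-1}u_{2n-j}$; the answer is $2$, giving the contradiction. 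This is exactly the ``divisorial argument'' you dismissed --- it does run in the right direction once you observe that a smooth Cartier divisor in a smooth ambient is cut out by an order-one equation.
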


\begin{proof}

Let $\Ha_{j-1}\in H^0(\overline{\Xcal}_{j-1},\omega^{p^j-1})$ be the generalized Hasse invariant afforded by \Cref{prop-mult1}. The vanishing locus of $\Ha_{j-1}$ is the complement of $\Xcal_{j-1}$. Consider the isomorphism $\pi_{P_{j-1}}\colon \Ycal^{(P_{j-1})}_{j-1}\to \Ycal_{j-1}\subseteq \overline{\Xcal}_{j-1}$. Write $C_{j-1}\colonequals P_{j-1} x_{j-1}z^{-1} Q_{j-1}$. The pullback of $\Ha_{j-1}$ to $\Ycal^{(P_{j-1})}_{j-1}$ arises from the stack
\begin{equation}
    \left[ (P_{j-1}\times Q_{j-1}) \backslash C_{j-1}\right]
\end{equation}
It corresponds to a $P_{j-1}\times Q_{j-1}$-equivariant function $h_{j-1}\colon C_{j-1} \to \AA^1$ satisfying the relation
\begin{equation}
    h_{j-1}(ag)=\eta_{j-1}(a)h_{j-1}(g), \quad a\in P_{j-1}, \ g\in C_{j-1},
\end{equation}
where $\eta_{j-1}=(-1,p^{j-1},p^{j-2}, \dots, p, 0, \dots, 0)$. It suffices to show that the order of vanishing of $h_{j-1}$ is $>1$ at the point $x_{2n-j}z^{-1}$. We translate the function $h_{j-1}$ to make it $B^+\times B$-equivariant. Define:
\begin{equation}
    \widetilde{h}_{j-1}(x)=h_{j-1}(w_0xz^{-1}).
\end{equation}
This function is regular on $\widetilde{C}_{j-1} \colonequals w_0 C_{j-1} z$ and satisfies $\widetilde{h}_{j-1}(ag)=\widetilde{\eta}_{j-1}(a)\widetilde{h}_{j-1}(g)$ for all $a\in B^+$ and $g\in \widetilde{C}_{j-1}$, where $\widetilde{\eta}_{j-1}=w_0\eta_{j-1}=-\eta_{j-1}$. Set $u_i\colonequals w_0 x_i$ for all $0\leq i \leq 2n-1$. Note that the Zariski closure of $\widetilde{C}_{j-1}$ coincides with the Zariski closure of $B^+u_{j-1}w_{0,I_{j-1}}B$ (since $u_{j-1}w_{0,I_{j-1}}$ is the element of $W$ of minimal length inside $\widetilde{C}_{j-1}$). We need to show that the order of vanishing of the function $\widetilde{h}_{j-1}$ is $>1$ at the point $u_{2n-j}$, or equivalently at the point $u_{2n-j} w_{0,I_{j-1}}$, since $\widetilde{h}_{j-1}$ is $P_{j-1}\times Q_{j-1}$-equivariant. Moreover, when restricted to $\widetilde{C}_{j-1}$, the function $\widetilde{h}_{j-1}$ coincides (up to nonzero scalar) with the restriction to $\widetilde{C}_{j-1}$ of the section
\begin{equation}
   \gamma_{j-1} \colon u_{j-1}w_{0,I_{j-1}} U^+B \to \AA^1, \quad w_{j-1}u^+ t u \mapsto (u_j^{-1}\cdot \eta_{j-1})(t)
\end{equation}
since both are $B^+\times B$-eigenfunctions for the same character. Since the order of vanishing can only increase through pullback, it suffices to show that the order of vanishing of $\gamma_{j-1}$ at the point $u_{2n-j} w_{0,I_{j-1}}$ is $>1$. By translation, this order coincides with that of the function
\begin{equation}
    \widetilde{\gamma}_{j-1} \colon U^+B \to \AA^1, \quad u^+ t u \mapsto (u_j^{-1}\cdot \eta_{j-1})(t)
\end{equation}
at the point $(u_{j-1}w_{0,I_{j-1}})^{-1}(u_{2n-j}w_{0,I_{j-1}})=u_{j-1}^{-1}u_{2n-j}$. We have $u_j^{-1}\cdot \eta_{j-1}=(p^{j-1},p^{j-2}, \dots, p, 1, 0, \dots, 0)$. Moreover
\begin{equation}
   u_{j-1}^{-1}u_{2n-j} = s_{j}\dots s_{n-1}s_n s_{n-1}\dots s_{j}.
\end{equation}
Hence we may directly apply the result of \cite[Theorem 3.8.5]{goldring2024ogusprinciplezipperiod} and find that $\ord_{u_{j-1}^{-1}u_{2n-j}}(\widetilde{\gamma}_{j-1})=2$. 
\end{proof}

\bibliographystyle{amsplain}
\bibliography{bib.bib}

\end{document}